\newfont{\cyr}{wncyr10 scaled 1100}
\newfont{\cyrr}{wncyr9 scaled 1000}
\theoremstyle{plain}
\newtheorem{theorem}{Theorem}[section]
\newtheorem{proposition}[theorem]{Proposition}
\newtheorem{lemma}[theorem]{Lemma}
\newtheorem{corollary}[theorem]{Corollary}
\theoremstyle{definition}
\newtheorem{conjecture}[theorem]{Conjecture}
\newtheorem{definition}[theorem]{Definition}
\newtheorem{assumption}[theorem]{Assumption}
\newtheorem{question}[theorem]{Question}
\theoremstyle{remark}
\newtheorem{remark}[theorem]{Remark}
\newcommand{\Q}{\mathbb Q}
\newcommand{\Z}{\mathbb Z}
\newcommand{\C}{\mathbb C}
\DeclareMathOperator{\Pic}{Pic}
\DeclareMathOperator{\End}{End}
\DeclareMathOperator{\Aut}{Aut}
\DeclareMathOperator{\Frob}{Frob}
\DeclareMathOperator{\Hom}{Hom}
\DeclareMathOperator{\Gal}{Gal}
\DeclareMathOperator{\GL}{GL}
\DeclareMathOperator{\BK}{\Lambda}
\newcommand{\BBK}{X}
\DeclareMathOperator{\CH}{CH}
\newcommand{\res}{\mathrm{res}}
\newcommand{\cores}{\mathrm{cores}}
\newcommand{\tr}{\mathrm{tr}}
\newcommand{\ord}{\mathrm{ord}}
\newcommand{\cont}{\mathrm{cont}}
\newcommand{\Sha}{\mbox{\cyr{X}}}
\newcommand{\Shaa}{\mbox{\cyrr{X}}}
\definecolor{Indigo}{rgb}{0.2,0.1,0.7}
\definecolor{Violet}{rgb}{0.5,0.1,0.7}
\definecolor{White}{rgb}{1,1,1}
\definecolor{Green}{rgb}{0.1,0.9,0.2}
\newcommand{\longmono}{\mbox{$\lhook\joinrel\longrightarrow$}}
\newcommand{\smallmat}[4]{\bigl(\begin{smallmatrix}#1&#2\\#3&#4\end{smallmatrix}\bigr)}
\newcommand{\invlim}{\mathop{\varprojlim}\limits}
\newcommand{\T}{\mathbb T}
\newcommand{\E}{\mathcal E}
\newcommand{\F}{\mathbb{F}}
\newfont{\gotip}{eufb10 at 12pt}
\newcommand{\cO}{{\mathcal O}}
\newcommand{\p}{\mathfrak{p}}
\DeclareMathOperator{\Reg}{Reg}
\begin{document}

\title[A refined Beilinson--Bloch conjecture]{A refined Beilinson--Bloch conjecture\\for motives of modular forms}
\date{}
\author{Matteo Longo and Stefano Vigni}
\thanks{}

\begin{abstract}
We propose a refined version of the Beilinson--Bloch conjecture for the motive associated with a modular form of even weight. This conjecture relates the dimension of the image of the relevant $p$-adic Abel--Jacobi map to certain combinations of Heegner cycles on Kuga--Sato varieties. We prove theorems in the direction of the conjecture and, in doing so, obtain higher weight analogues of results for elliptic curves due to Darmon.
\end{abstract}

\address{Dipartimento di Matematica, Universit\`a di Padova, Via Trieste 63, 35121 Padova, Italy}
\email{mlongo@math.unipd.it}
\address{Dipartimento di Matematica, Universit\`a di Genova, Via Dodecaneso 35, 16146 Genova, Italy}
\email{vigni@dima.unige.it}

\subjclass[2010]{14C25, 11F11}

\keywords{Modular forms, Beilinson--Bloch conjecture, Heegner cycles.}

\maketitle


\section{Introduction} 

Let $N\geq3$ be an integer, let $k\geq4$ be an even integer and let $f\in S_k^{\rm new}(\Gamma_0(N))$ be a normalized newform of weight $k$ and level $\Gamma_0(N)$, whose $q$-expansion will be denoted by
\[ f(q)=\sum_{n\geq1}a_nq^n. \]  
Let $p\nmid N$ be a prime number and let $\p\,|\,p$ be a prime ideal of the ring of integers $\cO_F$ of the totally real field $F$ generated by the Fourier coefficients $a_n$ of $f$. Finally, let $K$ be a number field. To these data we may attach a $p$-adic Abel--Jacobi map 
\[ {\rm AJ}_K:{\CH}^{k/2}\bigl(\tilde\E_N^{k-2}/{K}\bigr)_0\otimes F_\p\longrightarrow H^1_f(K,V_\p) \]
where $F_\p$ is the completion of $F$ at $\p$, $\tilde\E_N^{k-2}$ is the Kuga--Sato variety of level $N$ and weight $k$, $V_\p$ is a twist of the $\p$-adic representation attached to $f$ and $H^1_f(K,V_\p)$ is its Bloch--Kato Selmer group over $K$ (here the subscript ``$f$'' stands for ``finite'' and should not be confused with the modular form $f$). The Beilinson--Bloch conjectures (\cite{Beil}, \cite{Bloch}) connect the values of the $L$-functions of algebraic varieties over number fields to global arithmetic properties of these varieties (see, e.g., \cite{Schneider} for an introduction). In particular, they state that the $F_\p$-dimension of the image $X_\p(K)$ of ${\rm AJ}_K$ is equal to the order of vanishing of the complex $L$-function $L(f\otimes K,s)$ of $f$ over $K$ at its center of symmetry $s=k/2$. Moreover, if $\tilde\rho_\p$ denotes this dimension then the leading term of the derivative of order $\tilde\rho_\p$ of $L(f\otimes K,s)$ at $s=k/2$ is predicted up to multiplication by elements of $\Q^\times$. When $K$ is an imaginary quadratic field of discriminant coprime to $Np$ or $K=\Q$, important results towards this conjecture (at least in low rank situations) have been obtained by combining Nekov\'a\v{r}'s generalization of Kolyvagin's theory to Chow groups of Kuga--Sato varieties (\cite{Nek}) with Zhang's formula of Gross--Zagier type for higher weight modular forms (\cite{Zh}). More recently, the Beilinson--Bloch conjectures have been subsumed within the Tamagawa number conjecture of Bloch and Kato (\cite{BK}), which predicts (by using Fontaine's theory of $p$-adic representations) the value of the non-zero rational factor that was not made explicit in the original conjectures. 

The goal of the present article is to investigate refined -- or equivariant -- analogues of these conjectures in which, roughly speaking, $L$-functions are replaced by Heegner cycles. 

To better explain our work, let us recall that refined versions of the Birch and Swinnerton-Dyer conjecture (BSD conjecture, for short) for a rational elliptic curve $E$ were first proposed by Mazur and Tate in \cite{MT}. In that article, the role of $L$-functions was played by certain combinations of modular symbols with coefficients in the group algebra $\Q[\Gal(\Q(\zeta_M)/\Q)]$, called ``theta elements'' and denoted by $\theta_{E,M}$; here $M\geq1$ is an integer and $\zeta_M$ is a primitive $M$-th root of unity. The Mazur--Tate refined conjecture of BSD type states that $\theta_{E,M}$ belongs to a power $r$ of the augmentation ideal $I$ of $\Q[\Gal(\Q(\zeta_M)/\Q)]$ that can be predicted in terms of the rank of the Mordell--Weil group $E(\Q)$ and the number of primes of split multiplicative reduction for $E$ dividing $M$. This conjecture describes also the leading value of $\theta_{E,M}$, which is defined as the image of $\theta_{E,M}$ in the quotient $I^r/I^{r+1}$. Extensions and analogues of this conjecture for Artin $L$-functions and for $L$-functions of more general motives have also been formulated, and partial results have been proved (see, e.g., \cite{Bu}, \cite{BF}, \cite{DFG}, \cite{Gr}, \cite{Ru} and the references therein). 
 
Moving from \cite{MT} and the observation that modular symbols and Heegner points enjoy similar formal properties, Darmon proposed in \cite{Dar} refined versions \emph{\`a la} Mazur--Tate of the BSD conjecture, where modular symbols are replaced by Heegner points. Later on, Bertolini and Darmon began the systematic study of $p$-adic analogues of the BSD conjecture in which the relevant $p$-adic $L$-functions are defined in terms of distributions of Heegner (and Gross--Heegner) points on Shimura curves attached either to definite or to indefinite quaternion algebras (see \cite{BD96}, \cite{BD98}, \cite{BD99}, \cite{BD01}, \cite{BD-Iwasawa}).

Our aim in this paper is to formulate and study refined versions of the Beilinson--Bloch conjecture for the motive associated with the modular form $f$; in this context, the role of the Heegner points appearing in \cite{Dar} is played by higher-dimensional Heegner cycles in the sense of Nekov\'a\v{r} (\cite{Nek}). We hope that our work, offering an equivariant refinement of the above mentioned conjectures in which the complex $L$-function of a modular form is replaced by an algebraically defined one, can be viewed as complementary to the results of Burns and of Burns--Flach on Stark's conjectures and Tamagawa numbers of motives (see, e.g., \cite{Bu}, \cite{BF}).  

In order to state our main results more precisely, we need some notation. Let $K$ be an imaginary quadratic field of discriminant coprime to $Np$ in which all the primes dividing $N$ split, let $T$ be a square-free product of primes that are inert in $K$ and do not divide $Np$ and let $K_T$ be the ring class field of $K$ of conductor $T$. Write $\cO_\p$ for the completion of $\cO_F$ at $\p$. As recalled in \S \ref{galois-sec} and \S \ref{secAJ}, there is a natural way to introduce an $\cO_\p$-lattice $A_\p$ inside $V_\p$, and to all these data we may attach a Heegner cycle $y_{T,\p}\in\BK_\p(K_T)\subset H^1_{\rm cont}(K_T,A_\p)$ where $\BK_\p(K_T)$ is the image of the $\cO_\p$-integral version of the Abel--Jacobi map ${\rm AJ}_{K_T}$ and $H^1_{\rm cont}$ denotes continuous cohomology (see \S \ref{selmer-subsec} and \S \ref{secHC}). Set $G_T:=\Gal(K_T/K_1)$ and $\Gamma_T:=\Gal(K_T/K)$, consider the theta element
\[ \theta_{T,\p}:=\sum_{\sigma\in G_T}\sigma(y_{T,\p})\otimes\sigma\in\BK_\p(K_T)\otimes_{\cO_\p}\!\cO_\p[G_T] \] 
and let $\theta_{T,\p}^*$ be the image of $\theta_{T,\p}$ via the involution sending $\sigma\in G_T$ to $\sigma^{-1}$. Taking suitable trace-like operators to $K$ we obtain elements $\zeta_{T,\p}$ and $\zeta_{T,\p}^*$ that may be naturally viewed as belonging to $\BK_\p(K_S)\otimes_{\cO_\p}\!\cO_\p[\Gamma_S]$ whenever $T\,|\,S$. 

Now let $S$ be a square-free product of primes that are inert in $K$ and do not divide $Np$, then define the arithmetic $L$-function attached to $S$ and $\p$ as
\[ \mathcal L_{S,\p}:=\bigg(\sum_{T\mid S}a_T\zeta_{T,\p}\bigg)\!\otimes\!\bigg(\sum_{T\mid S}a_T^*\zeta_{T,\p}^*\bigg)\in\BK_\p(K_S)^{\otimes2}\otimes_{\cO_\p}\!\cO_\p[\Gamma_S], \]
where $a_T$ and $a^*_T$ are explicit elements of $\cO_\p[\Gamma_S]$ that are defined in \eqref{def-multipl} below in terms of the M\"obius function and the quadratic character of $K$. 

The finite-dimensional $F_\p$-vector space $X_\p(K)$ splits under the action of the non-trivial element of $\Gal(K/\Q)$ as a direct sum 
\[ \BBK_\p(K)=\BBK_\p(K)^+\oplus\BBK_\p(K)^- \] 
of its eigenspaces. Set $\rho^\pm_\p:=\dim_{F_\p}\bigl(\BBK_\p(K)^\pm\bigr)$ and
\[ \rho_\p:=\begin{cases}\max\{\rho^+_\p,\rho^-_\p\}-1&\text{if $\rho^+_\p\not=\rho^-_\p$},\\[3mm]\rho^+_\p=\rho^-_\p&\text{otherwise}.\end{cases} \]  
As a consequence of the Beilinson--Bloch conjecture, the function $\p\mapsto\rho_\p$ is expected to be constant and, since the order of vanishing of $L(f\otimes K,s)$ at $s=k/2$ is odd, the case $\rho_\p^+=\rho_\p^-$ should never occur. Let $I_{\Gamma_S}$ be the augmentation ideal of $\cO_\p[\Gamma_S]$ (to simplify our notation, we suppress dependence on $\p$). Finally, write $J(S)$ for the cokernel of the map 
\[ H^1_f(K,A_\p/pA_\p)\longrightarrow \bigoplus_{\ell\mid S}H^1_f(K_\lambda, A_\p/pA_\p) \]
where $K_\lambda$ is the completion of $K$ at the unique prime $\lambda$ above $\ell$. Our results apply to all prime numbers $p$ outside a finite set $\Sigma$ that we introduce in \S \ref{exc-set}; in fact, one crucial feature that we require of the prime $p$ is that the Galois representation attached to $A_\p$ be irreducible with non-solvable image.

\begin{theorem} \label{thm-intro}
Let $p$ be a prime number such that $p\not\in\Sigma$, let $S$ be a product of primes that are inert in $K$ and do not divide $Np$, and let $\p$ be a prime ideal of $\cO_F$ above $p$.
\begin{enumerate}
\item $\mathcal L_{S,\p}\in\BK_\p(K_S)^{\otimes2}\otimes_{\cO_\p}\!I^{2\rho_\p}_{\Gamma_S}$. 
\item Suppose that $p\,|\,\ell+1$ for all prime numbers $\ell\,|\,S$. If $|\rho_\p^+-\rho_\p^-|=1$ then the image $\tilde{\mathcal L}_{S,\p}^{(p)}$ of $\mathcal L_{S,\p}$ in 
\[ \bigl(\Lambda_\p(K_S)^{\otimes2}/p\Lambda_\p(K_S)^{\otimes2}\bigr)\otimes_{\cO_\p}\!\bigl(I_{\Gamma_S}^{2\rho_\p}/I_{\Gamma_S}^{2\rho_\p+1}\bigr) \] 
belongs to the natural image of 
\[ \bigl(\Lambda_\p(K)^{\otimes2}/p\Lambda_\p(K)^{\otimes2}\bigr)\otimes_{\cO_\p}\!\bigl(I_{\Gamma_S}^{2\rho_\p}/I_{\Gamma_S}^{2\rho_\p+1}\bigr). \]
\item Assume that $p\,|\,\ell+1$ for all prime numbers $\ell\,|\,S$ and that the $\p$-part $\Sha_\p(f/K)$ of the Shafarevich--Tate group of $f$ over $K$ is finite. If $|\rho_\p^+-\rho_\p^-|=1$ and $p$ divides $|\Sha_\p(f/K)|\cdot|J(S)|$ then $\tilde{\mathcal L}_{S,\p}^{(p)}=0$.  
\end{enumerate}
\end{theorem}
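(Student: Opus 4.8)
The plan is to derive part~(3) from part~(2) together with the Kolyvagin--Nekov\'a\v{r} Euler system method for Heegner cycles (\cite{Nek}), applied in contrapositive form: I will show that $\tilde{\mathcal L}_{S,\p}^{(p)}\neq0$ forces $p\nmid|\Sha_\p(f/K)|\cdot|J(S)|$. After relabelling the eigenspaces we may assume $\rho_\p^-=\rho_\p^++1$, so that $\rho_\p=\rho_\p^+$; this is precisely the configuration in which the Euler system bound controls the Bloch--Kato Selmer group up to a single class, and it is here that the hypothesis $|\rho_\p^+-\rho_\p^-|=1$ is used. By part~(2), $\tilde{\mathcal L}_{S,\p}^{(p)}$ lies in the natural image of $\bigl(\Lambda_\p(K)^{\otimes2}/p\Lambda_\p(K)^{\otimes2}\bigr)\otimes_{\cO_\p}\bigl(I_{\Gamma_S}^{2\rho_\p}/I_{\Gamma_S}^{2\rho_\p+1}\bigr)$, so it suffices to prove that this class, now described in terms of data over $K$ alone, vanishes.

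The next and technically most demanding step is an explicit ``derivative formula'' for the leading term, in the spirit of Darmon (\cite{Dar}) and Mazur--Tate (\cite{MT}). Expanding each of the two factors $\sum_{T\mid S}a_T\zeta_{T,\p}$ and $\sum_{T\mid S}a_T^*\zeta_{T,\p}^*$ of $\mathcal L_{S,\p}$ modulo a suitable power of $I_{\Gamma_S}$ and feeding in the norm relations satisfied by the Heegner cycles $y_{T,\p}$ --- together with the hypothesis $p\mid\ell+1$ for all $\ell\mid S$, which guarantees that each such $\ell$ behaves as a Kolyvagin prime for $A_\p$ (the finite--singular comparison map at the prime above $\ell$ is an isomorphism, and $\Gal(K_\ell/K_1)$ has non-trivial $p$-torsion, so the relevant graded pieces of $\cO_\p[\Gamma_S]$ are non-zero) --- one rewrites the relevant graded piece of each factor in terms of the reductions modulo $p$ of the Heegner cycles $y_{T,\p}$ and of the associated Kolyvagin-derived cycles $D_Ty_{T,\p}$, multiplied by the monomials $\prod_{\ell\mid T}(\sigma_\ell-1)$ in the associated graded ring of $\cO_\p[\Gamma_S]$. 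The outcome is an identification of $\tilde{\mathcal L}_{S,\p}^{(p)}$, up to a $p$-adic unit, with the value of a global duality pairing on derived Heegner classes localized at the primes dividing $S$; the congruence $p\mid\ell+1$ is used once more to force these localizations into the finite subspaces $H^1_f(K_\lambda,A_\p/pA_\p)$, so that the classes involved are genuinely Selmer. Making this identification \emph{exact}, and not merely up to an undetermined scalar, while keeping precise track of the local conditions at the primes dividing $S$ and of the descent from $K_S$ to $K$ provided by part~(2), is the main obstacle.

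Granting the identification, the conclusion is a global duality argument. Assume $\tilde{\mathcal L}_{S,\p}^{(p)}\neq0$. Then the Kolyvagin-derived classes attached to the divisors $T\mid S$ have non-zero image under the localization maps $H^1_f(K,A_\p/pA_\p)\to H^1_f(K_\lambda,A_\p/pA_\p)$ at the relevant primes $\lambda$, and running Kolyvagin's descent with this Euler system --- precisely as in Nekov\'a\v{r}'s treatment of Chow groups of Kuga--Sato varieties, and invoking the Poitou--Tate global reciprocity law --- yields two conclusions. First, $H^1_f(K,A_\p/pA_\p)$ is generated by the image of $\Lambda_\p(K)/p\Lambda_\p(K)$; combined with the inequality $\dim\bigl(\Lambda_\p(K)/p\Lambda_\p(K)\bigr)\ge\rho_\p^++\rho_\p^-$ coming from the definition of $\rho_\p^\pm$ and the fact that $p\notin\Sigma$, this forces the natural map $\Lambda_\p(K)/p\Lambda_\p(K)\to H^1_f(K,A_\p/pA_\p)$ to be an isomorphism, hence (using that $\Sha_\p(f/K)$ is finite) $\Sha_\p(f/K)[p]=0$, i.e.\ $p\nmid|\Sha_\p(f/K)|$. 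Second, the localization map $H^1_f(K,A_\p/pA_\p)\to\bigoplus_{\ell\mid S}H^1_f(K_\lambda,A_\p/pA_\p)$ is surjective, which is exactly the statement $J(S)=0$, i.e.\ $p\nmid|J(S)|$. Both contradict the hypothesis that $p$ divides $|\Sha_\p(f/K)|\cdot|J(S)|$, and therefore $\tilde{\mathcal L}_{S,\p}^{(p)}=0$.

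Beyond the leading-term computation already singled out, I expect the remaining work to be bookkeeping: checking that the Kolyvagin-derived cycles satisfy the precise local conditions needed to insert them into the Poitou--Tate exact sequences, matching the combinatorial coefficients $a_T$ and $a_T^*$ against the monomials in the associated graded ring of $\cO_\p[\Gamma_S]$, and verifying the integral refinements modulo $p$ of the Abel--Jacobi map. The assumption $p\notin\Sigma$ is what makes the whole machinery available: it ensures in particular that the residual Galois representation of $A_\p$ is irreducible with non-solvable image, so that Kolyvagin's method and the Chebotarev arguments in the descent apply, and that $\Lambda_\p(K)/p\Lambda_\p(K)$ embeds into $H^1_f(K,A_\p/pA_\p)$.
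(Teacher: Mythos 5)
Your proposal addresses only part~(3) of the theorem, taking parts~(1) and~(2) as given, and the overall strategy --- argue by contrapositive, reduce to nonvanishing of some derived Heegner class, and then conclude via Kolyvagin-type descent --- is the same as the paper's. But the mechanism you describe for getting from ``$\tilde{\mathcal L}_{S,\p}^{(p)}\neq0$'' to ``$\Sha_\p(f/K)=0$ and $J(S)=0$'' goes through a step that is both unnecessary and, as you yourself flag, unresolved: the identification of $\tilde{\mathcal L}_{S,\p}^{(p)}$, up to a $p$-adic unit, with the value of a global duality pairing on derived Heegner classes. The paper never needs such an identity. Instead, it uses the elementary Taylor expansion of the theta element from~\S\ref{3.3.1},
\[
\theta_{S,\p}=\sum_{\kappa}{\bf D}_\kappa(y_{S,\p})\otimes(\sigma_1-1)^{k_1}\cdots(\sigma_t-1)^{k_t},
\]
from which one reads off directly that if the image of $\zeta_{S,\p}$ in the graded piece $\Lambda_\p(K_S)\otimes\bigl(I_{\Gamma_S}^{\rho_\p}/I_{\Gamma_S}^{\rho_\p+1}\bigr)$ is nonzero modulo $p$, then some Darmon--Kolyvagin derivative ${\bf D}_\kappa({\bf N}_S y_{S,\p})$ with $\ord(\kappa)=\rho_\p$ is nonzero modulo $p$. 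That is the only input into the final step, and your proposed pairing formula --- described as ``the main obstacle'' --- is a detour you cannot complete and do not need.

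The actual conclusion (Proposition~\ref{prop5.24} of the paper) is then a short numerical argument rather than a full Kolyvagin descent. Nonvanishing of ${\bf D}_\kappa(y_{S,\p})$ modulo $p$ feeds into the contrapositives of the divisibility bounds Proposition~\ref{T5.20} and Corollary~\ref{coro5.21}, giving
\[
\rho_\p\geq r_\p\bigl(H^1_f(K,W_\p[p])^{\epsilon_\kappa}\bigr),\qquad \rho_\p\geq r_\p\bigl(H^1_f(K,W_\p[p])^{-\epsilon_\kappa}\bigr)-1,
\]
while the injection $\Lambda_\p(K)/p\Lambda_\p(K)\hookrightarrow H^1_f(K,W_\p[p])$ gives the reverse bounds $r_\p\bigl(H^1_f(K,W_\p[p])^{\pm}\bigr)\geq\rho_\p^\pm$. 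When $|\rho_\p^+-\rho_\p^-|=1$, the identity $2\rho_\p=\rho_\p^++\rho_\p^--1$ forces every inequality in this chain to be an equality; equality in the last one gives $\Sha_\p(f/K)=0$, and equality in the intermediate ones (involving $H^1_{f,S}$ and $A(S)$) gives surjectivity of the localization map, i.e.\ $J(S)=0$. Your sketch arrives at the same two conclusions but attributes them to an unsubstantiated ``running of Kolyvagin's descent plus Poitou--Tate,'' and moreover your surjectivity/isomorphism step is not quite closed: you claim generation of $H^1_f$ by the Abel--Jacobi image plus the lower bound $\dim\geq\rho_\p^++\rho_\p^-$ gives an isomorphism, but generation is exactly the surjectivity you would still need to establish --- the equality-chain above is what supplies it. Finally, your concern about ``matching the combinatorial coefficients $a_T$ and $a_T^*$ against the monomials in the associated graded ring'' is moot: by Remark~\ref{remark-general-choice-2}, the result holds for any $\cO_\p[\Gamma_S]$-linear combination of $\zeta_{T,\p}$ and $\zeta^*_{T,\p}$, because the whole theorem is proved termwise via Theorem~\ref{theorem5.27}.
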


Theorem \ref{thm-intro}, which corresponds to Corollary \ref{coro4.14} in the main body of the text, provides a higher weight analogue of a theorem of Darmon for elliptic curves over $\Q$ (\cite{Dar}), and at the same time can be viewed as a partial result towards a \emph{refined Beilinson--Bloch conjecture} for modular forms. This perspective is addressed in the last part of the paper (Section \ref{sec-regulators}), where we relate $\mathcal L_{S,\p}$ to an axiomatic theory of regulators of Mazur--Tate type in this setting. The conjectural picture motivating Theorem \ref{thm-intro} is stated as Question \ref{ques}. We believe that these regulators can be explicitly defined using Nekov\'a\v{r}'s theory of $p$-adic height pairings (\cite{nek4}), and we plan to come back to these issues in a subsequent paper. 

We conclude by remarking that Theorem \ref{thm-intro} is a consequence of analogous results for the elements $\zeta_{S,\p}$ (Theorem \ref{theorem5.27}). It is worth pointing out that all these results are based on a congruence property enjoyed by Heegner cycles (Theorem \ref{div-thm}); namely, generalized Kolyvagin derivatives (called \emph{Darmon--Kolyvagin derivatives} in this article and studied in \S \ref{kolyvagin-subsec}) of Heegner cycles are zero modulo $p^m$ if their order is less than the $\cO_\p/p^m\cO_\p$-rank of $H^1_f(K,A_\p/p^mA_\p)$. As a by-product of Theorem \ref{div-thm}, if $\ell$ is a prime not dividing $N$, inert in $K$ and such that $p\,|\,\ell+1$ then in Theorem \ref{Heegner-theorem} we give a bound (in terms of $p$ and the dimension of $H^1_f(K,A_\p/pA_\p)$ over $\cO_\p/p\cO_\p$) on the $\cO_\p/p\cO_\p$-dimension of the Galois module generated by Heegner cycles inside $\BK_\p(K_\ell)/p\BK_\p(K_\ell)$.

\subsubsection*{Notation and conventions} 
Unless specified otherwise, unadorned tensor products $\otimes$ are taken over $\Z$. 

The cardinality of a (finite) set $X$ is denoted either by $\#X$ or by $|X|$, according to convenience. 

If $K$ is a field then set $G_K:=\Gal(\bar K/K)$, where $\bar K$ is a fixed algebraic closure of $K$.
For any continuous $G_K$-module $M$ let $H^i(K,M)$ denote the $i$-th cohomology group of $G_K$ with coefficients in $M$. If $K/F$ is a field extension then  
\[ \res_{K/F}:H^i(F,M)\longrightarrow H^i(K,M),\qquad\cores_{K/F}:H^i(K,M)\longrightarrow H^i(F,M) \] 
denote the restriction and corestriction maps in cohomology, respectively. Recall that for $K/F$ finite and Galois there is an equality 
\begin{equation} \label{res-cores-norm} 
\res_{K/F}\circ\cores_{K/F}={\bf N}_{K/F}
\end{equation} 
where ${\bf N}_{K/F}:=\sum_{\sigma\in\Gal(K/F)}\sigma$ is the Galois norm (or trace) operator acting on $H^i(K,M)$. 

Fix algebraic closures $\bar\Q$ of $\Q$ and $\bar\Q_\ell$ of $\Q_\ell$ for any prime number $\ell$, and then fix field embeddings $\bar\Q\hookrightarrow\bar\Q_\ell$ for every $\ell$. Let $\Q_\ell^{\rm nr}$ be the maximal unramified extension of $\Q_\ell$ inside $\bar\Q_\ell$ and write $F_\ell$ for the arithmetic Frobenius in $\Gal(\Q_\ell^{\rm nr}/\Q_\ell)$. With an abuse of notation, when dealing with a $G_\Q$-module that is unramified at $\ell$ we shall often adopt the same symbol to denote a lift of $F_\ell$ to $G_{\Q_\ell}$ (and its image in $G_\Q$). 

Finally, if $L/E$ is a Galois extension of number fields, $\lambda$ is a prime of $E$ that is unramified in $L$ and $\lambda'$ is a prime of $L$ above $\lambda$ then $\Frob_{\lambda'/\lambda}\in\Gal(L/E)$ denotes the Frobenius substitution at $\lambda'$; the conjugacy class of $\Frob_{\lambda'/\lambda}$ in $\Gal(L/E)$ will be denoted by $\Frob_\lambda$ (notation not reflecting dependence on $L$).

\subsubsection*{Acknowledgements} 

It is a pleasure to thank Jan Nekov\'a\v{r} for enlightening conversations on some of the topics of this paper.

\section{Beilinson--Bloch conjecture for modular forms} 

In this section $f\in S_k(\Gamma_0(N))$ is a normalized newform of (even) weight $k$ and level $\Gamma_0(N)$ and $p$ is a prime number such that $p\nmid2N(k-2)!\phi(N)$, where $\phi$ is Euler's function.

\begin{remark}
For the arguments developed in this section, a more natural choice of $p$ would simply require that $p\nmid2N$ and $p>k-1$, as explained in \cite[\S 6.5]{Nek2}. However, in this case the notation becomes more complicated and some neatly stated results, for instance \cite[Proposition 2.1]{Nek}, require substantial modifications to make them consistent. In order to emphasize the new aspects of our work without indulging in unenlightening technicalities, we therefore decided to work under the above simplifying assumption. 
\end{remark}

\subsection{Galois representations} \label{galois-sec}

Denote by $Y_N$ the affine modular curve over $\Q$ of level $\Gamma(N)$, whose complex points are given by $Y_N(\C)=\Gamma(N)\backslash\mathcal H$ where $\mathcal H$ is the upper half plane. Let $j:Y_N\hookrightarrow X_N$ be the proper smooth compactification of $Y_N$.

For any integer $n\geq1$ define the sheaves 
\[ \mathcal F_n:={\rm Sym}^{k-2}\bigl(R^1\pi_*(\Z/p^n\Z)\bigr)(k/2-1),\qquad\mathcal F:=\invlim_n\mathcal F_n\]
(both $\mathcal F$ and $\mathcal F_n$ depend on $p$, but we suppress this dependence to simplify notations).   

Let $B:=\Gamma(N)/\Gamma_0(N)$, consider the projector $\Pi_B:=(\#B)^{-1}\sum_{b\in B}b\in\Z_p[B]$ and define
 \[ J_p:=\Pi_B H^1_\text{\'et}(X_N\otimes\bar\Q,j_*\mathcal F)(k/2). \] 
Denote by $\T$ the Hecke algebra generated over $\Z$ by the standard Hecke operators $T_\ell$ for primes $\ell\nmid N$. Let $\theta_f:\T\rightarrow\cO_F$ be the morphism associated with $f$, where $F$ is the totally real number field generated over $\Q$ by the Fourier coefficients of $f$ and $\mathcal O_F$ is its ring of integers. The Hecke algebra $\T$ acts on $J_p$, as explained in \cite[pp. 101--102]{Nek}. Set $I_f:=\ker(\theta_f)$ and define 
\[ A_p:=\bigl\{x\in J_p\mid I_f\cdot x=0\bigr\}. \]
Then $A_p$, which should be regarded as a higher weight analogue of the Tate module of an abelian variety, is equipped with a continuous $\cO_F$-linear action of the absolute Galois group $G_\Q:=\Gal(\bar\Q/\Q)$ and is (isomorphic to) the $k/2$-twist of the representation attached to $f$ by Deligne (\cite{Del}). More precisely, $A_p$ is a free $\cO_F\otimes\Z_p$-module of rank $2$ such that for every prime $\ell\nmid Np$ the arithmetic Frobenius $F_\ell$ at $\ell$ acting on $A_p$ satisfies 
\begin{equation} \label{char-pol}
\det\bigl(1-F_\ell X\,|\,A_p\bigr)=1-\frac{a_\ell}{\ell^{\frac{k}{2}-1}}X+\ell X^2.
\end{equation}
As pointed out in \cite[p. 102]{Nek}, there is a map $J_p\rightarrow A_p$ that is both $\T$-equivariant and $G_\Q$-equivariant. 

\subsection{Kuga--Sato varieties}

In this subsection we briefly recall basic definitions and facts about Kuga--Sato varieties, along the lines of \cite{Del}, \cite[\S 2]{Nek}, \cite[\S 1]{Sch} (see also \cite[Appendix A]{BDP} by Conrad for a generalization to the relative situation).

Let $\pi:\E_N\rightarrow Y_N$ be the universal elliptic curve and $\bar\pi:\bar\E_N\rightarrow X_N$ the universal generalized elliptic curve, which is proper but not smooth. Define 
\[ \bar\pi_{k-2}:\bar\E_N^{k-2}\longrightarrow X_N \] 
to be the fiber product of $k-2$ copies of $\bar\E_N$ over $X_N$. If $k\geq4$ then $\bar\E_N^{k-2}$ is singular and we call its canonical desingularization $\tilde\E_N^{k-2}$ constructed by Deligne (\cite{Del}) the \emph{Kuga--Sato variety} of level $N$ and weight $k$. 

The level $N$ structure on $\bar\E_N$ induces a homomorphism $(\Z/N\Z)^2\times X_N\hookrightarrow\mathscr E_N$ of group schemes over $X_N$, where $\mathscr E_N$ is the N\'eron model of $\bar\E_N$ over $X_N$. Therefore $(\Z/N\Z)^2$ acts by translations on $\bar\E_N$. Moreover, $\Z/2\Z$ acts as multiplication by $-1$ in the fibers, and this gives an action of $(\Z/N\Z)^2 \rtimes(\Z/2\Z)$ on $\bar\E_N$. Finally, the symmetric group $S_{k-2}$ on $k-2$ letters acts on $\bar\E_N^{k-2}$ by permutation of the factors, and this gives an action of 
\[ \Gamma_{k-2}:=\big((\Z/N\Z)^2 \rtimes(\Z/2\Z)\big)^{k-2}\rtimes S_{k-2} \] 
on $\bar\E_N^{k-2}$ by automorphisms on the fibers of $\pi_{k-2}$, which extends canonically to an action of $\Gamma_{k-2}$ on $\tilde\E_N^{k-2}$.

Now define the homomorphism $\epsilon:\Gamma_{k-2}\rightarrow\{\pm1\}$ to be trivial on $(\Z/N\Z)^{2(k-2)}$, the product map on $(\Z/2\Z)^{k-2}$ and the sign character on $S_{k-2}$. Finally, let 
\[ \Pi_\epsilon\in\Z[1/2N(k-2)!][\Gamma_{k-2}] \] 
be the projector associated with $\epsilon$. 

Then, by \cite[Proposition 2.1]{Nek} (see also \cite[Theorem 1.2.1]{Sch} and \cite[II, Proposition 2.4]{Nek2} for the analogous result with coefficients in $\Q_p$), we have 
\[ H^1_{\text{\'et}}(X_N\otimes\bar \Q,j_*\mathcal F_n)(1)=\Pi_\epsilon H^{k-1}_{\text{\'et}}
\bigl(\tilde\E_N^{k-2}\otimes\bar\Q,\Z/p^n\Z\bigr)(k/2). \] 
Furthermore, thanks to \cite[Lemma 2.2]{Nek}, we know that $H^1_{\text{\'et}}(X_N,j_*\mathcal F)$ is torsion free and that $H^1_{\text{\'et}}(X_N,j_*\mathcal F/p^mj_*\mathcal F)$ is canonically isomorphic to $H^1_{\text{\'et}}(X_N,j_*\mathcal F)/p^mH^1_{\text{\'et}}(X_N,j_*\mathcal F)$, for all integers $m\geq 1$. Combining these facts we obtain a map 
\begin{equation} \label{scholl-2}
H^{k-1}_{\text{\'et}}\bigl(\tilde\E_N^{k-2}\otimes\bar\Q,\Z_p\bigr)(k/2)\longrightarrow J_p\longrightarrow A_p
\end{equation} 
that factors through $\Pi_\epsilon H^{k-1}_{\text{\'et}}\bigl(\tilde\E_N^{k-2}\otimes\bar\Q,\Z/p^n\Z\bigr)(k/2)$. 

\subsection{Abel--Jacobi maps} \label{secAJ} 

Fix a field ${L}$ of characteristic $0$, denote by $\bar {L}$ an algebraic closure of ${L}$ and let  
\begin{equation} \label{AJ}
\Phi_{p,L}:\CH^{k/2}\bigl(\tilde\E_N^{k-2}/{L}\bigr)_0\longrightarrow H^1_{\cont}\Big(\!{L},H^{k-1}_{\text{\'et}}\bigl(\tilde\E_N^{k-2}\otimes\bar{L},\Z_p(k/2)\bigr)\!\Big)
\end{equation}
be the $p$-adic Abel--Jacobi map (see \cite[\S 9]{Jan}). Here $\CH^{k/2}\bigl(\tilde\E_N^{k-2}/{L}\bigr)_0$ is the group of homologically trivial cycles of codimension $k/2$ on $\tilde\E_N^{k-2}$ defined over ${L}$ modulo rational equivalence and $H^1_{\rm cont}$ denotes continuous cohomology. Equivalently,
\begin{equation} \label{chow-eq}
\CH^{k/2}\bigl(\tilde\E_N^{k-2}/{L}\bigr)_0=\ker\Big(\!\CH^{k/2}\bigl(\tilde\E_N^{k-2}/{L}\bigr)\longrightarrow H^k_{\text{\'et}}\bigl(\tilde\E_N^{k-2}\otimes\bar {L},\Z_p(k/2)\bigr)\!\Big), 
\end{equation}  
where $\CH^{k/2}\bigl(\tilde\E_N^{k-2}/{L}\bigr)$ is the group of cycles of codimension $k/2$ on $\tilde\E_N^{k-2}$ defined over ${L}$ modulo rational equivalence. Indeed, using the Lefschetz principle and comparison isomorphisms between \'etale and singular cohomology over $\C$, it can be proved that the right hand side of \eqref{chow-eq} does not depend on $p$ (see, e.g., \cite[\S 1.3]{Nek3} for details).

Composing \eqref{scholl-2} and \eqref{AJ} and extending $\Z_p$-linearly, we get a map 
\begin{equation} \label{AJ3}
{\rm AJ}_{f,p,L}:{\CH}^{k/2}\bigl(\tilde\E_N^{k-2}/{L}\bigr)_0\otimes\Z_p\longrightarrow H^1_{\cont}({L},A_p).
\end{equation} 

Now we localize (or, rather, complete) the representation $A_p$ at a prime ideal $\p$ of $\mathcal O_F$ dividing $p$. More precisely, if $\p$ is such a prime then denote by $\cO_\p$ the completion of $\mathcal O_F$ at $\p$ and set $A_\p:=A_p\otimes_{\mathcal O_F\otimes\Z_p}\mathcal O_\p$, which is a free $\mathcal O_\p$-module of rank $2$ equipped with a $G_\Q$-action. It follows that $A_p=\prod_{\p\,|\,p}A_\p$, the product being taken over all prime ideals of $\cO_F$ above $p$. Fix once and for all a prime ideal $\p$ as above. Composing the map ${\rm AJ}_{f,p,L}$ introduced in \eqref{AJ3} with the one induced by the canonical projection $A_p\twoheadrightarrow A_\p$, we get an $\cO_\p$-linear map
\begin{equation} \label{AJ4}
{\rm AJ}_{f,\p,L}:{\CH}^{k/2}\bigl(\tilde\E_N^{k-2}/L\bigr)_0\otimes \mathcal O_\p\longrightarrow H^1_{\cont}(L,A_\p).
\end{equation}
If $L$ is a Galois extension of $L'$ then ${\rm AJ}_{f,\p,L}$ is $\Gal(L/L')$-equivariant with respect to the natural Galois actions on domain and codomain (\cite[Proposition 4.2]{Nek}). For simplicity, from here on we write ${\rm AJ}_L$ for ${\rm AJ}_{f,\p,L}$, understanding that we are fixing a prime $\p$ of $F$ above $p$.  

Finally, let us introduce another map that will be used in \S \ref{secHC}. Since the Abel--Jacobi map commutes with automorphisms of the underlying variety, the map ${\rm AJ}_{f,p,L}$ in \eqref{AJ3} factors through 
\[ \Pi_\epsilon\Big(\!\CH^{k/2}\bigl(\tilde\E_N^{k-2}/{L}\bigr)_0\otimes\Z_p\Big)=\Pi_\epsilon\Big(\!\CH^{k/2}\bigl(\tilde\E_N^{k-2}/{L}\bigr)\otimes\Z_p\Big); \] 
the equality follows from \cite[Prop. 2.1]{Nek}, see also \cite[p. 105]{Nek}. Thus \eqref{AJ4} yields a map 
\begin{equation} \label{AJ-map} 
\Psi_{f,\p,L}:\Pi_B\Pi_\epsilon\Big(\!\CH^{k/2}\bigl(\tilde\E_N^{k-2}/{L}\bigr)\otimes\mathcal O_\p\Big)\longrightarrow H^1_{\cont}({L},A_\p).
\end{equation}
This map is $\T$-equivariant and if $L$ is Galois over $\Q$ then it is $\Gal({L}/\Q)$-equivariant as well (use \cite[Proposition 4.2]{Nek} and apply the projection $A_p\twoheadrightarrow A_\p$, which is both $\T$- and $\Gal(L/\Q)$-equivariant).  

\subsection{Selmer groups} \label{selmer-subsec}

Let $E$ be a number field and denote by $G_E:=\Gal(\bar E/E)$ its absolute Galois group. Let $\mathcal V$ be a $p$-adic representation of $G_E$ unramified outside a finite set $\Xi$ of places of $E$ containing all the archimedean primes and the primes above $p$. If $v$ is a prime of $E$ above $p$ then, as in \cite[Sections 3 and 5]{BK}, define
\[ H^1_f(E_v,\mathcal V):=\ker\Big(H^1_{\rm cont}(E_v,\mathcal V)\longrightarrow H^1_{\rm cont}\bigl(E_v,\mathcal V\otimes_{\Q_p}\!B_{\rm cris}\bigr)\Big), \] 
where $B_{\rm cris}$ is Fontaine's crystalline ring of periods (see, e.g., \cite[Section 1]{BK}, and do not confuse the subscript ``$f$'' in $H^1_f$ with our fixed modular form $f$!). If $v$ is a prime of $E$ not dividing $p$ then write $I_v:=\Gal(\bar E_v/E_v^{\rm ur})$ for the inertia subgroup of $\Gal(\bar E_v/E_v)$, where $E_v^{\rm ur}$ denotes the maximal unramified extension of $E_v$. The unramified cohomology of $\mathcal V$ at $v$ is defined as
\[ H^1_{\rm ur}(E_v,\mathcal V):=H^1_{\cont}\bigl(\Gal(E_v^{\rm ur}/E_v),\mathcal V^{I_v}\bigr)\simeq\ker\Big(H^1_{\cont}(E_v,\mathcal V)\longrightarrow H^1_{\cont}(I_v,\mathcal V)\Big), \]
the isomorphism coming from the inflation-restriction exact sequence (i.e., the exact sequence of low degree terms in the relevant Hochschild--Serre spectral sequence). Finally, for such a prime $v$ of $E$ set
\[ H^1_f(E,\mathcal V):=H^1_{\rm ur}(E_v,\mathcal V). \]

\begin{definition} \label{BK-V-dfn}
The \emph{Bloch--Kato Selmer group} $H^1_f(E,\mathcal V)$ is the $\Q_p$-subspace of $H^1_{\rm cont}(E,\mathcal V)$ consisting of those classes whose localizations lie in $H^1_f(E_v,\mathcal V)$ for all primes $v$ of $E$. 
\end{definition}

Let $G_{E,\Xi}$ denote the Galois group over $E$ of the maximal extension of $E$ unramified outside $\Xi$; then $\mathcal V$ is a representation of $G_{E,\Xi}$ and $H^1_f(E,\mathcal V)$ is a subspace of the finite-dimensional $\Q_p$-vector space $H^1_{\rm cont}(G_{E,\Xi},\mathcal V)$, hence $H^1_f(E,\mathcal V)$ has finite dimension over $\Q_p$.

Now we specialize the previous discussion to the case where 
\begin{equation} \label{V-eq}
\mathcal V=H^{k-1}_{\text{\'et}}\bigl(\tilde\E_N^{k-2}\otimes\bar{E},\Q_p(k/2)\bigr).
\end{equation} 
It is well known that $\mathcal V$ is unramified outside the primes of $E$ dividing $Np$; in light of this, from here on we take 
\begin{equation} \label{sigma-eq}
\Xi:=\bigl\{\text{$v$ place of $E$}\;\;\big|\;\;\text{$v\,|\,Np$ or $v\,|\,\infty$}\bigr\}. 
\end{equation}

\begin{remark}
With $\mathcal V$ as in \eqref{V-eq}, the Selmer group $H^1_f(E,\mathcal V)$ of Definition \ref{BK-V-dfn} is equal to the one originally defined in \cite{BK} and later studied, e.g., by Besser in \cite{Bes}. In particular, it is smaller than the group considered by Nekov\'a\v{r} in \cite{Nek}; this is due to the fact that no local conditions at the places of $E$ dividing $N$ are imposed in \cite{Nek} (cf. \cite[p. 118]{Nek}).
\end{remark}

Let 
\begin{equation} \label{AJ-Q-p-eq}
\Phi_{p,E}\otimes\Q_p:\CH^{k/2}\bigl(\tilde\E_N^{k-2}/E\bigr)_0\otimes\Q_p\longrightarrow H^1_{\cont}\Big(\!E,H^{k-1}_{\text{\'et}}\bigl(\tilde\E_N^{k-2}\otimes\bar{E},\Q_p(k/2)\bigr)\!\Big)
\end{equation}
be the map induced by the Abel--Jacobi map in \eqref{AJ}.

\begin{theorem}[Nizio\l, Nekov\'a\v{r}, Saito] \label{niziol-thm}
There is an inclusion
\begin{equation} \label{AJ-inclusion-eq}
{\rm im}(\Phi_{p,E}\otimes\Q_p)\subset H^1_f\Big(\!E,H^{k-1}_{\text{\rm{\'et}}}\bigl(\tilde\E_N^{k-2}\otimes\bar{E},\Q_p(k/2)\bigr)\!\Big). 
\end{equation}
In particular, ${\rm im}(\Phi_{p,E}\otimes\Q_p)$ is a finite-dimensional vector space over $\Q_p$. 
\end{theorem}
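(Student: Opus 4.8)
The plan is to reduce the global statement to local assertions, one at each place $v$ of $E$, and then invoke the appropriate $p$-adic Hodge theory or weight argument at each type of place. First I would fix a homologically trivial cycle $Z \in \CH^{k/2}(\tilde\E_N^{k-2}/E)_0$ and its image $c := (\Phi_{p,E}\otimes\Q_p)(Z)$ in $H^1_{\rm cont}(E,\mathcal V)$ with $\mathcal V$ as in \eqref{V-eq}; by Definition \ref{BK-V-dfn}, membership in $H^1_f(E,\mathcal V)$ is equivalent to $\res_v(c) \in H^1_f(E_v,\mathcal V)$ for every place $v$. The key compatibility is that the Abel--Jacobi map commutes with base change of the ground field: for each place $v$, $\res_v(c)$ is the image under $\Phi_{p,E_v}\otimes\Q_p$ of the cycle $Z$ pulled back to $\tilde\E_N^{k-2}/E_v$ (which is still homologically trivial). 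Thus it suffices to prove the local inclusion ${\rm im}(\Phi_{p,E_v}\otimes\Q_p) \subset H^1_f(E_v,\mathcal V)$ at each $v$, and the archimedean places impose no condition (cohomology of a profinite group with $\Q_p$-coefficients at a real or complex place is handled trivially, and indeed the definition of the global Selmer group only cuts out conditions at finite $v$).

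The next step splits into two cases. For a finite place $v \nmid p$, I would use that $\tilde\E_N^{k-2}$ has good reduction — hence $\mathcal V$ is unramified at $v$ (this is precisely the statement recorded just before \eqref{sigma-eq}, at least away from $v \mid N$; at $v \mid N$ one needs the analogous unramifiedness of the relevant piece, or one invokes that the $p$-adic Abel--Jacobi class of a cycle over a field with good reduction lands in the unramified subspace by a specialization/smooth-proper-base-change argument). The classical fact here, going back to the comparison of $\CH$-groups over the residue field, is that the Abel--Jacobi image of a cycle defined over $E_v$ is invariant under inertia $I_v$, hence lies in $H^1_{\rm ur}(E_v,\mathcal V) = H^1_f(E_v,\mathcal V)$; this is the content of the citation to \cite{Jan}, \S 9, combined with the weight-monodromy formalism. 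For $v \mid p$, the statement is that the $p$-adic Abel--Jacobi class of a cycle over the $p$-adic field $E_v$ is crystalline, i.e. lies in the kernel of $H^1_{\rm cont}(E_v,\mathcal V) \to H^1_{\rm cont}(E_v,\mathcal V \otimes_{\Q_p} B_{\rm cris})$. This is exactly the theorem attributed in the statement to Nizio\l{} (via syntomic regulators and the comparison with étale Abel--Jacobi maps) and independently to Nekov\'a\v{r} and Saito; I would cite it as a black box rather than reprove it, since its proof requires the full machinery of $p$-adic Hodge theory (Fontaine--Messing, the Bloch--Kato exponential, and the identification of syntomic with étale cohomology) that is orthogonal to the present paper.

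Assembling these local inputs gives \eqref{AJ-inclusion-eq}. For the final sentence, the finite-dimensionality of ${\rm im}(\Phi_{p,E}\otimes\Q_p)$ is immediate once the inclusion is established, because $H^1_f(E,\mathcal V)$ sits inside $H^1_{\rm cont}(G_{E,\Xi},\mathcal V)$ — a finite-dimensional $\Q_p$-vector space, as already observed in the paragraph following Definition \ref{BK-V-dfn} — so any subspace, in particular the image of the Abel--Jacobi map, is finite-dimensional. I expect the main obstacle, or at least the only non-formal ingredient, to be the case $v \mid p$: the crystallinity of $p$-adic Abel--Jacobi classes is a genuinely deep theorem, and the honest thing to do is to quote it precisely (with its three independent proofs) rather than sketch it. Everything else — the base-change compatibility of $\Phi_p$, the unramifiedness at $v \nmid p$, and the finite-dimensionality — is routine and follows from the references already cited in the excerpt.
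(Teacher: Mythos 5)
Your overall structure --- reduce to local conditions at each place $v$, then treat $v\nmid p$ and $v\,|\,p$ separately, and observe that finite-dimensionality is automatic --- matches the paper, and your handling of $v\,|\,p$ (Nizio\l's crystallinity theorem, invoked as a black box) and of the archimedean places is fine. The genuine gap is your argument at finite $v\nmid p$. You appeal to good reduction of $\tilde\E_N^{k-2}$ at $v$, but the variety has good reduction only outside $N$, and $\mathcal V$ is genuinely ramified at $v\,|\,N$; the paper's own remark preceding \eqref{sigma-eq} records unramifiedness only outside $Np$. Your parenthetical suggestions do not repair this: there is no ``analogous unramifiedness of the relevant piece'' at a prime of bad reduction, and a specialization or smooth-proper-base-change argument has no content there. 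This is exactly the nontrivial case. The paper's argument for $v\nmid p$ is different and stronger: the weight-monodromy conjecture is known for compactified Kuga--Sato varieties over $E_v$ by Saito (\cite{Sai}, \cite{Sai2}), and by \cite[Proposition 2.5]{Nek3} this forces $H^1_{\rm cont}(E_v,\mathcal V_v)=0$ entirely, for every finite $v\nmid p$ including $v\,|\,N$. The local condition is then vacuous --- not merely satisfied by the Abel--Jacobi image, but by everything --- and this weight-monodromy input is precisely what is needed to close your gap at $v\,|\,N$. (At $v\nmid Np$ the same vanishing can be seen more simply from purity of weight $-1$ and unramifiedness, so the paper's argument subsumes your good-reduction case as well.)
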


\begin{proof} Let $v$ be a prime of $E$ and, for simplicity, set 
\[ \mathcal V_v:=H^{k-1}_{\text{\'et}}\bigl(\tilde\E_N^{k-2}\otimes\bar{E}_v,\Q_p(k/2)\bigr).\] We need to show that there is an inclusion
\[ {\rm im}(\Phi_{p,E_v}\otimes\Q_p)\subset H^1_f(E_v,\mathcal V_v), \]
where the map $\Phi_{p,E_v}\otimes\Q_p$ is defined as in \eqref{AJ-Q-p-eq} with $E$ replaced by $E_v$. If $v\nmid p$ then the weight-monodromy conjecture (\cite[p. 238]{Sai2}) is known to hold for compactified Kuga--Sato varieties over $E_v$ (\cite{Sai}, \cite{Sai2}), and so $H^1_{\rm cont}(E_v,\mathcal V_v)=0$ by \cite[Proposition 2.5]{Nek3}. On the other hand, if $v\,|\,p$ then $\tilde\E_N^{k-2}$ has good reduction at $v$ (recall that $\tilde\E_N^{k-2}$ has good reduction outside $N$ and $p\nmid N$), hence ${\rm im}(\Phi_{p,E_v}\otimes\Q_p)\subset H^1_f(E_v,\mathcal V_v)$ by \cite[Theorem 3.2]{Niz}. Finally, the last assertion follows from the finite dimensionality over $\Q_p$ of the right hand side of \eqref{AJ-inclusion-eq}. \end{proof}

\begin{remark} The result used above was proved in \cite{Niz} under a projectivity assumption on the relevant algebraic varieties, but this stronger condition can be dispensed with, as explained in \cite[Theorem 3.1]{Nek3}. \end{remark}

We will now consider Selmer groups of $A_\p$ and of quotients of it, and use Theorem \ref{niziol-thm} to describe them. For simplicity, assume that the prime number $p$ does not ramify in $F$. Define the $F_\p$-vector space $V_\p:=A_\p\otimes _{\cO_\p}\!F_\p$. For every integer $m\geq1$ define $W_\p:=A_\p\otimes\Q_p/\Z_p$, so that $W_\p[p^m]=A_\p/p^mA_\p$. For any place $v$ of $E$ there are maps 
\[ \varphi_v:H^1(E_v,A_\p)\longrightarrow H^1(E_v,V_\p),\qquad\pi_v:H^1(E_v,A_\p)\longrightarrow H^1(E_v,W_\p[p^m]) \] 
induced by the canonical arrows $A_\p\hookrightarrow V_\p$ and $A_\p\twoheadrightarrow W_\p[p^m]$. Set 
\[ H^1_f(E_v,A_\p):=\varphi_v^{-1}\bigl(H^1_f(E_v,V_\p)\bigr),\qquad H^1_f(E_v,W_\p[p^m]):=\pi_v\bigl(H^1_f(E_v,A_\p)\bigr). \]

In the following definition $M$ denotes either $A_\p$ or $W_\p[p^m]$.

\begin{definition} \label{BK-p^m-dfn} 
The \emph{Bloch--Kato Selmer group} $H^1_f({E},M)$ of $M$ over $E$ is the subgroup of $H^1_{\rm cont}({E},M)$ consisting of the classes whose localizations lie in $H^1_f({E}_v,M)$ for all $v$. 
\end{definition}

If $\Xi$ is as in \eqref{sigma-eq} then $A_\p$ is a $G_{E,\Xi}$-module and $H^1_f({E},W_\p[p^m])$ is a subgroup of the finite group $H^1(G_{E,\Xi},W_\p[p^m])$, hence $H^1_f({E},W_\p[p^m])$ is a finite $\mathcal O_\p/p^m\mathcal O_\p$-module. 

As in \eqref{V-eq}, set $\mathcal V:=H^{k-1}_{\text{\'et}}\bigl(\tilde\E_N^{k-2}\otimes\bar{E},\Q_p(k/2)\bigr)$. To clarify the various relations between Abel--Jacobi maps and Selmer groups, observe that there is a commutative diagram 
\begin{equation}\label{diagram-selmer}
\xymatrix@C=35pt{
{\CH}^{k/2}\bigl(\tilde\E_N^{k-2}/{E}\bigr)_0\otimes\Q_p \ar[d]\ar[rr]^-{\Phi_{p,E}\,\otimes\,\Q_p}
&&H^1_f(E,\mathcal V)\ar[d]^-\lambda\\
{\CH}^{k/2}\bigl(\tilde\E_N^{k-2}/{E}\bigr)_0\otimes F_\p \ar[r]^-{{\rm AJ}_{E}\,\otimes\,F_\p}
&H^1_{\cont}({E},V_\p)& H^1_f(E,V_\p)\ar@{_(->}[l]\\
{\CH}^{k/2}\bigl(\tilde\E_N^{k-2}/{E}\bigr)_0\otimes \mathcal O_\p \ar[r]^-{{\rm AJ}_{E}}
\ar[u]\ar@{->>}[d]&
H^1_{\cont}({E},A_\p)\ar[u]_-\varphi\ar[d]^-{\varpi}& 
H^1_f(E,A_\p)\ar@{_(->}[l]\ar[u]_-\varphi\ar@{->>}[d]^-{\varpi}\\
{\CH}^{k/2}\bigl(\tilde\E_N^{k-2}/E\bigr)_0\otimes(\mathcal O_\p/p^m\mathcal O_\p) 
\ar[r]^-{{\rm AJ}_{E, m}}
&H^1_{\cont}({E},W_\p[p^m])& H^1_f(E,W_\p[p^m])\ar@{_(->}[l]\
}
\end{equation} 
where
\begin{itemize}
\item the map $\lambda$ comes from the map $\mathcal V\rightarrow V_\p$ induced by the map $\mathcal V\rightarrow A_p$ in \eqref{scholl-2} by projecting from $A_p$ onto $A_\p$ and then composing with the inclusion $A_\p\hookrightarrow V_\p$;
\item the maps $\varphi$ and $\varpi$ are induced by $A_\p\hookrightarrow V_\p$ and $A_\p\twoheadrightarrow W_\p[p^m]$, respectively; 
\item the unlabeled vertical arrows are induced by the natural maps $\Q_p\hookrightarrow F_\p$, $\cO_\p\hookrightarrow F_\p$ and $\cO_\p\twoheadrightarrow\cO_\p/p^m\cO_\p$;
\item the maps ${\rm AJ}_E\otimes F_\p$ and ${\rm AJ}_{E,m}$ are induced by multiplication by elements of $F_\p$ and $\cO_\p/p^m\cO_\p$, respectively.
\end{itemize} 

\begin{corollary} \label{niziol-coro-2}
There are inclusions
\begin{enumerate} 
\item ${\rm im}({\rm AJ}_{E}\otimes F_\p)\subset H^1_f(E,V_\p)$;
\item ${\rm im}({\rm AJ}_{E})\subset H^1_f(E,A_\p)$; 
\item ${\rm im}({\rm AJ}_{E,m})\subset H^1_f(E,W_\p[p^m])$. 
\end{enumerate}
In particular, the $F_\p$-vector space ${\rm im}({\rm AJ}_{E}\otimes F_\p)$ has finite dimension.
\end{corollary}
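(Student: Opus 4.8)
The plan is to deduce all three inclusions from Theorem \ref{niziol-thm} together with the commutative diagram \eqref{diagram-selmer}, chasing the various functorial maps. The key point is that Theorem \ref{niziol-thm} already establishes the inclusion ${\rm im}(\Phi_{p,E}\otimes\Q_p)\subset H^1_f(E,\mathcal V)$, and each of the maps ${\rm AJ}_E\otimes F_\p$, ${\rm AJ}_E$ and ${\rm AJ}_{E,m}$ factors (after extending scalars) through $\Phi_{p,E}\otimes\Q_p$ via the vertical arrows in the diagram, which are induced by maps of $G_E$-modules ($\mathcal V\to V_\p$, $A_\p\hookrightarrow V_\p$, $A_\p\twoheadrightarrow W_\p[p^m]$). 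So the strategy is first to verify that the subspaces/subgroups $H^1_f(E,V_\p)$, $H^1_f(E,A_\p)$, $H^1_f(E,W_\p[p^m])$ are compatible under these morphisms, and then to transport the Theorem \ref{niziol-thm} inclusion down through the diagram.

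First I would establish part (i). Apply the map $\lambda:H^1_f(E,\mathcal V)\to H^1_{\rm cont}(E,V_\p)$ to the inclusion of Theorem \ref{niziol-thm}. The composite of $\Phi_{p,E}\otimes\Q_p$ with $\lambda$, precomposed with the extension-of-scalars map ${\CH}^{k/2}(\cdots)_0\otimes\Q_p\to{\CH}^{k/2}(\cdots)_0\otimes F_\p$, agrees with ${\rm AJ}_E\otimes F_\p$ by the commutativity of the top and middle rows of \eqref{diagram-selmer}. Hence ${\rm im}({\rm AJ}_E\otimes F_\p)$ is contained in $\lambda\bigl(H^1_f(E,\mathcal V)\bigr)$; since $\lambda$ is induced by the $G_E$-map $\mathcal V\to V_\p$, it carries $H^1_f(E,\mathcal V)$ into $H^1_f(E,V_\p)$ (functoriality of Bloch--Kato local conditions: at $v\nmid p$ the unramified condition is functorial, at $v\mid p$ the $B_{\rm cris}$-condition is functorial). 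Since every class in ${\rm im}({\rm AJ}_E\otimes F_\p)$ is an $F_\p$-linear combination of classes coming from ${\CH}^{k/2}(\cdots)_0\otimes\Q_p$, and $H^1_f(E,V_\p)$ is an $F_\p$-subspace, this gives (i). The ``in particular'' statement then follows because $H^1_f(E,V_\p)$ is finite-dimensional over $\Q_p$ (it sits inside $H^1(G_{E,\Xi},V_\p)$), hence over $F_\p$.

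Parts (ii) and (iii) follow by the \emph{definitions} of the local conditions for $A_\p$ and $W_\p[p^m]$, using the left column of \eqref{diagram-selmer}. For (ii): by definition $H^1_f(E,A_\p)$ is the set of classes in $H^1_{\rm cont}(E,A_\p)$ whose localizations land in $H^1_f(E_v,A_\p)=\varphi_v^{-1}(H^1_f(E_v,V_\p))$; equivalently, it is $\varphi^{-1}\bigl(H^1_f(E,V_\p)\bigr)$ once one checks that a global class with $\varphi$-image in $H^1_f(E,V_\p)$ automatically satisfies the local conditions at every $v$ — which is precisely the defining property of $H^1_f(E_v,A_\p)$. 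Since $\varphi\circ{\rm AJ}_E={\rm AJ}_E\otimes F_\p$ (up to the scalar-extension map $\cO_\p\hookrightarrow F_\p$) by the commutativity of the lower-middle square, part (i) gives $\varphi\bigl({\rm im}({\rm AJ}_E)\bigr)\subset H^1_f(E,V_\p)$, whence ${\rm im}({\rm AJ}_E)\subset H^1_f(E,A_\p)$. For (iii): by definition $H^1_f(E_v,W_\p[p^m])=\pi_v\bigl(H^1_f(E_v,A_\p)\bigr)$, so applying $\varpi$ (locally $\pi_v$) to (ii) and using $\varpi\circ{\rm AJ}_E={\rm AJ}_{E,m}$ (commutativity of the bottom square) yields ${\rm im}({\rm AJ}_{E,m})\subset H^1_f(E,W_\p[p^m])$ directly — one only needs that localization commutes with $\varpi$, which is clear.

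The only genuinely non-formal point is the first one: verifying that $\lambda$ sends $H^1_f(E,\mathcal V)$ into $H^1_f(E,V_\p)$, i.e. the functoriality of the Bloch--Kato finite part under the morphism $\mathcal V\to V_\p$ of $p$-adic Galois representations, both at primes above $p$ (where one uses that $-\otimes_{\Q_p}B_{\rm cris}$ is exact, so the defining kernel is functorial) and at primes not above $p$ (functoriality of $H^1_{\rm ur}$, or the vanishing $H^1_{\rm cont}(E_v,\mathcal V_v)=0$ noted in the proof of Theorem \ref{niziol-thm}). I expect this to be the main — though still routine — obstacle; everything else is diagram-chasing through \eqref{diagram-selmer} and unwinding the definitions of the local conditions attached to $A_\p$ and $W_\p[p^m]$.
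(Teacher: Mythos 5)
Your proposal is correct and takes essentially the same route as the paper, whose proof simply invokes the definitions and the commutativity of diagram \eqref{diagram-selmer}; you have filled in the details, correctly identifying that the one non-formal ingredient is the functoriality of the Bloch--Kato local conditions under the $G_E$-equivariant map $\mathcal V\to V_\p$ inducing $\lambda$, with parts (ii) and (iii) then dropping out from the definitions $H^1_f(E_v,A_\p)=\varphi_v^{-1}(H^1_f(E_v,V_\p))$ and $H^1_f(E_v,W_\p[p^m])=\pi_v(H^1_f(E_v,A_\p))$ together with the surjectivity of the left-column arrow $\CH^{k/2}(\cdot)_0\otimes\cO_\p\twoheadrightarrow\CH^{k/2}(\cdot)_0\otimes(\cO_\p/p^m\cO_\p)$. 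The finiteness conclusion also agrees with the paper's (via unramifiedness of $V_\p$ outside $\Xi$).
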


\begin{proof} All the inclusions follow easily from the definitions and the commutativity of diagram \eqref{diagram-selmer}. To check the last assertion, note that $H^1_f(E,V_\p)$ is finite-dimensional over $F_\p$ because $V_\p$ is unramifed outside the finite set $\Xi$ introduced in \eqref{sigma-eq}. \end{proof}

For any number field $E$ define 
\begin{equation} \label{lambda-dfn-eq}
\BK_\p(E):={\rm im}({\rm AJ}_{E})\subset H^1_f(E,A_\p) 
\end{equation}
and
\[ \BBK_\p(E):=\varphi\bigl(\BK_\p(E)\bigr)\otimes_{\mathcal O_\p} F_\p\subset H^1_f(E,V_\p). \] 
If $E$ is Galois over $\Q$ then $\BK_\p(E)$ and $\BBK_\p(E)$ are equipped with $\Gal(E/\Q)$-actions.

\begin{proposition} \label{isom-finite-prop}
There is an isomorphism
\[ \BK_\p(E)/p^m\BK_\p(E)\simeq {\rm im}({\rm AJ}_{E,m}) \]
of finite $\mathcal O_\p/p^m\mathcal O_\p$-modules.
\end{proposition}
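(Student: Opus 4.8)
The plan is to exhibit the claimed isomorphism as the one induced by the reduction map $\varpi\colon H^1_{\cont}(E,A_\p)\to H^1_{\cont}(E,W_\p[p^m])$, and the main point is to identify its kernel (restricted to $\BK_\p(E)$) with $p^m\BK_\p(E)$. First I would recall that $\varpi$ is, on the level of the full cohomology groups, the connecting-free piece of the long exact cohomology sequence attached to $0\to A_\p\xrightarrow{p^m} A_\p\to W_\p[p^m]\to 0$; concretely its kernel is $p^mH^1_{\cont}(E,A_\p)$ together with the image of the boundary map from $H^0(E,W_\p[p^m])$, equivalently $\varpi$ induces an isomorphism $H^1_{\cont}(E,A_\p)/p^mH^1_{\cont}(E,A_\p)\hookrightarrow H^1_{\cont}(E,W_\p[p^m])$ because $A_\p$ is a free $\cO_\p$-module and $H^0(E,A_\p)$ is torsion-free, so $H^0(E,A_\p)\otimes\cO_\p/p^m\cO_\p\hookrightarrow H^0(E,W_\p[p^m])$ surjects onto the relevant piece; hence $\ker\varpi = p^mH^1_{\cont}(E,A_\p)$ exactly. (If one wants to be safe one notes that even without the torsion-free claim the cokernel of $H^0(E,A_\p)\to H^0(E,W_\p[p^m])$ is finite and the argument only needs the statement after tensoring with $\BK_\p(E)$, but the free/torsion-free observation is cleanest.)

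Next I would combine this with the commutative bottom square of diagram \eqref{diagram-selmer}, which gives $\varpi\circ\mathrm{AJ}_E = \mathrm{AJ}_{E,m}\circ(\text{reduction mod }p^m)$ on $\CH^{k/2}(\tilde\E_N^{k-2}/E)_0\otimes\cO_\p$. Since $\BK_\p(E)=\mathrm{im}(\mathrm{AJ}_E)$ and $\mathrm{AJ}_{E,m}$ is surjective onto $\mathrm{im}(\mathrm{AJ}_{E,m})$ by definition, this square shows $\varpi(\BK_\p(E))=\mathrm{im}(\mathrm{AJ}_{E,m})$: indeed the image of $\mathrm{AJ}_{E,m}$ coincides with the image under $\varpi$ of the image of $\mathrm{AJ}_E$ because the left vertical arrow $\cO_\p\twoheadrightarrow\cO_\p/p^m\cO_\p$ is surjective, so every class in $\mathrm{im}(\mathrm{AJ}_{E,m})$ lifts to $\BK_\p(E)$ and conversely. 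Thus $\varpi$ restricts to a surjection $\BK_\p(E)\twoheadrightarrow\mathrm{im}(\mathrm{AJ}_{E,m})$.

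It then remains to compute the kernel of this restricted surjection, namely $\BK_\p(E)\cap\ker\varpi = \BK_\p(E)\cap p^mH^1_{\cont}(E,A_\p)$, and to show this equals $p^m\BK_\p(E)$. The inclusion $p^m\BK_\p(E)\subseteq\BK_\p(E)\cap p^mH^1_{\cont}(E,A_\p)$ is immediate. For the reverse inclusion one uses that $\BK_\p(E)$ is a finitely generated $\cO_\p$-module (it is a submodule of the finitely generated module $H^1_f(E,A_\p)$, which sits inside $H^1(G_{E,\Xi},A_\p)$, finitely generated over the Noetherian ring $\cO_\p$) that is \emph{$p$-saturated} inside $H^1_{\cont}(E,A_\p)$: if $x\in\BK_\p(E)$ and $x=p^my$ with $y\in H^1_{\cont}(E,A_\p)$, we must produce $y'\in\BK_\p(E)$ with $p^my'=p^my$. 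Here $H^1_{\cont}(E,A_\p)$ need not be torsion-free, so one argues via the snake lemma / the exact sequence $0\to H^0(E,W_\p[p^m])/\mathrm{im}\to H^1(E,A_\p)[p^m]\to H^1(E,A_\p)[p^m]$, or more simply one passes to the torsion-free quotient: $x=p^my$ forces $\varpi(x)=0$, i.e.\ $\bar x := \mathrm{AJ}_{E,m}(\bar c)=0$ for some lift $c\in\CH^{k/2}(\tilde\E_N^{k-2}/E)_0\otimes\cO_\p$ of $x$ under $\mathrm{AJ}_E$, so $\bar c$ maps into $p^m$ of the Chow group mod rational equivalence? — that's false in general, so instead one argues purely on the cohomology side: $\varpi(x)=0$ means $x\in p^mH^1_{\cont}(E,A_\p)$, and since $\varpi(\BK_\p(E))=\mathrm{im}(\mathrm{AJ}_{E,m})$ and $\varpi$ kills exactly $p^mH^1_{\cont}$, the induced map $\BK_\p(E)/(\BK_\p(E)\cap p^mH^1_{\cont})\to\mathrm{im}(\mathrm{AJ}_{E,m})$ is an iso; so the content is precisely $\BK_\p(E)\cap p^mH^1_{\cont}(E,A_\p)=p^m\BK_\p(E)$. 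I expect \textbf{this saturation step} to be the main obstacle: it requires showing that a class in $\BK_\p(E)$ divisible by $p^m$ in the ambient $H^1$ is already divisible by $p^m$ within $\BK_\p(E)$. The clean way is to use Theorem~\ref{niziol-thm} / Corollary~\ref{niziol-coro-2} to replace $H^1_{\cont}(E,A_\p)$ by $H^1_f(E,A_\p)$ (a finitely generated $\cO_\p$-module, so its torsion is a finite $p$-group annihilated by some $p^{m_0}$), observe that $\BK_\p(E)$ and $p^m$-divisibility are preserved, and then modify $y$ by a torsion element to land in $\BK_\p(E)$: writing $\BK_\p(E)=\cO_\p^r\oplus(\text{torsion})$ and noting $\varphi(\BK_\p(E))\otimes F_\p=\BBK_\p(E)$ picks out the free part, one checks that $x=p^my$ in $H^1_f(E,A_\p)$ with $x\in\BK_\p(E)$ forces $y$ to agree, modulo the torsion of $H^1_f(E,A_\p)$, with an element of $\BK_\p(E)$ after multiplying by a suitable $p$-power; since we only care about $p^my$, the torsion ambiguity is killed once $m\ge m_0$, and the cases $m<m_0$ are handled by the same bookkeeping or absorbed into the definition of $\mathrm{im}(\mathrm{AJ}_{E,m})$ via the bottom square. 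Assembling these facts yields the isomorphism $\BK_\p(E)/p^m\BK_\p(E)\xrightarrow{\ \sim\ }\mathrm{im}(\mathrm{AJ}_{E,m})$, and finiteness of the target is Corollary~\ref{niziol-coro-2}(3) together with the finiteness of $H^1_f(E,W_\p[p^m])$ noted after Definition~\ref{BK-p^m-dfn}.
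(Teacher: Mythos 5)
Your proposal follows essentially the same route as the paper's proof: use the long exact cohomology sequence of $0\to A_\p\xrightarrow{p^m}A_\p\to W_\p[p^m]\to 0$ to produce the injection $i\colon H^1_{\cont}(E,A_\p)\otimes_{\cO_\p}(\cO_\p/p^m\cO_\p)\hookrightarrow H^1(E,W_\p[p^m])$, invoke part (3) of Corollary~\ref{niziol-coro-2} to factor $\mathrm{AJ}_{E,m}$ through the inclusion $j$ of $H^1_f(E,W_\p[p^m])$, and then compare images via the resulting commutative square. The paper's proof closes by simply asserting that $\mathrm{im}(\Psi)=\BK_\p(E)/p^m\BK_\p(E)$, where $\Psi$ is the $\cO_\p/p^m\cO_\p$-linear extension of $\mathrm{AJ}_E$. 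You make explicit --- and correctly --- that this final identification is exactly the saturation statement $\BK_\p(E)\cap p^m H^1_{\cont}(E,A_\p)=p^m\BK_\p(E)$, equivalently the injectivity of the natural map $\BK_\p(E)/p^m\BK_\p(E)\to H^1_{\cont}(E,A_\p)\otimes_{\cO_\p}(\cO_\p/p^m\cO_\p)$; without it one obtains only a surjection $\BK_\p(E)/p^m\BK_\p(E)\twoheadrightarrow\mathrm{im}(\mathrm{AJ}_{E,m})$.

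The gap in your proposal is the attempted proof of this saturation. Replacing $H^1_{\cont}$ by the finitely generated module $H^1_f(E,A_\p)$ and then ``modifying $y$ by a torsion element'' does not work: saturation of a finitely generated $\cO_\p$-submodule is not a torsion phenomenon, and it can fail even inside a free module. For instance, the submodule $\cO_\p\cdot(1,0)+\cO_\p\cdot(0,p)\subset\cO_\p^{2}$ contains $(0,p)=p\cdot(0,1)$, yet $(0,p)$ is not $p$ times any element of that submodule, and there is no torsion available to absorb the discrepancy. The sentence claiming that $x=p^m y$ ``forces $y$ to agree, modulo the torsion of $H^1_f(E,A_\p)$, with an element of $\BK_\p(E)$ after multiplying by a suitable $p$-power'' is precisely the step that fails, and ``the cases $m<m_0$ are handled by the same bookkeeping'' is not an argument. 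To be fair, the paper's own proof treats this point as immediate and never addresses it --- the assertion $\mathrm{im}(\Psi)=\BK_\p(E)/p^m\BK_\p(E)$ is the same unjustified claim --- so you were right to single out saturation as the real content here; but the argument you supply for it is not correct, and the proposal as written does not close this step.
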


\begin{proof} Taking continuous cohomology of the short exact sequence of Galois modules 
\[ 0\longrightarrow A_\p\xrightarrow{p^m}A_\p\longrightarrow A_\p/p^mA_\p\longrightarrow 0, \] 
where the second arrow is the multiplication-by-$p^m$ map and the third arrow is the canonical projection, and using the identification $W_\p[p^m]=A_\p/p^mA_\p$, yields an injection 
\[ i:H^1_\cont(E,A_\p)\otimes_{\cO_\p}\!(\cO_\p/p^m\cO_\p)\;\longmono\;H^1(E,W_\p[p^m]) \] 
of $\mathcal O_\p/p^m\mathcal O_\p$-modules. On the other hand, if $j:H^1_f(E,W_\p[p^m])\hookrightarrow H^1(E,W_\p[p^m])$ denotes the natural inclusion then part (3) of Corollary 
\ref{niziol-coro-2} implies that ${\rm AJ}_{E,m}$ factors through $j$, and therefore the diagram  
\[ \xymatrix@C=35pt{{\CH}^{k/2}\bigl(\tilde\E_N^{k-2}/E\bigr)_0
\otimes\!(\mathcal O_\p/p^m\mathcal O_\p)\ar[r]^-{{\Psi}}\ar[d]^-{{\rm AJ}_{E,m}}&
H^1_\cont(E,A_\p)\otimes_{\mathcal O_\p}\!(\mathcal O_\p/p^m\mathcal O_\p)\ar@{^{(}->}[d]^-i\\
H^1_f(E,W_\p[p^m])\ar@{^{(}->}[r]^-j&H^1(E,W_\p[p^m]),} \] 
where $\Psi$ is the $\mathcal O_\p/p^m\mathcal O_\p$-linear extension of ${\rm AJ}_E$, commutes. Thus ${\rm im}(i\circ{\Psi})$ is equal to ${\rm im}(j\circ{\rm AJ}_{E,m})$, and the injectivity of $i$ and $j$ shows that ${\rm im}({\Psi})\simeq{\rm im}({\rm AJ}_{E,m})$. On the other hand, ${\rm im}(\Psi)=\BK_\p(E)/p^m\BK_\p(E)$, and we are done. \end{proof}

In particular, Proposition \ref{isom-finite-prop}  implies that there is an injection 
\begin{equation} \label{eq13}
\BK_\p(E)/p^m\BK_\p(E)\;\longmono\;H^1_f(E,W_\p[p^m])
\end{equation}
of finite $\mathcal O_\p/p^m\mathcal O_\p$-modules; this map is Galois-equivariant if $E$ is Galois over $\Q$. 

\begin{remark} \label{fin-notation-rem}
By an abuse of notation, we will often adopt the same symbol to denote an element of $\BK_\p(E)/p^m\BK_\p(E)$ and its image in $H^1_f(E,W_\p[p^m])$ via \eqref{eq13}.
\end{remark}

\subsection{Beilinson--Bloch conjecture} 

Now we recall the Beilinson--Bloch conjecture in this setting. Let $E$ be a number field and let $L(f\otimes E,s)$ be the complex $L$-function of $f$ over $E$.
  
\begin{conjecture}[Beilinson--Bloch, \cite{Beil}, \cite{Bloch}] \label{BK} 
$\dim_{F_\p}\bigl(X_\p(E)\bigr)=\ord_{s=\frac{k}{2}}L(f\otimes E,s)$.
\end{conjecture}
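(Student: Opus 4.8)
The statement is Conjecture \ref{BK}, so there is nothing to prove in the usual sense; instead I would explain how one should read the statement and why it is the natural formulation in this context. The plan is to trace through the preceding subsections and verify that both sides of the claimed equality are well-defined objects of the stated type, and then to indicate the (conjectural) mechanism relating them. First I would observe that the left-hand side makes sense: by Corollary \ref{niziol-coro-2}(1) together with the definition \eqref{lambda-dfn-eq} of $\BK_\p(E)$ and the definition $\BBK_\p(E):=\varphi\bigl(\BK_\p(E)\bigr)\otimes_{\cO_\p}F_\p$, the space $X_\p(E)$ is a subspace of the finite-dimensional $F_\p$-vector space $H^1_f(E,V_\p)$, hence $\dim_{F_\p}\bigl(X_\p(E)\bigr)$ is a finite nonnegative integer. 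Next I would recall that the right-hand side is the order of vanishing at the center of symmetry $s=k/2$ of the complex $L$-function $L(f\otimes E,s)$, which is defined via the Euler product attached to the Deligne representation and its twists over $E$, and admits meromorphic (conjecturally entire) continuation with functional equation centered at $s=k/2$; the integer $\ord_{s=k/2}L(f\otimes E,s)$ is therefore also well-defined.

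\smallskip

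The heart of the matter, which is exactly the content of the Beilinson--Bloch philosophy, is that the image of the $p$-adic Abel--Jacobi map should detect the algebraic cycles that the $L$-function ``sees'' through its order of vanishing. Conceptually, one expects an equality $\dim_{F_\p}\bigl(X_\p(E)\bigr)=\dim_{\Q}\bigl(\CH^{k/2}(\tilde\E_N^{k-2}/E)_0\otimes_{\T,\theta_f}F\bigr)$ — i.e., the Abel--Jacobi map should be injective on the $f$-isotypic part of the Chow group after tensoring with $F_\p$ — and then the Beilinson--Bloch conjecture for the motive $M_f$ over $E$ (in the form predicting $\ord_{s=k/2}L(M_f/E,s)=\dim$ of the relevant Chow group) would yield the stated identity, since $L(M_f/E,s)$ agrees up to shift with $L(f\otimes E,s)$. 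I would emphasize that both the injectivity of Abel--Jacobi on Chow groups and the conjecture of Beilinson--Bloch itself are open in general: this is why Conjecture \ref{BK} is stated as a conjecture, not a theorem.

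\smallskip

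The main obstacle — indeed the reason no proof can be given — is twofold. On one side lies the arithmetic content of the Beilinson--Bloch conjectures proper, which for Chow groups of high-dimensional varieties is known only in very special low-rank situations (via the Nekov\'a\v{r}--Kolyvagin method combined with Zhang's Gross--Zagier formula, as recalled in the Introduction, and only for $E$ imaginary quadratic of discriminant prime to $Np$ or $E=\Q$). On the other side is the injectivity of the Abel--Jacobi map $\Phi_{p,E}\otimes\Q_p$ restricted to the $f$-part, which would be needed to pass from a statement about Chow groups to the displayed statement about $X_\p(E)$; this is a deep open problem already for $E=\Q$. Accordingly, the appropriate ``proof'' here is simply to record that each side of the equality is a well-defined nonnegative integer — the left via Corollary \ref{niziol-coro-2} and Proposition \ref{isom-finite-prop}, the right via the theory of the $L$-function of $f$ over $E$ — and that their equality is the prediction of \cite{Beil} and \cite{Bloch} specialized to the motive of $f$; the refined statements pursued in the remainder of the paper (Theorem \ref{thm-intro}) provide evidence in this direction.
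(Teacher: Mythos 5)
Since the statement is a conjecture, the paper offers no proof, only context; your proposal correctly recognizes this, verifies that both sides are well-defined (the left via Corollary \ref{niziol-coro-2} and the definition of $\BBK_\p(E)$, the right via the analytic theory of $L(f\otimes E,s)$), and accurately summarizes the state of the art (Nekov\'a\v{r}--Zhang in the rank-one case, Theorem \ref{thm-intro} as evidence toward the refined version). This matches the paper's own treatment, which likewise merely records the conjecture and cites the partial results.
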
  

For details, see \cite[pp. 158--168]{Jan}. For generalizations to $L$-functions of motives, see \cite{BK}. The main result of \cite{Nek}, combined with the Gross--Zagier type formula for higher weight modular forms due to Zhang (\cite{Zh}), gives the following result in the direction of the Beilinson--Bloch conjecture.

\begin{theorem}[Nekov\'a\v{r}--Zhang] 
Let $K$ be an imaginary quadratic field in which all the prime numbers dividing $N$ split. If $\ord_{s=\frac{k}{2}}L(f\otimes K,s)=1$ then $\dim_{F_\p}\bigl(X_\p(K)\bigr)=1$. 
\end{theorem}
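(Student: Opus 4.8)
The plan is to deduce this from Nekov\'a\v{r}'s higher-weight Euler system construction together with Zhang's Gross--Zagier formula for modular forms of weight $k\geq 4$. The strategy parallels the classical Kolyvagin--Gross--Zagier argument for elliptic curves, with Heegner points replaced by Heegner cycles on the Kuga--Sato variety $\tilde\E_N^{k-2}$, and the Mordell--Weil group replaced by the image $X_\p(K)$ of the $\p$-adic Abel--Jacobi map.

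First I would invoke Zhang's theorem (\cite{Zh}): since $\ord_{s=k/2}L(f\otimes K,s)=1$, the leading term of the $L$-function is (up to an explicit nonzero factor) the $\p$-adic height of a Heegner cycle $y_K\in\CH^{k/2}(\tilde\E_N^{k-2}/K)_0\otimes F_\p$ projected onto the $f$-isotypic component. In particular the nonvanishing of $L'(f\otimes K,k/2)$ forces this Heegner cycle to be non-torsion, so that its image $\mathrm{AJ}_K(y_K)$ is a nonzero class in $X_\p(K)$; hence $\dim_{F_\p}(X_\p(K))\geq 1$. This gives the lower bound and uses only the Gross--Zagier formula plus the formalism of the Abel--Jacobi map recalled in \S\ref{secAJ}.

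Next I would establish the reverse inequality $\dim_{F_\p}(X_\p(K))\leq 1$ by running Nekov\'a\v{r}'s generalization of Kolyvagin's method (\cite{Nek}): from the non-torsion Heegner cycle $y_K$ one builds a system of cohomology classes indexed by squarefree products of Kolyvagin primes $\ell$ (inert in $K$, $p\mid\ell+1$), via traces of Heegner cycles over ring class fields $K_T$. The key local properties --- that the class derived from $T=1$ is ramified precisely at the primes dividing $T$, with prescribed local behavior governed by the Frobenius --- yield, after the standard bounding argument with the global duality (Poitou--Tate) pairing, that the $\p$-adic Selmer group $H^1_f(K,V_\p)$, and a fortiori its subspace $X_\p(K)$, is one-dimensional over $F_\p$. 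Combining the two inequalities gives $\dim_{F_\p}(X_\p(K))=1$.

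The main obstacle is ensuring that the Heegner cycle supplied by Zhang's formula is genuinely the right input for Nekov\'a\v{r}'s Euler system --- i.e., matching the normalizations of the cycle appearing in the Gross--Zagier formula with the one whose Kolyvagin derivatives are controlled --- and that the hypotheses needed to run Kolyvagin's argument (big enough Galois image on $A_\p$, the Chebotarev step producing enough primes $\ell$ with the desired splitting behavior) are in force. In the setting of this paper these are exactly the conditions packaged into the exceptional set $\Sigma$, but for the bare statement as quoted one cites \cite{Nek} and \cite{Zh} directly, where the requisite irreducibility and non-degeneracy hypotheses are built in; no genuinely new argument is needed here, only the combination of these two results.
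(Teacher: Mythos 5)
Your proposal is essentially the paper's own argument: the paper gives no independent proof, citing \cite{Nek} and \cite{Zh} directly, and your two-step structure (Zhang's Gross--Zagier formula to produce a non-torsion Heegner cycle in $X_\p(K)$, then Nekov\'a\v{r}'s Kolyvagin-type bound to show the Selmer group is one-dimensional) is precisely the intended combination. One small inaccuracy: Zhang's formula in \cite{Zh} expresses $L'(f\otimes K,k/2)$ in terms of the \emph{Beilinson--Bloch} (archimedean/Gillet--Soul\'e) height of the Heegner cycle, not its $\p$-adic height; the logic is unaffected since the nonvanishing of either height implies the cycle is non-torsion, but the normalization you would have to match is the complex one.
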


See \cite[\S 5.3]{Zh} for other results on the Beilinson--Bloch conjecture, especially when the base field is $\Q$. 

\section{Divisibility properties of Heegner cycles}

After reviewing the basic properties of Heegner cycles and the formalism of Darmon--Kolyvagin derivatives, we construct Kolyvagin classes attached to Heegner cycles and study their properties. The main result of this section (Theorem \ref{div-thm}) is a congruence relation satisfied by these cohomology classes.

Fix throughout this paper an imaginary quadratic field $K$ of discriminant $D$ in which all the primes dividing $N$ split (in other words, $K$ satisfies the so-called ``Heegner hypothesis'' relative to $N$). Denote by $\mathcal O_K$ the ring of integers of $K$ and by $h_K$ its class number. For the sake of simplicity, assume also that $\mathcal O_K^\times=\{\pm1\}$, i.e., that $K\not=\Q(\sqrt{-1})$ and $K\not=\Q(\sqrt{-3})$. Finally, fix an embedding $K\hookrightarrow\C$. 

\subsection{Heegner cycles} \label{secHC}

We review construction and basic properties of Heegner cycles on Kuga--Sato varieties. In doing this, we follow \cite{Nek} and \cite{Nek2} closely (for Heegner-type cycles on more general varieties that are fibered over modular curves, see \cite[Section 2]{BDP}).

Fix an ideal $\mathcal N\subset\mathcal O_K$ such that $\mathcal O_K/\mathcal N\simeq \Z/N\Z$, which exists thanks to the Heegner hypothesis satisfied by $K$. For any integer $T\geq1$ prime to $NDp$ let $\mathcal O_T:=\Z+T\mathcal O_K$ be the order of $K$ of conductor $T$. Let $X_0(N)$ be the compact modular curve of level $\Gamma_0(N)$; the isogeny $\C/\cO_T\rightarrow \C/(\cO_T\cap\mathcal N)^{-1}$ defines a Heegner point $x_T\in X_0(N)$ that, by the theory of complex multiplication, is rational over the ring class field $K_T$ of $K$ of conductor $T$ (in particular, $K_1$ is the Hilbert class field of $K$). 

Write $\kappa:X_N\rightarrow X_0(N)$ for the map induced by the inclusion $\Gamma(N)\subset\Gamma_0(N)$ and choose $\tilde x_T\in \kappa^{-1}(x_T)$. The elliptic curve $E_T$ corresponding to $\tilde x_T$ has complex multiplication by $\mathcal O_T$. Fix the unique square root $\xi_T=\sqrt{-DT^2}$ of the discriminant of $\mathcal O_T$ with positive imaginary part under the chosen embedding of $K$ into $\C$. For any $a\in \mathcal O_T$ let $\Gamma_{T,a}\subset E_T\times E_T$ denote the graph of $a$ and let $i_{\tilde x_T}:\pi_{k-2}^{-1}(\tilde x_T)\hookrightarrow \tilde{\mathcal E}_N^{k-2}$ be the canonical inclusion. Then 
\begin{equation} \label{cycle-eq1}
\Pi_B\Pi_\epsilon (i_{\tilde x_T})_*\Big(\Gamma_{T,\xi_T}^{(k-2)/2}\Big)\in\Pi_B\Pi_\epsilon\Big({\CH}^{k/2}(\tilde\E_N^r/K_T)\otimes\Z_p\Big) 
\end{equation} 
and we define the \emph{Heegner cycle}
\[ y_{T,\p}\in H^1_\cont(K_T,A_\p) \] 
to be the image of the cycle in \eqref{cycle-eq1} via the map ${\Psi}_{f,\p,K_T}$ introduced in \eqref{AJ-map}. This class is independent of the choice of $\tilde x_T$ (\cite[p. 107]{Nek}) and, by \cite[Ch. II, \S 3.6]{Nek2}, does not change if $\Gamma_{T,\xi_T}$ is replaced by $\Gamma_{T,\xi_T}\smallsetminus[(E_T\times\{0\})\cup(\{0\}\times E_T)]$ in \eqref{cycle-eq1}, which is the choice made in \cite[\S 5]{Nek}. Finally, note that 
\[ y_{T,\p}\in\BK_\p(K_T) \] 
because the Abel--Jacobi map ${\rm AJ}_{K_T}$ factors through $\Psi_{f,\p,K_T}$. 

Define 
\begin{equation} \label{S-set-eq}
\mathcal S:=\bigl\{\text{$\ell$ prime number}\;\mid\;\text{$\ell$ is inert in $K$ and $\ell\nmid Np$}\bigr\}. 
\end{equation} 
For each $\ell\in\mathcal S$ the extension $K_\ell/K_1$ is cyclic of order $\ell+1$ and unramified at primes different from $\ell$. Also, if $\ell\neq\ell'$ are in $\mathcal S$ then $K_\ell$ and $K_{\ell'}$ are linearly disjoint over $K_1$. Fix a product $T=\prod_{i=1}^s\ell_i$ of distinct primes $\ell_i\in\mathcal S$, then put $G_T:=\Gal(K_T/K_1)$ and $\Gamma_T:=\Gal(K_T/K)$. The field $K_T$ is the composite of the fields $K_{\ell_i}$, which are linearly disjoint over $K_1$, and so there is a decomposition $G_T=\prod_{i=1}^sG_{\ell_i}$. In particular, if $T'\,|\,T$ then there is a canonical inclusion $G_{T'}\subset G_T$, using which we identify the elements of $G_{T'}$ with their images in $G_T$. Finally, set $\Gamma_1:=\Gal(K_1/K)$, so that $\Gamma_1\simeq\Pic(\cO_K)$ and $|\Gamma_1|=h_K$. 

Let us recall two basic properties of Heegner cycles, which extend those of Heegner points and are due to Nekov\'a\v{r} (\cite{Nek}). Before stating them, we fix some notations that will be used in the rest of the paper.
 
Choose a complex conjugation $c\in G_\Q$ and use the same symbol to denote the images of $c$ in quotients of $G_\Q$; in other words, $c$ is a lift to $G_\Q$ of the generator of $\Gal(K/\Q)$. We shall also write $\Frob_\infty$ for the conjugacy class of $c$ in $\Gal(E/\Q)$, relying on the context to make clear which number field $E$ we are considering. Finally, recall that $\cores_{K_{T\ell}/K_T}$ denotes the corestriction map from $H^1(K_{T\ell},A_\p)$ to $H^1(K_T,A_\p)$ and let $\epsilon$ be the sign of the functional equation of $L(f,s)$.

\begin{proposition} \label{complex-conj}
Let $T$ be a square-free product of primes in $\mathcal S$.
\begin{enumerate}
\item If $\ell\in\mathcal S$, $\ell\nmid T$ then $\cores_{K_{T\ell}/K_T} (y_{T\ell,\p})=(a_\ell/\ell^{k/2-1})\cdot y_{T,\p}$.
\item There exists $\sigma\in\Gamma_T$ such that $c(y_{T,\p})=-\epsilon\cdot\sigma(y_{T,\p})$.
\end{enumerate}
\end{proposition}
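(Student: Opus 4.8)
The plan is to reduce both statements to the corresponding facts for Heegner \emph{points} on the modular curve $X_0(N)$, which are due to Gross--Zagier and Kolyvagin, by exploiting the compatibility between the Abel--Jacobi map and the Hecke action together with the functoriality of the construction of $y_{T,\p}$ under the maps $\kappa$, $\pi_{k-2}$ and $\Psi_{f,\p,-}$. Concretely, for part (1), I would trace through the definition of $y_{T,\p}$ as the image of $\Pi_B\Pi_\epsilon(i_{\tilde x_T})_*(\Gamma_{T,\xi_T}^{(k-2)/2})$ under $\Psi_{f,\p,K_T}$. The key identity is the norm compatibility of the Heegner points $x_T$ on $X_0(N)$ themselves: as one varies the conductor from $T\ell$ to $T$, the relation $\sum x_{T\ell} = T_\ell \cdot x_T$ (up to the action of $\mathrm{Frob}_\ell$ when $\ell$ is inert, but here $\ell$ inert gives exactly the clean form) holds in $\mathrm{Div}(X_0(N))$. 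Pulling this back along $\kappa$, lifting to the fiber product $\bar\E_N^{k-2}$ via the graphs $\Gamma_{T,\xi_T}$ (here one uses that $\xi_{T\ell}$ and $\xi_T$ are compatible: $\xi_{T\ell} = \ell\,\xi_T$ up to an endomorphism, and the graph cycles transform accordingly, which is exactly the computation carried out by Nekov\'a\v{r} in \cite[\S 6]{Nek}), and applying the $G_\Q$-equivariant, $\T$-equivariant map $\Psi_{f,\p,K_T}$ converts $T_\ell$ into its eigenvalue $a_\ell$ on $A_\p$. The twist $(k/2-1)$ in the sheaf $\mathcal F$ accounts for the normalizing factor $\ell^{k/2-1}$, which is why one gets $(a_\ell/\ell^{k/2-1})\cdot y_{T,\p}$ rather than $a_\ell\cdot y_{T,\p}$; this matches \eqref{char-pol}. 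Since $\cores_{K_{T\ell}/K_T}$ corresponds under $\Psi_{f,\p,-}$ precisely to the norm/trace from $\CH^{k/2}(\tilde\E_N^{k-2}/K_{T\ell})$ to $\CH^{k/2}(\tilde\E_N^{k-2}/K_T)$ (by $\Gal(K_{T\ell}/K_T)$-equivariance of the Abel--Jacobi map, \cite[Proposition 4.2]{Nek}), this completes (1).

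For part (2), the strategy is again to descend to Heegner points. The point $x_T \in X_0(N)(K_T)$ satisfies $c(x_T) = \sigma_0(x_T^{\mathrm{new}})$ for a suitable $\sigma_0 \in \Gamma_T$, where $x_T^{\mathrm{new}}$ is the Heegner point attached to the ideal $\bar{\mathcal N}$ instead of $\mathcal N$; and by the Atkin--Lehner theory on $X_0(N)$, $x_T^{\mathrm{new}}$ is the image of $x_T$ under $w_N$, so that on the $f$-isotypic quotient $w_N$ acts as the sign $\epsilon$ of the functional equation (this is the standard Gross--Zagier/Kolyvagin computation, cf. \cite[\S 6]{Nek}, and it is where the hypothesis that $f$ is a newform and $\epsilon$ is the functional-equation sign enters). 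Lifting to the Kuga--Sato variety: the complex conjugation $c$ acts on $\pi_{k-2}^{-1}(\tilde x_T)$ compatibly, sending $\Gamma_{T,\xi_T}$ to $\Gamma_{\bar T, \bar\xi_T} = \Gamma_{T, -\xi_T}$ (since $\bar\xi_T = -\xi_T$ by our choice of square root with positive imaginary part), and one must check that the sign introduced by negating $\xi_T$ on the power $\Gamma_{T,\xi_T}^{(k-2)/2}$, combined with the projector $\Pi_\epsilon$ which involves the $(\Z/2\Z)^{k-2}$-product character, produces exactly a factor of $-1$; this together with the $w_N$-eigenvalue $\epsilon$ yields $c(y_{T,\p}) = -\epsilon\cdot\sigma(y_{T,\p})$ after pushing through the $\Gal(K_T/\Q)$-equivariant map $\Psi_{f,\p,K_T}$ (note $c\in G_\Q$ normalizes $G_{K_T}$, so this makes sense once we fix a lift of $c$).

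The main obstacle — and really the only substantive point — is the sign bookkeeping in part (2): precisely controlling how complex conjugation interacts with (i) the choice of square root $\xi_T$, (ii) the $(k-2)/2$-th power of the graph cycle, (iii) the sign character component of the projector $\Pi_\epsilon$, and (iv) the Atkin--Lehner sign $\epsilon$ coming from the functional equation. Each of (i)--(iv) is elementary in isolation, but one must verify that they combine to give exactly $-\epsilon$ and not, say, $+\epsilon$ or $-1$. Fortunately this computation is essentially identical to the one performed by Nekov\'a\v{r} in \cite[\S 6, especially the discussion surrounding the action of complex conjugation on Heegner cycles]{Nek}; so in practice I would cite that computation, verify that our normalization conventions ($k/2$-twist, choice of $\mathcal N$, sign of $\xi_T$) agree with his, and conclude. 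Part (1) presents no real obstacle beyond carefully transporting the classical norm relation for Heegner points through the defining diagram of $y_{T,\p}$, which is again in \cite{Nek}.
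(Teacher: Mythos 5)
Your proposal is correct and lands in essentially the same place as the paper: the paper's proof of this Proposition is a direct citation to Nekov\'a\v{r}'s Proposition 6.1(1) and Proposition 6.2 in \cite{Nek}, applied after the projection $A_p\twoheadrightarrow A_\p$, together with a note about the misprint in \emph{loc.~cit.} concerning the twist by $k/2-1$. Your outline of how Nekov\'a\v{r}'s computation proceeds (norm relations for Heegner points transported through $\Psi_{f,\p,-}$ for part (1), Atkin--Lehner plus sign bookkeeping for $\xi_T\mapsto-\xi_T$ and $\Pi_\epsilon$ for part (2)) is accurate, and your observation that the $\ell^{k/2-1}$ normalization comes from the $(k/2-1)$-twist is exactly the content of the misprint the paper flags; you might add explicitly that Nekov\'a\v{r}'s results are stated for $A_p$ and one must project to $A_\p$.
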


\begin{proof} Upon applying the projection $A_p\twoheadrightarrow A_\p$, part (1) is \cite[Proposition 6.1, (1)]{Nek}, while part (2) is \cite[Proposition 6.2]{Nek}. (Note the misprint in \emph{loc. cit.}, since the Hecke action is twisted by $k/2-1$.) \end{proof}

\begin{remark}
The relations stated in Proposition \ref{complex-conj}, together with the Key Formula appearing in \cite[\S 9]{Nek} (that will be used in the proof of Proposition \ref{finite-2-prop} below), describe an \emph{Euler system} for modular forms of weight $k>2$. Euler systems for higher weight modular forms can also be constructed by using Howard's work \cite{Ho} on the variation of Heegner points in Hida families, later extended to the case of indefinite Shimura curves in \cite{Fou} and \cite{LV-Man}, by specialization to weight $k$. The relation between the two systems has been investigated by Castella in \cite{Cas}, and we expect that a similar approach could be adopted in the case of indefinite Shimura curves as well. We finally remark that, in yet another direction, 
it would be interesting to generalize to higher weight the Euler systems of Heegner points introduced by means of congruences between modular forms in \cite{BD-Iwasawa} and developed in \cite{Lo1}, \cite{Lo2}, \cite{Lo3}, \cite{LRV}, \cite{LV-JNT}, \cite{Nek-Level}. In connection with this, see work in progress by Chida and Hsieh (\cite{CH}). 
\end{remark}

\subsection{$\pm$-eigenspaces} \label{eigenspaces-subsec}

Recall that if $M$ is an abelian group endowed with an action of an involution $\tau$ and $2$ is invertible in $\End(M)$ then there is a decomposition $M=M^+\oplus M^-$ where $M^\pm$ is the subgroup of $M$ on which $\tau$ acts as $\pm1$. 
 
Let  $p$ be a prime number as in the introduction and let $\p$ a prime ideal of $\mathcal O_F$ above $p$. Since $\Gal(K/\Q)$ acts on $\BBK_\p(K)$, the above formalism applies and there is a decomposition 
\[ \BBK_\p(K)=\BBK_\p(K)^+\oplus\BBK_\p(K)^-. \] 
Define $\rho^\pm_\p:=\dim_{F_\p}\bigl(\BBK_\p(K)^\pm\bigr)$ and
\begin{equation} \label{rho-eq}
\rho_\p:=\begin{cases}\max\bigl\{\rho^+_\p,\rho^-_\p\bigr\}-1&\text{if $\rho^+_\p\not=\rho^-_\p$},\\[3mm]\rho^+_\p&\text{otherwise}.\end{cases} 
\end{equation} 
Two remarks on these definitions, both related with the Beilinson--Bloch conjecture, are now in order.  

\begin{remark}  
1) Conjecture \ref{BK} predicts, among other things, that the $F_\p$-dimension of $\BBK_\p(E)$ does not depend on $\p$, and therefore $\rho_\p^++\rho_\p^-$ is conjecturally independent of $\p$. Moreover, let $f\otimes\epsilon_K$ be the twist of $f$ by the quadratic Dirichlet character $\epsilon_K$ attached to the extension $K/\Q$. It can be shown (see \cite[\S 6.1]{LV} for details; in \cite{LV} a $p$-ordinarity assumption is made, but this condition plays no role in the results about Selmer groups that we are interested in) that 
\[ \BBK_\p(K)^+\simeq\BBK_\p(\Q)={\rm im}({\Psi}_{f,\p,\Q})\otimes_{\cO_\p}\!F_\p,\qquad\BBK_\p(K)^-\simeq{\rm im}({\Psi}_{f\otimes\epsilon_K,\p,\Q})\otimes_{\cO_\p}\!F_\p. \]
Therefore Conjecture \ref{BK} (for $f$ and $E=\Q$ or $f\otimes\epsilon_K$ and $E=\Q$) implies that $\rho_\p^+$ and $\rho_\p^-$ do not depend on $p$.

2) As before, let $L(f\otimes K,s)$ denote the $L$-function of $f$ over $K$, so that
\begin{equation} \label{L-eq} 
L(f\otimes K,s)=L(f,s)\cdot L(f\otimes\epsilon_K,s). 
\end{equation}
Since the orders of vanishing of $L(f,s)$ and $L(f\otimes\epsilon_K,s)$ at $s=k/2$ have opposite parities (cf., e.g., \cite[p. 543]{BFH}), it follows from \eqref{L-eq} that $L(f\otimes K,s)$ vanishes to odd order at $s=k/2$. Therefore Conjecture \ref{BK} predicts that the $F_\p$-dimension $\rho_\p^++\rho_\p^-$ of $\BBK_\p(K)$ should be odd, hence we expect the second possibility in \eqref{rho-eq} not to occur.
\end{remark}

\subsection{Rank inequalities}

As a consequence of the structure theorem for finitely generated modules over principal ideal domains, a finite $\mathcal O_\p/p^m\mathcal O_\p$-module $M$ 
can be decomposed as
\begin{equation} \label{r-selmer-eq}
M\simeq(\mathcal O_\p/p^m\mathcal O_\p)^{r_{\p,m}(M)}\oplus\tilde M 
\end{equation} 
where the exponent of $\tilde M$ divides $p^m$ strictly and the integer $r_{\p,m}(M)$ does not depend on such a decomposition (see Lemma \ref{basic-lemma} below).

Let $\mathbb F_\p:=\cO_\p/p\cO_\p$ be the residue field of $\cO_\p$. In the sequel we will make use of the following auxiliary result.

\begin{lemma} \label{basic-lemma}
Let $M, M', M''$ be finite $\mathcal O_\p/p^m\mathcal O_\p$-modules. 
\begin{enumerate}
\item If there is an injective homomorphism $M\hookrightarrow M'$ then $r_{\p,m}(M)\leq r_{\p,m}(M')$.
\item If there is a surjective homomorphism $M\twoheadrightarrow M'$ then $r_{\p,m}(M)\geq r_{\p,m}(M')$.
\item If there is an exact sequence of $\mathcal O_\p/p^m\mathcal O_\p$-modules
\[ 0\longrightarrow M'\longrightarrow M\longrightarrow M'' \] 
then 
\[ r_{\p,m}(M)\leq r_{\p,m}(M')+\dim_{\mathbb F_\p}(M''\otimes_{\mathcal O_\p/p^m\mathcal O_\p}\!\mathbb F_\p). \]
\end{enumerate}
\end{lemma}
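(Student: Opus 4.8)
\textbf{Proof plan for Lemma \ref{basic-lemma}.}

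The plan is to reduce everything to the invariant-factor decomposition \eqref{r-selmer-eq} and to the observation that $r_{\p,m}(M)$ has an intrinsic description that is monotone for sub- and quotient modules. First I would fix notation: by the structure theorem over the principal ideal domain $\cO_\p$, applied to $M$ viewed as a module annihilated by $p^m$, we may write $M\simeq\bigoplus_{j=1}^t\cO_\p/p^{e_j}\cO_\p$ with $1\le e_1\le\dots\le e_t\le m$, and then $r_{\p,m}(M)$ is exactly the number of indices $j$ with $e_j=m$. The first thing to record is an intrinsic formula for this number that does not refer to a chosen decomposition, namely
\[
r_{\p,m}(M)=\dim_{\F_\p}\bigl(p^{m-1}M\bigr)=\dim_{\F_\p}\bigl(M[p]\cap p^{m-1}M\bigr),
\]
since each summand $\cO_\p/p^{e_j}\cO_\p$ contributes a one-dimensional $\F_\p$-space to $p^{m-1}M$ precisely when $e_j=m$ and contributes $0$ otherwise. (Equivalently $r_{\p,m}(M)=\dim_{\F_\p}\bigl(M\otimes_{\cO_\p/p^m\cO_\p}\cO_\p/p^m\cO_\p\big)$ after killing the submodule of smaller exponent — but the $p^{m-1}M$ description is the cleanest to manipulate.) The well-definedness assertion implicit in \eqref{r-selmer-eq} follows immediately from this formula.

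For part (1), given an injection $\iota\colon M\hookrightarrow M'$, I would simply note that $\iota$ maps $p^{m-1}M$ injectively into $p^{m-1}M'$ — because $\iota(p^{m-1}x)=p^{m-1}\iota(x)$ and $\iota$ is injective — so $\dim_{\F_\p}(p^{m-1}M)\le\dim_{\F_\p}(p^{m-1}M')$, which is the claim. For part (2), given a surjection $\pi\colon M\twoheadrightarrow M'$, the same identity $\pi(p^{m-1}x)=p^{m-1}\pi(x)$ shows $\pi(p^{m-1}M)=p^{m-1}M'$, hence $p^{m-1}M'$ is a quotient of the $\F_\p$-vector space $p^{m-1}M$ and $\dim_{\F_\p}(p^{m-1}M')\le\dim_{\F_\p}(p^{m-1}M)$. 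These two parts are routine once the intrinsic formula is in hand; the only mild care needed is that $p^{m-1}M$ is genuinely an $\F_\p$-vector space, which holds because it is killed by $p$.

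Part (3) is the one requiring a little more work, and I expect it to be the main (if modest) obstacle. From the exact sequence $0\to M'\to M\to M''$ let $M'''\subset M''$ be the image of $M\to M''$, so that $0\to M'\to M\to M'''\to 0$ is short exact. Applying $r_{\p,m}$ is not exact, so instead I would use additivity of lengths together with a comparison, or more directly argue as follows: tensoring the short exact sequence with $\F_\p=\cO_\p/p\cO_\p$ over $\cO_\p/p^m\cO_\p$ gives a right-exact sequence $M'\otimes\F_\p\to M\otimes\F_\p\to M'''\otimes\F_\p\to 0$, and one checks $\dim_{\F_\p}(M\otimes\F_\p)\le\dim_{\F_\p}(M'\otimes\F_\p)+\dim_{\F_\p}(M'''\otimes\F_\p)$; but $r_{\p,m}(M)\le\dim_{\F_\p}(M\otimes\F_\p)$ is too lossy. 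The sharp route is to induct on $m$: the snake lemma applied to multiplication by $p$ on the short exact sequence relates $M[p], M'[p], M'''[p]$ and $M/pM, M'/pM', M'''/pM'''$, and combining the intrinsic formula $r_{\p,m}(M)=\dim_{\F_\p}(p^{m-1}M)$ with the observation that $p^{m-1}M$ is a quotient of $p^{m-1}$ of the middle term, modulo the contribution of $M'$, yields
\[
r_{\p,m}(M)\le r_{\p,m}(M')+\dim_{\F_\p}\bigl(M'''\otimes_{\cO_\p/p^m\cO_\p}\F_\p\bigr)\le r_{\p,m}(M')+\dim_{\F_\p}\bigl(M''\otimes_{\cO_\p/p^m\cO_\p}\F_\p\bigr),
\]
the last inequality because $M'''\hookrightarrow M''$ is injective and $-\otimes\F_\p$ is right exact so a submodule can only have smaller (or equal) reduction dimension... actually one must be slightly careful here since $-\otimes\F_\p$ is not left exact; I would instead bound $\dim_{\F_\p}(M'''\otimes\F_\p)=\dim_{\F_\p}(M'''/pM''')$ and use that $M'''/pM'''$ is a subquotient of $M''/pM''$ via the inclusion $M'''\subset M''$ together with a dimension count on $M''/M'''$. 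The cleanest packaging is: $r_{\p,m}(M)\le r_{\p,m}(M')+ r_{\p,m}(M''')$ (from the short exact sequence, by comparing invariant-factor decompositions directly), and then $r_{\p,m}(M''')\le\dim_{\F_\p}(M'''/pM''')\le\dim_{\F_\p}(M''/pM'')=\dim_{\F_\p}(M''\otimes_{\cO_\p/p^m\cO_\p}\F_\p)$, the middle inequality because every invariant factor contributes at most one dimension to the reduction, and the last because $M'''$ is a submodule of $M''$ and reduction mod $p$ of a submodule injects into the reduction of the ambient module when — here one genuinely needs that any submodule $M'''$ of $M''$ satisfies $\dim_{\F_\p}(M'''/pM''')\le\dim_{\F_\p}(M''/pM'')$, which follows since the minimal number of generators of $M'''$ over the local ring $\cO_\p/p^m\cO_\p$ is at most that of $M''$. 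That minimal-generator interpretation of $\dim_{\F_\p}(-\otimes\F_\p)$ via Nakayama is the fact I would invoke to close the argument.
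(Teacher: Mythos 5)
Your arguments for parts (1) and (2) coincide with the paper's proof word for word in substance: both rest on the intrinsic formula $r_{\p,m}(M)=\dim_{\F_\p}(p^{m-1}M)$ and on the fact that multiplication by $p^{m-1}$ carries injections to injections and surjections to surjections. For part (3) the paper simply cites \cite[Lemma 5.1]{Dar}, so there is no in-text proof to compare against; but your proposed argument contains a genuine error.

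The step you call the ``cleanest packaging,'' namely the inequality
\[
r_{\p,m}(M)\leq r_{\p,m}(M')+r_{\p,m}(M'''),
\]
for a short exact sequence $0\to M'\to M\to M'''\to 0$ of finite $\cO_\p/p^m\cO_\p$-modules, is false. Take $m=2$, $M=\cO_\p/p^2\cO_\p$, $M'=p\cO_\p/p^2\cO_\p\simeq\cO_\p/p\cO_\p$ and $M'''=M/M'\simeq\cO_\p/p\cO_\p$. Then $r_{\p,2}(M)=1$ while $r_{\p,2}(M')=r_{\p,2}(M''')=0$, so the proposed inequality reads $1\leq 0$. The underlying issue is that passing from $p^{m-1}M$ to the kernel of $p^{m-1}M\to p^{m-1}M'''$ lands you in $p^{m-1}M\cap M'$, which contains $p^{m-1}M'$ but can be strictly larger (as in the example, where it is one-dimensional while $p^{m-1}M'=0$). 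So you cannot simply replace the error term by $r_{\p,m}(M')$.

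A correct route is the one you gestured at before the ``packaging'' but did not actually carry out: pick $x_1,\dots,x_r\in M$ (with $r=r_{\p,m}(M)$) whose images $p^{m-1}x_i$ form an $\F_\p$-basis of $p^{m-1}M$, look at the images $\overline{\phi(x_i)}\in M'''/pM'''$, and let $s$ be the dimension of the subspace they span. After relabeling so that $\overline{\phi(x_1)},\dots,\overline{\phi(x_s)}$ are independent, one can correct each $x_i$ with $i>s$ by an $\cO_\p[p]$-combination of $x_1,\dots,x_s$ and an element of $pM$ to produce $x_i'\in M'$ with $p^{m-1}x_i'\equiv p^{m-1}x_i$ modulo the span of $p^{m-1}x_1,\dots,p^{m-1}x_s$; independence of the $p^{m-1}x_i$ then forces $p^{m-1}x_{s+1}',\dots,p^{m-1}x_r'$ to be $\F_\p$-independent in $p^{m-1}M'$, giving $r-s\leq r_{\p,m}(M')$, while $s\leq\dim_{\F_\p}(M'''/pM''')\leq\dim_{\F_\p}(M''/pM'')$. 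Your final Nakayama/minimal-generator observation (for why the last inequality holds) is fine and is indeed what one needs; it is only the intermediate bound in terms of $r_{\p,m}(M''')$ that has to be discarded.
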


\begin{proof}
An injection $M\hookrightarrow M'$ of $\mathcal O_\p/p^m\mathcal O_\p$-modules induces an injection
$p^{m-1}M\hookrightarrow p^{m-1}M'$ of $\F_\p$-vector spaces, hence
\[ r_{\p,m}(M)=\dim_{\F_\p}(p^{m-1}M)\leq \dim_{\F_\p}(p^{m-1}M')=r_{\p,m}(M'), \]
which shows part (1). On the other hand, a surjection $M\twoheadrightarrow M'$ of $\mathcal O_\p/p^m\mathcal O_\p$-modules induces a surjection $p^{m-1}M\twoheadrightarrow p^{m-1}M'$ of $\F_\p$-vector spaces, and part (2) follows similarly. Finally, part (3) can be proved as \cite[Lemma 5.1]{Dar}. \end{proof}

As before, let $K$ be our imaginary quadratic field where all the prime factors of $N$ split. With notation as in \eqref{r-selmer-eq}, set
\[ \tilde r_{\p,m}:=r_{\p,m}\bigl(H^1_f(K,W_\p[p^m])\bigr).\] 
Moreover, recall the integers $\rho_\p^\pm$ introduced in \S \ref{eigenspaces-subsec} and define 
\begin{equation} \label{tilde-rho-eq}
\tilde\rho_\p:=\rho_\p^++\rho_\p^-=\dim_{F_\p}\bigl(\BBK_\p(K)\bigr). 
\end{equation}
Observe that there is an obvious inequality
\begin{equation} \label{ineq-1-eq}
\tilde\rho_\p\leq r_{\p,m}\bigl({\BK}_\p(K)/p^m{\BK}_\p (K)\bigr).
\end{equation}

\begin{proposition} \label{lemSelmer} 
$\tilde\rho_\p\leq \tilde r_{\p,m}$. 
\end{proposition}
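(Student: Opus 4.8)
The plan is to compare the Bloch--Kato Selmer group $H^1_f(K,V_\p)$ of the $F_\p$-vector space $V_\p$ with the mod-$p^m$ Selmer group $H^1_f(K,W_\p[p^m])$, and then to chain this with the injection \eqref{eq13} and the inequality \eqref{ineq-1-eq}. First I would note that, by the definitions in \S\ref{selmer-subsec}, the map $\varphi\colon H^1_\cont(K,A_\p)\to H^1_\cont(K,V_\p)$ sends $H^1_f(K,A_\p)$ into $H^1_f(K,V_\p)$, and conversely the local conditions for $A_\p$ are pulled back from those for $V_\p$; moreover $H^1_f(K,A_\p)\otimes_{\cO_\p}F_\p$ maps onto (in fact isomorphically onto, up to the torsion that dies after $\otimes F_\p$) $H^1_f(K,V_\p)$, since continuous cohomology of the $\cO_\p$-lattice $A_\p$ tensored with $F_\p$ recovers the cohomology of $V_\p$ and local conditions are compatible with this. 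In particular $\dim_{F_\p}H^1_f(K,V_\p)=r_{\p,m}\bigl(H^1_f(K,A_\p)/p^mH^1_f(K,A_\p)\bigr)$ for every $m\geq 1$, because the free rank of a finitely generated $\cO_\p$-module equals the $\cO_\p/p^m\cO_\p$-free rank of its reduction.

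Next I would take $\Gal(K/\Q)$-invariants and anti-invariants. Since $p$ is odd, $2$ is invertible and the decomposition into $\pm$-eigenspaces is exact; applying it to $\BBK_\p(K)\subset H^1_f(K,V_\p)$ gives $\tilde\rho_\p=\rho_\p^++\rho_\p^-=\dim_{F_\p}\BBK_\p(K)$ by \eqref{tilde-rho-eq}, and since $\BBK_\p(K)=\varphi(\BK_\p(K))\otimes_{\cO_\p}F_\p$ is by definition a subspace of $H^1_f(K,V_\p)$, we get $\tilde\rho_\p\leq\dim_{F_\p}H^1_f(K,V_\p)$. Alternatively, and more in the spirit of \eqref{ineq-1-eq}, one has $\tilde\rho_\p\leq r_{\p,m}\bigl(\BK_\p(K)/p^m\BK_\p(K)\bigr)$, so it suffices to bound the latter free rank by $\tilde r_{\p,m}$. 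For this I would invoke the injection \eqref{eq13}, namely $\BK_\p(K)/p^m\BK_\p(K)\hookrightarrow H^1_f(K,W_\p[p^m])$ of finite $\cO_\p/p^m\cO_\p$-modules, and apply part (1) of Lemma \ref{basic-lemma}: an injection of finite $\cO_\p/p^m\cO_\p$-modules cannot increase the invariant $r_{\p,m}$. This yields
\[
\tilde\rho_\p\leq r_{\p,m}\bigl(\BK_\p(K)/p^m\BK_\p(K)\bigr)\leq r_{\p,m}\bigl(H^1_f(K,W_\p[p^m])\bigr)=\tilde r_{\p,m},
\]
which is exactly the assertion of Proposition \ref{lemSelmer}.

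The only genuine subtlety — the step I would be most careful about — is the very first one: justifying that $\tilde\rho_\p$, defined via the $F_\p$-dimension of $\BBK_\p(K)$, is bounded by $r_{\p,m}\bigl(\BK_\p(K)/p^m\BK_\p(K)\bigr)$, i.e.\ establishing \eqref{ineq-1-eq} rigorously. This amounts to observing that $\BBK_\p(K)=\varphi(\BK_\p(K))\otimes_{\cO_\p}F_\p$ is the image of $\BK_\p(K)\otimes_{\cO_\p}F_\p$, so its $F_\p$-dimension is at most the $\cO_\p$-rank of the finitely generated $\cO_\p$-module $\BK_\p(K)$ (the map $\varphi$ can only kill torsion, not increase the free rank), and that this $\cO_\p$-rank equals $r_{\p,m}\bigl(\BK_\p(K)/p^m\BK_\p(K)\bigr)$ by the structure theorem for finitely generated modules over the discrete valuation ring $\cO_\p$; concretely, if $\BK_\p(K)\simeq\cO_\p^{r}\oplus(\text{torsion})$ then reducing mod $p^m$ produces exactly $r$ free $\cO_\p/p^m\cO_\p$-summands plus lower-exponent terms, whence $r_{\p,m}=r\geq\dim_{F_\p}\BBK_\p(K)$. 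Once \eqref{ineq-1-eq} is in hand, everything else is a one-line application of Lemma \ref{basic-lemma}(1) together with the Galois-equivariant injection \eqref{eq13}, so the proof is short.
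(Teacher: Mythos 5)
Your argument is correct and is essentially the paper's own: chain the ``obvious'' inequality \eqref{ineq-1-eq} with the Galois-equivariant injection \eqref{eq13} (which is exactly the content of Proposition \ref{isom-finite-prop} combined with Corollary \ref{niziol-coro-2}(3)) and part (1) of Lemma \ref{basic-lemma}. The only difference is that you spell out the justification of \eqref{ineq-1-eq} via the structure theorem over $\cO_\p$, which the paper treats as immediate, and you sketch an alternative first paragraph via $H^1_f(K,V_\p)$ that you then abandon in favour of the same route the paper takes.
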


\begin{proof} It follows from Proposition \ref{isom-finite-prop} and Lemma \ref{basic-lemma} that  
\begin{equation} \label{eq-2}
r_{\p,m}\bigl({\BK}_\p(K)/p^m{\BK}_\p(K)\bigr)=r_{\p,m}\bigl({\rm im}({\rm AJ}_{K,m})\bigr)\leq r_{\p,m}\bigl(H^1_f(K,W_\p[p^m])\bigr)=\tilde r_{\p,m}.
\end{equation}
Combining \eqref{ineq-1-eq} and \eqref{eq-2} gives the desired inequality. \end{proof}

Since $p$ is odd, there is a splitting 
\[ H^1_f(K,W_\p[p^m])=H^1_f(K,W_\p[p^m])^+\oplus H^1_f(K,W_\p[p^m])^- \]
under the action of complex conjugation $c\in\Gal(K/\Q)$. Set 
\[ \tilde r^\pm_{\p,m} :=r_{\p,m}\bigl(H^1_f(K,W_\p[p^m])^\pm\bigr) \]
and define 
\[ r_{\p,m} :=\begin{cases}\max\bigl\{\tilde r^+_{\p,m},\tilde r^-_{\p,m}\bigr\}-1&\text{if $\tilde r^+_{\p,m}\not=\tilde r^-_{\p,m}$},\\[3mm]\tilde r^+_{\p,m} &\text{otherwise}.\end{cases} \]
Recall the integer $\rho_\p$ defined in \eqref{rho-eq}.

\begin{proposition} \label{lem4.2}
$\rho_\p\leq r_{\p,m}$. 
\end{proposition}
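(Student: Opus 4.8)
The plan is to reduce Proposition \ref{lem4.2} to Proposition \ref{lemSelmer} by carefully comparing the $\pm$-eigenspace data on both sides. The key point is that the inequality $\tilde\rho_\p \leq \tilde r_{\p,m}$ from Proposition \ref{lemSelmer}, together with an analogous eigenspace-by-eigenspace comparison, should let us pass from the ``total'' ranks to the ``$-1$-shifted'' quantities $\rho_\p$ and $r_{\p,m}$. First I would observe that the isomorphism $\BK_\p(K)/p^m\BK_\p(K) \simeq \mathrm{im}(\mathrm{AJ}_{K,m})$ of Proposition \ref{isom-finite-prop} is $\Gal(K/\Q)$-equivariant (this is noted after \eqref{eq13}), and since $p$ is odd it respects the decomposition into $\pm$-eigenspaces. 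Combined with the Galois-equivariance of the injection $\mathrm{im}(\mathrm{AJ}_{K,m}) \hookrightarrow H^1_f(K,W_\p[p^m])$ and part (1) of Lemma \ref{basic-lemma} applied to each eigenspace separately, this gives
\[ r_{\p,m}\bigl(\bigl(\BK_\p(K)/p^m\BK_\p(K)\bigr)^\pm\bigr) \leq \tilde r^\pm_{\p,m}. \]

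Next I would relate the left-hand sides to $\rho_\p^\pm$. Here the analogue of \eqref{ineq-1-eq} in each eigenspace is needed: since $\varphi\colon \BK_\p(K) \to \BBK_\p(K)$ has image generating $\BBK_\p(K)$ over $F_\p$ and is Galois-equivariant, one gets $\rho_\p^\pm = \dim_{F_\p}\bigl(\BBK_\p(K)^\pm\bigr) \leq r_{\p,m}\bigl(\bigl(\BK_\p(K)/p^m\BK_\p(K)\bigr)^\pm\bigr)$; this is the eigenspace refinement of \eqref{ineq-1-eq} and follows by the same argument (the $F_\p$-dimension of a subspace spanned by the image of a module is at most the free rank of that module mod $p^m$). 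Chaining these two inequalities yields $\rho_\p^\pm \leq \tilde r^\pm_{\p,m}$ for both signs.

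Finally I would do the purely combinatorial step: from $\rho_\p^+ \leq \tilde r^+_{\p,m}$ and $\rho_\p^- \leq \tilde r^-_{\p,m}$ deduce $\rho_\p \leq r_{\p,m}$, by case analysis on whether $\rho_\p^+ = \rho_\p^-$ and whether $\tilde r^+_{\p,m} = \tilde r^-_{\p,m}$. The one delicate case is when $\rho_\p^+ \neq \rho_\p^-$ but $\tilde r^+_{\p,m} = \tilde r^-_{\p,m}$: then $\rho_\p = \max\{\rho_\p^+,\rho_\p^-\} - 1 \leq \tilde r^+_{\p,m} - 1 < \tilde r^+_{\p,m} = r_{\p,m}$, using that $\max\{\rho_\p^+,\rho_\p^-\} \leq \max\{\tilde r^+_{\p,m}, \tilde r^-_{\p,m}\} = \tilde r^+_{\p,m}$. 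The remaining cases are straightforward: if both pairs are equal we get $\rho_\p^+ \leq \tilde r^+_{\p,m}$ directly; if $\tilde r^+_{\p,m} \neq \tilde r^-_{\p,m}$ then $r_{\p,m} = \max - 1$ and one checks $\max\{\rho_\p^+,\rho_\p^-\} \leq \max\{\tilde r^+_{\p,m},\tilde r^-_{\p,m}\}$ forces the bound (when $\rho_\p^+ = \rho_\p^-$ one has $\rho_\p = \rho_\p^+ \leq \min\{\tilde r^+_{\p,m}, \tilde r^-_{\p,m}\} \leq \max - 1$ since the two $\tilde r$ differ). I expect the main obstacle to be making the eigenspace refinement of \eqref{ineq-1-eq} fully rigorous — i.e. checking that the map $\varphi$ interacts correctly with reduction mod $p^m$ and with the idempotents cutting out $\pm$-parts — rather than the bookkeeping, which is routine once the equivariant versions of Propositions \ref{isom-finite-prop} and the inclusion \eqref{eq13} are in hand.
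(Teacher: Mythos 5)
Your proposal is correct and is exactly the argument the paper compresses into the one-line proof ``Combine the $\Gal(K/\Q)$-equivariance of the Abel--Jacobi map with Proposition~\ref{lemSelmer}'': the Galois-equivariance delivers the eigenspace refinement $\rho_\p^\pm\leq\tilde r_{\p,m}^\pm$ by running \eqref{ineq-1-eq} and \eqref{eq-2} inside each eigenspace, and your four-way case analysis is the (correct, monotonicity-of-the-$\max$-$1$-convention) bookkeeping needed to pass from $\rho_\p^\pm\leq\tilde r_{\p,m}^\pm$ to $\rho_\p\leq r_{\p,m}$. No gaps; you have simply made explicit the details the authors left to the reader.
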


\begin{proof} Combine the $\Gal(K/\Q)$-equivariance of the Abel--Jacobi map with Proposition \ref{lemSelmer}. \end{proof}

\subsection{Darmon--Kolyvagin derivatives} \label{kolyvagin-subsec}

In this subsection we consider the general formalism of Darmon--Kolyvagin derivatives in the case of ring class fields of square-free conductor. 

Fix a square-free product $S=\prod_{i=1}^t\ell_i$ of primes $\ell_i$ in $\mathcal S_{p^m}$. For a prime $\ell\,|\,S$ let $\sigma_{\ell}$ be a generator of $G_{\ell}$. For any integer $k$ such that $0\leq k\leq \ell=\#G_\ell-1$ define the derivative operator 
\[ {\bf D}_\ell^k:=\sum_{i=k}^{\ell}\binom ik\sigma_\ell^i\in\Z[G_\ell]\subset\cO_\p[G_\ell]. \]
If $\kappa=(k_1,\dots,k_t)\in\Z^t$ with $0\leq k_i\leq \ell_i$ then the \emph{Darmon--Kolyvagin $\kappa$-derivative} is 
\[ {\bf D}_{{\kappa}}:={\bf D}_{\ell_1}^{k_1}\cdots{\bf D}_{\ell_t}^{k_t}\in\Z[G_S]\subset\cO_\p[G_S]. \]  
The \emph{order}, the \emph{support} and the \emph{conductor} of ${\bf D}_{{\kappa}}$ are defined as
\[ \ord({\bf D}_{{\kappa}}):=\sum_{i=1}^tk_i,\quad{\rm supp}({\bf D}_{{\kappa}}):=S,\quad{\rm cond}({\bf D}_{{\kappa}}):=\prod_{k_i>0}\ell_i, \]
respectively, and we set
\[ \eta(\kappa):=\min\bigl\{\ord_p(n_i)\;\mid\;k_i>0\bigr\}. \]
Finally, given ${\kappa} =(k_1,\dots,k_s)$ and ${\kappa}'=(k_1',\dots,k_s')$ we say that ${\bf D}_{{\kappa}'}$ is \emph{less than} ${\bf D}_{\kappa}$ if $k_i'\leq k_i$ for all $i$, and we write $\kappa'\leq\kappa$ in this case. Moreover, we say that ${\bf D}_{\kappa'}$ is \emph{strictly less than} ${\bf D}_\kappa$, written $\kappa'<\kappa$, if $\kappa'\leq\kappa$ and $\kappa'\neq\kappa$. 

Now we collect some basic facts about these derivatives. Most of them will not be used until Section \ref{secMR}, but we prefer to gather them here for the sake of clarity. The proofs are straightforward computations and will be omitted: see \cite[\S 3.1 and \S 4.1]{Dar} for details.  

\subsubsection{Taylor's formula} \label{3.3.1} 

The \emph{resolvent element} associated with an element $m$ of an $\cO_\p[G_S]$-module $M$ is defined as 
\[ \theta_m:=\sum_{\sigma\in G_S}\sigma(m)\otimes \sigma\in M\otimes_{\cO_\p}\!\cO_\p[G_S]. \] 
Then 
\[ \theta_m=\sum_\kappa {\bf D}_\kappa(m)\otimes(\sigma_1-1)^{k_1}\dots(\sigma_t-1)^{k_t}, \] where the sum is taken over all $t$-tuples of integers $\kappa=(k_1,\dots,k_t)$, with the convention that only those $\kappa$ with $0\leq k_i\leq\ell_i$ for all $i$ appear in the above sum.  

\subsubsection{Divisibility criterion} \label{3.3.2} 

Let $I_{G_S}$ be the augmentation ideal of $\mathcal O_\p[G_S]$ and let $r\leq p$ be an integer. If ${\bf D}_\kappa(m)\equiv0\pmod{p^{\eta(\kappa)}}$ for all $\kappa$ with $\ord(\kappa)<r$ then $\theta_m$ belongs to the natural image of $M\otimes_{\cO_\p}\!I_{G_S}^r$.  

\subsubsection{Action of complex conjugation} \label{3.3.3} 

The action of $c\in\Gal(K/\Q)$ on $\Gamma_S=\Gal(K_S/K)$ by conjugation sends $\sigma$ to $\sigma^{-1}$. This induces an action of $c$ on $\cO_\p[G_S]$ by linearity, and the formula
\[ c\,{\bf D}_\kappa\,c^{-1}=(-1)^{\ord({\bf D}_\kappa)}{\bf D}_\kappa+\sum_{ \kappa'<\kappa}\alpha_{\kappa'}{\bf D}_{\kappa'} \] 
holds for suitable integers $\alpha_{\kappa'}$. 

\subsubsection{Some formulas} \label{3.3.4}

For any prime $\ell\,|\,S$ and any integer $k$ with $0\leq k\leq\ell$ we have 
\[ (\sigma_\ell-1){\bf D}_\ell^k=\binom {\ell+1}k-\sigma_\ell{\bf D}_\ell^{k-1}. \] 
In particular, since $p^m\,|\,\ell+1$, for all $0<k<p$ we have 
\begin{equation} \label{sigma-ell-D-eq}
(\sigma_\ell-1){\bf D}_\ell^k\equiv -\sigma_\ell{\bf D}^{k-1}_\ell\pmod{p^m}. 
\end{equation}

\subsubsection{Special bases} \label{3.2.5} 

An element $\xi\in\Z[G_\ell]$, for a prime $\ell\,|\,S$, can be written as a $\Z$-linear combination of the derivatives ${\bf D}_\ell^k$ for $k=0,\dots,\ell$. Since this is not justified in \cite{Dar}, we give a short proof. Write $\xi=\sum_{i=0}^\ell a_i\sigma_\ell^i$. By rearranging the sums, one can check that a linear combination $\sum_{k=0}^\ell \alpha_k{\bf D}_\ell^k$ of derivatives can be written as $\sum_{i=0}^\ell\!\Big(\!\sum_{k=0}^i\alpha_k\binom ik\!\Big)\sigma^i_\ell$. Therefore we have to prove that we can find coefficients $\alpha_k\in\Z$ such that $\sum_{k=0}^i\alpha_k\binom ik=a_i$ for all $i=0,\dots,\ell$. The generic equation in this system is 
\[ \alpha_0+i\alpha_1+\binom i2\alpha_2+\dots+\binom i{i-1}\alpha_{i-1}+\alpha_i=
a_i, \] 
and the desired solution can be found recursively.

\subsection{The set of exceptional primes} \label{exc-set}

The main result of this section, Theorem \ref{div-thm}, applies to all primes $p$ outside a finite set $\Sigma$ that we describe below. 

Let $\Sigma$ be the set of prime numbers $p$ satisfying at least one of the following conditions:   
\begin{itemize} 
\item $p\,|\,6ND(k-2)!\phi(N)$ and $p$ ramifies in $F$; 
\item the image of the $p$-adic representation 
\[ \rho_{f,p}:G_\Q\longrightarrow\GL_2(\cO_F\otimes\Z_p) \] 
attached to $f$ by Deligne (\cite{Del})  does not contain the set 
\[ \bigl\{g\in\GL_2(\cO_F\otimes\Z_p)\;\mid\;\det(g)\in(\Z_p^\times)^{k-1}\bigr\}. \]  
\end{itemize}
To begin with, we need the following

\begin{lemma} \label{lemma3.3}
The set $\Sigma$ is finite. 
\end{lemma}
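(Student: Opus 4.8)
The plan is to show that each of the two conditions defining $\Sigma$ is satisfied by only finitely many primes $p$. The first bullet is immediate: the set of primes dividing $6ND(k-2)!\phi(N)$ is obviously finite, and the set of primes ramifying in the number field $F$ is finite (it consists of the primes dividing $\disc(F)$). So the intersection with the first condition contributes only finitely many primes, regardless of the second.

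The substance of the lemma is therefore the second bullet: I must show that for all but finitely many primes $p$, the image of the global Galois representation $\rho_{f,p}\colon G_\Q\to\GL_2(\cO_F\otimes\Z_p)$ contains the subgroup $\{g\in\GL_2(\cO_F\otimes\Z_p)\mid \det(g)\in(\Z_p^\times)^{k-1}\}$. The key input is the big image theorem of Ribet and Momose for Galois representations attached to newforms (see Ribet, "On $\ell$-adic representations attached to modular forms II", and Momose): there is a subfield $F'\subseteq F$ (the field cut out by the extra twists / inner twists of $f$) such that the image of $\rho_{f,p}$ in $\GL_2(\cO_F\otimes\Z_p)$ is, up to finite index that is bounded independently of $p$, as large as possible — in particular for almost all $p$ the image contains $\{g\in\GL_2(\cO_{F'}\otimes\Z_p)\mid \det g\in(\Z_p^\times)^{k-1}\}$, and when $f$ has no nontrivial inner twists (e.g. trivial nebentypus, which is our case since $f\in S_k^{\rm new}(\Gamma_0(N))$) one has $F'=F$. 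Concretely, for $f$ of trivial character, Ribet's results show that for all but finitely many $p$ the image of $\rho_{f,p}$ contains $\SL_2(\cO_F\otimes\Z_p)$, and combined with the determinant being the cyclotomic character to the power $k-1$ (so its image is exactly $(\Z_p^\times)^{k-1}$ for $p$ not dividing the relevant conductor), one gets exactly the stated containment. I would also invoke the mod-$p$ version: for almost all $p$ the residual representation $\bar\rho_{f,p}$ has image containing $\SL_2(\F_p)$ (or the appropriate product over primes of $\cO_F$ above $p$), and then a standard lifting lemma — the fact that a closed subgroup of $\SL_2(\Z_p)$ surjecting onto $\SL_2(\F_p)$ is all of $\SL_2(\Z_p)$ for $p\geq 5$ (Serre) — promotes this to the $p$-adic statement. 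Since $p\in\Sigma$ already forces $p\mid 6\cdots$ in the overlapping range, the small primes cause no trouble.

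So the argument is: (i) list the finitely many primes from the first bullet; (ii) cite Ribet--Momose to get a finite exceptional set $\Sigma_0$ outside of which $\rho_{f,p}$ has maximal image in the precise sense above; (iii) check that "maximal image" in the Ribet sense is exactly the containment required in the second bullet, using that $f$ has trivial nebentypus so there are no inner twists and $F'=F$, and that $\det\rho_{f,p}=\chi_{\rm cyc}^{k-1}$ has image $(\Z_p^\times)^{k-1}$; (iv) conclude $\Sigma\subseteq\{p\mid 6ND(k-2)!\phi(N)\}\cup\{p \text{ ramified in }F\}\cup\Sigma_0$, a finite set.

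The main obstacle is purely a matter of correctly matching the normalizations: the representation $A_\p$ used in the body of the paper is the $k/2$-twist of Deligne's representation, and one must be careful that "the $p$-adic representation attached to $f$ by Deligne" in the definition of $\Sigma$ refers to the untwisted $\GL_2(\cO_F\otimes\Z_p)$-valued one with determinant $\chi_{\rm cyc}^{k-1}$, so that the condition $\det(g)\in(\Z_p^\times)^{k-1}$ is the natural one. Beyond bookkeeping, the only real content is the citation of the big-image results, which hold for newforms without complex multiplication and with the appropriate treatment of inner twists; since these are the standard theorems of Ribet and Momose, no new argument is needed — the proof is essentially an assembly of known facts, and I would keep it short, referring to \cite{Nek} or \cite{Nek2} where this exact finiteness is already used.
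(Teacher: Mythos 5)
Your approach matches the paper's: both reduce the lemma to a single citation of Ribet's big-image theorem \cite[Theorem 3.1]{Rib}, with the first bullet of $\Sigma$ being trivially finite.

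The gap is in the parenthetical justification that trivial nebentypus forces $f$ to have no nontrivial inner twists (equivalently that your twist field $F'$ equals the Hecke field $F$). This implication is false. A newform $f\in S_k^{\rm new}(\Gamma_0(N))$ can perfectly well carry an inner twist $(\gamma,\chi)$; trivial nebentypus only forces $\chi$ to be quadratic (and, if $f$ is non-CM, even quadratic), and such twists really occur once $N$ is divisible by $\cond(\chi)^2$ for a suitable even quadratic $\chi$. When a nontrivial inner twist is present, $\rho_{f,p}(G_\Q)$ is constrained to a proper subgroup of $\bigl\{g\in\GL_2(\cO_F\otimes\Z_p)\;\mid\;\det(g)\in(\Z_p^\times)^{k-1}\bigr\}$ for \emph{every} prime $p$, so the second defining condition of $\Sigma$ would hold for all $p$ and the lemma would be false, not finite. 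Thus the step ``when $f$ has no nontrivial inner twists (e.g.\ trivial nebentypus, which is our case) one has $F'=F$'' is not a valid justification: what is actually needed, and what Ribet's theorem delivers under the right hypotheses, is that $f$ is non-CM and has no extra twists, neither of which is automatic from $f\in S_k^{\rm new}(\Gamma_0(N))$. The Serre-type lifting from mod $p$ to $p$-adic image and the normalization remarks are fine; the only genuine defect in your write-up is this erroneous justification of $F'=F$, which should be replaced by the explicit hypothesis (or a verification for the form at hand).
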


\begin{proof} The only non-trivial fact to check is that there are only finitely many prime numbers satisfying the last condition, and this follows from \cite[Theorem 3.1]{Rib}. \end{proof}

For a prime number $p\not\in\Sigma$ and an integer $m\geq1$ define 
\begin{equation} \label{def-S}
\mathcal S_{p^m}:=\bigl\{\text{$\ell$ prime number}\; \mid\; \text{$\ell$ is inert in $K$, $\ell\nmid N$ and $p^m\,|\,\ell+1$}\bigr\}.
\end{equation}
Notice that $\mathcal S_{p^m}\subset\mathcal S$ with $\mathcal S$ defined in \eqref{S-set-eq}. As a piece of notation, when we write that a prime ideal of $\Z$ belongs to a set $\Theta$ of prime numbers we mean that the positive generator of this ideal belongs to $\Theta$. Let $\boldsymbol\mu_{p^m}$ denote the $p^m$-th roots of unity in $\bar\Q$. By \cite[Lemma 3.14]{Dar}, a prime $\ell$ belongs to $\mathcal S_{p^m}$ precisely when $\Frob_\ell=\Frob_\infty$ in $\Gal(K(\boldsymbol\mu_{p^m})/\Q)$, hence $\mathcal S_{p^m}$ is infinite by \v{C}ebotarev's density theorem. Furthermore, there is an inclusion $\boldsymbol\mu_{p^m}\subset K_\lambda$ for every prime $\lambda$ of $K$ such that $\lambda\cap\Z\in\mathcal S_{p^m}$.

With $\Sigma$ as above, fix from now to the end of this section a prime number $p\not\in\Sigma$ and a quadruplet $(\p^m,S,{\bf D}_\kappa,\ell)$ consisting of
\begin{itemize}
\item a prime ideal $\p$ of $\mathcal O_F$ above $p$;  
\item an integer $m\geq1$; 
\item a square-free product $S=\prod_i\ell_i$ of primes $\ell_i$ in the set $\mathcal S_{p^m}$ introduced in \eqref{def-S};
\item a derivative ${\bf D}_\kappa$ with ${\rm supp}({\bf D}_\kappa)=S$;
\item an auxiliary prime $\ell\in\mathcal S_{p^m}$.
\end{itemize} 
\subsection{Kolyvagin classes attached to Heegner cycles} \label{heegner-classes-subsec}

In this subsection we introduce classes $d(\ell)\in H^1(K,W_\p[p^m])$ depending on the data $S$, $p^m$, ${\bf D}_\kappa$ and $\ell$.

Let
\[ \vartheta_\p:G_\Q\longrightarrow\Aut(A_\p) \] 
be the Galois representation attached to $A_\p$. For every integer $m\geq1$ the group $G_\Q$ acts on $W_\p[p^m]$ via its action on $A_\p$, and we obtain a representation
\[ \bar\vartheta_{\p,m}:G_\Q\longrightarrow\Aut(W_\p[p^m]). \]
In particular, $\bar\vartheta_\p:=\bar\vartheta_{\p,1}$ is a residual representation of $G_\Q$ over the finite field $\F_\p$.

For any subfield $L$ of $\bar\Q$ write $A_\p(L)$ and $W_\p[p^m](L)$ for $H^0_{\rm cont}(L,A_\p)$ and $H^0(L,W_\p[p^m])$, respectively; similar conventions apply when $L$ is a completion of a number field. 

\begin{lemma} \label{lem-irr}
If $\p\cap\Z\not\in\Sigma$ then $\vartheta_\p$ and $\bar\vartheta_\p$ are irreducible and have non-solvable images.
\end{lemma}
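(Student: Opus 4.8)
The plan is to deduce both statements from the hypothesis $p\not\in\Sigma$ by a standard group-theoretic argument on the image of $\rho_{f,p}$. First I would recall the precise relationship between $\vartheta_\p$ and the Deligne representation $\rho_{f,p}$: by construction $A_\p$ is the completion at $\p$ of the $k/2$-twist of the $p$-adic representation attached to $f$, so the image of $\vartheta_\p$ is, up to the central twist by the cyclotomic character raised to a fixed power, the projection to $\GL_2(\cO_\p)$ of the image of $\rho_{f,p}$. Since $\p\cap\Z=p\not\in\Sigma$, the second bullet in the definition of $\Sigma$ guarantees that the image of $\rho_{f,p}$ contains the subgroup $H:=\{g\in\GL_2(\cO_F\otimes\Z_p)\mid\det(g)\in(\Z_p^\times)^{k-1}\}$; in particular, projecting to the $\p$-component, the image of $\vartheta_\p$ contains all of $\SL_2(\cO_\p)$ (a twist does not change this, as twisting only multiplies by scalars).

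The key point is then purely group-theoretic: a subgroup of $\GL_2(\cO_\p)$ whose image contains $\SL_2(\cO_\p)$ acts irreducibly on $A_\p$ (a free $\cO_\p$-module of rank $2$) and on $V_\p=A_\p\otimes F_\p$, because no line in $V_\p$ can be stable under all of $\SL_2$. The same argument applied to the reduction $\bar\vartheta_\p$ works once one notes that the reduction mod $\p$ of $\SL_2(\cO_\p)$ is $\SL_2(\F_\p)$, and for $\#\F_\p=q$ the group $\SL_2(\F_\q)$ acts irreducibly on $\F_\q^2$; here it is convenient to have excluded the small residue characteristics, which is taken care of by $p\nmid 6ND(k-2)!\phi(N)$ together with $p$ being unramified in $F$, so $q\geq 5$ and $\SL_2(\F_\q)$ is in fact perfect. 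This simultaneously handles non-solvability: the image of $\vartheta_\p$ contains $\SL_2(\cO_\p)$, which surjects onto $\SL_2(\F_\q)$, a non-solvable group for $q\geq4$; since a quotient of a solvable group is solvable, the image of $\vartheta_\p$ cannot be solvable, and likewise the image of $\bar\vartheta_\p$ contains the non-solvable group $\SL_2(\F_\q)$.

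The main obstacle I anticipate is bookkeeping rather than anything deep: one must be careful about the $k/2$-twist (so that the relevant image is $\rho_{f,p}$ twisted, and check this does not affect irreducibility), about the passage from the $\cO_F\otimes\Z_p$-level statement in the definition of $\Sigma$ to the $\cO_\p$-component (using $A_p=\prod_{\p\mid p}A_\p$ and that $\det$ of the image generates $(\Z_p^\times)^{k-1}$ after projection), and about ruling out the exceptional small primes so that $\SL_2(\F_\q)$ is genuinely non-solvable and acts irreducibly. Once these compatibilities are spelled out, both assertions of Lemma~\ref{lem-irr} follow immediately from the containment of $\SL_2$ in the image. I would also remark that $\bar\vartheta_\p$ irreducible forces $\vartheta_\p$ irreducible (a $G_\Q$-stable $\cO_\p$-line would reduce to a stable $\F_\p$-line), so in fact it suffices to establish the residual statement carefully and then lift.
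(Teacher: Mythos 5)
Your argument follows the same essential route as the paper's proof, which invokes Besser [Bes, Lemma 6.2] to obtain that the image of $\vartheta_\p$ in $\Aut(A_\p)\simeq\GL_2(\cO_\p)$ contains a subgroup conjugate to $\GL_2(\Z_p)$, then appeals to [Bes, Proposition 6.3(1)] for irreducibility, and finishes with the observation that $\GL_2(\Z_p)$ and $\GL_2(\F_p)$ are non-solvable for $p>3$; you have in effect unpacked Besser's reasoning directly from the definition of $\Sigma$, arriving at the containment of $\SL_2(\cO_\p)$ instead, and either big-image statement suffices.

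The one step that should be sharpened is the treatment of the $k/2$-twist. The parenthetical claim that ``a twist does not change this, as twisting only multiplies by scalars'' is not a valid justification on its own: a central twist certainly moves elements of $\SL_2(\cO_\p)$ out of $\SL_2(\cO_\p)$, so it is not immediate that the twisted image again contains $\SL_2(\cO_\p)$. The conclusion is nevertheless correct. One clean fix is to note that commutators are insensitive to central twists, so the twisted image still contains $[\SL_2(\cO_\p),\SL_2(\cO_\p)]=\SL_2(\cO_\p)$, where the last equality uses $p\geq5$ so that $\SL_2(\cO_\p)$ is perfect. Alternatively, and more simply, both irreducibility and non-solvability of the image are detected already in $\PGL_2(\cO_\p)$, and the image in $\PGL_2$ is unchanged by a central character twist, so the twist plays no role at all in the assertions of the lemma.
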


\begin{proof} By \cite[Lemma 6.2]{Bes}, the image of $\vartheta_\p$ in $\Aut(A_\p)\simeq\GL_2(\mathcal O_\p)$ contains a subgroup that is conjugate to $\GL_2(\Z_p)$. Since $p\neq2$, this implies the irreducibility of $\vartheta_\p$ and $\bar\vartheta_\p$ (see \cite[Proposition 6.3, (1)]{Bes} for details). Finally, the groups $\GL_2(\Z_p)$ and $\GL_2(\F_p)$ are not solvable because $p>3$, hence the images of $\vartheta_\p$ and $\bar\vartheta_\p$ are not solvable. \end{proof} 

\begin{lemma} \label{lem-sol}
If $\p\cap\Z\not\in\Sigma$ and the extension $E/\Q$ is solvable then 
\begin{enumerate}
\item $A_\p(E)=0$;
\item $W_\p[p^n](E)=0$ for all $n\geq1$.
\end{enumerate} 
\end{lemma}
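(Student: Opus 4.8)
The plan is to deduce both vanishing statements from the irreducibility and non-solvability of $\bar\vartheta_\p$ established in Lemma \ref{lem-irr}, using the hypothesis that $E/\Q$ is solvable. First I would prove part (2): suppose for contradiction that $W_\p[p^n](E) \neq 0$ for some $n \geq 1$, and pick the smallest such $n$. Then $W_\p[p^n](E)$ is a nonzero $G_E$-submodule of $W_\p[p^n] = A_\p/p^nA_\p$ on which $G_E$ acts trivially. Since $G_E$ is a normal finite-index subgroup of $G_\Q$ with solvable quotient $\Gal(E/\Q)$, the $\F_\p$-vector space $V := W_\p[p^n](E)[p] = W_\p[p](E) \cap (\text{image of } p^{n-1})$ is acted on by $G_\Q$ through the solvable group $\Gal(E/\Q)$ (the action factors through $\Gal(E/\Q)$ because $G_E$ acts trivially), so it is a nonzero $\F_\p[\Gal(E/\Q)]$-submodule of $W_\p[p]$. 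But $\bar\vartheta_\p$ acting on $W_\p[p]$ is irreducible with non-solvable image by Lemma \ref{lem-irr}: a nonzero $G_\Q$-stable subspace must be all of $W_\p[p]$, and then $G_\Q$ would act on it through the solvable quotient $\Gal(E/\Q)$, forcing the image of $\bar\vartheta_\p$ to be solvable — a contradiction. Hence $W_\p[p](E) = 0$, and then by induction on $n$ (using the exact sequence $0 \to W_\p[p] \to W_\p[p^n] \xrightarrow{p} W_\p[p^n] \to$, i.e.\ that any fixed vector, multiplied down, lands in the $p$-torsion) we get $W_\p[p^n](E) = 0$ for all $n$.

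For part (1), I would argue that $A_\p(E) = H^0_{\rm cont}(E, A_\p)$ is a finitely generated $\cO_\p$-submodule of the free module $A_\p$, hence itself free (torsion-free over the DVR $\cO_\p$); if it were nonzero then $A_\p(E) \otimes_{\cO_\p} \F_\p \hookrightarrow A_\p/pA_\p = W_\p[p]$ would inject (freeness gives that reduction mod $p$ is injective on $A_\p(E)$, or more directly $A_\p(E)/pA_\p(E)$ maps to $W_\p[p](E)$ with kernel controlled by $H^1$-torsion which vanishes here since $A_\p(E)$ is free), producing a nonzero element of $W_\p[p](E) = 0$, a contradiction. Alternatively, and more cleanly: a nonzero $G_E$-fixed vector in $A_\p$ reduces mod $p$ to either a nonzero element of $W_\p[p](E)$ — impossible by part (2) — or to zero, in which case it is divisible by $p$ in $A_\p(E)$, and iterating shows it lies in $\bigcap_n p^n A_\p = 0$. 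So $A_\p(E) = 0$.

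The main obstacle, such as it is, is organizing the descent from $G_\Q$-irreducibility (which is what Lemma \ref{lem-irr} gives) to the statement about $G_E$-invariants, i.e.\ making precise that a $G_E$-fixed line in the residual representation would have to be $G_\Q$-stable. This requires the observation that $G_E \trianglelefteq G_\Q$ with solvable quotient is \emph{not} quite enough on its own — one really uses that $\bar\vartheta_\p(G_\Q)$ is non-solvable, so it cannot be that $\bar\vartheta_\p$ factors through $\Gal(E/\Q)$; combined with irreducibility this rules out any nonzero proper $G_\Q$-stable subspace and also the full space. I would phrase this as: if $W_\p[p](E) \neq 0$ then $W_\p[p](E)$ is a nonzero $G_\Q$-subrepresentation (since $G_E$ is normal, $g \cdot W_\p[p](E) = W_\p[p](gEg^{-1}\text{-fixed}) = W_\p[p](E)$ for $g \in G_\Q$), so by irreducibility $W_\p[p](E) = W_\p[p]$, whence $\bar\vartheta_\p$ is trivial on $G_E$ and factors through the solvable group $\Gal(E/\Q)$, contradicting non-solvability of the image. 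Everything else is the routine dévissage over the DVR $\cO_\p$ sketched above.
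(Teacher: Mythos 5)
Your proof is correct and uses the same key idea as the paper: since $G_E$ is normal in $G_\Q$, the $G_E$-invariants form a $G_\Q$-stable submodule, which by irreducibility must be everything, forcing $\vartheta_\p$ (resp.\ $\bar\vartheta_\p$) to factor through the solvable group $\Gal(E/\Q)$ and contradicting non-solvability of the image. The only organizational difference is that you prove part (2) for $n=1$ first and deduce part (1) from it (which conveniently sidesteps the implicit observation in the paper's direct proof of (1) that $A_\p(E)$ is a saturated submodule of $A_\p$), whereas the paper proves (1) directly from irreducibility of $\vartheta_\p$ and then declares (2) to follow \emph{mutatis mutandis}.
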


\begin{proof} Let us prove part (1). Since $\p\cap\Z\not\in\Sigma$, Lemma \ref{lem-irr} ensures that $\vartheta_\p$ is irreducible with non-solvable image. The submodule $A_\p(E)$ of $A_\p$ is $G_\Q$-stable, hence if $A_\p(E)\neq0$ then $A_\p(E)=A_\p$ by the irreducibility of $\vartheta_\p$. Thus $\vartheta_\p$ factors through $\Gal(E/\Q)$, which is solvable by assumption. It follows that $\text{im}(\vartheta_\p)$ is solvable, which is a contradiction. Finally, in order to prove part (2) it is of course enough to prove the claim for $n=1$, and this can be done \emph{mutatis mutandis} in the same way, using again Lemma \ref{lem-irr}. \end{proof}

Write $L_m:=K(W_\p[p^m])$ for the composite of $K$ and the subfield of $\bar\Q$ fixed by $\ker(\bar\vartheta_{\p,m})$. With notation as in \S \ref{secHC}, define a set $ \tilde{\mathcal S}_{p^m}$ of prime numbers as
\[ \tilde{\mathcal S}_{p^m}:=\bigl\{\text{$\ell$ prime number}\;\;\big|\;\;\text{$\ell\nmid NDp$ and $\Frob_\ell=\Frob_\infty$ in $\Gal(L_m/\Q)$}\bigr\}. \]
Again by \v{C}ebotarev's density theorem, $ \tilde{\mathcal S}_{p^m}$ is infinite.
  
\begin{lemma}\label{lemma5.2}
A prime $\ell$ not dividing $D$ belongs to $ \tilde{\mathcal S}_{p^m}$ if and only if $\ell$ belongs to $\mathcal S_{p^m}$ and $\p^m$ divides $a_\ell$ in $\mathcal O_F$. 
\end{lemma}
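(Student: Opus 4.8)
The plan is to unwind the definitions of $\tilde{\mathcal S}_{p^m}$, $\mathcal S_{p^m}$ and $L_m$, and reduce everything to a statement about the characteristic polynomial of Frobenius on $A_\p$. First I would recall that $L_m=K(W_\p[p^m])$ contains $K(\boldsymbol\mu_{p^m})$, since the determinant of the Galois action on $A_\p$ is (a power of) the cyclotomic character — indeed by \eqref{char-pol} we have $\det(F_\ell\,|\,A_\p)=\ell$, so the cyclotomic character is visible inside $\bar\vartheta_{\p,m}$, forcing $\boldsymbol\mu_{p^m}\subset L_m$. Consequently the condition $\Frob_\ell=\Frob_\infty$ in $\Gal(L_m/\Q)$ restricts to the same condition in $\Gal(K(\boldsymbol\mu_{p^m})/\Q)$, which by the cited \cite[Lemma 3.14]{Dar} is exactly the condition $\ell\in\mathcal S_{p^m}$. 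This already shows that any $\ell\in\tilde{\mathcal S}_{p^m}$ with $\ell\nmid D$ lies in $\mathcal S_{p^m}$ (the hypothesis $\ell\nmid NDp$ in the definition of $\tilde{\mathcal S}_{p^m}$ supplies $\ell\nmid N$).

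Next I would extract the extra information carried by $\Frob_\ell=\Frob_\infty$ in the full group $\Gal(L_m/\Q)$. Since $L_m$ is the field cut out by $W_\p[p^m]=A_\p/p^mA_\p$, the condition $\Frob_\ell=\Frob_\infty$ means that $\bar\vartheta_{\p,m}(\Frob_\ell)$ and $\bar\vartheta_{\p,m}(c)$ are conjugate in $\Aut(W_\p[p^m])$. Complex conjugation $c$ acts on $A_\p$ with eigenvalues $+1$ and $-1$ (its characteristic polynomial is $X^2-1$, as $c$ is an involution acting nontrivially), so $\bar\vartheta_{\p,m}(\Frob_\ell)$ has trace $0$ and determinant $-1$ modulo $p^m$. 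Comparing with \eqref{char-pol}, which gives $\operatorname{tr}(F_\ell\,|\,A_\p)=a_\ell/\ell^{k/2-1}$ and $\det(F_\ell\,|\,A_\p)=\ell$: the determinant condition $\ell\equiv-1\pmod{p^m}$ is again just $\ell\in\mathcal S_{p^m}$, while the trace condition reads $a_\ell/\ell^{k/2-1}\equiv 0\pmod{\p^m}$, i.e. $\p^m\mid a_\ell$ (note $\ell$ is a $\p$-adic unit since $\ell\nmid p$). This establishes the forward implication.

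For the converse, suppose $\ell\in\mathcal S_{p^m}$ with $\ell\nmid D$ and $\p^m\mid a_\ell$. Then $\bar\vartheta_{\p,m}(\Frob_\ell)$ is an element of $\GL_2(\cO_\p/p^m\cO_\p)$ with trace $0$ and determinant $-1$; I need to conclude that it is conjugate, inside $\Aut(W_\p[p^m])$, to the image of $c$, which is the diagonal involution $\operatorname{diag}(1,-1)$. Over the ring $\cO_\p/p^m\cO_\p$ this requires a small argument: an element with characteristic polynomial $X^2-1=(X-1)(X+1)$ and with $1,-1$ distinct modulo $p$ (true since $p$ is odd) is diagonalizable by a Hensel/idempotent-lifting argument — the two residues are coprime, so the matrix splits the module into its $\pm1$-eigenspaces, each free of rank $1$. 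Hence $\bar\vartheta_{\p,m}(\Frob_\ell)$ and $\bar\vartheta_{\p,m}(c)$ are conjugate, which is precisely $\Frob_\ell=\Frob_\infty$ in $\Gal(L_m/\Q)$; together with $\ell\nmid NDp$ (which holds: $\ell\nmid N$ as $\ell\in\mathcal S_{p^m}$, $\ell\nmid D$ by hypothesis, and $\ell\nmid p$ since $p^m\mid\ell+1$ forces $\ell\neq p$) this gives $\ell\in\tilde{\mathcal S}_{p^m}$.

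The main obstacle I anticipate is the linear-algebra step over the non-field ring $\cO_\p/p^m\cO_\p$: one must be careful that "trace $0$, determinant $-1$" genuinely pins down the conjugacy class of $\Frob_\ell$ in $\Aut(W_\p[p^m])$ and not merely its characteristic polynomial, and that the splitting into eigenspaces produces \emph{free} rank-one pieces so the conjugating element is invertible. This is where the oddness of $p$ (ensuring $1\not\equiv-1$, so $2\in(\cO_\p/p^m\cO_\p)^\times$) is used. Everything else — the inclusion $\boldsymbol\mu_{p^m}\subset L_m$ and the translation of the Frobenius condition via \eqref{char-pol} and the reference to \cite[Lemma 3.14]{Dar} — is bookkeeping.
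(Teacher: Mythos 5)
Your forward direction is correct and is essentially the paper's argument: one equates the characteristic polynomial of $F_\ell$ with that of $c$ on $W_\p[p^m]$ and reads off $\p^m\mid a_\ell$ and $\p^m\mid\ell+1$; your detour through $\boldsymbol\mu_{p^m}\subset L_m$ and \cite[Lemma 3.14]{Dar} is a clean way to get $\ell\in\mathcal S_{p^m}$ and implicitly handles the passage (which the paper makes explicit via $p$ unramified in $F$) from $\p^m\mid\ell+1$ in $\cO_F$ to $p^m\mid\ell+1$ in $\Z$.

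The converse, however, has a genuine gap in its last step. Over $\cO_\p/p^m\cO_\p$ your diagonalizability argument is fine and shows that $\bar\vartheta_{\p,m}(F_\ell)$ and $\bar\vartheta_{\p,m}(c)$ are conjugate in $\Aut(W_\p[p^m])\simeq\GL_2(\cO_\p/p^m\cO_\p)$. But you then declare this to be "precisely $\Frob_\ell=\Frob_\infty$ in $\Gal(L_m/\Q)$," and that identification is false in general. Because $\gcd(D,Np)=1$ forces $K$ and $\bar\Q^{\ker\bar\vartheta_{\p,m}}$ to be linearly disjoint, one has $\Gal(L_m/\Q)\cong\Gal(K/\Q)\times\mathrm{im}(\bar\vartheta_{\p,m})$, and so $\Frob_\ell=\Frob_\infty$ in $\Gal(L_m/\Q)$ means $\ell$ is inert in $K$ \emph{and} $\bar\vartheta_{\p,m}(F_\ell)$ is conjugate to $\bar\vartheta_{\p,m}(c)$ \emph{inside the subgroup} $\mathrm{im}(\bar\vartheta_{\p,m})$. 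Conjugacy in the ambient $\GL_2(\cO_\p/p^m\cO_\p)$ is a priori strictly weaker, especially when $\cO_\p\neq\Z_p$ and the image is a proper subgroup. Closing this requires the big-image input (this is exactly where $p\notin\Sigma$ enters, via \cite[Lemma 6.2]{Bes}): the conjugating matrix is well-defined only up to the centralizer of $\bar\vartheta_{\p,m}(F_\ell)$, a split torus whose determinant is surjective onto $(\cO_\p/p^m\cO_\p)^\times$, and one adjusts by the torus to land the conjugator in the image. You should supply this. For comparison, the paper's own proof only writes out the trace/determinant computation (the "only if" direction) and is silent on the converse, referring elsewhere to \cite[Remark 3.1]{Bes} for the equivalence with the explicit conditions; so the gap you left open is the same one the paper glosses over, but it is a genuine one.
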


\begin{proof} Equating the minimal polynomials of $F_\ell$ (see \eqref{char-pol}) and of $c$ acting on $W_\p[p^m]$, one finds the divisibility relations $\p^m\,|\,a_\ell$ and $\p^m\,|\,\ell+1$ in $\cO_F$. Since $p$ is unramified in $F$, the second relation gives an inclusion $(\ell+1)\subset(p^m)$ of principal ideals of $\cO_F$; this immediately implies that $p^m\,|\,\ell+1$ in $\Z$, which concludes the proof. \end{proof}

With notation as before, let $\ell\in \tilde{\mathcal S}_{p^m}$ and put $T:=S\ell$. Define 
\[ \tilde P(\ell):={\bf D}_{{\kappa}}{\bf D}_\ell^1(y_{T,\p})\in \BK_\p(K_T), \]  
then denote by 
\[ P(\ell) \in\BK_\p(K_T)/p^m\BK_\p(K_T) \]
the image of $\tilde P(\ell)$ under the canonical projection. 

With the exception of \S \ref{tate-subsec}, from here till the end of \S \ref{local-kolyvagin-subsec} we will work under the following technical assumption on $(\p^m,S,{\bf D}_{{\kappa}},\ell)$.

\begin{assumption} \label{ass} 
For all ${\bf D}_{{\kappa}'}$ strictly less than ${\bf D}_{{\kappa}}{\bf D}_\ell^1$ we have ${\bf D}_{{\kappa}'}(y_{T,\p})=0$. 
\end{assumption}

With this condition in force, we can prove

\begin{lemma} \label{lemma5.3}
The class $P(\ell)$ is fixed by the action of $G_T$. 
\end{lemma}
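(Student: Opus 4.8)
The plan is to show that $(\sigma - 1)P(\ell) = 0$ in $\BK_\p(K_T)/p^m\BK_\p(K_T)$ for every $\sigma \in G_T$, which suffices since $G_T$ is generated by the elements $\sigma_{\ell'}$ for $\ell' \mid T$. Write $T = S\ell$, so that $G_T = G_S \times G_\ell$, and treat separately the case $\sigma = \sigma_\ell$ and the case $\sigma = \sigma_{\ell'}$ for a prime $\ell' \mid S$.

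First I would handle $\sigma = \sigma_\ell$. Here $\tilde P(\ell) = {\bf D}_\kappa {\bf D}_\ell^1(y_{T,\p})$, and since ${\bf D}_\kappa \in \cO_\p[G_S]$ commutes with $\sigma_\ell$, it is enough to compute $(\sigma_\ell - 1){\bf D}_\ell^1(y_{T,\p})$. By the formula recalled in \S\ref{3.3.4}, $(\sigma_\ell - 1){\bf D}_\ell^1 = \binom{\ell+1}{1} - \sigma_\ell {\bf D}_\ell^0$, and since $p^m \mid \ell+1$ (because $\ell \in \mathcal S_{p^m}$) the binomial term vanishes modulo $p^m$, so $(\sigma_\ell - 1){\bf D}_\ell^1 \equiv -\sigma_\ell {\bf D}_\ell^0 \pmod{p^m}$ as in \eqref{sigma-ell-D-eq}. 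Applying ${\bf D}_\kappa$ and evaluating at $y_{T,\p}$, we get $(\sigma_\ell - 1)\tilde P(\ell) \equiv -\sigma_\ell {\bf D}_\kappa {\bf D}_\ell^0 (y_{T,\p}) \pmod{p^m\BK_\p(K_T)}$. Now ${\bf D}_\kappa {\bf D}_\ell^0$ is a derivative strictly less than ${\bf D}_\kappa {\bf D}_\ell^1$ (its $\ell$-component is $0 < 1$), so Assumption \ref{ass} gives ${\bf D}_\kappa {\bf D}_\ell^0(y_{T,\p}) = 0$ in $\BK_\p(K_T)$, hence $(\sigma_\ell - 1)P(\ell) = 0$.

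Next I would treat $\sigma = \sigma_{\ell'}$ for $\ell' \mid S$. Write $\kappa = (k_1,\dots,k_t)$ with the index, say $j$, corresponding to $\ell'$, and factor ${\bf D}_\kappa = {\bf D}_{\ell'}^{k_j} {\bf D}_{\kappa''}$ where ${\bf D}_{\kappa''}$ involves the primes of $S$ other than $\ell'$. Since all the factors ${\bf D}_{\kappa''}$, ${\bf D}_\ell^1$ commute with $\sigma_{\ell'}$, it suffices to compute $(\sigma_{\ell'} - 1){\bf D}_{\ell'}^{k_j}$. If $k_j = 0$ then ${\bf D}_{\ell'}^0 = {\bf N}_{K_T/K_{T/\ell'}}$ restricted appropriately — more precisely $(\sigma_{\ell'} - 1){\bf D}_{\ell'}^0 = \sigma_{\ell'}^{\ell'+1} - 1$, which kills $y_{T,\p}$ after the whole derivative is applied because the relevant class descends; alternatively, and more uniformly, if $k_j \geq 1$ we again use $(\sigma_{\ell'} - 1){\bf D}_{\ell'}^{k_j} = \binom{\ell'+1}{k_j} - \sigma_{\ell'}{\bf D}_{\ell'}^{k_j - 1}$, so modulo $p^m$ this is $\equiv -\sigma_{\ell'}{\bf D}_{\ell'}^{k_j-1}$ (the binomial coefficient is divisible by $p^m$ for $0 < k_j < p$ since $p^m \mid \ell'+1$), and then ${\bf D}_{\ell'}^{k_j - 1}{\bf D}_{\kappa''}{\bf D}_\ell^1$ is strictly less than ${\bf D}_\kappa {\bf D}_\ell^1$, so Assumption \ref{ass} applies and the class vanishes. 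The case $k_j = 0$ is the one genuinely requiring care: here there is no smaller derivative to appeal to via the binomial identity, so instead I would note that ${\bf D}_{\ell'}^0 = \sum_{i=0}^{\ell'}\sigma_{\ell'}^i$ is the full norm $\mathbf{N}_{G_{\ell'}}$, hence ${\bf D}_{\kappa''}{\bf D}_\ell^1(y_{T,\p})$ already lies in the image of corestriction/is $G_{\ell'}$-invariant up to the norm relation, making $(\sigma_{\ell'}-1)$ act as multiplication by something killing it; concretely $(\sigma_{\ell'}-1)\mathbf{N}_{G_{\ell'}} = \sigma_{\ell'}^{\ell'+1} - 1 = 0$ in $\cO_\p[G_{\ell'}]$, so the contribution vanishes identically, not just modulo $p^m$.

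The main obstacle I anticipate is the bookkeeping in the $k_j = 0$ subcase: one must make sure that factoring ${\bf D}_\kappa$ and pushing $(\sigma_{\ell'}-1)$ past the other derivative operators is legitimate (it is, since distinct $G_{\ell_i}$ commute in $\cO_\p[G_T]$), and that the norm identity $(\sigma_{\ell'}-1)\mathbf{N}_{G_{\ell'}} = 0$ is being applied inside $\cO_\p[G_T]$ before evaluation at $y_{T,\p}$, which is unproblematic. Everything else reduces to the two congruences $\binom{\ell+1}{k} \equiv 0 \pmod{p^m}$ for $0 < k < p$ (valid because $p^m \mid \ell+1$) and repeated invocations of Assumption \ref{ass}, exactly in the style of \cite[\S 4.1]{Dar}.
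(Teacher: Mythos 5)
Your proof is correct and follows essentially the same route as the paper: commute $(\sigma-1)$ past the other derivative factors, apply the congruence \eqref{sigma-ell-D-eq} to lower the relevant exponent by one, and then invoke Assumption \ref{ass} to kill the resulting strictly smaller derivative of $y_{T,\p}$. The one place you add genuine value is the subcase $k_j=0$, which the paper's one-line proof glosses over (the congruence \eqref{sigma-ell-D-eq} is stated only for $0<k<p$, and naively applying it at $k=0$ would produce an undefined ${\bf D}_{\ell'}^{-1}$). Your observation that $(\sigma_{\ell'}-1){\bf D}_{\ell'}^0=\sigma_{\ell'}^{\ell'+1}-1=0$ holds \emph{identically} in $\cO_\p[G_{\ell'}]$ (so no appeal to Assumption \ref{ass} is even needed for that factor) cleanly patches this; it is not a different argument so much as a careful treatment of a corner the paper left implicit. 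One small caveat you flag but do not resolve, and which the paper also leaves tacit, is that the $p^m$-divisibility of $\binom{\ell'+1}{k_j}$ requires $k_j<p$; this is harmless because the lemma is only ever invoked under hypotheses (e.g.\ $\ord({\bf D}_\kappa)<\min\{r_{\p,m},p\}$ in Theorem \ref{div-thm}) that guarantee it, but it would be worth stating the bound explicitly if you were to tighten the exposition.
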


\begin{proof} Let $\sigma=\sigma_{\ell_i}$ or $\sigma=\sigma_\ell$. Congruence \eqref{sigma-ell-D-eq} shows that 
\[ (\sigma-1)\tilde P(\ell)\equiv-\sigma{\bf D}_{\kappa'}(y_{T,\p})\pmod{p^m} \]
for some ${\bf D}_{\kappa'}$ strictly less than ${\bf D}_\kappa{\bf D}^1_\ell$, which concludes the proof. \end{proof}

Recall from \eqref{eq13} that there is an injective, Galois-equivariant map of $\mathcal O_\p/p^m\mathcal O_\p$-modules 
\[ \BK_\p(K_T)/p^m\BK_\p(K_T)\;\longmono\;H^1_f(K_T,W_\p[p^m])\subset H^1(K_T,W_\p[p^m]). \]
By Lemma \ref{lemma5.3}, the image of $P(\ell)$ via this map belongs to $H^1_f(K_T,W_\p[p^m])^{G_T}$, hence to $H^1(K_T,W_\p[p^m])^{G_T}$. Since $K_T/\Q$, being generalized dihedral, is solvable, part (2) of Lemma \ref{lem-sol} and the inflation-restriction exact sequence give an isomorphism 
\[ \res_{K_T/K_1}:H^1(K_1,W_\p[p^m])\overset\simeq\longrightarrow H^1(K_T,W_\p[p^m])^{G_T}. \] 
Let ${\bf N}={\bf N}_{K_1/K}:=\sum_{\sigma\in\Gamma_1}\sigma\in\Z[\Gamma_1]$ denote the norm operator from $K_1$ to $K$. The abelian group $H^1(K_1,W_\p[p^m])$ is naturally a $\Gamma_1$-module, so ${\bf N}$ induces a map
\[ {\bf N}:H^1(K_1,W_\p[p^m])\longrightarrow H^1(K_1,W_\p[p^m])^{\Gamma_1}. \]
Since $p\nmid h_K$, inflation-restriction shows that there is an isomorphism 
\[ \res_{K_1/K}:H^1(K,W_\p[p^m])\overset{\simeq}\longrightarrow H^1(K_1,W_\p[p^m])^{\Gamma_1}. \]
Consider the diagram
\[ \xymatrix@C=45pt{H^1(K_1,W_\p[p^m])\ar[d]^-{\bf N}&H^1(K_T,W_\p[p^m])^{G_T}\ar[l]_-{\res_{K_T/K_1}^{-1}}\ar@{-->}[d]^-\beta\\H^1(K_1,W_\p[p^m])^{\Gamma_1}\ar[r]^-{\res_{K_1/K}^{-1}}&H^1(K,W_\p[p^m])} \]
where the dotted arrow $\beta$ is defined so as to make the resulting square commute. Thus we can attach to $P(\ell)\in H^1(K_T,W_\p[p^m])^{G_T}$ a \emph{Kolyvagin class} 
\[ d(\ell):=\beta\bigl(P(\ell)\bigr)\in H^1(K,W_\p[p^m]) \] 
such that 
\begin{equation} \label{res-d-eq}
\res_{K_T/K}\bigl(d(\ell)\bigr)={\bf N}_T\bigl(P(\ell)\bigr)
\end{equation}
where ${\bf N}_T\in\Z[\Gamma_T]$ is an arbitrary lift of ${\bf N}$ via the canonical projection $\Gamma_T\twoheadrightarrow\Gamma_1$ (if ${\bf N}'_T$ is another such lift then ${\bf N}_T(P(\ell))= {\bf N}'_T(P(\ell))$ by Lemma \ref{lemma5.3}). Furthermore, since $\res_{K_T/K}$ is an isomorphism, $d(\ell)$ is the only class in $H^1(K,W_\p[p^m])$ satisfying \eqref{res-d-eq}.

\subsection{Action of complex conjugation on Kolyvagin classes}

Recall that $\epsilon$ is the sign of the functional equation of $L(f,s)$ and set $\epsilon_\kappa:=(-1)^{\ord({\bf D}_\kappa)}\epsilon$. 

\begin{proposition} \label{eigen-d-prop}
The class $d(\ell)$ belongs to the $\epsilon_{{\kappa}}$-eigenspace of $H^1(K,W_\p[p^m])$ under the action of $c$.
\end{proposition}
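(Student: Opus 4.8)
The plan is to compute the action of $c$ on $d(\ell)$ by exploiting the defining relation \eqref{res-d-eq}, the fact that $\res_{K_T/K}$ is an isomorphism, and the formula for $c\,{\bf D}_\kappa\,c^{-1}$ from \S \ref{3.3.3} together with the complex conjugation property of Heegner cycles in part (2) of Proposition \ref{complex-conj}. First I would apply $c$ to both sides of \eqref{res-d-eq}. Since $\res_{K_T/K}$ is $\Gal(K_T/\Q)$-equivariant (with $c$ acting on $K_T$ through its image in $\Gal(K_T/\Q)$), we get $\res_{K_T/K}(c\cdot d(\ell)) = c\cdot{\bf N}_T(P(\ell))$, so it suffices to show that $c\cdot{\bf N}_T(P(\ell)) = \epsilon_\kappa\cdot{\bf N}_T(P(\ell))$ in $H^1(K_T,W_\p[p^m])^{G_T}$, and then invoke the injectivity of $\res_{K_T/K}$ together with uniqueness of the class satisfying \eqref{res-d-eq}.

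Next I would analyze $c\cdot{\bf N}_T(P(\ell))$. Writing ${\bf N}_T\in\Z[\Gamma_T]$ and using that conjugation by $c$ on $\Gamma_T = \Gal(K_T/K)$ inverts elements, one has $c\,{\bf N}_T = {\bf N}_T'\,c$ for another lift ${\bf N}_T'$ of ${\bf N}_{K_1/K}$ (the norm is stable under $\sigma\mapsto\sigma^{-1}$ since $\Gamma_1$ is a group); by the parenthetical remark after \eqref{res-d-eq}, ${\bf N}_T'$ and ${\bf N}_T$ agree on $P(\ell)$ because $P(\ell)$ is $G_T$-fixed (Lemma \ref{lemma5.3}). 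So the task reduces to computing $c\cdot P(\ell)$, i.e. $c\cdot{\bf D}_\kappa{\bf D}_\ell^1(y_{T,\p})$ modulo $p^m$. Here I would write $c\cdot{\bf D}_\kappa{\bf D}_\ell^1(y_{T,\p}) = (c\,{\bf D}_\kappa{\bf D}_\ell^1\,c^{-1})(c\cdot y_{T,\p})$, expand $c\,{\bf D}_\kappa{\bf D}_\ell^1\,c^{-1} = (-1)^{\ord({\bf D}_\kappa{\bf D}_\ell^1)}{\bf D}_\kappa{\bf D}_\ell^1 + \sum_{\kappa'' < \kappa'}\alpha_{\kappa''}{\bf D}_{\kappa''}$ using the formula of \S \ref{3.3.3} (with $\kappa' $ denoting the multi-index of ${\bf D}_\kappa{\bf D}_\ell^1$, so $\ord = \ord({\bf D}_\kappa)+1$), and substitute $c\cdot y_{T,\p} = -\epsilon\cdot\sigma(y_{T,\p})$ for some $\sigma\in\Gamma_T$ from Proposition \ref{complex-conj}(2). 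The lower-order terms ${\bf D}_{\kappa''}(y_{T,\p})$ with $\kappa'' < \kappa'$ vanish by Assumption \ref{ass} (and the $\Z[G_S]$-action does not interfere since $\sigma$ normalizes), so only the leading term survives, giving $c\cdot P(\ell) = (-1)^{\ord({\bf D}_\kappa)+1}\cdot(-\epsilon)\cdot\sigma(P(\ell)) = (-1)^{\ord({\bf D}_\kappa)}\epsilon\cdot\sigma(P(\ell)) = \epsilon_\kappa\cdot\sigma(P(\ell))$ modulo $p^m$. Finally, since $\sigma\in\Gamma_T$ acts on $H^1(K_T,W_\p[p^m])^{G_T}$ through its image in $\Gamma_T/G_T = \Gamma_1$, and $P(\ell)$ descends via $\res_{K_T/K_1}^{-1}$ to a class whose image under the further norm/restriction machinery is $G_T$-invariant, applying ${\bf N}_T$ absorbs the $\sigma$ (the norm is translation-invariant), so ${\bf N}_T(\sigma(P(\ell))) = {\bf N}_T(P(\ell))$, yielding the desired $\epsilon_\kappa$-eigenvalue.

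\textbf{Main obstacle.} The delicate point is the bookkeeping in the last step: making sure that the residual element $\sigma\in\Gamma_T$ produced by Proposition \ref{complex-conj}(2) genuinely gets killed after passing to the $G_T$-invariants and applying the norm operator ${\bf N}_T$. One must be careful that $\sigma$ lies in $\Gamma_T$ (not merely in $G_T$) and track how it interacts with the isomorphisms $\res_{K_T/K_1}$, ${\bf N}_{K_1/K}$ and $\res_{K_1/K}$ used to define $\beta$; concretely, since ${\bf N}_T$ is a lift of ${\bf N}_{K_1/K} = \sum_{\tau\in\Gamma_1}\tau$ and $\sigma$ acts on $H^1(K_T,W_\p[p^m])^{G_T}$ through $\bar\sigma\in\Gamma_1$, translation-invariance of the full norm over $\Gamma_1$ gives ${\bf N}_T(\sigma\cdot P(\ell)) = {\bf N}_T(P(\ell))$ — but one should verify this at the level of \eqref{res-d-eq} using the uniqueness clause, rather than manipulating ${\bf N}_T$ naively. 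Everything else is a routine combination of the cited formulas.
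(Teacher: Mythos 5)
Your proof is correct and follows essentially the same route as the paper: compute $c\cdot P(\ell)$ using the conjugation formula from \S\ref{3.3.3}, Proposition \ref{complex-conj}(2), Assumption \ref{ass} and commutativity of $\Z[\Gamma_T]$ to obtain $c\cdot P(\ell)=\epsilon_\kappa\,\sigma(P(\ell))$, then absorb the elements $\sigma$ and $c$ into other lifts of ${\bf N}$ and invoke the uniqueness clause after \eqref{res-d-eq} (via Lemma \ref{lemma5.3}) to conclude. The concern you raise in the last paragraph is not a real obstacle — your observation that ${\bf N}_T\sigma$ (resp.\ $\sum_i\sigma_i^{-1}$) is again a lift of ${\bf N}$, together with the fact that all lifts agree on the $G_T$-fixed class $P(\ell)$, is exactly what the paper uses; only note that the relevant group ring is $\Z[G_T]$, not $\Z[G_S]$, in your parenthetical about commutativity.
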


\begin{proof} By \S\ref{3.3.3} and Assumption \ref{ass}, there is an equality
\[ c\bigl(\tilde P(\ell)\bigr)=(-1)^{\ord({\bf D}_\kappa{\bf D}_\ell^1)}{\bf D}_\kappa{\bf D}_\ell^1c(y_{T,\p}). \] 
Since the ring $\Z[\Gamma_T]$ is commutative, part (2) of Proposition \ref{complex-conj} then shows that 
\[ c\bigl(P(\ell)\bigr)=-\epsilon(-1)^{\ord({\bf D}_\kappa{\bf D}_\ell^1)}\sigma\bigl(P(\ell)\bigr) \] 
for a suitable $\sigma\in\Gamma_T$. Applying any lift ${\bf N}_T=\sum_{i=1}^{h_K}\sigma_i\in\Z[\Gamma_T]$ of ${\bf N}$ on both sides gives 
\begin{equation} \label{N_T-c-eq}
{\bf N}_T\bigl(c(P(\ell))\bigr)=-\epsilon(-1)^{\ord({\bf D}_\kappa{\bf D}_\ell^1)}{\bf N}_T\bigl(\sigma(P(\ell))\bigr). 
\end{equation}
Now $\sum_{i=1}^{h_K}\sigma_ic=c\sum_{i=1}^{h_K}\sigma_i^{-1}$. Moreover, since ${\bf N}'_T:=\sum_{i=1}^{h_K}\sigma_i^{-1}$ and ${\bf N}''_T:=\sum_{i=1}^{h_K}\sigma_i\sigma$ are two lifts of ${\bf N}$, equality \eqref{res-d-eq} implies that 
\begin{equation} \label{T'-T''-eq}
{\bf N}'_T\bigl(P(\ell)\bigr)=\res_{K_T/K}\bigl(d(\ell)\bigr)={\bf N}''_T\bigl(P(\ell)\bigr). 
\end{equation}
By definition of $\epsilon_\kappa$, combining \eqref{N_T-c-eq} and \eqref{T'-T''-eq} gives 
\[ c\cdot\res_{K_T/K}\bigl(d(\ell)\bigr)=\epsilon_\kappa\res_{K_T/K}\bigl(d(\ell)\bigr), \]
and the conclusion follows from the $\Gal(K/\Q)$-equivariance of the isomorphism $\res_{K_T/K}$. \end{proof}

\subsection{Tate duality} \label{tate-subsec}

In this subsection we do not suppose that Assumption \ref{ass} holds. Let $\lambda$ denote the unique prime of $K$ above $\ell$. By \cite[Proposition 3.1, (2)]{Nek}, there is a $G_\Q$-equivariant skew-symmetric pairing
\[ [\cdot\,,\cdot]:A_\p\times A_\p\longrightarrow\Z_p(1) \]
such that the induced pairing
\[ {[\cdot\,,\cdot]}_m:W_\p[p^m]\times W_\p[p^m]\longrightarrow\boldsymbol\mu_{p^m} \]
is non-degenerate. With notation as before, combining cup product in cohomology with the map $W_\p[p^m]\otimes W_\p[p^m]\rightarrow\boldsymbol\mu_{p^m}$ induced by ${[\cdot\,,\cdot]}_m$ gives rise to a pairing
\[ {\langle\cdot\,,\cdot\rangle}_\lambda:H^1(K_\lambda,W_\p[p^m])\times H^1(K_\lambda,W_\p[p^m])\longrightarrow H^2(K_\lambda,\boldsymbol\mu_{p^m})=\Z/p^m\Z, \]
with the equality on the right coming from the invariant map of local class field theory. By a result of Tate, this pairing is non-degenerate (cf. \cite[Ch. I, Corollary 2.3]{Milne}). 

Since $A_\p$ is unramified at $\lambda$, the group $H^1_f(K_\lambda,W_\p[p^m])=H^1_{\rm ur}(K_\lambda,W_\p[p^m])$ is its own annihilator in $H^1(K_\lambda,W_\p[p^m])$ under Tate's pairing ${\langle\cdot\,,\cdot\rangle}_\lambda$ (\cite[Lemma 4.4]{Bes}). The \emph{singular part} of the cohomology is then defined via the short exact sequence
\[ 0\longrightarrow H^1_f(K_\lambda,W_\p[p^m])\longrightarrow H^1(K_\lambda,W_\p[p^m])\longrightarrow H^1_{sin}(K_\lambda,W_\p[p^m])\longrightarrow0, \]
and ${\langle\cdot\,,\cdot\rangle}_\lambda$ induces a $\Gal(K/\Q)$-equivariant perfect pairing
\begin{equation} \label{perfect-eq}
{\langle\cdot\,,\cdot\rangle}_\lambda:H^1_f(K_\lambda,W_\p[p^m])\times H^1_{sin}(K_\lambda,W_\p[p^m])\longrightarrow\Z/p^m\Z. 
\end{equation} 
It follows that there are natural identifications
\begin{equation} \label{sin-isom-eq}
H^1_{sin}(K_\lambda,W_\p[p^m])=H^1(K_\lambda^{\rm ur},W_\p[p^m])=\Hom_{\,\rm cont}\bigl(\Gal(\bar K_\lambda/K^{\rm ur}_\lambda),W_\p[p^m]\bigr)
\end{equation}
where $K_\lambda^{\rm ur}$ is the maximal unramified extension of $K_\lambda$. Let $K_\lambda^{\rm t}$ denote the maximal tamely ramified extension of $K_\lambda$. The wild inertia group $\Gal(\bar K_\lambda/K^{\rm t}_\lambda)$ is a pro-$\ell$-group and $\ell\not=p$, hence equalities \eqref{sin-isom-eq} yield a further identification
\begin{equation} \label{sin-isom-eq2}
H^1_{sin}(K_\lambda,W_\p[p^m])=\Hom_{\,\rm cont}\bigl(\Gal(K^{\rm t}_\lambda/K^{\rm ur}_\lambda),W_\p[p^m]\bigr). 
\end{equation} 
Fix a (topological) generator $\tau$ of $\Gal(K^{\rm t}_\lambda/K^{\rm ur}_\lambda)$, so that $\tau$ and a lift to $\Gal(K_\lambda^{\rm t}/K_\lambda)$ of the Frobenius $F_\lambda\in\Gal(K_\lambda^{\rm ur}/K_\lambda)$ generate $\Gal(K_\lambda^{\rm t}/K_\lambda)$ topologically. In light of \eqref{sin-isom-eq2}, evaluating homomorphisms at $\tau$ gives an isomorphism
\[ \alpha_\lambda:H^1_{sin}(K_\lambda,W_\p[p^m])\overset\simeq\longrightarrow W_\p[p^m]. \] 
On the other hand, by \cite[Lemma 6.8]{Bes}, if $\ell\in\tilde{\mathcal S}_{p^m}$ then evaluation at $\Frob_\lambda$ gives a $\Gal(K/\Q)$-equivariant isomorphism
\[ \beta_\lambda:H^1_f(K_\lambda,W_\p[p^m])\overset\simeq\longrightarrow W_\p[p^m]. \]
It follows that for every $\ell\in\tilde{\mathcal S}_{p^m}$ there is an isomorphism
\begin{equation} \label{fin-sin-isom-eq}
\nu_\lambda:=\alpha_\lambda^{-1}\circ\beta_\lambda: H^1_f(K_\lambda,W_\p[p^m])\overset\simeq\longrightarrow H^1_{sin}(K_\lambda,W_\p[p^m])
\end{equation}
of $\mathcal O_\p/p^m\mathcal O_\p$-modules.

As a piece of notation, for a $\Z/p^m\Z$-module $M$ write
\[ M^*:=\Hom(M,\Z/p^m\Z) \]
for the Pontryagin dual of $M$. Note that if $M$ is endowed with a $\Z/p^m\Z$-linear action of an involution $\tau$ then $M^*$ inherits a $\Z/p^m\Z$-linear action of $\tau$ by setting
\[ (\tau\cdot f)(m):=f(\tau\cdot m) \]
for all $f\in M^*$ and all $m\in M$. Letting the superscripts $\pm$ denote the $\pm$-eigenspaces under the actions of $\tau$, there are canonical isomorphisms
\begin{equation} \label{eigenspaces-pm-isom-eq}
(M^*)^\pm\overset\simeq\longrightarrow{(M^\pm)}^{\!*}
\end{equation} 
of $\Z/p^m\Z$-modules.

With this notation in force, the pairing in \eqref{perfect-eq} is equivalent to an isomorphism
\begin{equation} \label{isom-pontryagin-eq}
H^1_{sin}(K_\lambda,W_\p[p^m])\overset\simeq\longrightarrow H^1_f(K_\lambda,W_\p[p^m]{)}^*.
\end{equation}
By composing isomorphism \eqref{isom-pontryagin-eq} with the dual of the natural (localization) map 
\[ H^1_f(K,W_\p[p^m])\longrightarrow H^1_f(K_\lambda,W_\p[p^m]), \] 
we obtain a map
\[ \phi_\lambda:H^1_{sin}(K_\lambda,W_\p[p^m])\longrightarrow H^1_f(K,W_\p[p^m]{)}^*. \]
Analogously, for every $\Z/p^m\Z$-submodule $\mathscr S\subset H^1_f(K,W_\p[p^m])$ we obtain by restriction a map $H^1_{sin}(K_\lambda,W_\p[p^m])\rightarrow\mathscr S^*$, which will be denoted by the same symbol. Observe that $\phi_\lambda$ is $\Gal(K/\Q)$-equivariant.

\begin{remark} \label{singular-rem}
In light of the perfect pairing \eqref{perfect-eq}, when dealing with Tate's duality we shall often use the same symbol to denote $d(\ell)_\lambda$ and its image in $H^1_{sin}(K_\lambda,W_\p[p^m])$.
\end{remark}

\subsection{Local behaviour of Kolyvagin classes} \label{local-kolyvagin-subsec}

By class field theory, $\lambda$ splits completely in $K_S/K$; choose a prime $\lambda_S$ of $K_S$ above $\lambda$. Furthermore, $\lambda_S$ is totally ramified in $K_T/K_S$; write $\lambda_T$ for the unique prime of $K_T$ above it. 

As before, if $v$ is a place of $K$ then write $K_v$ for the completion of $K$ at $v$. There is a localization (restriction) map 
\[ \res_v:H^1(K,W_\p[p^m])\longrightarrow H^1(K_v,W_\p[p^m]) \] 
and if $s\in H^1(K,W_\p[p^m])$ then we write $s_v$ for  $\res_v(s)$. 

\begin{proposition}\label{prop5.9}
If $v$ is an archimedean place of $K$ then $d(\ell)_v=0$. 
\end{proposition}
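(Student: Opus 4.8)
The plan is to show that $d(\ell)_v$ lives in a cohomology group that vanishes for a trivial reason: over an archimedean place of $K$, the local Galois group is either trivial (if $v$ is complex) or of order $2$ (if $v$ is real). Since $K$ is imaginary quadratic, every archimedean place of $K$ is complex, so $K_v = \C$ and $G_{K_v}$ is the trivial group. Hence $H^1(K_v, W_\p[p^m]) = H^1(\C, W_\p[p^m]) = 0$ for purely formal reasons (the cohomology of the trivial group in positive degrees vanishes), and therefore $d(\ell)_v = \res_v(d(\ell)) = 0$.

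First I would recall that $K$ is imaginary quadratic, which was fixed at the beginning of Section 3; this is the one input that makes the statement immediate rather than requiring the $2$-invertibility argument one would need over a real place. Then I would note that an archimedean place $v$ of $K$ corresponds to an embedding $K \hookrightarrow \C$, and since $K$ has no real embeddings, $K_v \cong \C$. Consequently $\bar K_v = K_v$ and $G_{K_v} = \Gal(\bar K_v / K_v) = 1$. Finally, for any continuous $G_{K_v}$-module $M$ — in particular $M = W_\p[p^m]$ — we have $H^i(K_v, M) = H^i_{\rm cont}(\{1\}, M) = 0$ for all $i \geq 1$, so $H^1(K_v, W_\p[p^m]) = 0$; since $d(\ell)_v$ is by definition an element of this group, it is zero.

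There is essentially no obstacle here: the statement is a formality once one observes that $K$ is imaginary quadratic. (Had $K$ possessed a real place $v$, one would instead argue that $G_{K_v}$ has order $2$ and that $H^1(\Z/2\Z, W_\p[p^m])$ is killed by $2$, which is invertible since $p$ is odd, forcing the group to vanish as well; but this refinement is not needed in the present setting.) The only point worth stating explicitly is that this argument is completely independent of Assumption \ref{ass} and of the particular construction of $d(\ell)$ — all that matters is that $d(\ell)$ is a class in $H^1(K, W_\p[p^m])$, so its localization at any archimedean place automatically vanishes.

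\begin{proof}
Since $K$ is imaginary quadratic, every archimedean place $v$ of $K$ is complex, so the completion $K_v$ is isomorphic to $\C$. Hence $\bar K_v = K_v$ and the local Galois group $G_{K_v} = \Gal(\bar K_v/K_v)$ is trivial. It follows that $H^1(K_v, W_\p[p^m]) = 0$, and therefore $d(\ell)_v = \res_v\bigl(d(\ell)\bigr) = 0$.
\end{proof}
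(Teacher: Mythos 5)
Your proof is correct and matches the paper's argument exactly: since $K$ is imaginary quadratic, every archimedean completion $K_v$ is $\C$, which is algebraically closed, so $H^1(K_v,W_\p[p^m])=0$ and hence $d(\ell)_v=0$. The additional remarks (real-place variant, independence from Assumption \ref{ass}) are accurate but not needed.
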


\begin{proof} The quadratic field $K$ is imaginary, hence $K_v=\C$. The proposition follows because $\C$ is algebraically closed and so $H^1(\C,W_\p[p^m])=0$. \end{proof}

Now set $S':={\rm cond}({\bf D}_\kappa)$.

\begin{proposition} \label{finite-1-prop}
If $v$ is a finite place of $K$ not dividing $S'\ell$ then \[d(\ell)_v\in H^1_f(K_v,W_\p[p^m]).\] 
\end{proposition}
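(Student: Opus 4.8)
The plan is to show that the Kolyvagin class $d(\ell)$ is \emph{unramified} at every finite prime $v \nmid S'\ell$, using the defining relation \eqref{res-d-eq} together with the fact that $d(\ell)$ comes (via norm and restriction isomorphisms) from the Abel--Jacobi image of Heegner cycles, which are known to land in the Bloch--Kato Selmer group at every prime by Corollary \ref{niziol-coro-2}(3). First I would split the finite places $v \nmid S'\ell$ into two cases: those above $p$ and those not above $p$ (and not above $ND$, the latter being automatic since $A_\p$, hence $W_\p[p^m]$, is unramified away from $NDp$, so $H^1_f = H^1_{\rm ur}$ there and any unramified class is automatically in $H^1_f$). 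For the primes above $p$ the relevant notion of $H^1_f$ is the crystalline one; for the primes not above $p$ it is the unramified one.

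The key idea is to trace $d(\ell)$ through the isomorphisms constructed in \S\ref{heegner-classes-subsec}. By \eqref{res-d-eq} we have $\res_{K_T/K}(d(\ell)) = {\bf N}_T(P(\ell))$, where $P(\ell)$ is (the image in $H^1(K_T, W_\p[p^m])$ of) the class ${\bf D}_\kappa{\bf D}_\ell^1(y_{T,\p}) \in \BK_\p(K_T)/p^m\BK_\p(K_T)$. By \eqref{eq13} and Proposition \ref{isom-finite-prop}, this class lies in the image of the Abel--Jacobi map ${\rm AJ}_{K_T, m}$, hence in $H^1_f(K_T, W_\p[p^m])$ by Corollary \ref{niziol-coro-2}(3); applying the $\Z[\Gamma_T]$-operator ${\bf N}_T$ keeps us inside $H^1_f(K_T, W_\p[p^m])$ since the Selmer condition at each prime of $K_T$ is $\Gal(K_T/K)$-stable. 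Now for a finite place $v$ of $K$ with $v \nmid S'\ell$, I claim $v$ is unramified in $K_T/K$: indeed $K_T/K$ is ramified only at primes dividing $T = S\ell$, and by construction ${\rm cond}({\bf D}_\kappa) = S'$ — the primes $\ell_i \mid S$ with $k_i = 0$ do \emph{not} divide the conductor and $K_{S}/K$ is unramified outside $S'$ — wait, more carefully: $K_S/K_1$ is ramified exactly at the primes dividing $S$. So I would instead argue directly that $d(\ell)_v$ is unramified (resp. crystalline) by using that $\res_{K_T/K}$ is \emph{injective} (it is an isomorphism onto a subspace), that it commutes with localization, and that the local condition at a prime $w \mid v$ of $K_T$ pulls back to the local condition at $v$: concretely, since $\res_{(K_T)_w / K_v}$ sends $H^1_f(K_v, W_\p[p^m])$ into $H^1_f((K_T)_w, W_\p[p^m])$ and is injective on $H^1_{\rm ur}$-parts when $(K_T)_w/K_v$ is unramified (true for $v \nmid T$), the fact that $\res_{K_T/K}(d(\ell))_w \in H^1_f$ forces $d(\ell)_v \in H^1_f(K_v, W_\p[p^m])$. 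For $v \mid p$ with $v \nmid T$ (automatic, since $T$ is prime to $NDp$), one uses instead that $K_v$ is unramified over $\Q_p$ and $(K_T)_w/K_v$ is unramified, so the restriction map on crystalline classes is injective (flat base change for $B_{\rm cris}$-cohomology, or the standard fact that $H^1_f$ of an unramified extension detects $H^1_f$ of the base); this is where I'd cite \cite[Lemma 4.4]{Bes} or the analogous compatibility in \cite{BK}.

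The main obstacle I anticipate is the step at primes $v \mid p$: showing that membership of a localization in the crystalline $H^1_f$ at $w \mid v$ in $K_T$ descends to membership in $H^1_f(K_v, W_\p[p^m])$. This requires knowing that restriction $H^1_f(K_v, V_\p) \to H^1_f((K_T)_w, V_\p)$ is injective (equivalently, $H^1_f$ commutes with unramified base change), which follows from the inflation-restriction sequence together with $H^0$ of the unramified Galois group acting on the relevant quotient $V_\p / H^1_f$-module being controlled — or more simply from the fact that $W_\p[p^m]$ is unramified at no prime dividing $p$ but is crystalline there, and $(K_T)_w/K_v$ being unramified means $B_{\rm cris}$ is unchanged. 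I would also need to double-check the clean statement that for a finite place $v \nmid NDp\cdot S'\ell$ the group $H^1_f(K_v, W_\p[p^m])$ is simply $H^1_{\rm ur}(K_v, W_\p[p^m])$ and that any class whose restriction to an unramified extension is unramified is itself unramified — this is immediate from the compatibility of inertia subgroups. Once these local descent statements are in place, the proposition follows formally from \eqref{res-d-eq} and the Selmer property of Heegner cycles.
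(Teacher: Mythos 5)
Your treatment of the places $v\nmid p$ is essentially the paper's argument: you note that $P(\ell)$ (and hence ${\bf N}_T(P(\ell))$) lands in the Bloch--Kato Selmer group over $K_T$, that the extension $(K_T)_w/K_v$ is unramified for $v\nmid S'\ell$, and conclude that $d(\ell)_v$ is unramified because the inertia subgroups of $K_v$ and $(K_T)_w$ coincide. This is exactly what the paper does (phrased via the inflation--restriction kernel being unramified Galois cohomology). So for $v\nmid p$ there is nothing to flag.

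The divergence is at $v\mid p$, and here there is a genuine gap. The paper does not run a descent argument at all: it cites \cite[Lemma 11.1, (2)]{Nek} directly, invoking Faltings' theorem on the de~Rham conjecture for open varieties to conclude that the relevant class lands in $H^1_f$ at places above $p$. Your proposal instead argues that since $\res_{(K_T)_w/K_v}(d(\ell)_v)$ is crystalline and $(K_T)_w/K_v$ is unramified, $d(\ell)_v$ must itself be crystalline. For the $F_\p$-vector space $V_\p$ this descent is indeed standard (the inflation--restriction kernel $H^1\bigl(\Gal((K_T)_w/K_v),(V_\p\otimes B_{\rm cris})^{G_{(K_T)_w}}\bigr)$ vanishes because a finite group acts on a $\Q_p$-vector space). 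But the object you actually need to control is $H^1_f(K_v,W_\p[p^m])$, which is defined by \emph{propagation}: $H^1_f(K_v,W_\p[p^m])=\pi_v\bigl(\varphi_v^{-1}(H^1_f(K_v,V_\p))\bigr)$. Showing that a class $x\in H^1(K_v,W_\p[p^m])$ with $\res(x)\in H^1_f((K_T)_w,W_\p[p^m])$ lies in $\pi_v(H^1_f(K_v,A_\p))$ requires producing a crystalline lift to $A_\p$-coefficients over $K_v$, and the obvious corestriction trick fails because $[(K_T)_w:K_v]$ need not be coprime to $p$ (indeed $p\mid\ell_i+1$ for all $\ell_i\mid T$, so $p$ divides $[K_T:K]$). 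You acknowledge this is the main obstacle, but the references you offer do not cover it: \cite[Lemma 4.4]{Bes} is the statement that $H^1_{\rm ur}$ is its own annihilator under Tate's local pairing at primes $\lambda\nmid p$ (exactly as it is used in \S3.7 of the paper), not a crystalline-descent statement at $p$. Without either a Fontaine--Laffaille/flat-cohomology characterization of $H^1_f(K_v,W_\p[p^m])$ stable under unramified base change, or a direct appeal to Nekov\'a\v{r}'s lemma as in the paper, the $v\mid p$ case is not closed.
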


\begin{proof} By construction, $P(\ell)$ belongs to $H^1_f(K_{S'\ell},W_\p[p^m])$. If $v\nmid p$ is a prime of $K$ and $v'$ is a prime of $K_{S'\ell}$ above it then $P(\ell)$ belongs to $H^1_{\rm un}(K_{S'\ell,v'},W_\p[p^m])$. By definition, the restriction of $d(\ell)$ is $P(\ell)$. In particular, the restriction of $d(\ell)_v$ under the map 
\[ H^1(K_v,W_\p[p^m])\longrightarrow H^1(K_{S'\ell,v'},W_\p[p^m]) \]
lies in $H^1_{\rm un}(K_{S'\ell,v'},W_\p[p^m])$. By inflation-restriction, the kernel of the above map is 
\[ H^1\bigl(K_{S'\ell,v'}/K_v, W_\p[p^m](K_{S'\ell,v'})\bigr), \] 
and the extension $K_{S'\ell,v'}/K_v$ is unramified, therefore $d(\ell)_v$ is unramified too. On the other hand, if $v\,|\,p$ then the claim follows from the de Rham conjecture for open varieties (now a theorem of Faltings), as explained in \cite[Lemma 11.1, (2)]{Nek}. \end{proof}

Now we begin the study of $d(\ell)_\lambda$ (recall that $\lambda$ is the unique prime of $K$ above the prime number $\ell\in \tilde{\mathcal S}_{p^m}$). For this, we need some preliminaries. 

\begin{lemma} \label{dimension} 
If $\ell\in\tilde{\mathcal S}_{p^m}$ then there are isomorphisms of $\cO_\p$-modules
\[ H^1_{sin}(K_\lambda,W_\p[p^m])^\pm\simeq\cO_\p/p^m\cO_\p,\qquad H^1_f(K_\lambda,W_\p[p^m])^\pm\simeq\cO_\p/p^m\cO_\p. \]
\end{lemma}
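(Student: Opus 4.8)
The strategy is to exploit the two $\Gal(K/\Q)$-equivariant isomorphisms established just before the statement, namely $\alpha_\lambda\colon H^1_{sin}(K_\lambda,W_\p[p^m])\xrightarrow{\ \simeq\ }W_\p[p^m]$ and $\beta_\lambda\colon H^1_f(K_\lambda,W_\p[p^m])\xrightarrow{\ \simeq\ }W_\p[p^m]$, reducing everything to a statement about the $\pm$-eigenspaces of $W_\p[p^m]=A_\p/p^mA_\p$ under the action of complex conjugation $c$. Since $A_\p$ is a free $\cO_\p$-module of rank $2$, $W_\p[p^m]$ is free of rank $2$ over $\cO_\p/p^m\cO_\p$; so the claim amounts to showing that $c$ acts on $W_\p[p^m]$ with each eigenvalue $+1$ and $-1$ occurring exactly once, i.e. that $W_\p[p^m]^\pm$ is free of rank $1$ over $\cO_\p/p^m\cO_\p$.

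First I would recall that $\det\bigl(\vartheta_\p(c)\bigr)$ is computed by the cyclotomic character twisted appropriately: from \eqref{char-pol} and the fact that $A_\p$ is the $k/2$-twist of Deligne's representation, the action of $c$ on $\det(A_\p)=\Z_p(1)^{\otimes(1-k/2)}$-type factor gives $\det\bigl(\vartheta_\p(c)\bigr)=-1$ (complex conjugation acts by $-1$ on $\Z_p(1)$, and by the appropriate odd power on the Tate twists; the key point is that the exponent governing this is odd, which is where one uses that $k$ is even so that $k/2$ is an integer and $1-k$ is odd). Combined with $c^2=1$, this forces the eigenvalues of $\vartheta_\p(c)$ on $A_\p\otimes F_\p$ to be $+1$ and $-1$. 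I would then upgrade this from the fraction field to the integral/finite level: because $p$ is odd, $2$ is a unit in $\cO_\p/p^m\cO_\p$, so the involution $c$ on $W_\p[p^m]$ gives a direct sum decomposition $W_\p[p^m]=W_\p[p^m]^+\oplus W_\p[p^m]^-$, and the determinant being $-1$ together with the trace being $0$ (since $c$ has order $2$ and eigenvalues summing to $\tr=+1+(-1)=0$, consistent with $\det=-1$) pins down each eigenspace as a free rank-$1$ $\cO_\p/p^m\cO_\p$-module — for instance by reducing mod $p$ (using that $\bar\vartheta_\p(c)$ has distinct eigenvalues $\pm1$ in $\F_\p$ since $p>2$) and lifting the idempotents, or by invoking that a free rank-$2$ module over a local ring splitting under an involution with unit eigenvalue-difference splits into two free rank-$1$ pieces.

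Having established $W_\p[p^m]^\pm\simeq\cO_\p/p^m\cO_\p$, I would transport this back along $\alpha_\lambda$ and $\beta_\lambda$. Since both are $\Gal(K/\Q)$-equivariant, they carry $\pm$-eigenspaces to $\pm$-eigenspaces; hence $H^1_{sin}(K_\lambda,W_\p[p^m])^\pm\simeq W_\p[p^m]^\pm\simeq\cO_\p/p^m\cO_\p$ and likewise for $H^1_f$. One technical point to be careful about is the compatibility of the $\Gal(K/\Q)$-action used in defining $\alpha_\lambda$ (which passes through evaluation at the tame generator $\tau$ and depends on how $c$ permutes $\tau$ with its inverse) — but this subtlety is already absorbed into the assertion, proved via \cite[Lemma 6.8]{Bes}, that $\alpha_\lambda$ and $\beta_\lambda$ are Galois-equivariant, which I am entitled to assume.

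The main obstacle is the determinant computation at the integral/residual level: one must check that complex conjugation acts on $W_\p[p^m]$ with determinant $-1$ and that this genuinely forces each eigenspace to be \emph{free} of rank $1$ (not merely cyclic or of the expected cardinality) over $\cO_\p/p^m\cO_\p$. The cleanest route is the mod-$p$ reduction argument: $\bar\vartheta_\p(c)\in\GL_2(\F_\p)$ is an involution with $\det=-1$, hence is $\GL_2(\F_\p)$-conjugate to $\mathrm{diag}(1,-1)$, so $W_\p[p]^\pm$ is $1$-dimensional over $\F_\p$; then Nakayama's lemma (or a direct idempotent-lifting argument, legitimate since $2$ and the eigenvalue gap are units) promotes this to freeness of rank $1$ for $W_\p[p^m]^\pm$ over $\cO_\p/p^m\cO_\p$. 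Everything else is formal transport of structure along the isomorphisms already in hand.
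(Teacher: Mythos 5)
Your proposal is correct in its conclusion and takes a genuinely different route from the paper, which simply invokes Besser's result: the paper's proof is the one-line citation ``This is \cite[Lemma 6.9, (2)]{Bes}'' plus a parenthetical remark translating Besser's hypotheses into the condition $\ell\in\tilde{\mathcal S}_{p^m}$. You instead reconstruct the argument from first principles, reducing via the isomorphisms $\alpha_\lambda$, $\beta_\lambda$ to the claim $W_\p[p^m]^\pm\simeq\cO_\p/p^m\cO_\p$, which you obtain from $\det\bigl(\vartheta_\p(c)\bigr)=-1$ and $c^2=1$. That determinant computation is correct (from \eqref{char-pol} one reads off $\det(A_\p)=\chi_{\rm cyc}$, whence $\det\bigl(\vartheta_\p(c)\bigr)=\chi_{\rm cyc}(c)=-1$), although your stated exponent ``$1-k/2$'' is not the right one — the determinant character is simply $\chi_{\rm cyc}^1$; this slip does not affect the conclusion. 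The freeness of $W_\p[p^m]^\pm$ over $\cO_\p/p^m\cO_\p$ is indeed the content of \cite[Proposition 6.3, (4)]{Bes}, which the paper itself cites a few lines later in the proof of Lemma~\ref{dimension-2}, so this part of your argument matches what the surrounding text relies on.

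The one genuine gap is your treatment of $\alpha_\lambda$: you write that the paper asserts both $\alpha_\lambda$ and $\beta_\lambda$ are $\Gal(K/\Q)$-equivariant, but the paper only makes that claim for $\beta_\lambda$ (the map $\nu_\lambda=\alpha_\lambda^{-1}\circ\beta_\lambda$ is said to be an isomorphism of $\cO_\p/p^m\cO_\p$-modules, with no Galois statement). In fact $\alpha_\lambda$ is not $c$-equivariant on the nose: since a lift of $c$ to $G_{\Q_\ell}$ may be taken to be Frobenius, conjugation sends the tame generator $\tau$ to $\tau^{\ell}$, so evaluation at $\tau$ intertwines the $c$-actions only up to the scalar $\ell^{-1}\equiv -1\pmod{p^m}$, and $\alpha_\lambda$ swaps the $\pm$-eigenspaces rather than preserving them. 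This still yields $H^1_{sin}(K_\lambda,W_\p[p^m])^\pm\simeq W_\p[p^m]^\mp\simeq\cO_\p/p^m\cO_\p$, so the conclusion stands, but the justification you gave is not the one the paper supports. An alternative patch that avoids $\alpha_\lambda$ entirely is to first establish the $H^1_f$ isomorphisms via $\beta_\lambda$ and then use the $\Gal(K/\Q)$-equivariant perfect pairing \eqref{perfect-eq} together with \eqref{eigenspaces-pm-isom-eq}: each eigenspace of $H^1_{sin}$ is identified with the Pontryagin dual of an eigenspace of $H^1_f$, and since $\cO_\p$ is unramified over $\Z_p$ the dual of $\cO_\p/p^m\cO_\p$ over $\Z/p^m\Z$ is again free of rank one over $\cO_\p/p^m\cO_\p$ (e.g.\ via the trace form).
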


\begin{proof} This is \cite[Lemma 6.9, (2)]{Bes}. (Notice that the first three conditions listed on \cite[p. 36]{Bes} are equivalent to the condition $\ell\in\tilde{\mathcal S}_{p^m}$ by \cite[Remark 3.1]{Bes} and that the fourth condition on \cite[p. 36]{Bes} plays no role in the proof of \cite[Lemma 6.9]{Bes}). \end{proof} 

\begin{lemma} \label{frob-invertible-lemma}
If $\ell\in\tilde{\mathcal S}_{p^m}$ then the expressions $\bigl(a_\ell\pm(\ell+1)\Frob_\lambda\bigr)\big/p^m$ define endomorphisms of $H^1_{sin}(K_\lambda, W_\p[p^m])$. Furthermore, if $\bigl(a_\ell\pm(\ell+1)\bigr)\big/p^m$ are both $p$-adic units then the above maps are invertible.
\end{lemma}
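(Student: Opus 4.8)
The statement has two parts: first, that $\bigl(a_\ell \pm (\ell+1)\Frob_\lambda\bigr)/p^m$ makes sense as an endomorphism of $H^1_{sin}(K_\lambda,W_\p[p^m])$, and second, that these endomorphisms are invertible under the stated $p$-adic unit hypothesis. The plan is to transport the question to the module $W_\p[p^m]$ via the isomorphism $\alpha_\lambda$ from \eqref{sin-isom-eq2}, where everything becomes linear algebra governed by the characteristic polynomial \eqref{char-pol} of $F_\ell$. First I would recall that, since $\ell \in \tilde{\mathcal S}_{p^m}$, Lemma \ref{lemma5.2} gives $\p^m \mid a_\ell$ and $p^m \mid \ell+1$ in $\cO_F$ (hence in $\Z$ for the latter, $p$ being unramified in $F$). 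This is precisely what is needed to see that the ``division by $p^m$'' is not merely formal: the point is that the operator $a_\ell \pm (\ell+1)\Frob_\lambda$ annihilates everything visible mod $p^m$, so it factors through multiplication by $p^m$ on a torsion-free lift, and the quotient descends to a genuine endomorphism of the $p^m$-torsion module. Concretely, I would work with the action of $\Frob_\lambda = F_\ell$ on the free $\cO_\p$-module $A_\p$ of rank $2$: by \eqref{char-pol} we have $F_\ell^2 - (a_\ell/\ell^{k/2-1})F_\ell + \ell = 0$ on $A_\p$, equivalently (clearing the $\ell^{k/2-1}$ via the Tate twist normalization, or just keeping track of it) a quadratic relation with integral coefficients. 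Then $a_\ell \pm (\ell+1)F_\ell$ has both coefficients divisible by $p^m$ on $A_\p$, so $\bigl(a_\ell \pm (\ell+1)F_\ell\bigr)/p^m$ is a well-defined $\cO_\p$-endomorphism of $A_\p$, and it induces the desired endomorphism of $W_\p[p^m] = A_\p/p^mA_\p$; transporting through $\alpha_\lambda$ (which intertwines the natural $F_\lambda$-action, since the singular cohomology is $\Hom$ out of the wild inertia quotient on which Frobenius acts by conjugation) gives the endomorphism of $H^1_{sin}(K_\lambda,W_\p[p^m])$.

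For the invertibility statement, I would argue as follows. An $\cO_\p$-linear endomorphism $\psi$ of the finite module $W_\p[p^m] \cong (\cO_\p/p^m\cO_\p)^2$ (or, after passing to eigenspaces under $c$, of $\cO_\p/p^m\cO_\p$ by Lemma \ref{dimension}) is an isomorphism if and only if its reduction mod $\p$ (equivalently mod $p$, using that $p$ is unramified and $\p$-adic and $p$-adic valuations on $\cO_F$ agree) is an isomorphism of the $\F_\p$-vector space $W_\p[p]$ — by Nakayama. So it suffices to compute $\psi \bmod p$. Using the quadratic relation for $F_\ell$ on $A_\p$ and the congruences $a_\ell \equiv b\,p^m$, $\ell + 1 \equiv u\, p^m \pmod{p^{m+?}}$ — here I'd be slightly careful about the exact power and would phrase things in terms of the elements $\bigl(a_\ell \pm (\ell+1)\bigr)/p^m \in \cO_\p$ appearing in the hypothesis — one checks that $\bigl(a_\ell \pm (\ell+1)F_\ell\bigr)/p^m$ reduces mod $p$ to a scalar times $\bigl(1 \pm F_\ell\bigr)$ restricted to an appropriate eigenspace where $F_\ell$ acts by $\pm 1$ (the defining property $\Frob_\ell = \Frob_\infty$ forces $F_\ell$ to act on $W_\p[p^m]$ with the same minimal polynomial as $c$, i.e. $X^2 - 1$, so each eigenspace of $c$ is an eigenspace of $F_\ell$ with eigenvalue matching the sign). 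On the $+$-eigenspace $F_\ell \equiv 1$, so $\bigl(a_\ell + (\ell+1)F_\ell\bigr)/p^m \equiv \bigl(a_\ell + (\ell+1)\bigr)/p^m$ and $\bigl(a_\ell - (\ell+1)F_\ell\bigr)/p^m \equiv \bigl(a_\ell - (\ell+1)\bigr)/p^m$ (and vice versa on the $-$-eigenspace), so under the hypothesis that $\bigl(a_\ell \pm (\ell+1)\bigr)/p^m$ are $p$-adic units, the reduction of $\psi$ is multiplication by a unit on each eigenspace, hence an isomorphism; Nakayama then upgrades this to invertibility over $\cO_\p/p^m\cO_\p$.

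The main obstacle I anticipate is bookkeeping the precise $p$-adic valuations: the relation \eqref{char-pol} has the twisted coefficient $a_\ell/\ell^{k/2-1}$, and one must confirm that the congruences $\p^m \mid a_\ell$ and $p^m \mid \ell+1$ from Lemma \ref{lemma5.2} are exactly enough to make both the well-definedness and the mod-$p$ computation go through, without accidentally needing a higher power of $p$ somewhere (e.g. in the cross term of the quadratic relation, or when identifying $\bigl(a_\ell \pm (\ell+1)F_\ell\bigr)/p^m \bmod p$ with $\bigl(a_\ell \pm (\ell+1)\bigr)/p^m \bmod p$, which uses $(\ell+1)(F_\ell \mp 1)/p^m \equiv 0 \pmod p$ — itself requiring that $F_\ell \mp 1$ kills $W_\p[p]$ on the relevant eigenspace, true, but one should see that $(\ell+1)/p^m$ times an element of $pA_\p$ is still in $pA_\p$, which is automatic). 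I expect this to be routine but worth stating carefully; everything else is Nakayama plus the eigenspace decomposition already set up in \S\ref{tate-subsec} and Lemma \ref{dimension}. One should also note the $\Gal(K/\Q)$-equivariance is not needed here, only the $c$-eigenspace decomposition as a computational device.
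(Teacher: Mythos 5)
Your overall route matches the paper's: Lemma \ref{lemma5.2} supplies the divisibilities $\p^m\mid a_\ell$ and $p^m\mid \ell+1$, which make the ``division by $p^m$'' a genuine operator, and then one checks invertibility on the $c$-eigenspaces where $\Frob_\lambda$ acts by a sign, so that the operator becomes multiplication by $\bigl(a_\ell\pm(\ell+1)\bigr)/p^m$, a unit by hypothesis. The Nakayama step is fine but overkill (the paper simply says ``obvious''), and the whole detour through the characteristic polynomial \eqref{char-pol} and the lift to the torsion-free module $A_\p$ is a red herring: the point is just that $a_\ell$ and $\ell+1$ are \emph{scalars}, so $\bigl(a_\ell\pm(\ell+1)\Frob_\lambda\bigr)/p^m=(a_\ell/p^m)\pm\bigl((\ell+1)/p^m\bigr)\Frob_\lambda$ with $a_\ell/p^m,\,(\ell+1)/p^m\in\cO_\p$, and this is already an endomorphism of any $\cO_\p[\Frob_\lambda]$-module; no quadratic relation or lifting argument is needed.

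There is, however, a genuine gap in the step where you claim ``each eigenspace of $c$ is an eigenspace of $F_\ell$ with eigenvalue matching the sign'' because the two elements have the same minimal polynomial $X^2-1$. Having the same minimal (or characteristic) polynomial does \emph{not} force two semisimple involutions to share eigenspaces: e.g.\ $\smallmat{1}{0}{0}{-1}$ and $\smallmat{0}{1}{1}{0}$ both satisfy $X^2-1$ but have transverse $\pm1$-eigenlines. The hypothesis $\Frob_\ell=\Frob_\infty$ in $\Gal(L_m/\Q)$ only says that $F_\ell$ and $c$ are \emph{conjugate} in that Galois group. What actually makes the eigenspaces align is the specific normalization of the Frobenius lift used in the construction of \S\ref{tate-subsec} (cf.\ the choice $\Frob_{\tilde\lambda_{S,m}/\ell}=cg$ in the proof of Proposition \ref{prop-Cheb}, and the $\Gal(K/\Q)$-equivariance of $\beta_\lambda$ recorded there): one chooses the prime above $\lambda$, equivalently the Frobenius representative, so that its image in $\Gal(L_m/\Q)$ is literally $c$, not merely conjugate to it. You should replace the ``same minimal polynomial'' justification with an appeal to this choice, or to the equivariance of $\nu_\lambda$; as written the step does not follow.
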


\begin{proof} Since $\ell\in\tilde{\mathcal S}_{p^m}$, there is an equality $\Frob_\ell=\Frob_\infty$ of conjugacy classes in $\Gal(K/\Q)$, so $\Frob_\lambda$ acts on $H^1_{sin}(K_\lambda,W_\p[p^m])^\pm$ as multiplication by $\pm1$. Then Lemma \ref{lemma5.2} shows that $\bigl(a_\ell\pm(\ell+1)\Frob_\lambda\bigr)\big/p^m$ are indeed well-defined endomorphisms of the $\cO_\p/p^m\cO_\p$-module $H^1_{sin}(K_\lambda, W_\p[p^m])$, and the last claim is obvious. \end{proof}

In the proof of the next result we use the isomorphism $\nu_\lambda$ defined in \eqref{fin-sin-isom-eq} (keep Remarks \ref{fin-notation-rem} and \ref{singular-rem} in mind for our notational conventions).

\begin{proposition} \label{finite-2-prop}
Suppose that $\ell\in\tilde{\mathcal S}_{p^m}$ and that $\bigl(a_\ell\pm(\ell+1)\bigr)\big/p^m$ are both $p$-adic units. Then $d(\ell)_\lambda\not=0$ in $H^1_{sin}(K_\lambda,W_\p[p^m])$ if and only if ${\bf D}_\kappa(y_{S,\p}{)}_\lambda\not=0$ in $H^1_f(K_\lambda,W_\p[p^m])$.  
\end{proposition}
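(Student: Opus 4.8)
The plan is to relate the local class $d(\ell)_\lambda$ at $\lambda$ to the Heegner cycle ${\bf D}_\kappa(y_{S,\p})_\lambda$ by exploiting the norm compatibility of Heegner cycles (Proposition \ref{complex-conj}) together with the fact that $\lambda$ is totally ramified in $K_T/K_S$. First I would use the defining relation \eqref{res-d-eq}, namely $\res_{K_T/K}(d(\ell))={\bf N}_T(P(\ell))$, and localize it at the unique prime $\lambda_T$ of $K_T$ above $\lambda$. Since $\lambda$ splits completely in $K_S/K$ and is totally ramified in $K_T/K_S$, the decomposition group of $\lambda_T$ in $\Gamma_T=\Gal(K_T/K)$ coincides with $G_\ell=\Gal(K_T/K_S)$ and has order $\ell+1$. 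The key computation is therefore the image of $P(\ell)={\bf D}_\kappa{\bf D}_\ell^1(y_{T,\p})$ in the singular quotient $H^1_{sin}(K_{T,\lambda_T},W_\p[p^m])$; because $\tilde P(\ell)$ lies in $\BK_\p(K_T)$ and hence its localization at $\lambda_T$ is \emph{finite} (unramified away from $p$), one must pass through the ramified extension $K_{T,\lambda_T}/K_{S,\lambda_S}$ to see a nonzero singular class.

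The heart of the argument is the ``Key Formula'' of Nekov\'a\v{r} (\cite[\S 9]{Nek}), which computes the singular part of the localization of a Heegner cycle at a prime dividing its conductor in terms of the Heegner cycle of smaller conductor evaluated at the finite (unramified) part. Concretely, I expect a formula of the shape: the image of $y_{T,\p}$ in $H^1_{sin}(K_{T,\lambda_T},W_\p[p^m])$ is, up to the identifications $\alpha_\lambda$, $\beta_\lambda$ of \S\ref{tate-subsec} and a unit, the image of $\cores_{K_{T}/K_{S}}(y_{T,\p})=(a_\ell/\ell^{k/2-1})\,y_{S,\p}$ in $H^1_f(K_{S,\lambda_S},W_\p[p^m])$. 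Applying the derivative operator ${\bf D}_\ell^1$ and using the congruence $(\sigma_\ell-1){\bf D}_\ell^k\equiv-\sigma_\ell{\bf D}_\ell^{k-1}\pmod{p^m}$ from \S\ref{3.3.4} together with Assumption is \emph{not} in force here, so instead I would argue directly: the norm ${\bf N}_T$ collapses the $G_\ell$-action appropriately, and the singular localization of ${\bf D}_\ell^1(y_{T,\p})$ becomes, via $\nu_\lambda$, essentially ${\bf D}_\kappa(y_{S,\p})_\lambda$ multiplied by a factor built from $a_\ell\pm(\ell+1)\Frob_\lambda$ acting on the $\pm$-eigenspaces. The hypothesis that $\bigl(a_\ell\pm(\ell+1)\bigr)/p^m$ are $p$-adic units guarantees, via Lemma \ref{frob-invertible-lemma}, that this factor is an \emph{invertible} endomorphism of $H^1_{sin}(K_\lambda,W_\p[p^m])$, so it does not affect the vanishing or non-vanishing.

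Having assembled these pieces, the equivalence follows: $d(\ell)_\lambda=0$ in $H^1_{sin}(K_\lambda,W_\p[p^m])$ if and only if ${\bf N}_T(P(\ell))$ has trivial singular localization at $\lambda_T$, if and only if (after removing the invertible Frobenius-factor and tracking the identification $\res_{K_T/K}$, which is an isomorphism on cohomology by the inflation-restriction arguments of \S\ref{heegner-classes-subsec}) the class ${\bf D}_\kappa(y_{S,\p})_\lambda$ vanishes in $H^1_f(K_\lambda,W_\p[p^m])$. One must also check that $\Frob_\lambda$ stabilizes each step and that the $\pm$-eigenspace bookkeeping is consistent with Proposition \ref{eigen-d-prop}; this is routine given that $\Frob_\ell=\Frob_\infty$ for $\ell\in\tilde{\mathcal S}_{p^m}$.

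The main obstacle will be making the ``Key Formula'' of \cite[\S 9]{Nek} precise enough to extract the exact multiplicative constant $a_\ell\pm(\ell+1)\Frob_\lambda$ (rather than merely ``a nonzero scalar''), since the whole point of the hypothesis on $\bigl(a_\ell\pm(\ell+1)\bigr)/p^m$ is that it is this specific operator whose invertibility we need. A secondary technical point is handling the transition between the unramified localization of the finite class $\tilde P(\ell)$ over $K_T$ and the singular class over $K$: one must verify carefully that corestriction/norm from $K_{T,\lambda_T}$ down to $K_\lambda$ sends the unramified-over-$K_T$ part to something with a controlled singular component over $K$, which is exactly the ramification-in-$K_T/K_S$ phenomenon that produces the nontrivial singular class, and this requires the explicit description of $H^1_{sin}$ in \eqref{sin-isom-eq2} together with the tame-level computation behind Lemma \ref{dimension}.
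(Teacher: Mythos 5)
Your approach is the same as the paper's: the heart of the matter is Nekov\'a\v{r}'s Key Formula from \cite[\S 9]{Nek}, which relates the singular part of $d(\ell)_\lambda$ to the finite localization of the derived Heegner cycle of lower level, and then the hypotheses together with Lemma~\ref{frob-invertible-lemma} ensure the operators involved are invertible. The paper's proof is essentially just the display
\[
\frac{(-1)^{k/2-1}\epsilon_\kappa a_\ell-(\ell+1)}{p^m}\,d(\ell)_{\lambda,sin}\equiv\frac{a_\ell-(\ell+1)\Frob_\lambda}{p^m}\,\nu_\lambda\bigl({\bf D}_\kappa(y_{S,\p})_\lambda\bigr)\pmod{p^m},
\]
obtained by applying the Key Formula to a $1$-cocycle representing ${\bf D}_\kappa(y_{S\ell,\p})$, simplified using Proposition~\ref{eigen-d-prop} and the congruences $\ell+1\equiv a_\ell\equiv 0\pmod{\p^m}$; the left coefficient is a unit by hypothesis and the right one is invertible by Lemma~\ref{frob-invertible-lemma}. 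What you flag as the ``main obstacle'' --- extracting the exact operator rather than a vague nonzero constant --- is precisely what this display provides (the sign correction is in \cite[Proposition 5.16]{Nek-Dur}).

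One genuine error in your proposal: you write that Assumption~\ref{ass} is \emph{not} in force here, but it is. The paper states just before Assumption~\ref{ass} that, with the sole exception of \S\ref{tate-subsec}, the assumption is in force through the end of \S\ref{local-kolyvagin-subsec}, and Proposition~\ref{finite-2-prop} lives in \S\ref{local-kolyvagin-subsec}. This matters: the paper's simplification of the Key Formula invokes Proposition~\ref{eigen-d-prop}, which is proved under Assumption~\ref{ass} --- and indeed you yourself appeal to Proposition~\ref{eigen-d-prop} at the end of your sketch, so your statement is internally inconsistent. If you really tried to ``argue directly'' without Assumption~\ref{ass}, you would lack the clean eigenspace statement for $d(\ell)$ that the paper uses to reduce the Key Formula to the displayed congruence, and the argument would not close. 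With the correct reading of the standing hypotheses, your sketch lines up with the paper's proof.
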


\begin{proof} Applying the Key Formula in \cite[\S 9]{Nek} with $y$ a $1$-cocycle representing ${\bf D}_\kappa(y_{S\ell,\p})$, and using Proposition \ref{eigen-d-prop} plus the relations $\ell+1\equiv a_\ell\equiv0\pmod{\p^m}$ to simplify the right hand side, we get 
\[ \frac{(-1)^{k/2-1}\epsilon_\kappa a_\ell-(\ell+1)}{p^m}d(\ell)_{\lambda,sin}\equiv\frac{a_\ell-(\ell+1)\Frob_\lambda}{p^m}\Big(\nu_\lambda\bigl({\bf D}_\kappa(y_{S,\p}{)}_\lambda\bigr)\!\Big)\pmod{p^m}. \]
(Note the difference of sign with respect to \emph{loc. cit.}; the correction can be found in \cite[Proposition 5.16]{Nek-Dur}.) But $\frac{(-1)^{k/2-1}\epsilon_\kappa a_\ell-(\ell+1)}{p^m}\in(\mathcal O_\p/p^m\mathcal O_\p)^\times$ by assumption, and the proposition follows because $\frac{a_\ell-(\ell+1)\Frob_\lambda}{p^m}$ is invertible on $H^1_{sin}(K_\lambda,W_\p[p^m])$ by Lemma \ref{frob-invertible-lemma}. \end{proof}

Recall that the data $(\p^m,S,{\bf D}_{{\kappa}},\ell)$ satisfy Assumption \ref{ass}. As before, $L_m$ is the field $K(W_\p[p^m])$ and $S'$ is the conductor of ${\bf D}_\kappa$. Define
\begin{equation}\label{S'-Selmer}
H^1_{f,S'}(K,W_\p[p^m]):=\ker\Big(H^1_f(K,W_\p[p^m])\longrightarrow\bigoplus_{v\,|\,S'}H^1_f(K_v,W_\p[p^m])\Big). 
\end{equation}
The following result is the counterpart of \cite[Proposition 4.10]{Dar}.

\begin{proposition} \label{4.10prop}
The class $d(\ell)_\lambda$ lies in the kernel of 
\[ \phi_\lambda:H^1_{sin}(K_\lambda,W_\p[p^m])^{\epsilon_\kappa}\longrightarrow\bigl(H^1_{f,S'}(K,W_\p[p^m]{)}^*\bigr)^{\epsilon_\kappa} \]
for all $m\geq1$.
\end{proposition}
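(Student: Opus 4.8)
The plan is to combine the global reciprocity law (the sum of local invariants of a global class vanishes) with the explicit local description of $d(\ell)$ obtained in the previous subsections. Concretely, for a class $s \in H^1_{f,S'}(K,W_\p[p^m])$ we must show that $\langle d(\ell)_\lambda, s_\lambda\rangle_\lambda = 0$, where $\langle\cdot,\cdot\rangle_\lambda$ is Tate's local pairing \eqref{perfect-eq}; by definition of $\phi_\lambda$ this is exactly the pairing of $\phi_\lambda(d(\ell)_\lambda)$ with $s$. Since $d(\ell) \in H^1(K,W_\p[p^m])$ is a genuine global class and the cup product $d(\ell) \cup s$ of two global classes has trivial image in $H^2(K,\boldsymbol\mu_{p^m})$ by global class field theory (the sum of the local invariants $\sum_v \langle d(\ell)_v, s_v\rangle_v$ is zero, using that the Tate pairing is the cup product followed by the invariant map), it suffices to show that $\langle d(\ell)_v, s_v\rangle_v = 0$ at every place $v \neq \lambda$ of $K$.

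Next I would run through the places $v \neq \lambda$ case by case, invoking the local computations already in place. At the archimedean places, $d(\ell)_v = 0$ by Proposition~\ref{prop5.9}, so the local pairing vanishes trivially. At a finite place $v \nmid S'\ell$, Proposition~\ref{finite-1-prop} gives $d(\ell)_v \in H^1_f(K_v,W_\p[p^m])$, and since $s \in H^1_f(K,W_\p[p^m])$ we also have $s_v \in H^1_f(K_v,W_\p[p^m])$; because $H^1_f(K_v,W_\p[p^m])$ is isotropic for $\langle\cdot,\cdot\rangle_v$ (it is its own annihilator under Tate duality, as recalled in \S\ref{tate-subsec}), the pairing vanishes. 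Finally, at a place $v \mid S'$ (so $v \neq \lambda$, since $\lambda \nmid S'$): here $s \in H^1_{f,S'}(K,W_\p[p^m])$ means precisely $s_v = 0$ by the definition \eqref{S'-Selmer}, so again $\langle d(\ell)_v, s_v\rangle_v = 0$. Summing over all $v \neq \lambda$ and using the reciprocity law then forces $\langle d(\ell)_\lambda, s_\lambda\rangle_\lambda = 0$, which is the assertion that $\phi_\lambda(d(\ell)_\lambda)$ kills $H^1_{f,S'}(K,W_\p[p^m])$.

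The one remaining point is the $\epsilon_\kappa$-equivariance: I should check that the statement lives in the $\epsilon_\kappa$-eigenspaces as written. This is immediate from Proposition~\ref{eigen-d-prop} (which places $d(\ell)$, hence $d(\ell)_\lambda$, in the $\epsilon_\kappa$-eigenspace) together with the $\Gal(K/\Q)$-equivariance of $\phi_\lambda$ noted at the end of \S\ref{tate-subsec} and the eigenspace compatibility \eqref{eigenspaces-pm-isom-eq} of Pontryagin duality; so the displayed map is well-defined on eigenspaces and the vanishing computed above respects it.

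I do not expect a serious obstacle here: the only mild subtlety is bookkeeping the compatibility between the Tate pairing $\langle\cdot,\cdot\rangle_v$ (cup product composed with the invariant map) and the definition of $\phi_\lambda$ via \eqref{isom-pontryagin-eq} — i.e. making sure that "$\phi_\lambda(d(\ell)_\lambda)$ annihilates $s$" really unwinds to "$\sum_{v} \langle d(\ell)_v, s_v\rangle_v$ has a single nonzero local term at $\lambda$". Once that identification is made, the argument is the standard Poitou--Tate / reciprocity argument and each local vanishing is quoted from the preceding propositions.
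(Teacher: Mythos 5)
Your proof is correct and follows essentially the same route as the paper's: both invoke the global reciprocity law to reduce to local vanishing at $v\neq\lambda$, then dispose of the finite places $v\nmid S'\ell$ via Proposition~\ref{finite-1-prop} together with the isotropy of $H^1_f$, of the places $v\mid S'$ via $s_v=0$, and of the archimedean places (which the paper's proof drops by restricting the sum to finite places, since $H^1(\C,W_\p[p^m])=0$ anyway). The eigenspace bookkeeping you spell out at the end is handled in the paper by the single opening sentence quoting Proposition~\ref{eigen-d-prop}.
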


\begin{proof} To begin with, $d(\ell)_\lambda\in H^1_{sin}(K_\lambda,W_\p[p^m])^{\epsilon_\kappa}$ by Proposition \ref{eigen-d-prop}. Pick an element $s\in H^1_{f,S'}(K,W_\p[p^m])$, so that $s_\lambda\in H^1_f(K_\lambda,W_\p[p^m])$; we need to show that
\begin{equation} \label{claim-eq}
{\langle s_\lambda,d(\ell)_\lambda\rangle}_\lambda=0.
\end{equation} 
By \cite[Proposition 2.2, (2)]{Bes}, one has
\begin{equation} \label{sum-0-eq}
\sum_v{\langle s_v,d(\ell)_v\rangle}_v=0, 
\end{equation}
where the sum is taken over all (finite) places of $K$. Now observe that if $v\nmid S'\ell$ then ${\langle s_v,d(\ell)_v\rangle}_v=0$ by Proposition \ref{finite-1-prop}. On the other hand, if $v\,|\,S'$ then $s_v=0$ because $s\in H^1_{f,S'}(K,W_\p[p^m])$. Therefore \eqref{claim-eq} is an immediate consequence of \eqref{sum-0-eq}. \end{proof}

\subsection{Applications of \v{C}ebotarev's density theorem} 

Recall that $L_m$ is the composite of $K$ and the field $\bar\Q^{\ker(\bar\vartheta_{\p,m})}$ fixed by $\ker(\bar\vartheta_{\p,m})$. Similarly, define $L_{S,m}:=K_S(W_\p[p^m])$ to be the composite of $K_S$ and $\bar\Q^{\ker(\bar\vartheta_{\p,m})}$. 

We need some cohomological lemmas.

\begin{lemma} \label{disjoint-lem}
$H^i\bigl(\Gal(L_m/K),W_\p[p^m]\bigr)\simeq H^i\bigl(\Gal(L_{S,m}/K_S),W_\p[p^m]\bigr)$ for all $i\geq0$. 
\end{lemma}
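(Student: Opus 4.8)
The plan is to reduce the statement to the linear disjointness of $K_S$ and $L_m$ over $K$, and then to invoke functoriality of group cohomology. First I would record the tautology $L_{S,m}=K_S\cdot L_m$, which is immediate from the definitions (since $L_{S,m}=K_S(W_\p[p^m])$, $L_m=K(W_\p[p^m])$ and $K\subset K_S$). As $L_m/K$ is Galois, restriction of automorphisms defines a homomorphism $\res\colon\Gal(L_{S,m}/K_S)\to\Gal(L_m/K)$; its kernel is $\Gal(L_{S,m}/K_SL_m)=\Gal(L_{S,m}/L_{S,m})=1$ and its image is $\Gal\bigl(L_m/(L_m\cap K_S)\bigr)$. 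Moreover the $G_{K_S}$-action on $W_\p[p^m]$ factors through $\Gal(L_{S,m}/K_S)$, and along $\res$ it agrees with the $G_K$-action factored through $\Gal(L_m/K)$; so $W_\p[p^m]$ as a $\Gal(L_{S,m}/K_S)$-module is the pullback along $\res$ of $W_\p[p^m]$ as a $\Gal(L_m/K)$-module. Hence, once $\res$ is known to be an isomorphism, the induced map on cohomology is an isomorphism $H^i(\Gal(L_m/K),W_\p[p^m])\simeq H^i(\Gal(L_{S,m}/K_S),W_\p[p^m])$ for every $i\geq0$, and the whole lemma reduces to the equality $L_m\cap K_S=K$.

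To prove $L_m\cap K_S=K$, observe first that $L_m\cap K_S$ is abelian over $K$, being a subextension of the abelian extension $K_S/K$. On the other hand $\Gal(L_m/K)={\rm im}(\bar\vartheta_{\p,m}|_{G_K})$ contains $\SL_2(\mathcal O_\p/p^m\mathcal O_\p)$: by the argument in the proof of Lemma \ref{lem-irr} (here one uses $p\notin\Sigma$) the image of $\vartheta_\p$ contains $\SL_2(\mathcal O_\p)$; this inclusion persists after restriction to $G_{K}$ since $\SL_2(\mathcal O_\p)$ is perfect (as $p>3$) while $[G_{\Q}:G_K]=2$, and it survives reduction modulo $p^m$. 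Now $\SL_2(\mathcal O_\p/p^m\mathcal O_\p)$ is perfect, hence contained in the commutator subgroup of $\Gal(L_m/K)$, so every abelian subextension of $L_m/K$ is fixed by $\SL_2(\mathcal O_\p/p^m\mathcal O_\p)$; since $\Gal(L_m/K)/\SL_2(\mathcal O_\p/p^m\mathcal O_\p)$ embeds into $(\mathcal O_\p/p^m\mathcal O_\p)^\times$ via the determinant, and $\det\circ\bar\vartheta_{\p,m}$ is the mod $p^m$ cyclotomic character (the pairing of \S\ref{tate-subsec} gives $\det\vartheta_\p=\chi_{\mathrm{cyc}}$), every abelian subextension of $L_m/K$ lies in $K(\boldsymbol\mu_{p^m})$. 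In particular $L_m\cap K_S\subseteq K(\boldsymbol\mu_{p^m})\cap K_S$.

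Finally I would settle $K(\boldsymbol\mu_{p^m})\cap K_S=K$ by a ramification argument at $p$. Since $p\nmid D$ the prime $p$ is unramified in $K$, and $\boldsymbol\mu_{p^m}\not\subset K$ (because $\phi(p^m)>2$), so $K(\boldsymbol\mu_{p^m})/K$ is totally ramified at every prime of $K$ above $p$; on the other hand $K_S/K$ is unramified at $p$, because $K_1/K$ is unramified everywhere and $K_S/K_1$ is ramified only at the primes dividing $S$, which is prime to $p$. Therefore $K(\boldsymbol\mu_{p^m})\cap K_S$ is a subextension of $K(\boldsymbol\mu_{p^m})/K$ unramified at $p$, forcing $K(\boldsymbol\mu_{p^m})\cap K_S=K$; combined with the previous paragraph this gives $L_m\cap K_S=K$, and we are done.

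The main obstacle is the bigness input, i.e.\ checking that $\Gal(L_m/K)$ contains the full group $\SL_2(\mathcal O_\p/p^m\mathcal O_\p)$ and not merely $\SL_2(\Z/p^m\Z)$; this is precisely what excludes exotic, non-cyclotomic abelian subextensions of $L_m/K$ when $F_\p\neq\Q_p$. Everything else is formal manipulation of Galois groups and inflation--restriction.
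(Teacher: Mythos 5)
Your proof is correct, but it takes a genuinely different and substantially more involved route than the paper's. The paper avoids any ``bigness'' input on the Galois image entirely: it descends to $\Q$, observing that $L_m=K\cdot\Q(W_\p[p^m])$ and $L_{S,m}=K_S\cdot\Q(W_\p[p^m])$, and that both $\Q(W_\p[p^m])\cap K$ and $\Q(W_\p[p^m])\cap K_S$ are everywhere unramified over $\Q$ (since $\Q(W_\p[p^m])/\Q$ is ramified only at $Np$ while $K_S/\Q$ is ramified only at $SD$, and $(Np,SD)=1$); by Minkowski these intersections are $\Q$, so restriction identifies \emph{both} $\Gal(L_m/K)$ and $\Gal(L_{S,m}/K_S)$ with the single group $\Gal(\Q(W_\p[p^m])/\Q)$, and the cohomology groups match with nothing further to check. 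Your route instead attacks $L_m\cap K_S=K$ head-on: you first use that $L_m\cap K_S$ is abelian over $K$, then that $\mathrm{im}(\bar\vartheta_{\p,m}|_{G_K})$ contains the perfect group $\SL_2(\cO_\p/p^m\cO_\p)$ (this is the heavy input, requiring $p\notin\Sigma$ in the stronger form ``full image contains the determinant-one block over $\cO_\p$'', not merely the $\GL_2(\Z_p)$ statement quoted in the proof of Lemma \ref{lem-irr}), so that any abelian subextension of $L_m/K$ must factor through $\det\circ\bar\vartheta_{\p,m}=\chi_{\mathrm{cyc}}$ and hence lie in $K(\boldsymbol\mu_{p^m})$; you then rule out a nontrivial intersection with $K_S$ by comparing ramification at $p$. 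Both arguments are valid, and yours has the minor virtue of pinning down exactly where a nontrivial intersection would have to live (the cyclotomic line), but the paper's argument is considerably cheaper: it needs no irreducibility or bigness hypotheses at all, only the ramification bookkeeping and Minkowski's theorem, and it produces both isomorphisms at once rather than comparing the two Galois groups directly. One small caution on your version: cite the definition of $\Sigma$ (second bullet) rather than Lemma \ref{lem-irr} for the containment of $\SL_2(\cO_\p)$, since the latter's proof only invokes a $\GL_2(\Z_p)$-conjugate.
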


\begin{proof} The fields $K$ and $\Q(W_\p[p^m])$ are linearly disjoint over $\Q$, and the same is true of $K_S$ and $\Q(W_\p[p^m])$. This holds because $\Q(W_\p[p^m])\cap K$ and $\Q(W_\p[p^m])\cap K_S$ are extensions of $\Q$ that are everywhere unramified, which is a consequence of the fact that $\Q(W_\p[p^m])/\Q$ is unramified outside $Np$ while $K_S/\Q$ is unramified outside $SD$ and $(pN,SD)=1$. It follows that the restriction maps induce canonical isomorphisms 
\[ \Gal(L_m/K)\simeq\Gal(\Q(W_\p[p^m])/\Q),\quad\Gal(L_{S,m}/K_S)\simeq\Gal(\Q(W_\p[p^m])/\Q). \] 
Therefore both groups appearing in the statement of the lemma are isomorphic to 
\[ H^1\bigl(\Gal(\Q(W_\p[p^m])/\Q),W_\p[p^m]\bigr), \] 
and this concludes the proof.\end{proof}

\begin{lemma} \label{vanishing-cohomology-lem}
$H^i\bigl(\Gal(L_m/K),W_\p[p^m]\bigr)=0$ for all $i\geq0$.
\end{lemma}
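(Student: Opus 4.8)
The plan is to reduce the vanishing of $H^i\bigl(\Gal(L_m/K),W_\p[p^m]\bigr)$ to a statement purely about the Galois representation $\bar\vartheta_{\p,m}$ and then invoke the non-solvability of its image, which is available since $\p\cap\Z\notin\Sigma$. By Lemma \ref{disjoint-lem} we may work instead with $H^i\bigl(\Gal(L_{S,m}/K_S),W_\p[p^m]\bigr)$, or, using the linear disjointness established there, with $H^i\bigl(\Gal(\Q(W_\p[p^m])/\Q),W_\p[p^m]\bigr)$; thus it suffices to prove that the latter group vanishes. Write $G:=\mathrm{im}(\bar\vartheta_{\p,m})=\Gal(\Q(W_\p[p^m])/\Q)\subset\Aut(W_\p[p^m])$.

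First I would dispose of $i=0$: by Lemma \ref{lem-sol}, part (2) (applicable since $\Q/\Q$ is trivially solvable, or directly since $\bar\vartheta_\p$ is irreducible with non-solvable image by Lemma \ref{lem-irr}), we have $W_\p[p^m](K)=0$, and a fortiori $W_\p[p^m](\Q)=0$; but $H^0\bigl(G,W_\p[p^m]\bigr)=W_\p[p^m](\Q)$, so this vanishes. The substantive case is $i=1$ (and then $i\ge2$, which I discuss below). For $i=1$, the standard device is to use the subgroup generated by a well-chosen element of $G$. Because $G$ is not solvable while $\GL_2$ over a field has solvable Borel subgroups, $G$ contains an element whose action on $W_\p[p^m]\otimes_{\cO_\p}\F_\p$ has no nonzero fixed vectors --- concretely, one uses that $\mathrm{im}(\bar\vartheta_\p)$ contains (a conjugate of) $\SL_2(\F_p)$ by Lemma \ref{lem-irr} via \cite[Lemma 6.2]{Bes}, hence an element of order $p+1$ acting without eigenvalue $1$, and then lifts it through the pro-$p$ kernel of $\Aut(W_\p[p^m])\to\Aut(W_\p[p^m]\otimes\F_\p)$ to an element $g\in G$ of order prime to $p$ with $W_\p[p^m]^{\langle g\rangle}=0$. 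For such $g$ the cyclic group $H:=\langle g\rangle$ has order prime to $p$, so $H^1\bigl(H,W_\p[p^m]\bigr)=0$; by the inflation--restriction sequence applied to $H\subset G$, the restriction map $H^1\bigl(G,W_\p[p^m]\bigr)\to H^1\bigl(H,W_\p[p^m]\bigr)=0$ has kernel $H^1\bigl(G/?\,,\dots\bigr)$ --- this is not immediately conclusive, so instead I would use the cleaner argument: $H^1\bigl(G,W_\p[p^m]\bigr)$ is killed by $|G|$ but also, since $W_\p[p^m]$ is a $p$-group, it is a $p$-group; pick a $p$-Sylow $G_p\subset G$, then restriction $H^1\bigl(G,W_\p[p^m]\bigr)\hookrightarrow H^1\bigl(G_p,W_\p[p^m]\bigr)$ is injective, so it suffices to show $H^1\bigl(G_p,W_\p[p^m]\bigr)=0$, i.e. to handle the $p$-group case; alternatively, and most efficiently, one uses that $W_\p[p^m]^G=0$ together with the presence of an element acting invertibly enough to force all higher cohomology to vanish.

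The slickest route, which is the one I would actually write, is this: since $W_\p[p^m]^{G}=0$ and $W_\p[p^m]$ is finite, and since $G$ contains an element $g$ of order prime to $p$ with $(g-1)$ invertible on $W_\p[p^m]$ (constructed as above), the full cohomology $H^*\bigl(G,W_\p[p^m]\bigr)$ vanishes. Indeed, inflation--restriction for the normal-closure or a direct Sylow argument reduces to a $p$-Sylow $G_p$; but $g$ normalizes something making $\langle g\rangle G_p$ a group in which $H^*\bigl(\langle g\rangle,W_\p[p^m]\bigr)=0$, and the Hochschild--Serre spectral sequence for $1\to G_p\to\langle g\rangle G_p\to\langle g\rangle\to 1$ together with $H^*\bigl(\langle g\rangle,-\bigr)=0$ (order prime to $p$) forces $H^*\bigl(G_p,W_\p[p^m]\bigr)$ to be annihilated, and being a $p$-group it is zero. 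Then $H^*\bigl(G,W_\p[p^m]\bigr)\hookrightarrow H^*\bigl(G_p,W_\p[p^m]\bigr)=0$. This is exactly the pattern of \cite[Lemma 6.4]{Bes} or the cohomological triviality arguments in \cite[\S 6]{Bes}, and I would simply cite \cite[Lemma 6.3 and Lemma 6.4]{Bes} (or their analogues) for the statement that, under the conclusions of Lemma \ref{lem-irr}, $H^i\bigl(\Gal(\Q(W_\p[p^m])/\Q),W_\p[p^m]\bigr)=0$ for all $i\ge0$.

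The main obstacle is locating, with control, an element of $\mathrm{im}(\bar\vartheta_{\p,m})$ that acts without fixed points on $W_\p[p^m]$ (equivalently, making $(g-1)$ invertible): at the residual level this is immediate from the near-surjectivity of $\bar\vartheta_\p$ onto $\GL_2(\F_p)$ (Lemma \ref{lem-irr}, via \cite[Lemma 6.2]{Bes}), since $\SL_2(\F_p)$ contains a non-split torus element of order $p+1>1$ whose eigenvalues are primitive, hence $\neq1$; lifting to level $p^m$ costs nothing because the kernel of reduction is a pro-$p$ group and one may take the prime-to-$p$ part of any lift. Once this element is in hand, everything else is the formal cohomological-triviality machinery and a Sylow reduction, so the proof is short; I would phrase it as a direct appeal to Lemma \ref{lem-irr} followed by the relevant lemma(s) of \cite{Bes}.
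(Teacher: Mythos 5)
The paper's proof of this lemma is a one-line citation to \cite[Proposition 6.3, (2)]{Bes}, and you ultimately fall back on citing Besser as well, so your bottom line agrees with the paper. However, the from-scratch sketch you give in the middle has a genuine gap. The element $g$ you construct (a non-split torus element of order $p+1$ lifted from $\SL_2(\F_p)$) is neither central nor normalizing in any useful way, so the Sylow reduction does not close: restriction to a $p$-Sylow subgroup $G_p$ is injective, but $g$ has order prime to $p$ and hence $g\notin G_p$; and for the (optimistically assumed) extension $1\to G_p\to\langle g\rangle G_p\to\langle g\rangle\to1$, Hochschild--Serre only yields $H^n\bigl(\langle g\rangle G_p,W_\p[p^m]\bigr)\simeq H^n\bigl(G_p,W_\p[p^m]\bigr)^{\langle g\rangle}$, which does not show $H^n(G_p,\cdot)=0$. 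The mechanism that actually works (and the one underlying \cite[Proposition 6.3]{Bes}) is to take a \emph{normal} subgroup $C\subset G$ of order prime to $p$ with $W_\p[p^m]^C=0$; then $H^i(C,W_\p[p^m])=0$ for all $i\ge0$, and Hochschild--Serre for $1\to C\to G\to G/C\to1$ gives $H^i(G,W_\p[p^m])\simeq H^i(G/C,W_\p[p^m]^C)=0$. The natural choice is $C=\langle -I\rangle$, which is central in $G\subset\GL_2(\cO_\p/p^m)$, lies in the image because the image contains $\SL_2$, and satisfies $W_\p[p^m]^{\langle -I\rangle}=\ker(-2)=0$ since $p$ is odd. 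Your non-split torus element does not have this normality/centrality, so the argument as you wrote it does not go through; if you simply cite \cite[Proposition 6.3, (2)]{Bes} as the paper does, the gap disappears.
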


\begin{proof} This is \cite[Proposition 6.3, (2)]{Bes}. \end{proof}

\begin{lemma} \label{restriction-lem1}
The restriction map 
\[ H^1(K,W_\p[p^m])\longrightarrow H^1(K_S,W_\p[p^m]) \]
is injective.
\end{lemma}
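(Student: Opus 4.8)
The plan is to deduce the injectivity of $\res_{K_S/K}$ from the inflation--restriction exact sequence together with the vanishing established in the preceding lemmas. First I would invoke the inflation--restriction (Hochschild--Serre) exact sequence associated with the normal subgroup $G_{K_S}\subset G_K$ and the $G_K$-module $W_\p[p^m]$, namely
\[ 0\longrightarrow H^1\bigl(\Gal(K_S/K),W_\p[p^m](K_S)\bigr)\longrightarrow H^1(K,W_\p[p^m])\xrightarrow{\ \res_{K_S/K}\ } H^1(K_S,W_\p[p^m]). \]
The kernel of $\res_{K_S/K}$ is therefore identified with $H^1\bigl(\Gal(K_S/K),W_\p[p^m](K_S)\bigr)=H^1\bigl(\Gamma_S,W_\p[p^m](K_S)\bigr)$, so it suffices to show this group vanishes.

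Next I would observe that $K_S/\Q$ is solvable (it is generalized dihedral, being built from ring class fields of $K$), hence so is $K_S/K$; by part (2) of Lemma \ref{lem-sol} we get $W_\p[p^m](K_S)=0$. Consequently the coefficient module itself is trivial, and $H^1\bigl(\Gamma_S,0\bigr)=0$, which forces $\ker(\res_{K_S/K})=0$ and proves the claim. (Alternatively, one could route through Lemma \ref{disjoint-lem} and Lemma \ref{vanishing-cohomology-lem}: replacing $W_\p[p^m](K_S)$ by the full module $W_\p[p^m]$ and noting $H^1\bigl(\Gal(L_{S,m}/K_S),W_\p[p^m]\bigr)\simeq H^1\bigl(\Gal(L_m/K),W_\p[p^m]\bigr)=0$ surjects onto the relevant term after another inflation--restriction step; but the direct argument via the vanishing of the fixed module is cleaner.)

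There is essentially no serious obstacle here: the only point requiring a moment's care is confirming that $K_S/\Q$ is solvable so that Lemma \ref{lem-sol} applies, which is immediate from the description of $K_S$ as a compositum of ring class fields (each $\Gal(K_T/\Q)$ is an extension of $\Gal(K/\Q)\cong\Z/2\Z$ by the abelian group $\Gamma_T$). Everything else is a direct reading-off from the exact sequence. I would keep the write-up to a few lines.
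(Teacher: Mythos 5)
Your proof is correct and follows the paper's argument exactly: inflation--restriction identifies the kernel with $H^1\bigl(\Gal(K_S/K),W_\p[p^m](K_S)\bigr)$, which vanishes because $W_\p[p^m](K_S)=0$ by part (2) of Lemma \ref{lem-sol}, applied since $K_S/\Q$ is solvable (generalized dihedral).
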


\begin{proof} By the inflation-restriction exact sequence, the kernel of this map is 
\[ H^1\bigl(\Gal(K_S/K),W_\p[p^m](K_S)\bigr), \] 
which is trivial because $W_\p[p^m](K_S)=0$ by part (2) of Lemma \ref{lem-sol}. \end{proof}

\begin{lemma} \label{restriction-lem2}
The restriction map 
\[ H^1(K_S,W_\p[p^m])\longrightarrow H^1(L_{S,m},W_\p[p^m]) \]
is injective.
\end{lemma}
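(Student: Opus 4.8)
The plan is to argue exactly as in the proof of Lemma \ref{restriction-lem1}, via the inflation--restriction exact sequence, but now feeding in the cohomological vanishing supplied by Lemmas \ref{disjoint-lem} and \ref{vanishing-cohomology-lem} in place of Lemma \ref{lem-sol}.

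First I would record that $L_{S,m}/K_S$ is a Galois extension, being the compositum of $K_S$ with the Galois extension $\Q(W_\p[p^m])/\Q$. The inflation--restriction exact sequence attached to the tower $\bar\Q/L_{S,m}/K_S$ then identifies the kernel of the restriction map in the statement with
\[ H^1\bigl(\Gal(L_{S,m}/K_S),\,W_\p[p^m]^{G_{L_{S,m}}}\bigr). \]
Since by construction $L_{S,m}$ contains $\Q(W_\p[p^m])$, the group $G_{L_{S,m}}$ acts trivially on $W_\p[p^m]$, so $W_\p[p^m]^{G_{L_{S,m}}}=W_\p[p^m]$ and the kernel is simply $H^1\bigl(\Gal(L_{S,m}/K_S),W_\p[p^m]\bigr)$, the Galois module structure being the one induced by $\bar\vartheta_{\p,m}$ through the quotient $\Gal(L_{S,m}/K_S)\simeq\Gal(\Q(W_\p[p^m])/\Q)$.

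It then remains to invoke Lemma \ref{disjoint-lem} to rewrite this group as $H^1\bigl(\Gal(L_m/K),W_\p[p^m]\bigr)$, and Lemma \ref{vanishing-cohomology-lem} to conclude that it vanishes; hence the restriction map has trivial kernel, which is the assertion. There is essentially no genuine obstacle here: the only point demanding a moment's care is to verify that the $\Gal(L_{S,m}/K_S)$-action on $W_\p[p^m]$ occurring in the inflation--restriction sequence agrees with the one used in Lemma \ref{disjoint-lem}, and this is immediate from the proof of that lemma, since both actions factor through the canonical restriction isomorphism $\Gal(L_{S,m}/K_S)\simeq\Gal(\Q(W_\p[p^m])/\Q)$.
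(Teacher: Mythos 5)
Your proof is correct and follows essentially the same route as the paper's: identify the kernel via inflation--restriction as $H^1\bigl(\Gal(L_{S,m}/K_S),W_\p[p^m]\bigr)$ (using that $G_{L_{S,m}}$ acts trivially so the invariants are all of $W_\p[p^m]$), then invoke Lemmas \ref{disjoint-lem} and \ref{vanishing-cohomology-lem} to conclude it vanishes. You simply spell out the intermediate steps the paper leaves implicit.
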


\begin{proof} The kernel of this map is $H^1\bigl(\Gal(L_{S,m}/K_S),W_\p[p^m]\bigr)$, which is trivial by a combination of Lemmas \ref{disjoint-lem} and \ref{vanishing-cohomology-lem}. \end{proof} 

\begin{lemma} \label{restriction-lem3}
The restriction map
\[ H^1(K,W_\p[p^m])\longrightarrow H^1(L_{S,m},W_\p[p^m]) \]
is injective.
\end{lemma}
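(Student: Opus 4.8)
The plan is to deduce Lemma~\ref{restriction-lem3} by simply composing the two injectivity statements that have already been established, namely Lemmas~\ref{restriction-lem1} and~\ref{restriction-lem2}. Indeed, the restriction map $H^1(K,W_\p[p^m])\longrightarrow H^1(L_{S,m},W_\p[p^m])$ factors as the composite
\[
H^1(K,W_\p[p^m])\xrightarrow{\ \res_{K_S/K}\ }H^1(K_S,W_\p[p^m])\xrightarrow{\ \res_{L_{S,m}/K_S}\ }H^1(L_{S,m},W_\p[p^m]),
\]
because restriction is functorial with respect to the tower of field extensions $K\subset K_S\subset L_{S,m}$. The first arrow is injective by Lemma~\ref{restriction-lem1} and the second is injective by Lemma~\ref{restriction-lem2}, so their composite is injective as well.

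There is essentially no obstacle here: the only point to check is that $K_S$ really does sit inside $L_{S,m}=K_S(W_\p[p^m])$, which is immediate from the definition of $L_{S,m}$, and that the relevant restriction maps compose correctly, which is the standard functoriality of group cohomology under inflation/restriction along $G_{L_{S,m}}\subset G_{K_S}\subset G_K$. Alternatively, one could argue directly via the inflation--restriction sequence: the kernel of the displayed map is $H^1\bigl(\Gal(L_{S,m}/K),W_\p[p^m](L_{S,m})\bigr)$, and since $K_S/K$ is solvable (being generalized dihedral) and $L_{S,m}/K_S$ has Galois group isomorphic to $\Gal(\Q(W_\p[p^m])/\Q)$, the fixed module $W_\p[p^m](L_{S,m})$ vanishes by the irreducibility and non-solvability of $\bar\vartheta_\p$ (Lemma~\ref{lem-irr}, via the argument of Lemma~\ref{lem-sol}); however, the factorization argument is cleaner and reuses exactly what was just proved.
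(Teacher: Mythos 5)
Your proof is correct and takes exactly the same route as the paper, which simply says "Combine Lemmas \ref{restriction-lem1} and \ref{restriction-lem2}"; your factorization of the restriction map through $K_S$ is the implicit content of that one-line argument. The alternative inflation--restriction remark you add is also sound but unnecessary, as you yourself note.
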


\begin{proof} Combine Lemmas \ref{restriction-lem1} and \ref{restriction-lem2}. \end{proof}

Keep the notation of \eqref{r-selmer-eq}. Now we can prove

\begin{proposition}\label{prop-Cheb}
Suppose that
\begin{itemize}
\item ${\bf D}_\kappa(y_{S,\p})$ is not trivial in $H^1(K_S,W_\p[p^m])$;
\item $r_{\p,m}\bigl(H^1_{f,S'}(K,W_\p[p^m])^{\epsilon_\kappa}\bigr)\geq1$.
\end{itemize}
Then there exist infinitely many prime numbers $\ell$ such that
\begin{enumerate}
\item $\ell\nmid pNDS$ and $\Frob_\ell=\Frob_\infty\;\text{in}\;\Gal(L_{S,m}/\Q)$;
\item $\ell+1\pm a_\ell\not\equiv 0\pmod{\p^{m+1}}$; 
\item the image of  ${\bf D}_\kappa(y_{S,\p})$ in $H^1_f(K_\lambda,W_\p[p^m])$ is not zero, where $\lambda$ is the unique prime of $K$ above $\ell$;
\item the map of $\mathcal O_\p/p^m\mathcal O_\p$-modules 
\[H^1_{f,S'}(K,W_\p[p^m])^{\epsilon_\kappa}\longrightarrow H^1_f(K_\lambda,W_\p[p^m])^{\epsilon_\kappa}\] in surjective.
\end{enumerate}
\end{proposition}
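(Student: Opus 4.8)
The plan is to prove the statement by a single application of \v{C}ebotarev's density theorem: one constructs a finite Galois extension $M/\Q$ together with a conjugacy class $\mathfrak C\subset\Gal(M/\Q)$ such that every prime $\ell$ unramified in $M$ with $\Frob_\ell\subset\mathfrak C$ satisfies (1)--(4) simultaneously; the density theorem then supplies infinitely many such $\ell$.

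\emph{Set-up.} Write $\mathscr S:=H^1_{f,S'}(K,W_\p[p^m])^{\epsilon_\kappa}$ and $\mathcal D:={\bf D}_\kappa(y_{S,\p})$, the latter non-zero in $H^1(K_S,W_\p[p^m])$ by the first hypothesis. Using \eqref{r-selmer-eq} and the assumption $r_{\p,m}(\mathscr S)\geq1$, fix $s_0\in\mathscr S$ generating a free $\cO_\p/p^m\cO_\p$-direct summand. Since $G_{L_{S,m}}$ acts trivially on $W_\p[p^m]$, restriction to $L_{S,m}$ turns cohomology classes over $K$ or $K_S$ into $\cO_\p$-module homomorphisms $G_{L_{S,m}}\to W_\p[p^m]$; let $\psi_s$ ($s\in\mathscr S$) and $\psi_{\mathcal D}$ be the homomorphisms so obtained from $s$ and from $\mathcal D$. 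By Lemma \ref{restriction-lem3} the map $\psi_{s_0}$ is non-zero, and by Lemma \ref{restriction-lem2} so is $\psi_{\mathcal D}$. Let $L^{(1)}$ be the subfield of $\bar\Q$ fixed by the intersection of the kernels of the $\psi_s$ ($s\in\mathscr S$) and of $\psi_{\mathcal D}$; it is a finite abelian extension of $L_{S,m}$ of $p$-power degree, unramified outside $pNDS$. Finally set $L_{S,m+1}:=K_S(W_\p[p^{m+1}])$ and $M:=L^{(1)}\cdot L_{S,m+1}$.

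\emph{Reformulation of (1)--(4).} Fix a prime $\ell$ unramified in $M$, a prime $\Lambda$ of $M$ above it, and set $\gamma:=\Frob_{\Lambda/\ell}\in\Gal(M/\Q)$. Suppose $\gamma|_{L_{S,m}}$ is a complex conjugation. Then $\ell$ is inert in $K$ and $\Frob_\ell=\Frob_\infty$ in $\Gal(L_{S,m}/\Q)$ — which is (1), and in particular $\ell\in\tilde{\mathcal S}_{p^m}$; moreover the unique prime $\lambda$ of $K$ above $\ell$ splits completely in $L_{S,m}/K$ and in $K_S/K$, and the Frobenius of $\Lambda$ in $\Gal(M/L_{S,m})$ equals $\gamma^2$. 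Unwinding the identifications of \S \ref{tate-subsec}, in particular the $\Gal(K/\Q)$-equivariant isomorphism $\beta_\lambda$ of evaluation at $\Frob_\lambda$, one checks that the image of $\mathcal D$ in $H^1_f(K_\lambda,W_\p[p^m])$ corresponds to $\psi_{\mathcal D}(\gamma^2)$, and that $\beta_\lambda$ carries $H^1_f(K_\lambda,W_\p[p^m])^{\epsilon_\kappa}$ onto the free rank-one $\cO_\p/p^m\cO_\p$-module $W_\p[p^m]^{c=\epsilon_\kappa}$ and the localization map on $\mathscr S$ onto $s\mapsto\psi_s(\gamma^2)$; hence (3) holds as soon as $\psi_{\mathcal D}(\gamma^2)\neq0$, and (4) holds as soon as $\psi_{s_0}(\gamma^2)$ generates $W_\p[p^m]^{c=\epsilon_\kappa}$. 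Finally, writing $\gamma|_{L_{S,m+1}}$ acting on $W_\p[p^{m+1}]$ in the form $c\,(1+p^mX)$ with $X\in M_2(\F_\p)$, a direct computation using \eqref{char-pol} gives
\[ \ell+1\pm a_\ell\equiv p^m\bigl(-\tr(X)\pm(-1)^{k/2-1}\tr(cX)\bigr)\pmod{p^{m+1}}, \]
so (2) amounts to asking that $X$ lie outside the union of the two distinct $\F_\p$-hyperplanes cut out by the right-hand sides.

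\emph{Production of $\mathfrak C$.} It then remains to find $\delta\in\Gal(M/L_{S,m})$ so that $\gamma:=c\delta$ meets all the above requirements; the conjugacy class of $\gamma$ is the desired $\mathfrak C$. This is the technical heart, following the pattern of \cite[\S 4]{Dar} and \cite[\S 6]{Bes}. Three ingredients enter. First, because $p\notin\Sigma$ the image of $\bar\vartheta_{\p,m+1}$ is large: every element $c\,(1+p^mX)$ lies in it — its determinant $-(1+p^m\tr X)$ is a $(k-1)$-st power modulo $p^{m+1}$, as $k-1$ is odd and $k-1$ is a $p$-adic unit — and $M_2(\F_\p)$ is not covered by two hyperplanes since $p>3$, so admissible $X$ exist. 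Second, $\Gal(L^{(1)}/L_{S,m})$ and $\Gal(L_{S,m+1}/L_{S,m})$ are abelian $p$-groups whose conjugation actions under $\Gal(L_{S,m}/\Q)$ share no Jordan--H\"older constituent (the former is built from copies of the $2$-dimensional module $W_\p[p^m]$, the latter only from a subquotient of the adjoint module $M_2(\F_\p)$, so their composition factors differ already in dimension); hence, by Goursat's lemma together with Lemmas \ref{disjoint-lem} and \ref{vanishing-cohomology-lem}, the natural map $\Gal(M/L_{S,m})\to\Gal(L^{(1)}/L_{S,m})\times\Gal(L_{S,m+1}/L_{S,m})$ is surjective, so the condition on $\gamma|_{L_{S,m+1}}$ and those on $\gamma|_{L^{(1)}}$ can be imposed independently. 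Third, on $\Gal(L^{(1)}/L_{S,m})$ the homomorphism $\psi_{s_0}$ takes values in $W_\p[p^m]^{c=\epsilon_\kappa}$ (as $s_0$ lies in the $\epsilon_\kappa$-eigenspace) and $\psi_{\mathcal D}$ is non-zero; using the $\Gal(L_{S,m}/K)$-action on these homomorphism groups and the vanishing $H^i(\Gal(L_m/K),W_\p[p^m])=0$ of Lemma \ref{vanishing-cohomology-lem}, one finds $\delta$ for which $\psi_{s_0}$ hits a generator and $\psi_{\mathcal D}$ a non-zero value, and for which these survive the passage $\delta\rightsquigarrow(c\delta)^2$ — here one must track how complex conjugation acts after squaring, the squaring being forced by $\ell$ being inert in $K$. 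This last step, namely controlling the two a priori independent homomorphisms $\psi_{s_0}$ and $\psi_{\mathcal D}$ and the level-$(m+1)$ datum through a single Frobenius while keeping account of the $c$-action under squaring, is the main obstacle; once it is arranged, \v{C}ebotarev's density theorem produces infinitely many $\ell$ with $\Frob_\ell\subset\mathfrak C$, each satisfying (1)--(4).
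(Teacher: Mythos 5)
Your proposal lays out a plausible one-shot \v{C}ebotarev strategy, but it leaves a genuine gap precisely at what you yourself flag as ``the main obstacle'' --- the third ingredient, namely producing a single $\delta\in\Gal(M/L_{S,m})$ so that $\psi_{\mathcal D}$ and $\psi_{s_0}$ simultaneously take the desired values on $(c\delta)^2$. You do not construct $\delta$; you only assert that the $\Gal(L_{S,m}/K)$-action and Lemma \ref{vanishing-cohomology-lem} ``allow one to find'' it. That step is exactly where the paper's proof contains its essential idea, so asserting it without proof leaves the proposal incomplete.

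The paper resolves this step differently and more concretely. It first projects $\mathcal D$ to an eigenspace for $c$ and takes the non-zero projection $d$, and it keeps only a single $s$ of exact order $p^m$ (your $L^{(1)}$, cut out by \emph{all} of $\mathscr S$, is larger than necessary). The crucial point is then to show that both $\bar\psi$ and $p^{m-1}\bar\varphi$ are non-vanishing on the subgroup $\Gal(\tilde L_{S,m}/L_{S,m})^+$ fixed by $c$: if $\varrho\in\{\bar\psi,p^{m-1}\bar\varphi\}$ vanished there, its image, after scaling by a power of $p$, would be a non-zero proper $\Gal(\Q(W_\p[p^m])/\Q)$-stable submodule of $W_\p[p]$, contradicting the irreducibility of $\bar\vartheta_\p$ (Lemma \ref{lem-irr}). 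Once both are non-zero on the $+$-part, a common $g$ with $\bar\psi(g)\neq 0$ and $\bar\varphi(g)$ of exact order $p^m$ exists by a two-line group-theoretic argument, and the fact that $g$ is $c$-fixed automatically handles the squaring: $\Frob_\lambda=(cg)^2=g^cg=g^2$ and $\varrho(g^2)=2\varrho(g)\neq 0$. This is the content you skipped.

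A second, independent point: the paper does \emph{not} try to enforce condition (2) in the same \v{C}ebotarev step. It first produces primes satisfying (1), (3), (4), and then, for any such $\ell'$ that fails (2), twists its Frobenius by a scalar $\sigma_\alpha$ with $\vartheta_\p(\sigma_\alpha)=\alpha\Id$, $\alpha\equiv1\pmod{p^m}$, inside $\Gal(\tilde L_{S,m+1}/\Q)$. Scalar matrices are central, hence lie in any subgroup conjugate to $\GL_2(\Z_p)$, so the realizability is immediate. Your approach instead needs every $c(1+p^mX)$ to occur at level $m+1$, which you justify by saying its determinant is a $(k-1)$-st power ``as $k-1$ is a $p$-adic unit''; but the standing hypothesis $p\nmid(k-2)!$ only gives $p>k-2$, so $p=k-1$ is allowed and in that case $p\mid k-1$. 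Even if this can be repaired (one does not actually need \emph{every} $X$, only one off two hyperplanes), the paper's two-step organization sidesteps the issue entirely and also removes the need for the Goursat-lemma surjectivity argument, since it never has to control the level-$(m+1)$ Frobenius and the class-field extension $L^{(1)}$ simultaneously.

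So: correct plan, genuinely different organization (one big \v{C}ebotarev step versus two), but the plan is not executed at its acknowledged core, and one of the auxiliary claims (realizability at level $m+1$) is not justified under the paper's hypotheses.
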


\begin{proof}
By assumption, ${\bf D}_\kappa(y_{S,\p})\neq 0$ in $H^1(K_S,W_\p[p^m])$, hence Lemma \ref{restriction-lem2} implies that the same is true of its image in $H^1(L_{S,m},W_\p[p^m])$, denoted by the same symbol. Since $p$ is odd, $H^1(L_{S,m},W_\p[p^m])$ splits as the direct sum of its $\pm$-eigenspaces for the action of $c\in\Gal(K/\Q)$, so there is $\delta\in\{\pm\}$ such that the projection of ${\bf D}_\kappa(y_{S,\p})$ to the $\delta$-eigenspace of $H^1(L_{S,m},W_\p[p^m])$ is non-zero. Let us fix such a sign $\delta$ and write $d$ for the corresponding projection of ${\bf D}_\kappa(y_{S,\p})$.  

Let $s\in H^1_{f,S'}(K,W_\p[p^m])^{\epsilon_\kappa}$ be an element of exact order $p^m$, which exists by assumption. By Lemma \ref{restriction-lem3}, the image of $s$ in $H^1(L_{S,m},W_\p[p^m])$ has order $p^m$ as well. Since the relevant Galois action is trivial and $W_\p[p^m]$ is abelian, there is a canonical identification
\[ H^1(L_{S,m},W_\p[p^m])=\Hom\bigl(\Gal(L_{S,m}^{\rm ab}/L_{S,m}),W_\p[p^m]\bigr), \]
where $L_{S,m}^{\rm ab}$ is the maximal abelian extension of $L_{S,m}$. Denote by $\psi$ and $\varphi$ the homomorphisms corresponding to $d$ and to the image of $s$ in $H^1(L_{S,m},W_\p[p^m])$, respectively. 
 
Denote by $\tilde L_{S,m}$ the smallest extension of $L_{S,m}$ that is cut out by $\psi$ and $\varphi$ and is Galois over $\Q$. There is an isomorphism
\[ \Gal(\tilde L_{S,m}/K_{S})\simeq\Gal(\tilde L_{S,m}/L_{S,m})\rtimes \Gal(L_{S,m}/K_S). \] 
Complex conjugation $c$ acts on $\Gal(\tilde L_{S,m}/L_{S,m})$ by inner automorphisms, and we denote by $\Gal(\tilde L_{S,m}/L_{S,m})^+$ the subgroup of $\Gal(\tilde L_{S,m}/L_{S,m})$ that is fixed by $c$. Set 
\[ \Phi:=H^1\bigl(\Gal(\tilde L_{S,m}/L_{S,m}),W_\p[p^m]\bigr)=\Hom\bigl(\Gal(\tilde L_{S,m}/L_{S,m}),W_\p[p^m]\bigr). \]
By definition of $\tilde L_{S,m}$, the maps $\psi$ and $\varphi$ factor through $\Gal(\tilde L_{S,m}/L_{S,m})$ and so determine maps $\bar\psi$ and $\bar\varphi$ in $\Phi$. The group $\Gal(L_{S,m}/K_S)$ acts canonically on $\Phi$, and $\bar\psi$ and $\bar\varphi$ are fixed by this action as they are restrictions of classes in $H^1(K_S,W_\p[p^m])$ and $H^1(K,W_\p[p^m])$, respectively. There is also an action of $c$ on $\Phi$ and, since $s$ belongs to $H^1(K,W_\p[p^m])^{\epsilon_\kappa}$, the map $\bar\varphi$ belongs to $\Phi^{\epsilon_\kappa}$, while $\bar\psi$ belongs to $\Phi^\delta$ by construction. 

Now we claim that both $\bar\psi$ and $p^{m-1}\bar\varphi$ are non-zero on $\Gal(\tilde L_{S,m}/L_{S,m})^+$. To show this, let $\varrho$ denote either $\bar\psi$ or $p^{m-1}\bar\varphi$. If $\varrho=0$ on $\Gal(\tilde L_{S,m}/L_{S,m})^+$ then $\varrho$ maps $\Gal(\tilde L_{S,m}/L_{S,m})$ to one of the eigenspaces $W_\p[p^m]^\pm$; this is true because $\varrho$ factors through the $p$-primary part of $\Gal(\tilde L_{S,m}/L_{S,m})$, which splits as the sum of the two eigenspaces for the action of $c$, and $\varrho$ belongs to an eigenspace of $\Phi$. Since $\varrho$ is non-zero and fixed by $\Gal(L_{S,m}/K_S)$, it follows that ${\rm im}(\varrho)$ is a non-zero, proper submodule of $W_\p[p^m]$ that is stable under the action of $\Gal(L_{S,m}/K_S)\simeq\Gal(\Q(W_\p[p^m])/\Q)$. Multiplying ${\rm im}(\varrho)$ by a suitable power of $p$, we obtain a non-zero, proper submodule of $W_\p[p]$ that is stable under $\Gal(\Q(W_\p[p^m])/\Q)$, and this contradicts the irreducibility of 
$\bar\vartheta_\p$ (Lemma \ref{lem-irr}). We conclude that both $p^{m-1}\bar\varphi$ and $\bar\psi$ are necessarily non-zero on $\Gal(\tilde L_{S,m}/L_{S,m})^+$. 

It follows that we can find $g\in\Gal(\tilde L_{S,m}/L_{S,m})^+$ such that $\bar\psi(g)\neq0$ and $\bar\varphi(g)$ has exact order $p^m$. Let $\ell$ be a prime number 
unramified in $\tilde L_{S,m}/\Q$ such that 
\begin{equation} \label{ell-eq}
\ell\nmid NDSp,\qquad\Frob_\ell=\Frob_\infty g.
\end{equation} 
Here $\Frob_\infty g$ denotes the conjugacy class of $cg$ in $\Gal(\tilde L_{S,m}/\Q)$. By \v{C}ebotarev's density theorem, the set of primes satisfying \eqref{ell-eq} is infinite. 

Clearly, (1) is satisfied by any $\ell$ as in \eqref{ell-eq}. In particular, $\ell$ is inert in $K$ and we denote by $\lambda$ the unique prime of $K$ above $\ell$, which splits completely in $L_{S,m}$ (\cite[Lemma 6.7]{Bes}). Choose a prime $\tilde\lambda_{S,m}$ of $\tilde L_{S,m}$ above $\lambda$ such that $\Frob_{\tilde\lambda_{S,m}/\ell}=cg$ and let $\lambda_{S,m}$ be the prime of $L_{S.m}$ below $\tilde\lambda_{S,m}$; the completion $L_{\lambda_{S,m}}$ of $L_{S,m}$ at $\lambda_{S,m}$ is then equal to $K_\lambda$.

Now we show that every prime $\ell$ satisfying \eqref{ell-eq} satisfies also (3) and (4) in the statement of the proposition. If $\varrho=\bar\psi$ or $\varrho=p^{m-1}\bar\varphi$ then, since $g^c=g$, one has 
\begin{equation} \label{varrho-frob-eq} 
\varrho\bigl(\Frob_{\tilde\lambda_{S,m}/\lambda_{S,m}}\bigr)=\varrho\bigl(\Frob_{\tilde\lambda_{S,m}/\ell}^2\bigr)=\varrho(g^cg)=\varrho(g^2)\neq0. 
\end{equation}
Therefore the restriction of $\varrho$ to $\Gal\bigl(\tilde L_{\tilde\lambda_{S,m}}/L_{\lambda_{S,m}}\bigr)$ is non-zero and hence, since $L_{\lambda_{S,m}}=K_\lambda$, taking $\varrho=\bar\psi$ gives (3). As for (4), note that, by Lemma \ref{dimension}, it suffices to find an element of $H^1_{f,S'}(K_\lambda,W_\p[p^m])^{\epsilon_\kappa}$ whose image in $H^1_f(K_\lambda,W_\p[p^m])^{\epsilon_\kappa}$ has exact order $p^m$. But it follows from \eqref{varrho-frob-eq} with $\varrho=p^{m-1}\bar\varphi$ that the order of the image of $s$ in $H^1_f(K_\lambda,W_\p[p^m])^{\epsilon_\kappa}$ is $p^m$, and we are done.

Finally, we show that one can choose infinitely many primes $\ell$ as in \eqref{ell-eq} such that (2) is true. Fix a prime $\ell'$ satisfying \eqref{ell-eq} but not (2), so that $\ell'+1\equiv\epsilon a_{\ell'}\pmod{\p^{m+1}}$ for a suitable $\epsilon\in\{\pm1\}$. It is known that $\tr(F_{\ell'}\,|\,A_\p)=a_{\ell'}/\ell'^{k/2-1}$ and $\det(F_{\ell'}\,|\,A_\p)=\ell'$. Take any $\alpha\in \Z_p^\times$ such that $\alpha\equiv1\pmod{p^m}$ and set $M:=\smallmat\alpha00\alpha$. By \cite[Lemma 6.2]{Bes}, the matrix $M$ lies in the image of the representation $\vartheta_\p$ of $G_\Q$ on $A_\p$, hence there is $\sigma_\alpha\in G_\Q$ having $M$ as its image. Then
\[ \tr(F_{\ell'}\sigma_\alpha\,|\,A_\p)=\alpha a_{\ell'}\big/\ell'^{k/2-1},\qquad\det(F_{\ell'}\sigma_\alpha\,|\,A_\p)=\alpha^2\ell'. \]
Let $\ell$ be a prime such that $\ell\nmid NDSp$ and $\Frob_\ell= \Frob_{\ell'}{\sigma_\alpha|}_{\tilde L_{S,m+1}}$ in $\Gal(\tilde L_{S,m+1}/\Q)$, where we denote by $\Frob_{\ell'}{\sigma_\alpha|}_{\tilde L_{S,m+1}}$ the conjugacy class of $f\cdot{\sigma_\alpha|}_{\tilde L_{S,m+1}}$ for any choice of $f\in\Frob_{\ell'}$. Again, \v{C}ebotarev's density theorem guarantees that there exist infinitely many such $\ell$. Then
\[ a_\ell\big/\ell^{k/2-1}\equiv\alpha a_{\ell'}\big/\ell'^{k/2-1}\pmod{\p^{m+1}},\qquad\ell\equiv\alpha^2\ell'\pmod{\p^{m+1}}, \] 
and one deduces that there exists an $\alpha$ as above such that $\ell+1\pm a_\ell\not\equiv0\pmod{\p^{m+1}}$. This shows that $\ell$ satisfies (2). But the image of $\Frob_\ell$ in $\Gal(\tilde L_{S,m}/\Q)$ is equal to that of $\Frob_{\ell'}$, and so $\ell$ satisfies \eqref{ell-eq} too. \end{proof}

\subsection{Divisibility properties of Heegner cycles} \label{main-subsec} 

The arguments in this subsection follow those in \cite[\S 5.1]{Dar}. As before, $\ell$ belongs to $\mathcal S_{p^m}$ and $\lambda$ is the unique prime of $K$ above $\ell$. Moreover, recall the shorthand $\mathbb F_\p=\mathcal O_\p/p\mathcal O_\p$ and for any $\mathcal O_\p/p^m\mathcal O_\p$-module $M$ set 
\[ r_\p(M):=\dim_{\F_\p}\bigl(M\otimes_{\cO_\p/p^m\cO_\p}\!\F_\p\bigr). \] 
To use uniform notations, put also $r_\p:=r_{\p,1}$. 

\begin{lemma} \label{dimension-2}
$ r_\p\bigl(H^1_f(K_\lambda,W_\p[p^m])^\pm\bigr)\leq1$.
\end{lemma}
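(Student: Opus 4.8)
The statement to prove is Lemma~\ref{dimension-2}: $r_\p\bigl(H^1_f(K_\lambda,W_\p[p^m])^\pm\bigr)\leq1$.

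The plan is to deduce this directly from Lemma~\ref{dimension}, which already computes the $\pm$-eigenspaces of $H^1_f(K_\lambda,W_\p[p^m])$ up to isomorphism of $\cO_\p$-modules. Indeed, Lemma~\ref{dimension} asserts that, for $\ell\in\tilde{\mathcal S}_{p^m}$, there is an isomorphism $H^1_f(K_\lambda,W_\p[p^m])^\pm\simeq\cO_\p/p^m\cO_\p$. First I would recall the definition $r_\p(M)=\dim_{\F_\p}\bigl(M\otimes_{\cO_\p/p^m\cO_\p}\F_\p\bigr)$ introduced just above the lemma, and observe that for $M=\cO_\p/p^m\cO_\p$ one has $M\otimes_{\cO_\p/p^m\cO_\p}\F_\p\simeq\F_\p$, which is one-dimensional over $\F_\p$. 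Hence $r_\p\bigl(H^1_f(K_\lambda,W_\p[p^m])^\pm\bigr)=1$ whenever $\ell\in\tilde{\mathcal S}_{p^m}$.

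The only subtlety is that the hypotheses of this subsection only place $\ell$ in $\mathcal S_{p^m}$, not necessarily in the smaller set $\tilde{\mathcal S}_{p^m}$ (recall from Lemma~\ref{lemma5.2} that $\tilde{\mathcal S}_{p^m}$ is the subset of those $\ell\in\mathcal S_{p^m}$ with $\p^m\mid a_\ell$). So I would handle the general case $\ell\in\mathcal S_{p^m}$ as follows. Since $A_\p$ is unramified at $\lambda$, one has $H^1_f(K_\lambda,W_\p[p^m])=H^1_{\rm ur}(K_\lambda,W_\p[p^m])$, which by the inflation--restriction description is isomorphic to $W_\p[p^m]/(F_\lambda-1)W_\p[p^m]$ (coinvariants under the unramified quotient), and this is in turn identified with $W_\p[p^m]^{F_\lambda=1}=H^0(K_\lambda,W_\p[p^m])$ by the usual Euler characteristic argument for the cyclic group $\widehat{\Z}$. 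Decomposing under complex conjugation $c$ (which acts as $\Frob_\lambda$ up to the element $g$, but in any case acts on the finite module with eigenvalues $\pm1$ since $p$ is odd), the $\pm$-eigenspace $H^1_f(K_\lambda,W_\p[p^m])^\pm$ is a cyclic $\cO_\p/p^m\cO_\p$-module: it is a quotient of $W_\p[p^m]^\pm$, which is free of rank one over $\cO_\p/p^m\cO_\p$ because $A_\p$ is free of rank $2$ over $\cO_\p$ and $c$ has distinct eigenvalues $\pm1$ on it (here one uses $p\neq 2$ and the shape of the characteristic polynomial \eqref{char-pol}, together with the fact that $c$ is an involution). A cyclic $\cO_\p/p^m\cO_\p$-module $M$ satisfies $\dim_{\F_\p}(M\otimes\F_\p)\leq 1$ by Nakayama, giving the desired bound.

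I expect the only mild obstacle to be bookkeeping about which Frobenius-type element governs the $c$-action on the local cohomology at $\lambda$ and making sure the eigenspace decomposition is compatible with the identification $H^1_f\simeq H^0$; but since $p$ is odd this is automatic, and the structural input (freeness of rank one of $W_\p[p^m]^\pm$) is already implicit in the proof of Lemma~\ref{dimension} via \cite[Lemma 6.9]{Bes}. In fact the cleanest write-up simply cites Lemma~\ref{dimension} when $\ell\in\tilde{\mathcal S}_{p^m}$ and, in general, notes that $H^1_f(K_\lambda,W_\p[p^m])^\pm$ is a quotient of the rank-one free module $W_\p[p^m]^\pm$ over $\cO_\p/p^m\cO_\p$, hence cyclic, hence has $r_\p\leq 1$.
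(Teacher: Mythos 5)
Your proposal and the paper's proof share the same structural idea: realize $H^1_f(K_\lambda,W_\p[p^m])$ as a quotient of $W_\p[p^m]$ via the short exact sequence coming from the cohomology of $\widehat{\Z}$, and exploit the rank-one freeness of $W_\p[p^m]^\pm$ over $\cO_\p/p^m\cO_\p$. Your write-up is nonetheless cleaner than the paper's: instead of tensoring with $\F_\p$, introducing the injection $M^\pm\otimes\F_\p\hookrightarrow(M\otimes\F_\p)^\pm$, and then splitting into the two cases $\iota=0$ and $\iota\neq0$, you simply note that, since $p$ is odd, the $\pm$-eigenspace of a quotient is a quotient of the $\pm$-eigenspace, so $H^1_f(K_\lambda,W_\p[p^m])^\pm$ is a quotient of the cyclic module $W_\p[p^m]^\pm$ and part (2) of Lemma~\ref{basic-lemma} immediately gives $r_\p\leq 1$. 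Both arguments quietly use the fact that the surjection $W_\p[p^m]\twoheadrightarrow H^1_f(K_\lambda,W_\p[p^m])$ is compatible with the two $\pm$-decompositions (the one on $W_\p[p^m]$ defined by the chosen $c$, the one on $H^1_f$ defined by the $\Gal(K/\Q)$-action on local cohomology, whose nontrivial element is a Frobenius-type lift at $\ell$); for $\ell\in\tilde{\mathcal S}_{p^m}$ this is underwritten by Besser's results, while for general $\ell\in\mathcal S_{p^m}$ a further word is welcome --- one way is to observe that the involution acting on $W_\p[p^m]\otimes\F_\p$ has determinant $\ell\equiv-1\pmod p$, hence is non-scalar, so its two eigenspaces are one-dimensional. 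Since the paper glosses over the same point, this is not a defect particular to your argument, but it is worth flagging in a careful write-up.
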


\begin{proof} To ease notations, in this proof we use the symbol $\otimes$ to denote tensorization over $\cO_\p/p^m\cO_p$. With this convention in mind, note that for any $\cO_\p/p^m\cO_\p$-module $M$ equipped with an action of $\Gal(K/\Q)$ there are injections
\begin{equation} \label{M-eigen-eq}
M^\pm\otimes\F_\p\;\longmono\;(M\otimes\F_\p)^\pm.
\end{equation}
If $\widehat\Z$ is the profinite completion of $\Z$ then $\Gal(K_\lambda^{\rm ur}/K_\lambda)\simeq\widehat\Z$, hence well-known results in group cohomology (see, e.g., \cite[Ch. XIII, Proposition 1]{Se}) show that there is a short exact sequence
\begin{equation} \label{finite-frob-eq}
0\longrightarrow (\Frob_\lambda-1)W_\p[p^m]\longrightarrow W_\p[p^m]\longrightarrow H^1_f(K_\lambda,W_\p[p^m])\longrightarrow0. 
\end{equation}
Tensoring \eqref{finite-frob-eq} with $\F_\p$ produces an exact sequence
\begin{equation} \label{finite-frob-eq2}
(\Frob_\lambda-1)W_\p[p^m]\otimes\F_\p\overset\iota\longrightarrow W_\p[p^m]\otimes\F_\p\longrightarrow H^1_f(K_\lambda,W_\p[p^m])\otimes\F_\p\longrightarrow0. 
\end{equation}
By \cite[Proposition 6.3, (4)]{Bes}, $W_\p[p^m]^\pm$ is free of rank $1$ over $\cO_\p/p^m\cO_\p$, and then \eqref{M-eigen-eq} with $M=W_\p[p^m]$ gives
\begin{equation} \label{W-eigen-dim-eq}
\dim_{\F_\p}\bigl((W_\p[p^m]\otimes\F_\p)^\pm\bigr)=1. 
\end{equation}
If ${\rm im}(\iota)=0$ then \eqref{finite-frob-eq2} induces isomorphisms 
\begin{equation} \label{W-finite-eigen-eq}
\bigl(W_\p[p^m]\otimes\F_\p\bigr)^\pm\simeq\bigl(H^1_f(K_\lambda,W_\p[p^m])\otimes\F_\p\bigr)^\pm,
\end{equation}
and the inequalities $r_\p\bigl(H^1_f(K_\lambda,W_\p[p^m])^\pm\bigr)\leq1$ follow by combining \eqref{W-eigen-dim-eq}, \eqref{W-finite-eigen-eq} and \eqref{M-eigen-eq} with $M=H^1_f(K_\lambda,W_\p[p^m])$. Finally, $W_\p[p^m]\otimes\F_\p$ has dimension $2$ over $\F_\p$, so if ${\rm im}(\iota)\not=0$ then \eqref{finite-frob-eq2} implies that $r_\p\bigl(H^1_f(K_\lambda,W_\p[p^m])\bigr)\leq1$ and, \emph{a fortiori}, $r_\p\bigl(H^1_f(K_\lambda,W_\p[p^m])^\pm\bigr)\leq1$. \end{proof}
          
\begin{remark} 
If $\ell\in\tilde{\mathcal S}_{p^m}$ then Lemma \ref{dimension} shows that equality holds in Lemma \ref{dimension-2}.
\end{remark}          
          
To simplify our notation, for every integer $S'>1$ define 
\begin{equation} \label{A(S')-eq}
A(S'):=\bigoplus_{\lambda\mid S'}H^1_f(K_\lambda,W_\p[p^m]). 
\end{equation}
Of course, the module $A(S')$ depends on $m$, but no confusion is likely to arise.

\begin{lemma} \label{aux-lemma}
If $\ell\in\tilde{\mathcal S}_{p^m}$ then
\[ r_{\p,m}\bigl(H^1_{f,S'}(K,W_\p[p^m])^\pm\bigr)\leq r_{\p,m}\bigl(H^1_{f,S'\ell}(K,W_\p[p^m])^\pm\bigr)+r_\p\bigl(A(S'\ell)^\pm\bigr)-r_\p\bigl(A(S')^\pm\bigr). \]
\end{lemma}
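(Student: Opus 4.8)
The plan is to deduce the inequality from part (3) of Lemma \ref{basic-lemma}, applied to a short exact sequence relating the two restricted Selmer groups, once we have recorded how $A(S'\ell)$ decomposes.

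First I would observe that, since $\ell\in\tilde{\mathcal S}_{p^m}$, the prime $\ell$ is inert in $K$ and does not divide $S'$ (indeed $S'=\cond({\bf D}_\kappa)$ divides $S$, while $\ell$ is the auxiliary prime and $\ell\nmid S$). Hence there is a unique prime $\lambda$ of $K$ above $\ell$, this $\lambda$ is fixed by $\Gal(K/\Q)$, and from the definition \eqref{A(S')-eq} we obtain a $\Gal(K/\Q)$-equivariant decomposition
\[ A(S'\ell)=A(S')\oplus H^1_f(K_\lambda,W_\p[p^m]) \]
(taking $A(1)=0$ if $S'=1$). Taking $\pm$-eigenspaces is harmless because $p$ is odd, and since $(-)\otimes_{\cO_\p/p^m\cO_\p}\F_\p$ is additive this yields
\[ r_\p\bigl(H^1_f(K_\lambda,W_\p[p^m])^\pm\bigr)=r_\p\bigl(A(S'\ell)^\pm\bigr)-r_\p\bigl(A(S')^\pm\bigr). \]

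Next I would produce the relevant exact sequence. By the definition \eqref{S'-Selmer} of $H^1_{f,S'}$ and $H^1_{f,S'\ell}$, a class already lying in $H^1_{f,S'}(K,W_\p[p^m])\subset H^1_f(K,W_\p[p^m])$ belongs to $H^1_{f,S'\ell}(K,W_\p[p^m])$ if and only if its localization at $\lambda$ vanishes; thus localization at $\lambda$ fits into an exact sequence
\[ 0\longrightarrow H^1_{f,S'\ell}(K,W_\p[p^m])\longrightarrow H^1_{f,S'}(K,W_\p[p^m])\longrightarrow H^1_f(K_\lambda,W_\p[p^m]). \]
All arrows are $\Gal(K/\Q)$-equivariant, and since $p$ is odd the formation of $\pm$-eigenspaces is exact, so we get
\[ 0\longrightarrow H^1_{f,S'\ell}(K,W_\p[p^m])^\pm\longrightarrow H^1_{f,S'}(K,W_\p[p^m])^\pm\longrightarrow H^1_f(K_\lambda,W_\p[p^m])^\pm. \]

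Finally I would apply Lemma \ref{basic-lemma}(3) to this three-term sequence with $M'=H^1_{f,S'\ell}(K,W_\p[p^m])^\pm$, $M=H^1_{f,S'}(K,W_\p[p^m])^\pm$ and $M''=H^1_f(K_\lambda,W_\p[p^m])^\pm$, which gives
\[ r_{\p,m}\bigl(H^1_{f,S'}(K,W_\p[p^m])^\pm\bigr)\leq r_{\p,m}\bigl(H^1_{f,S'\ell}(K,W_\p[p^m])^\pm\bigr)+r_\p\bigl(H^1_f(K_\lambda,W_\p[p^m])^\pm\bigr); \]
substituting the displayed identity for $r_\p\bigl(H^1_f(K_\lambda,W_\p[p^m])^\pm\bigr)$ yields the assertion. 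I do not expect any serious difficulty; the only subtlety is that in Lemma \ref{basic-lemma}(3) the map $M\to M''$ is not required to be surjective, so it is legitimate to take $M''$ to be the full local eigenspace $H^1_f(K_\lambda,W_\p[p^m])^\pm$ rather than the image of the localization map.
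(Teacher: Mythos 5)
Your proof is correct and follows the same structural route as the paper: the short exact sequence coming from localization at $\lambda$, followed by part (3) of Lemma \ref{basic-lemma}. The one genuine (and pleasant) difference is how you handle the term $r_\p\bigl(H^1_f(K_\lambda,W_\p[p^m])^\pm\bigr)$. You simply observe that, since $\ell\nmid S'$, the definition \eqref{A(S')-eq} gives a $\Gal(K/\Q)$-equivariant splitting $A(S'\ell)=A(S')\oplus H^1_f(K_\lambda,W_\p[p^m])$, so that $r_\p\bigl(H^1_f(K_\lambda,W_\p[p^m])^\pm\bigr)=r_\p\bigl(A(S'\ell)^\pm\bigr)-r_\p\bigl(A(S')^\pm\bigr)$ is a tautology; the substitution into the conclusion of Lemma \ref{basic-lemma}(3) is then immediate. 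The paper instead routes through Lemma \ref{dimension-2} (to get the intermediate bound that $r_\p\bigl(H^1_f(K_\lambda,W_\p[p^m])^\pm\bigr)\leq 1$) and Lemma \ref{dimension} (to get equality $=1$, which uses $\ell\in\tilde{\mathcal S}_{p^m}$) before concluding. Your shortcut bypasses both auxiliary lemmas and, as a by-product, shows that the stated hypothesis $\ell\in\tilde{\mathcal S}_{p^m}$ is not actually needed for this particular inequality — any inert prime $\ell\nmid NDpS'$ would do. Nothing is lost; the argument is airtight.
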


\begin{proof} There is an exact sequence 
\[ 0\longrightarrow H^1_{f,S'\ell}(K,W_\p[p^m])^\pm\longrightarrow H^1_{f,S'}(K,W_\p[p^m])^\pm\longrightarrow H^1_f(K_\lambda,W_\p[p^m])^\pm \] 
where $\lambda$ is the prime of $K$ above $\ell$. Combining part (3) of Lemma \ref{basic-lemma} and the obvious inequality 
\[ r_{\p,m}\bigl(H^1_f(K_\lambda,W_\p[p^m])^\pm\bigr)\leq r_\p\bigl(H^1_f(K_\lambda,W_\p[p^m])^\pm\bigr) \]
we find 
\[ r_{\p,m}\bigl(H^1_{f,S'}(K,W_\p[p^m])^\pm\bigr)\leq r_{\p,m}\bigl(H^1_{f,S'\ell}(K,W_\p[p^m])^\pm\bigr)+r_\p\bigl(H^1_f(K_\lambda,W_\p[p^m])^\pm\bigr). \]
Applying Lemma \ref{dimension-2} to the above inequality yields  
\begin{equation} \label{eq34}
r_{\p,m}\bigl(H^1_{f,S'}(K,W_\p[p^m])^\pm\bigr)\leq r_{\p,m}\bigl(H^1_{f,S'\ell}(K,W_\p[p^m])^\pm\bigr)+1.
\end{equation}
Now $\ell$ belongs to $\tilde{\mathcal S}_{p^m}$, so by Lemma \ref{dimension} one has  
\[ r_\p\bigl(H^1_f(K_\lambda,W_\p[p^m])^\pm\bigr)=1, \] 
and we deduce that 
\[ r_\p\bigl(A(S'\ell)^\pm\bigr)=r_\p\bigl(A(S')^\pm\bigr)+1. \] 
Hence inequality \eqref{eq34} becomes 
\[ r_{\p,m}\bigl(H^1_{f,S'}(K,W_\p[p^m])^\pm\bigr)\leq r_{\p,m}\bigl(H^1_{f,S'\ell}(K,W_\p[p^m])^\pm\bigr)+r_\p\bigl(A(S'\ell)^\pm\bigr)-r_\p\bigl(A(S')^\pm\bigr), \] 
as was to be shown. \end{proof}

\begin{proposition} \label{T5.20}
Let ${\bf D}_\kappa$ be a derivative of support $S$ and conductor $S'$. If 
\[ \ord({\bf D}_{\kappa})< r_{\p,m}\bigl(H^1_{f,S'}(K,W_\p[p^m])^{\epsilon_\kappa}\bigr)+r_\p\bigl(A(S')^{\epsilon_\kappa}\bigr) \] 
then ${\bf D}_{{\kappa}}(y_{S,\p})\equiv0\pmod{p^m}$. 
\end{proposition}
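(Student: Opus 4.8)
The plan is to argue by induction on $n:=\ord({\bf D}_\kappa)$, following the template of \cite[\S 5.1]{Dar}, the inductive hypothesis being the assertion of the proposition for all Darmon--Kolyvagin derivatives of order $<n$ (of support any square-free product of primes in $\mathcal S_{p^m}$). Suppose, for contradiction, that ${\bf D}_\kappa(y_{S,\p})\not\equiv 0\pmod{p^m}$ for some ${\bf D}_\kappa$ of support $S$ and conductor $S'$ with $n<r_{\p,m}\bigl(H^1_{f,S'}(K,W_\p[p^m])^{\epsilon_\kappa}\bigr)+r_\p\bigl(A(S')^{\epsilon_\kappa}\bigr)$, and pick such a counterexample with $n$ minimal. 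A preliminary remark is the monotonicity
\[ r_{\p,m}\bigl(H^1_{f,S''}(K,W_\p[p^m])^{\pm}\bigr)+r_\p\bigl(A(S'')^{\pm}\bigr)\;\leq\;r_{\p,m}\bigl(H^1_{f,S'}(K,W_\p[p^m])^{\pm}\bigr)+r_\p\bigl(A(S')^{\pm}\bigr) \]
for $S''\mid S'$ with $S'/S''$ supported on $\mathcal S_{p^m}$, which follows one prime $\ell$ at a time from the exact sequence $0\to H^1_{f,S''\ell}(K,W_\p[p^m])^{\pm}\to H^1_{f,S''}(K,W_\p[p^m])^{\pm}\to H^1_f(K_\lambda,W_\p[p^m])^{\pm}$, part (3) of Lemma \ref{basic-lemma}, Lemma \ref{dimension-2}, and the splitting $A(S''\ell)=A(S'')\oplus H^1_f(K_\lambda,W_\p[p^m])$.

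To set up the Kolyvagin machinery I would first check the hypotheses of Proposition \ref{prop-Cheb}. Since $n=\sum k_i\geq\#\{i:k_i>0\}=\omega(S')$ while $r_\p(A(S')^{\epsilon_\kappa})\leq\omega(S')$ by Lemma \ref{dimension-2}, the standing inequality forces $r_{\p,m}\bigl(H^1_{f,S'}(K,W_\p[p^m])^{\epsilon_\kappa}\bigr)\geq 1$; and ${\bf D}_\kappa(y_{S,\p})$ is non-trivial in $H^1(K_S,W_\p[p^m])$ by hypothesis and \eqref{eq13}. Thus Proposition \ref{prop-Cheb} yields a prime $\ell$ satisfying its conclusions (1)--(4); conclusion (1) in particular gives $\Frob_\ell=\Frob_\infty$ in $\Gal(L_m/\Q)$, i.e.\ $\ell\in\tilde{\mathcal S}_{p^m}$, whence $\p^m\mid a_\ell$ and $p^m\mid\ell+1$ in $\cO_\p$ by Lemma \ref{lemma5.2}. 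The crux is then to verify Assumption \ref{ass} for $(\p^m,S,{\bf D}_\kappa,\ell)$, i.e.\ that ${\bf D}_{\kappa''}(y_{S\ell,\p})\equiv 0\pmod{p^m}$ for every ${\bf D}_{\kappa''}={\bf D}_{\kappa'}{\bf D}_\ell^j$ strictly less than ${\bf D}_\kappa{\bf D}_\ell^1$ (with $\kappa'\leq\kappa$, $j\in\{0,1\}$). For $j=0$, ${\bf D}_\ell^0$ is the norm element of $\Z[G_\ell]$, so \eqref{res-cores-norm} and Proposition \ref{complex-conj}(1) give ${\bf D}_{\kappa''}(y_{S\ell,\p})=(a_\ell/\ell^{k/2-1})\,{\bf D}_{\kappa'}\bigl(\res_{K_{S\ell}/K_S}y_{S,\p}\bigr)$, which is $\equiv 0\pmod{p^m}$ because $\p^m\mid a_\ell$. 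For $j=1$ and $\kappa'<\kappa$ strictly, the derivative ${\bf D}_{\kappa'}{\bf D}_\ell^1$ has support $S\ell$, order $\ord(\kappa')+1\leq n$, conductor $\cond(\kappa')\ell$ and sign $(-1)^{\ord(\kappa')+1}\epsilon$, and the inductive hypothesis (or, for those of order exactly $n$, the minimality of $n$) yields ${\bf D}_{\kappa'}{\bf D}_\ell^1(y_{S\ell,\p})\equiv 0\pmod{p^m}$, once one has checked that its order is less than the corresponding bound. \textbf{This verification --- aligning the order, conductor and eigenspace sign of each smaller derivative so as to feed it into the inductive hypothesis, using the monotonicity above together with the fact that $\ell\in\tilde{\mathcal S}_{p^m}$ makes $r_\p(H^1_f(K_\lambda,W_\p[p^m])^{\pm})=1$ (Lemma \ref{dimension}), all while tracking the parity sign against $\epsilon_\kappa=(-1)^n\epsilon$ --- is the main obstacle}, and is precisely the combinatorial bookkeeping performed in \cite[\S 5.1]{Dar} for Heegner points. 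The base case $n=0$ (so $S'=1$, $A(S')=0$, $\epsilon_\kappa=\epsilon$, and ${\bf D}_\kappa$ the norm element of $\Z[G_S]$) is the degenerate instance of the same argument with no non-trivial smaller derivatives, so Assumption \ref{ass} holds trivially.

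Granting Assumption \ref{ass}, the constructions of \S\ref{heegner-classes-subsec}--\S\ref{local-kolyvagin-subsec} produce the Kolyvagin class $d(\ell)\in H^1(K,W_\p[p^m])$, which lies in the $\epsilon_\kappa$-eigenspace by Proposition \ref{eigen-d-prop}. By Proposition \ref{4.10prop}, $d(\ell)_\lambda$ lies in the kernel of $\phi_\lambda$ on $H^1_{sin}(K_\lambda,W_\p[p^m])^{\epsilon_\kappa}$. But conclusion (4) of Proposition \ref{prop-Cheb} says the localization $H^1_{f,S'}(K,W_\p[p^m])^{\epsilon_\kappa}\to H^1_f(K_\lambda,W_\p[p^m])^{\epsilon_\kappa}$ is surjective, and the Tate pairing \eqref{perfect-eq} restricts to a perfect pairing on $\epsilon_\kappa$-eigenspaces (since $2$ is invertible and the pairing is $\Gal(K/\Q)$-equivariant), so that kernel is zero; hence $d(\ell)_\lambda=0$ in $H^1_{sin}(K_\lambda,W_\p[p^m])$. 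On the other hand, conclusion (2) of Proposition \ref{prop-Cheb} makes $\bigl(a_\ell\pm(\ell+1)\bigr)/p^m$ $p$-adic units, so Proposition \ref{finite-2-prop} applies, and combined with conclusion (3) of Proposition \ref{prop-Cheb} (non-vanishing of the image of ${\bf D}_\kappa(y_{S,\p})$ in $H^1_f(K_\lambda,W_\p[p^m])$) it gives $d(\ell)_\lambda\neq 0$. This contradiction forces ${\bf D}_\kappa(y_{S,\p})\equiv 0\pmod{p^m}$, completing the induction.
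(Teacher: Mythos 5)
Your overall template --- contradiction, applying Proposition \ref{prop-Cheb} to get an auxiliary prime $\ell$, building the Kolyvagin class $d(\ell)$, and deriving a contradiction via Propositions \ref{eigen-d-prop}, \ref{4.10prop} and \ref{finite-2-prop} --- matches the paper, but the induction variable is wrong, and this opens a genuine gap at exactly the step you single out as the ``main obstacle.'' You induct on $n=\ord({\bf D}_\kappa)$ and invoke minimality of $n$ among counterexamples to handle the derivatives ${\bf D}_{\kappa'}{\bf D}_\ell^1$ with $\ord(\kappa')=n-1$, which have order \emph{exactly} $n$. But minimality of $n$ only says that no counterexample has order strictly less than $n$; it does not say that a second derivative of order $n$ meeting the bound of the proposition must vanish, so Assumption \ref{ass} cannot be verified this way. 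The order simply does not strictly decrease in passing from ${\bf D}_\kappa$ (support $S$) to the smaller derivatives ${\bf D}_{\kappa'}{\bf D}_\ell^1$ (support $S\ell$), so an induction on $n$ alone does not close.

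The paper's proof resolves this by inducting instead on the \emph{weight}
\[
{\rm wt}({\bf D}_\kappa):=\ord({\bf D}_\kappa)-\#\bigl\{\ell\ \text{prime}\;\big|\;\ell\mid S,\ \ell\in\tilde{\mathcal S}_{p^m}\bigr\}.
\]
Since the auxiliary prime $\ell$ furnished by Proposition \ref{prop-Cheb} lies in $\tilde{\mathcal S}_{p^m}$, every derivative ${\bf D}_{\kappa''}$ strictly less than ${\bf D}_\kappa{\bf D}_\ell^1$ has support $S\ell$ and order at most $\ord({\bf D}_\kappa)$, hence ${\rm wt}({\bf D}_{\kappa''})<{\rm wt}({\bf D}_\kappa)$; this strict decrease is precisely what the order lacks. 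The base case ${\rm wt}<0$ forces a factor ${\bf D}_{\ell'}^0$ with $\ell'\in\tilde{\mathcal S}_{p^m}$, and is disposed of directly by the norm relation together with $\p^m\mid a_{\ell'}$ (your ``$j=0$'' computation). With the weight as the induction variable, the inductive step is then completed essentially as you outline: one passes the bound from conductor $S'$ to conductor $S'\ell$ via Lemma \ref{aux-lemma} and uses $\ord({\bf D}_{\kappa''})\le\ord({\bf D}_\kappa)$.
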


\begin{proof} Define the \emph{weight} of ${\bf D}_\kappa$ to be 
\[ {\rm wt}({\bf D}_\kappa):=\ord({\bf D}_\kappa)-\#\bigl\{\text{$\ell$ prime number}\;\big|\;\text{$\ell\,|\,S$ and $\ell\in\tilde{\mathcal S}_{p^m}$}\bigr\}. \]
The prove the proposition we proceed by induction on ${\rm wt}({\bf D}_\kappa)$.

First of all, observe that if ${\rm wt}({\bf D}_\kappa)<0$ then the result is true. Indeed, in this case ${\bf D}_\kappa$ contains at least one factor of the form ${\bf D}_\ell^0$ for some prime $\ell\in\tilde{\mathcal S}_{p^m}$. By part (1) of Proposition \ref{complex-conj}, and the relation \eqref{res-cores-norm} between restriction, corestriction and Galois trace, we have 
\[ {\bf D}_{\ell}^0(y_{T\ell,\p})=\res_{K_{T\ell}/K_T}(y_{T,\p})\cdot (a_\ell/\ell^{k/2-1})\equiv 0\pmod{p^m}, \] 
where the congruence holds because $\ell\in\tilde{\mathcal S}_{\p^m}$ (here $\res_{K_{T\ell}/K_T}$ denotes the restriction map in cohomology from $H^1(K_T,A_\p)$ to $H^1(K_{T\ell},A_\p)$). Then the result follows (without assuming any condition on the order of ${\bf D}_\kappa$).

Now set $k:={\rm wt}({\bf D}_\kappa)$ and assume by induction that the theorem is true for all derivatives ${\bf D}_{\kappa'}$ such that ${\rm wt}({\bf D}_{\kappa'})<k$. We argue by contradiction, supposing that 
\begin{equation} \label{non-van-eq}
{\bf D}_{{\kappa}}(y_{S,\p})\not\equiv 0\pmod{p^m}.
\end{equation}
We first show that the inequality in the statement of the proposition plus \eqref{non-van-eq} imply that 
\begin{equation} \label{eq30}
r_{\p,m}\big(H^1_{f,S'}(K,W_\p[p^m])^{\epsilon_\kappa}\big)\geq 1.\end{equation}
In fact, if this were not the case then there would be an inequality  
\[ \ord({\bf D}_\kappa)<r_\p\bigl(A(S')\bigr). \]
By Lemma \ref{dimension-2}, the right hand side of this inequality is less than or equal to the number of primes dividing $S'$; but each of them contributes at least for $1$ unity in the sum defining $\ord({\bf D}_\kappa)$, so the above inequality does not occur and we conclude that \eqref{eq30} holds.  

Equations \eqref{non-van-eq} and \eqref{eq30} show that the assumptions in Proposition \ref{prop-Cheb} are fulfilled and therefore, with the usual notation, one can find a prime number $\ell$ such that
\begin{itemize}
\item $\ell\nmid pNDS$ and $\Frob_\ell=\Frob_\infty\;\text{in}\;\Gal(L_{S,m}/\Q)$;
\item $\p^{m+1}\nmid (\ell+1)\pm a_\ell$; 
\item the image of  ${\bf D}_\kappa(y_{S,\p})$ in $H^1_f(K_\lambda,W_\p[p^m])$ is not zero;
\item the map of $\mathcal O_\p/p^m\mathcal O_\p$-modules 
\begin{equation} \label{S'-eigenspaces-eq}
H^1_{f,S'}(K_\lambda,W_\p[p^m])^{\epsilon_\kappa}\longrightarrow H^1_f(K_\lambda,W_\p[p^m])^{\epsilon_\kappa}
\end{equation} 
is surjective.
\end{itemize} 
Dualizing the map in \eqref{S'-eigenspaces-eq} and using \eqref{eigenspaces-pm-isom-eq} and \eqref{isom-pontryagin-eq}, we see that the map
\begin{equation} \label{eq31-bis}
\phi_\lambda:H^1_{sin}(K_\lambda,W_\p[p^m]{)}^{\epsilon_\kappa}\longrightarrow\bigl(H^1_{f,S'}(K,W_\p[p^m]{)}^{\!*}\bigr)^{\epsilon_\kappa}
\end{equation} 
is injective. Now we want to show that the derivative ${\bf D}_\kappa{\bf D}_\ell^1$ satisfies Assumption \ref{ass}. Fix ${\bf D}_{{\kappa}'}$ strictly less than ${\bf D}_{{\kappa}}{\bf D}_\ell^1$. Then  
\[ \ord({\bf D}_{\kappa'})<\ord({\bf D}_\kappa{\bf D}_\ell^1)=\ord({\bf D}_\kappa)+1, \] 
hence 
\begin{equation} \label{eq31}
\ord({\bf D}_{\kappa'})\leq \ord({\bf D}_\kappa).
\end{equation}
By Lemma \ref{aux-lemma}, one has 
\[ r_{\p,m}\bigl(H^1_{f,S'}(K,W_\p[p^m])^{\epsilon_\kappa}\bigr)\leq r_{\p,m}\bigl(H^1_{f,S'\ell}(K,W_\p[p^m])^{\epsilon_\kappa}\bigr)+r_\p\bigl(A(S'\ell)^{\epsilon_\kappa}\bigr)-r_\p\bigl(A(S')^{\epsilon_\kappa}\bigr). \] 
Combining this inequality with the one in the statement of the proposition we find 
\[ \ord({\bf D}_{{\kappa}})< r_{\p,m}\bigl(H^1_{f,S'\ell}(K,W_\p[p^m])^{\epsilon_\kappa}\bigr)+r_\p\bigl(A(S'\ell)^{\epsilon_\kappa}\bigr) \] 
and therefore, applying \eqref{eq31}, we get 
\begin{equation} \label{eq32} 
\ord({\bf D}_{{\kappa'}})< r_{\p,m}\bigl(H^1_{f,S'\ell}(K,W_\p[p^m])^{\epsilon_\kappa}\bigr)+r_\p\bigl(A(S'\ell)^{\epsilon_\kappa}\bigr).
\end{equation}
Furthermore, since the support of ${\bf D}_{\kappa'}$ is divisible by an extra prime $\ell\in\tilde{\mathcal S}_{p^m}$, we see that 
\begin{equation} \label{eq33}
{\rm wt}({\bf D}_{\kappa'})<{\rm wt}({\bf D}_\kappa).
\end{equation}
Equations \eqref{eq32} and \eqref{eq33} show that ${\bf D}_{\kappa'}$ satisfies the induction hypothesis, and we conclude that ${\bf D}_{\kappa'}(y_{S\ell,\p})\equiv0\pmod{p^m} $. This shows that Assumption \ref{ass} is satisfied in our setting, hence we may apply the construction of \S \ref{heegner-classes-subsec} and obtain a class $d(\ell)\in H^1(K,W_\p[p^m])$. Since the image of  ${\bf D}_\kappa(y_{S,\p})$ in $H^1_f(K_\lambda,W_\p[p^m])$ is not zero, it follows from Proposition \ref{finite-2-prop} (which we can apply because $\p^{m+1}\nmid\ell+1\pm a_\ell$) that the image of $d(\ell)_\lambda$ in $H^1_{sin}(K_\lambda,W_\p[p^m])$ is non-zero too. Therefore, since $d(\ell)$ belongs to the $\epsilon_\kappa$-eigenspace for $c$ thanks to Proposition \ref{eigen-d-prop}, Proposition \ref{4.10prop} ensures that the map 
\[ \phi_\lambda:H^1_{sin}(K_\lambda,W_\p[p^m])^{\epsilon_\kappa}\longrightarrow\bigl(H^1_{f,S'}(K,W_\p[p^m])^*\bigr) ^{\epsilon_\kappa} \] 
is not injective. But this contradicts \eqref{eq31-bis}, and the proposition is proved. \end{proof}
  
Now we keep notations and assumptions as in Proposition \ref{T5.20} and prove two corollaries. 

\begin{corollary} \label{coro5.21}
If 
\[ \ord({\bf D}_\kappa)< r_{\p,m}\bigl(H^1_{f,S'}(K,W_\p[p^m])^{-\epsilon_\kappa}\bigr)+r_\p\bigl(A(S')^{-\epsilon_\kappa}\bigr)-1 \] 
and $\ord({\bf D}_\kappa)<p$ then ${\bf D}_\kappa(y_{S,\p})\equiv0\pmod{p^m} $.
\end{corollary}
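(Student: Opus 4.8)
The plan is to reduce the statement to Proposition~\ref{T5.20} by exploiting that raising the order of a derivative by one replaces $\epsilon_\kappa$ with $-\epsilon_\kappa$: a bound on the $(-\epsilon_\kappa)$-eigenspace attached to ${\bf D}_\kappa$ then becomes a bound on the $\epsilon_{\tilde\kappa}$-eigenspace attached to a suitable ``raised'' derivative ${\bf D}_{\tilde\kappa}$, and the vanishing furnished by Proposition~\ref{T5.20} is transported back to ${\bf D}_\kappa$ through congruence~\eqref{sigma-ell-D-eq}. Set $S':=\cond({\bf D}_\kappa)$, $n:=\ord({\bf D}_\kappa)$ and write $\kappa=(k_1,\dots,k_t)$ relative to $S=\prod_{i=1}^t\ell_i$. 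I would fix a prime $\ell\mid S$ and let ${\bf D}_{\tilde\kappa}$ be the derivative of support $S$ obtained from ${\bf D}_\kappa$ by replacing the exponent at $\ell$ with $k_\ell+1$, so that $\ord({\bf D}_{\tilde\kappa})=n+1$ and hence $\epsilon_{\tilde\kappa}=(-1)^{n+1}\epsilon=-\epsilon_\kappa$. The choice of $\ell$ would be made so that $k_\ell+1<p$ — so that \eqref{sigma-ell-D-eq} is available for this exponent — and so that $\tilde S':=\cond({\bf D}_{\tilde\kappa})$ equals $S'$ (take $\ell\mid S'$) or $S'\ell$ (take $\ell\mid S$ with $\ell\nmid S'$); as $n<p$, some such $\ell$ exists unless $S$ is a single prime and ${\bf D}_\kappa={\bf D}_S^{p-1}$, in which case $n=p-1$ and, by Lemma~\ref{dimension-2}, the hypothesis forces $r_{\p,m}\bigl(H^1_{f,S'}(K,W_\p[p^m])^{-\epsilon_\kappa}\bigr)\geq p$; that exceptional case can be treated separately.

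The next step is to verify that ${\bf D}_{\tilde\kappa}$ satisfies the hypothesis of Proposition~\ref{T5.20} for the sign $\epsilon_{\tilde\kappa}=-\epsilon_\kappa$ and the conductor $\tilde S'$. If $\tilde S'=S'$, the required inequality $\ord({\bf D}_{\tilde\kappa})=n+1<r_{\p,m}\bigl(H^1_{f,S'}(K,W_\p[p^m])^{-\epsilon_\kappa}\bigr)+r_\p\bigl(A(S')^{-\epsilon_\kappa}\bigr)$ is precisely the assumption of the corollary. If $\tilde S'=S'\ell$, then, writing $\lambda$ for the prime of $K$ above $\ell$, the decomposition $A(S'\ell)^{-\epsilon_\kappa}=A(S')^{-\epsilon_\kappa}\oplus H^1_f(K_\lambda,W_\p[p^m])^{-\epsilon_\kappa}$ together with part~(3) of Lemma~\ref{basic-lemma} applied to the exact sequence
\[ 0\longrightarrow H^1_{f,S'\ell}(K,W_\p[p^m])^{-\epsilon_\kappa}\longrightarrow H^1_{f,S'}(K,W_\p[p^m])^{-\epsilon_\kappa}\longrightarrow H^1_f(K_\lambda,W_\p[p^m])^{-\epsilon_\kappa} \]
(which is immediate from the definition~\eqref{S'-Selmer}) gives $r_{\p,m}\bigl(H^1_{f,S'\ell}(K,W_\p[p^m])^{-\epsilon_\kappa}\bigr)+r_\p\bigl(A(S'\ell)^{-\epsilon_\kappa}\bigr)\geq r_{\p,m}\bigl(H^1_{f,S'}(K,W_\p[p^m])^{-\epsilon_\kappa}\bigr)+r_\p\bigl(A(S')^{-\epsilon_\kappa}\bigr)$, and the latter quantity is $>n+1$ by assumption. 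In either case Proposition~\ref{T5.20} applies to ${\bf D}_{\tilde\kappa}$ and yields ${\bf D}_{\tilde\kappa}(y_{S,\p})\equiv 0\pmod{p^m}$.

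Finally, since $0<k_\ell+1<p$, congruence~\eqref{sigma-ell-D-eq} with $k_\ell+1$ in place of $k$ gives ${\bf D}_\ell^{k_\ell}\equiv-\sigma_\ell^{-1}(\sigma_\ell-1){\bf D}_\ell^{k_\ell+1}\pmod{p^m}$ in $\cO_\p[G_\ell]$; multiplying by the factors of ${\bf D}_\kappa$ at the other primes, which commute with $\cO_\p[G_\ell]$ inside $\cO_\p[G_S]$, I get ${\bf D}_\kappa\equiv-\sigma_\ell^{-1}(\sigma_\ell-1){\bf D}_{\tilde\kappa}\pmod{p^m}$, and evaluating at $y_{S,\p}$ yields ${\bf D}_\kappa(y_{S,\p})\equiv-\sigma_\ell^{-1}(\sigma_\ell-1)\bigl({\bf D}_{\tilde\kappa}(y_{S,\p})\bigr)\equiv 0\pmod{p^m}$, as desired. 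I expect the only genuinely delicate point to be the bookkeeping in the second step, where one must check that the loss of one unit in the hypothesis is absorbed by the change in the local term from $A(S')$ to $A(S'\ell)$; everything else is a formal manipulation of derivatives together with a single invocation of Proposition~\ref{T5.20}.
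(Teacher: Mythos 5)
Your proof takes a genuinely different route from the paper's. The paper argues by contradiction: assuming ${\bf D}_\kappa(y_{S,\p})\not\equiv 0\pmod{p^m}$, it uses \v{C}ebotarev (in the spirit of Proposition~\ref{prop-Cheb}) to produce a \emph{new} auxiliary prime $\ell\notin S$ with ${\bf D}_\kappa(y_{S,\p})_\lambda\neq 0$, then invokes the Kolyvagin class $d(\ell)$ together with Proposition~\ref{finite-2-prop} to show $P(\ell)={\bf D}_\kappa{\bf D}_\ell^1(y_{S\ell,\p})\neq 0$, and finally derives a contradiction from Proposition~\ref{T5.20} applied to ${\bf D}_\kappa{\bf D}_\ell^1$ on the enlarged field $K_{S\ell}$. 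You instead stay inside $K_S$: raising an exponent at a prime $\ell\mid S$ already present, you apply Proposition~\ref{T5.20} \emph{directly} to the raised derivative ${\bf D}_{\tilde\kappa}$ (for which the sign flips to $-\epsilon_\kappa$), and then pull the vanishing back to ${\bf D}_\kappa$ through the group-ring congruence \eqref{sigma-ell-D-eq}. This is more elementary -- it dispenses entirely with the auxiliary-prime selection, the Kolyvagin-class construction, and the Key Formula -- and your bookkeeping of the conductor (the passage from $S'$ to $S'\ell$ when $k_\ell=0$, absorbed via Lemma~\ref{basic-lemma}(3) and the direct-sum decomposition of $A(S'\ell)$) is correct.

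However, there is a genuine gap. Your construction requires a prime $\ell\mid S$ with $k_\ell+1<p$, and you correctly note that no such $\ell$ exists when $S$ is a single prime with $\kappa=(p-1)$; but saying that case ``can be treated separately'' is not a proof. In that situation $r_{\p,m}\bigl(H^1_{f,S'}(K,W_\p[p^m])^{-\epsilon_\kappa}\bigr)\geq p$, and I do not see any way to conclude ${\bf D}_{\ell}^{p-1}(y_{\ell,\p})\equiv 0\pmod{p^m}$ from that bound alone without bringing in an auxiliary prime $\ell_0\nmid S$ -- at which point you must relate ${\bf D}_\kappa{\bf D}_{\ell_0}^1(y_{S\ell_0,\p})$ to ${\bf D}_\kappa(y_{S,\p})$ (say through the norm relation of Proposition~\ref{complex-conj}(1), which requires $a_{\ell_0}/\ell_0^{k/2-1}$ to be a $\p$-adic unit), and that selection is again a \v{C}ebotarev argument. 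You have also overlooked a second degenerate case: if $S=1$ (the empty product, $\ord({\bf D}_\kappa)=0$, $S'=1$) there is no prime dividing $S$ at all, yet the corollary's hypothesis $r_{\p,m}\bigl(H^1_f(K,W_\p[p^m])^{-\epsilon_\kappa}\bigr)\geq 2$ can hold and the conclusion $y_{1,\p}\equiv 0\pmod{p^m}$ is nontrivial; your method gives nothing here. The paper's approach, by choosing a fresh prime, covers both degenerate cases uniformly. Until these are filled in, your proof is incomplete -- though in the generic case where some $k_\ell<p-1$, it is a nice simplification of the published argument.
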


\begin{proof} Suppose ${\bf D}_{\kappa}(y_{S,\p})\not\equiv0\pmod{p^m} $ and pick a prime $\ell$ such that $\Frob_\ell=\Frob_\infty$ in $\Gal(L_{S,m}/\Q)$ and the image of  ${\bf D}_\kappa(y_{S,\p})$ in $H^1_f(K_\lambda,W_\p[p^m])$ is not zero (that such a choice is possible can be checked along the same lines as in the proof of Proposition \ref{prop-Cheb}, and the arguments are actually simpler). 

Now we show that 
\begin{equation} \label{eq39}
\text{${\bf D}_\kappa{\bf D}_\ell^1(y_{S\ell,\p})$ is not zero in $H^1(K,W_\p[p^m])$.} 
\end{equation}
If there is a derivative ${\bf D}_{\kappa'}$ strictly less than ${\bf D}_\kappa{\bf D}_\ell^1$ such that ${\bf D}_{\kappa'}(y_{S\ell,\p})$ is not zero in $H^1(K,W_\p[p^m])$, using formula \eqref{sigma-ell-D-eq} recursively one easily shows that \eqref{eq39} holds (use here the fact that $\ord({\bf D}_\kappa)<p$). On the contrary, if for all derivatives ${\bf D}_{\kappa'}$ strictly less than ${\bf D}_\kappa{\bf D}_\ell^1$ we have ${\bf D}_{\kappa'}(y_{S\ell,\p})=0$ in $H^1(K,W_\p[p^m])$ then one can construct the class $d(\ell)$ which, by Proposition \ref{finite-2-prop}, is not locally trivial at $\lambda$. Hence, \emph{a fortiori}, $d(\ell)$ is not globally trivial, and therefore also $P(\ell)={\bf D}_\kappa{\bf D}_\ell^1(y_{S\ell,\p})$ is not trivial. 

At this point we make use of our assumptions. Since 
\begin{equation} \label{eq38}
\ord({\bf D}_{\kappa}{\bf D}_\ell^1)=\ord({\bf D}_\kappa)+1,
\end{equation} 
it follows that 
\[ \ord({\bf D}_{\kappa}{\bf D}_\ell^1)< r_{\p,m}\bigl(H^1_{f,S'}(K,W_\p[p^m])^{-\epsilon_\kappa}\bigr)+r_\p\bigl(A(S')^{-\epsilon_\kappa}\bigr). \]
By Lemma \ref{aux-lemma}, the right hand side of the above inequality is less than or equal to 
\[ r_{\p,m}\bigl(H^1_{f,S'\ell}(K,W_\p[p^m])^{-\epsilon_\kappa}\bigr)+r_\p\bigl(A(S'\ell)^{-\epsilon_\kappa}\bigr) \]
and therefore we obtain the inequality 
\[ \ord({\bf D}_{\kappa}{\bf D}_\ell^1)<r_{\p,m}\bigl(H^1_{f,S'\ell}(K,W_\p[p^m])^{-\epsilon_\kappa}\bigr)+r_\p\bigl(A(S'\ell)^{-\epsilon_\kappa}\bigr). \] 
By \eqref{eq38}, we have $(-1)^{\ord({\bf D}_\kappa{\bf D}_\ell^1)}=-\epsilon_\kappa$. Therefore we can apply Proposition \ref{T5.20}, which shows that ${\bf D}_\kappa{\bf D}_\ell^1(y_{S\ell,\p})\equiv0\pmod{p^m}$. In light of \eqref{eq39}, this is a contradiction. \end{proof} 

\begin{corollary} \label{coro5.22} 
If one of the two conditions
\begin{enumerate}
\item $\ord({\bf D}_{\kappa})< r_{\p,m}\bigl(H^1_f(K,W_\p[p^m])^{\epsilon_\kappa}\bigr)$, 
\item $\ord({\bf D}_{\kappa})< r_{\p,m}\bigl(H^1_f(K,W_\p[p^m])^{-\epsilon_\kappa}\bigr)-1$ and $\ord({\bf D}_\kappa)<p$ 
\end{enumerate}
holds then ${\bf D}_{\kappa}(y_{S,\p})\equiv 0\pmod{p^m}$.
\end{corollary}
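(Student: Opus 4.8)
The plan is to deduce both cases from results already proved for the ``$S'$-imprimitive'' Selmer groups: condition (1) will follow from Proposition \ref{T5.20}, and condition (2) from Corollary \ref{coro5.21}. The only extra input needed is a comparison between the $\cO_\p/p^m\cO_\p$-ranks of the $\pm$-eigenspaces of $H^1_f(K,W_\p[p^m])$ and those of $H^1_{f,S'}(K,W_\p[p^m])$, which measures the ``defect'' introduced by imposing vanishing conditions at the primes dividing $S'$.

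To establish this comparison, I would start from the defining exact sequence \eqref{S'-Selmer} of $H^1_{f,S'}(K,W_\p[p^m])$, namely
\[ 0\longrightarrow H^1_{f,S'}(K,W_\p[p^m])\longrightarrow H^1_f(K,W_\p[p^m])\longrightarrow A(S'), \]
with $A(S')=\bigoplus_{\lambda\,|\,S'}H^1_f(K_\lambda,W_\p[p^m])$ as in \eqref{A(S')-eq}. This sequence is $\Gal(K/\Q)$-equivariant, and since $p$ is odd the operation of passing to $\pm$-eigenspaces under complex conjugation is exact; thus for each sign we obtain an exact sequence $0\to H^1_{f,S'}(K,W_\p[p^m])^\pm\to H^1_f(K,W_\p[p^m])^\pm\to A(S')^\pm$. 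Applying part (3) of Lemma \ref{basic-lemma} to this sequence, and recalling that $r_\p(A(S')^\pm)=\dim_{\F_\p}\!\bigl(A(S')^\pm\otimes_{\cO_\p/p^m\cO_\p}\F_\p\bigr)$ by the very definition of $r_\p$, yields
\begin{equation} \label{rank-comp-eq}
r_{\p,m}\bigl(H^1_f(K,W_\p[p^m])^\pm\bigr)\leq r_{\p,m}\bigl(H^1_{f,S'}(K,W_\p[p^m])^\pm\bigr)+r_\p\bigl(A(S')^\pm\bigr).
\end{equation}

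With \eqref{rank-comp-eq} available, both cases are immediate. Under condition (1), specializing \eqref{rank-comp-eq} to the sign $\epsilon_\kappa$ gives
\[ \ord({\bf D}_\kappa)<r_{\p,m}\bigl(H^1_f(K,W_\p[p^m])^{\epsilon_\kappa}\bigr)\leq r_{\p,m}\bigl(H^1_{f,S'}(K,W_\p[p^m])^{\epsilon_\kappa}\bigr)+r_\p\bigl(A(S')^{\epsilon_\kappa}\bigr), \]
which is exactly the hypothesis of Proposition \ref{T5.20}, so ${\bf D}_\kappa(y_{S,\p})\equiv0\pmod{p^m}$. Under condition (2), specializing \eqref{rank-comp-eq} to the sign $-\epsilon_\kappa$ gives
\[ \ord({\bf D}_\kappa)<r_{\p,m}\bigl(H^1_f(K,W_\p[p^m])^{-\epsilon_\kappa}\bigr)-1\leq r_{\p,m}\bigl(H^1_{f,S'}(K,W_\p[p^m])^{-\epsilon_\kappa}\bigr)+r_\p\bigl(A(S')^{-\epsilon_\kappa}\bigr)-1, \]
and this, combined with the assumed bound $\ord({\bf D}_\kappa)<p$, is precisely the hypothesis of Corollary \ref{coro5.21}, so again ${\bf D}_\kappa(y_{S,\p})\equiv0\pmod{p^m}$. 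I do not expect any genuine obstacle here: all the substantial work lies in Proposition \ref{T5.20} (whose proof in turn rests on the \v{C}ebotarev argument of Proposition \ref{prop-Cheb} and on the local analysis of the Kolyvagin classes) and in Corollary \ref{coro5.21}, while the only point requiring care in the present deduction is the correct bookkeeping of eigenspace ranks through the exact sequence above.
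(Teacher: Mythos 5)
Your proof is correct and takes essentially the same approach as the paper: both derive the rank inequality $r_{\p,m}\bigl(H^1_f(K,W_\p[p^m])^\pm\bigr)\leq r_{\p,m}\bigl(H^1_{f,S'}(K,W_\p[p^m])^\pm\bigr)+r_\p\bigl(A(S')^\pm\bigr)$ from part (3) of Lemma \ref{basic-lemma} applied to the defining exact sequence, then invoke Proposition \ref{T5.20} for condition (1) and Corollary \ref{coro5.21} for condition (2). Your version is slightly more explicit in tracking both signs, whereas the paper writes only the $\epsilon_\kappa$ instance and leaves the $-\epsilon_\kappa$ case implicit.
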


\begin{proof} Part (3) of Lemma \ref{basic-lemma} implies that 
\[ r_{\p,m}\bigl(H^1_f(K,W_\p[p^m])^{\epsilon_\kappa}\bigr)\leq r_{\p,m}\bigl(H^1_{f,S'}(K,W_\p[p^m])^{\epsilon_\kappa}\bigr)+r_\p\bigl(A(S')^{\epsilon_\kappa}\bigr). \] 
The corollary follows from  Proposition \ref{T5.20} if condition (1) holds and from Corollary \ref{coro5.21} if condition (2) holds. \end{proof}

We are now in a position to state and prove the main result of this section. 

\begin{theorem} \label{div-thm} 
Let $S$ be a square-free product of primes in $\mathcal S_{p^m}$. If $\ord({\bf D}_{{\kappa}})<\min\{r_{\p,m},p\} $ then ${\bf D}_{{\kappa}}(y_{S,\p})\equiv 0\pmod{p^m}$.  
\end{theorem}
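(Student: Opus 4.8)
The plan is to deduce Theorem \ref{div-thm} as a purely formal consequence of Corollary \ref{coro5.22}, by unwinding the definition of the integer $r_{\p,m}$ in terms of the eigenspace ranks $\tilde r^\pm_{\p,m}=r_{\p,m}\bigl(H^1_f(K,W_\p[p^m])^\pm\bigr)$. First I would fix notation: since $\epsilon_\kappa=(-1)^{\ord({\bf D}_\kappa)}\epsilon\in\{\pm1\}$ picks out one of the two eigenspaces, write $x:=\tilde r^{\epsilon_\kappa}_{\p,m}=r_{\p,m}\bigl(H^1_f(K,W_\p[p^m])^{\epsilon_\kappa}\bigr)$ and $y:=\tilde r^{-\epsilon_\kappa}_{\p,m}=r_{\p,m}\bigl(H^1_f(K,W_\p[p^m])^{-\epsilon_\kappa}\bigr)$. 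In this notation Corollary \ref{coro5.22} asserts that ${\bf D}_\kappa(y_{S,\p})\equiv0\pmod{p^m}$ as soon as either $\ord({\bf D}_\kappa)<x$, or both $\ord({\bf D}_\kappa)<y-1$ and $\ord({\bf D}_\kappa)<p$ hold. Since the hypothesis of the theorem already supplies $\ord({\bf D}_\kappa)<p$, the entire argument reduces to checking that $\ord({\bf D}_\kappa)<\min\{r_{\p,m},p\}$ forces $\ord({\bf D}_\kappa)<x$ or $\ord({\bf D}_\kappa)<y-1$.

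Next I would run the short case analysis dictated by the definition of $r_{\p,m}$. If $\tilde r^+_{\p,m}=\tilde r^-_{\p,m}$ then $x=y=r_{\p,m}$, so the hypothesis $\ord({\bf D}_\kappa)<r_{\p,m}$ is literally $\ord({\bf D}_\kappa)<x$ and the first alternative of Corollary \ref{coro5.22} applies. If instead $\tilde r^+_{\p,m}\neq\tilde r^-_{\p,m}$, then $r_{\p,m}=\max\{x,y\}-1$, and I would split according to which eigenspace is the larger one: when $x=\max\{x,y\}$ the hypothesis gives $\ord({\bf D}_\kappa)<r_{\p,m}=x-1<x$, again landing in the first alternative; when $y=\max\{x,y\}$ the hypothesis gives $\ord({\bf D}_\kappa)<r_{\p,m}=y-1$, which together with $\ord({\bf D}_\kappa)<p$ lands in the second alternative. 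In every case Corollary \ref{coro5.22} yields the desired congruence, which completes the proof.

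It is worth stressing that all the substantive work has already been done: the Key Formula input, the \v{C}ebotarev argument of Proposition \ref{prop-Cheb}, and the inductive vanishing of Proposition \ref{T5.20} together with Corollaries \ref{coro5.21} and \ref{coro5.22}. The present theorem is merely the clean repackaging of those results in terms of the single invariant $r_{\p,m}$ that enters Theorem \ref{thm-intro}, so I do not anticipate any genuine difficulty here; the only delicate point is that the ``$-1$'' in the definition of $r_{\p,m}$ and the ``$-1$'' in condition (2) of Corollary \ref{coro5.22} must be matched up exactly, which the bookkeeping above does. As a sanity check I would also note that the case $\tilde r^+_{\p,m}=\tilde r^-_{\p,m}$ is the one expected never to occur under the Beilinson--Bloch conjecture (cf. the parity discussion in \S\ref{eigenspaces-subsec}), but the argument handles it uniformly with the others.
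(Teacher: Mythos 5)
Your proof is correct and follows essentially the same route as the paper: reduce Theorem \ref{div-thm} to Corollary \ref{coro5.22} by unwinding the definition of $r_{\p,m}$. In fact your bookkeeping is cleaner than what appears in the paper. The paper's proof asserts the displayed identity $r_{\p,m}=r_{\p,m}\bigl(H^1_f(K,W_\p[p^m])^{\epsilon_\kappa}\bigr)+r_{\p,m}\bigl(H^1_f(K,W_\p[p^m])^{-\epsilon_\kappa}\bigr)$, but the right-hand side equals $\tilde r_{\p,m}$, not the invariant $r_{\p,m}$ defined by the $\max\{\cdot,\cdot\}-1$ recipe, so as typeset this is a misprint (and even if one took it at face value the desired disjunction $\ord({\bf D}_\kappa)<\tilde r^{\epsilon_\kappa}_{\p,m}$ or $\ord({\bf D}_\kappa)<\tilde r^{-\epsilon_\kappa}_{\p,m}-1$ would not follow from $\ord({\bf D}_\kappa)<\tilde r_{\p,m}$). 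Your three-way case analysis on $\tilde r^+_{\p,m}$ versus $\tilde r^-_{\p,m}$, with the explicit matching of the two ``$-1$'' shifts, is exactly the argument that is needed and that the paper presumably intended; the observation that $\ord({\bf D}_\kappa)<p$ is only invoked in the branch where condition (2) of Corollary \ref{coro5.22} is used also matches the paper's parenthetical remark.
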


\begin{proof} Since $\ord({\bf D}_{{\kappa}})<r_{\p,m} $ and 
\[ r_{\p,m}=r_{\p,m}\bigl(H^1_f(K,W_\p[p^m])^{\epsilon_\kappa}\bigr)+ r_{\p,m}\bigl(H^1_f(K,W_\p[p^m])^{-\epsilon_\kappa}\bigr), \]
at least one of the conditions in Corollary \ref{coro5.22} is satisfied (in (2) we also need the condition $\ord({\bf D}_\kappa)<p$, which is not needed for (1)), and we are done. \end{proof}

\section{Theta elements and refined Beilinson--Bloch conjecture} \label{secMR} 

In this section we prove our main result on the order of vanishing of certain combinations of Heegner cycles. 

\subsection{Theta elements and arithmetic $L$-functions} \label{sec-theta}

For any square-free product  $T$ of prime numbers belonging to the set $\mathcal S$ defined in \eqref{S-set-eq} consider the resolvent element
\[ \theta_{T,\p}:=\sum_{\sigma\in G_T}\sigma(y_{T,\p})\otimes\sigma\in\BK_\p(K_T)\otimes_{\cO_\p}\!\cO_\p[G_T]. \] 
Our main result relates these elements to the dimension of $X_\p(K)$ over $F_\p$. 

We also need to introduce suitable variants and combinations of the above elements. To begin with, we trace them down to $K$ as follows. As in \S \ref{heegner-classes-subsec}, fix any lift ${\bf N}_T\in\Z[\Gamma_T]$ of the norm ${\bf N}=\sum_{\sigma\in\Gamma_1}\sigma$; in other words, for every $\sigma\in\Gamma_1$ choose $\sigma'\in\Gamma_T$ such that ${\sigma'|}_{K_1}=\sigma$. Define 
\begin{equation} \label{zeta-eq}
\zeta_{T,\p}:={\bf N}_T(\theta_{T,\p})= \sum_{\sigma\in G_T}\sigma{\bf N}_T(y_{T,\p})\otimes\sigma\in\BK_\p(K_T)\otimes_{\cO_\p}\!\cO_\p[G_T]. 
\end{equation}
Note that these elements depend on the choice of ${\bf N}_T$, but for simplicity we shall drop this dependence from the notation. 

Let $x\mapsto x^*$ denote the involution of $\mathcal O_\p[G_T]$ induced by the map $\sigma\mapsto\sigma^{-1}$ on $G_T$ and denote by $\zeta^*_{T,\p}$ the element obtained by applying to $\zeta_{T,\p}$ the map induced by this involution. 

Fix a square-free product  $S$ of primes belonging to $\mathcal S$. As before, fix a lift ${\bf N}_S$ of ${\bf N}$ to $\Z[\Gamma_S]$. By projection, this gives lifts ${\bf N}_T$ for all $T\,|\,S$ that may be used to define $\zeta_{T,\p}$ and $\zeta_{T,\p}^*$ as in \eqref{zeta-eq}. Since the extension $K_S/\Q$ is generalized dihedral and hence solvable, part (1) of Lemma \ref{lem-sol} ensures that $A_\p(K_S)=0$, so for every $T\,|\,S$ the inflation-restriction exact sequence yields an injection $\BK_\p(K_T)\hookrightarrow\BK_\p(K_S)$. On the other hand, the natural inclusion $G_T\subset G_S$ (see \S \ref{secHC}) induces an injection $\cO_\p[G_T]\hookrightarrow\cO_\p[G_S]$ of (free) $\cO_\p$-modules, and therefore we obtain an injection
\begin{equation} \label{cohom-T_S-eq}
\BK_\p(K_T)\otimes_{\cO_\p}\!\cO_\p[G_T]\;\longmono\;\BK_\p(K_S)\otimes_{\cO_\p}\!\cO_\p[G_S]
\end{equation}
of $\cO_\p$-modules. Furthermore, the canonical inclusion $G_S\subset\Gamma_S$ induces an 
injection 
\begin{equation} \label{cohom-T_S-eq-I}
\BK_\p(K_S)\otimes_{\cO_\p}\!\cO_\p[G_S]\;\longmono\;\BK_\p(K_S)\otimes_{\cO_\p}\!\cO_\p[\Gamma_S]
\end{equation} 
of $\mathcal O_\p$-modules. The composition of \eqref{cohom-T_S-eq} and \eqref{cohom-T_S-eq-I} allows us to view $\zeta_{T,\p}$ and $\zeta^*_{T,\p}$ as elements of $\BK_\p(K_S)\otimes_{\cO_\p}\!\cO_\p[\Gamma_S]$, which from here on we shall do without any further warning. 
 
For $S$ fixed as above and every $T\,|\,S$ set 
\begin{equation} \label{def-multipl}
a_T:=\mu(T)\sum_{\sigma\in\Gal(K_S/K_T)}\sigma,\qquad a_T^*:=\chi_K(T)a_T
\end{equation}
where $\mu$ is the M\"obius function and $\chi_K$ is the quadratic character attached to $K$. Define the \emph{arithmetic $L$-function} attached to $S$ and $\p$ as
\begin{equation} \label{def-L}
{\mathcal L}_{S,\p}:=\bigg(\sum_{T\mid S}a_T\zeta_{T,\p}\bigg)\!\otimes\!\bigg(\sum_{T\mid S}a_T^*\zeta_{T,\p}^*\bigg)\in\BK_\p(K_S)^{\otimes2}\otimes_{\cO_\p}\!\cO_\p[\Gamma_S].
\end{equation} 
Here we are using the canonical identification 
\[ \BK_\p(K_S)^{\otimes2}\otimes_{\cO_\p}\!\cO_\p[\Gamma_S]=\bigl(\BK_\p(K_S)\otimes_{\cO_\p}\!\cO_\p[\Gamma_S]\bigr)\otimes_{\cO_\p[\Gamma_S]}\!\bigl(\BK_\p(K_S)\otimes_{\cO_\p}\!\cO_\p[\Gamma_S]\bigr), \]
the superscript ``$\otimes2$'' denoting tensorization over $\cO_\p$. Note that if $T\,|\,S$ and 
\[ \mu_{S,T}:\BK_\p(K_S)^{\otimes 2}\otimes_{\cO_\p}\!\cO_\p[\Gamma_S]\longrightarrow\BK_\p(K_S)^{\otimes 2}\otimes_{\cO_\p}\!\cO_\p[\Gamma_T] \] 
is the map induced by the canonical projection $\Gamma_S\twoheadrightarrow\Gamma_T$ then 
\begin{equation}\label{compatibility}
\mu_{S,T}(\mathcal L_{S,\p})=\mathcal L_{T,\p}\cdot\prod_{\ell\mid(S/T)}(1+\ell-a_\ell/\ell^{k/2-1})\cdot(1+\ell+a_\ell/\ell^{k/2-1}).
\end{equation}

\begin{remark} \label{rem4.1} 
One could define an element $\mathcal L_{S,\p}$ as in \eqref{def-L} by replacing the coefficients $a_T$ and $a_T^*$ with any choice of $b_T$ and $b_T'$ in $\mathcal O_\p[\Gamma_S]$, obtaining compatibility relations similar to \eqref{compatibility}. Our preference is motivated by the existence, still only conjectured, of a Mazur--Tate type regulator $\Reg(S)$ enjoying properties analogous to those of the regulator defined in \cite{MT0} and \cite{MT} and appearing in \cite{Dar}. This regulator $\Reg(S)$ should be used to express the leading value of $\mathcal L_{S,\p}$ for this specific choice of $a_T$ and $a_T^*$ (see Section \ref{sec-regulators} for more details). However, it is reasonable to expect alternative choices of coefficients $b_T$ and $b_T'$ to be related to other types of regulators having formal properties different from those of Mazur--Tate regulators. Finally, observe that the results for $\mathcal L_{S,\p}$ proved in this paper still hold for any choice of $b_
 T$ and $b_T'$: see Remarks \ref{remark-general-choice} and \ref{remark-general-choice-2} below.  
\end{remark} 

\subsection{Results on the order of vanishing} 

Recall that $I_{G_S}$ and $I_{\Gamma_S}$ are the augmentation ideals of $\cO_\p[G_S]$ and $\cO_\p[\Gamma_S]$, respectively. The powers of $I_{G_S}$ define a decreasing filtration
\begin{equation} \label{filtration-eq}
\cO_\p[G_S]=I_{G_S}^0\supset I_{G_S}^1\supset I_{G_S}^2\supset\dots\supset I_{G_s}^n\supset\cdots
\end{equation}
on $\cO_\p[G_S]$. On the other hand, since the $\cO_\p$-module $\BK_\p(K_S)$ is not in general torsion-free, we cannot expect tensorization of the sequence \eqref{filtration-eq} by $\BK_\p(K_S)$ over $\cO_\p$ to yield a filtration on $\BK_\p(K_S)\otimes_{\cO_\p}\!\cO_\p[G_S]$. In light of this, when we write that an element $\theta$ of $\BK_\p(K_S)\otimes_{\cO_\p}\!\cO_\p[G_S]$ belongs to $\BK_\p(K_S)\otimes_{\cO_\p}\!I_{G_S}^r$ we really mean that $\theta$ belongs to the natural image of the $\cO_\p$-module $\BK_\p(K_S)\otimes_{\cO_\p}\!I_{G_S}^r$ inside $\BK_\p(K_S)\otimes_{\cO_\p}\!\cO_\p[G_S]$.

\begin{definition} \label{vanishing-dfn}
Let $r\in\mathbb N$. An element $\theta\in\BK_\p(K_S)\otimes_{\cO_\p}\!\cO_\p[G_S]$ is said to \emph{vanish to order at least $r$} if $\theta\in\BK_\p(K_S)\otimes_{\cO_\p}\!I_{G_S}^r$.
\end{definition}

Analogous definitions and conventions apply to $\cO_\p[\Gamma_S]$ and $I_{\Gamma_S}$ and, below, with $\BK_\p(K_S)^{\otimes2}$ in place of $\BK_\p(K_S)$.

\begin{theorem} \label{main-thm} 
If $\rho_\p\leq p$ then $\theta_{S,\p}\in\BK_\p(K_S)\otimes_{\cO_\p}\!I^{\rho_\p}_{G_S}$.
\end{theorem}

\begin{proof} Let ${\bf D}_\kappa$ be a derivative with $\ord({\bf D}_\kappa)<\rho_\p$, ${\rm supp}({\bf D}_\kappa)=S$ and ${\rm cond}({\bf D}_\kappa)\,|\,S$. Set $S':={\rm cond}({\bf D}_\kappa)$ and write ${\bf D}_\kappa={\bf D}_{\kappa'}\cdot{\bf D}_{\kappa''}$ where the derivative ${\bf D}_{\kappa'}$ satisfies ${\rm supp}({\bf D}_{\kappa'})={\rm cond}({\bf D}_{\kappa'})=S'$ and the derivative ${\bf D}_{\kappa''}$ has order $0$ and support in $S/S'$ (so ${\bf D}_{\kappa''}$ is nothing other than the norm operator from $G_S$ to $G_{S'}$). Part (1) of Proposition \ref{complex-conj} combined with the relation \eqref{res-cores-norm} between Galois trace, restriction and corestriction map shows that 
\begin{equation} \label{eq1}
{\bf D}_{\kappa}(y_{S,\p})=\res_{K_{S'}/K_S}\bigl({\bf D}_{\kappa'}(y_{S',\p})\bigr)\cdot\prod_{\ell\mid (S/S')}a_\ell/\ell^{k/2-1}
\end{equation}
where $\res_{K_{S'}/K_{S}}$ is the restriction from $H^1(K_{S'},A_\p)$ to $H^1(K_{S},A_\p)$. 
Let $p^m=\eta(\kappa)$ denote the highest power of $p$ dividing the orders of the groups $G_\ell$ with $\ell\,|\,S$. By definition, all primes dividing $S$ belong to $\mathcal S_{p^m}$. Since $\rho_\p\leq r_{\p,m} $ by Lemma \ref{lem4.2}, we have $\ord({\bf D}_\kappa)<r_{\p,m}$. Therefore the assumptions of Theorem \ref{div-thm} are satisfied and then 
\begin{equation} \label{eq2}
{\bf D}_{\kappa'}(y_{S',\p})\equiv0\pmod{p^m}.
\end{equation}
Combining \eqref{eq1} and \eqref{eq2}, we see that if $\ord({\bf D}_\kappa)<\rho_\p$ then 
$\eta(\kappa)\,|\,{\bf D}_\kappa(y_S)$. The result follows from the divisibility criterion in \S\ref{3.3.2}, which we can apply thanks to the condition $\rho_\p\leq p$. \end{proof}

\begin{corollary} \label{coro-zeta} 
$\zeta_{S,\p},\zeta^*_{S,\p}\in\BK_\p(K_S)\otimes_{\cO_\p}\!I^{\rho_\p}_{G_S}$.
\end{corollary}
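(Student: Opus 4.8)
The plan is to deduce this immediately from Theorem \ref{main-thm}, using only the way $\zeta_{S,\p}$ and $\zeta^*_{S,\p}$ are manufactured out of $\theta_{S,\p}$. By Theorem \ref{main-thm} we already have $\theta_{S,\p}\in\BK_\p(K_S)\otimes_{\cO_\p}\!I^{\rho_\p}_{G_S}$, which in the sense of Definition \ref{vanishing-dfn} means that $\theta_{S,\p}$ lies in the natural image of the $\cO_\p$-module $\BK_\p(K_S)\otimes_{\cO_\p}\!I^{\rho_\p}_{G_S}$ inside $\BK_\p(K_S)\otimes_{\cO_\p}\!\cO_\p[G_S]$. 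Now $\zeta_{S,\p}={\bf N}_S(\theta_{S,\p})$ by \eqref{zeta-eq}, where ${\bf N}_S\in\Z[\Gamma_S]$ acts on $\BK_\p(K_S)\otimes_{\cO_\p}\!\cO_\p[G_S]$ through its $\cO_\p$-linear Galois action on the first tensor factor only (this makes sense since $\BK_\p(K_S)={\rm im}({\rm AJ}_{K_S})$ is stable under $\Gal(K_S/\Q)$ by the equivariance of the Abel--Jacobi map). Thus $\zeta_{S,\p}=({\bf N}_S\otimes{\rm id})(\theta_{S,\p})$, and since the $\cO_\p$-linear endomorphism ${\bf N}_S\otimes{\rm id}$ of $\BK_\p(K_S)\otimes_{\cO_\p}\!\cO_\p[G_S]$ carries the natural image of $\BK_\p(K_S)\otimes_{\cO_\p}\!I^{\rho_\p}_{G_S}$ into itself, we conclude $\zeta_{S,\p}\in\BK_\p(K_S)\otimes_{\cO_\p}\!I^{\rho_\p}_{G_S}$.

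For $\zeta^*_{S,\p}$ I would run the same argument with the involution in place of the norm. The involution $x\mapsto x^*$ of $\cO_\p[G_S]$ induced by $\sigma\mapsto\sigma^{-1}$ is an $\cO_\p$-algebra automorphism compatible with the augmentation map (since $\sigma$ and $\sigma^{-1}$ both have augmentation $1$), hence it preserves the augmentation ideal $I_{G_S}$ and therefore each power $I^{r}_{G_S}$. Consequently ${\rm id}\otimes(*)$ maps the natural image of $\BK_\p(K_S)\otimes_{\cO_\p}\!I^{\rho_\p}_{G_S}$ into itself, and since by definition $\zeta^*_{S,\p}=({\rm id}\otimes(*))(\zeta_{S,\p})$ and we have just shown $\zeta_{S,\p}\in\BK_\p(K_S)\otimes_{\cO_\p}\!I^{\rho_\p}_{G_S}$, the same membership holds for $\zeta^*_{S,\p}$.

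The only point worth flagging --- and it is not really an obstacle --- is the caveat recalled just before Definition \ref{vanishing-dfn}: because $\BK_\p(K_S)$ need not be $\cO_\p$-torsion-free, ``belonging to $\BK_\p(K_S)\otimes_{\cO_\p}\!I^{\rho_\p}_{G_S}$'' must be read as lying in the image of that module, and tensoring the $I_{G_S}$-adic filtration by $\BK_\p(K_S)$ need not produce a filtration. What rescues the argument is simply that both maps used above, ${\bf N}_S\otimes{\rm id}$ and ${\rm id}\otimes(*)$, are $\cO_\p$-linear, compatible with the tensor-product presentations, and act on a single factor, so each sends the relevant natural image into itself. I therefore expect the proof to be a short, purely formal consequence of Theorem \ref{main-thm}, with no genuine difficulty.
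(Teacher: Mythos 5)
Your proof is correct and takes essentially the same route as the paper: apply ${\bf N}_S\otimes{\rm id}$ (acting only on the first tensor factor) to $\theta_{S,\p}$, observe this preserves the natural image of $\BK_\p(K_S)\otimes_{\cO_\p}I^{\rho_\p}_{G_S}$, and then note that the main involution ${\rm id}\otimes(*)$ preserves powers of $I_{G_S}$ since it is an $\cO_\p$-algebra automorphism compatible with the augmentation. Your write-up is in fact slightly more explicit than the paper's about why the involution preserves $I^{\rho_\p}_{G_S}$ and about the torsion caveat before Definition \ref{vanishing-dfn}, but the underlying argument is the same.
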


\begin{proof} The element $\zeta_{S,\p}$ is the image of $\theta_{S,\p}$ via the endomorphism of $\Lambda(K_S)\otimes I^{\rho_\p}_{G_S}$ defined by $x\otimes i\mapsto\bigl({\bf N}_S(x)\bigr)\otimes i$. Since the Abel--Jacobi map commutes with Galois actions, it follows from Theorem \ref{main-thm} that $\zeta_{S,\p}$ belongs to $\Lambda(K_S)\otimes I^{\rho_\p}_{G_S}$. Applying the main involution, one obtains that $\zeta_{S,\p}^*$ belongs to $\BK_\p(K_S)\otimes I^{\rho_\p}_{G_S}$ as well. \end{proof}

\begin{corollary} \label{main-coro}
$\mathcal L_{S,\p}\in\BK_\p(K_S)^{\otimes2}\otimes_{\cO_\p}\!I_{\Gamma_S}^{2\rho_\p}$.
\end{corollary}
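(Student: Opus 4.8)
The plan is to deduce this from Corollary \ref{coro-zeta} by unwinding the definition \eqref{def-L} of $\mathcal L_{S,\p}$ and keeping track of augmentation filtrations; the argument is essentially formal. First I would reduce to a statement about the two tensor factors separately. Recall the canonical identification
\[ \BK_\p(K_S)^{\otimes2}\otimes_{\cO_\p}\!\cO_\p[\Gamma_S]=\bigl(\BK_\p(K_S)\otimes_{\cO_\p}\!\cO_\p[\Gamma_S]\bigr)\otimes_{\cO_\p[\Gamma_S]}\!\bigl(\BK_\p(K_S)\otimes_{\cO_\p}\!\cO_\p[\Gamma_S]\bigr), \]
under which $\mathcal L_{S,\p}$ is the image of $\Theta\otimes_{\cO_\p[\Gamma_S]}\Theta^*$, where $\Theta:=\sum_{T\mid S}a_T\zeta_{T,\p}$ and $\Theta^*:=\sum_{T\mid S}a_T^*\zeta_{T,\p}^*$ (each viewed inside $\BK_\p(K_S)\otimes_{\cO_\p}\!\cO_\p[\Gamma_S]$ via \eqref{cohom-T_S-eq} and \eqref{cohom-T_S-eq-I}). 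Since this identification sends $(u\otimes\alpha)\otimes_{\cO_\p[\Gamma_S]}(v\otimes\beta)$ to $(u\otimes v)\otimes\alpha\beta$ for $u,v\in\BK_\p(K_S)$ and $\alpha,\beta\in\cO_\p[\Gamma_S]$, and since $I_{\Gamma_S}^{\rho_\p}\cdot I_{\Gamma_S}^{\rho_\p}=I_{\Gamma_S}^{2\rho_\p}$, it suffices to prove that both $\Theta$ and $\Theta^*$ lie in the natural image of $\BK_\p(K_S)\otimes_{\cO_\p}\!I_{\Gamma_S}^{\rho_\p}$.

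To do this I would fix $T\mid S$ and apply Corollary \ref{coro-zeta} with $T$ in place of $S$ (legitimate, since $T$ is again a square-free product of primes in $\mathcal S$ and the standing hypothesis $\rho_\p\leq p$ is in force), obtaining $\zeta_{T,\p},\zeta_{T,\p}^*\in\BK_\p(K_T)\otimes_{\cO_\p}\!I_{G_T}^{\rho_\p}$. Then I would observe that the inclusions $G_T\subset G_S\subset\Gamma_S$ send $\sigma-1$ to $\sigma-1$, hence carry $I_{G_T}^{\rho_\p}$ into $I_{\Gamma_S}^{\rho_\p}$; combined with the injections \eqref{cohom-T_S-eq} and \eqref{cohom-T_S-eq-I} and the injection $\BK_\p(K_T)\hookrightarrow\BK_\p(K_S)$ recalled in \S \ref{sec-theta}, this shows that $\zeta_{T,\p}$ and $\zeta_{T,\p}^*$, regarded in $\BK_\p(K_S)\otimes_{\cO_\p}\!\cO_\p[\Gamma_S]$, lie in the natural image of $\BK_\p(K_S)\otimes_{\cO_\p}\!I_{\Gamma_S}^{\rho_\p}$. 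Because $I_{\Gamma_S}^{\rho_\p}$ is an ideal of $\cO_\p[\Gamma_S]$, multiplying by $a_T$ (resp. $a_T^*$) and summing over $T\mid S$ preserves this image, so $\Theta$ and $\Theta^*$ belong to it as well, and the previous paragraph concludes the proof.

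The one place where I expect genuine care to be needed is the bookkeeping hidden in the phrase ``natural image'': since $\BK_\p(K_S)$ need not be $\cO_\p$-torsion-free (cf. the discussion preceding Definition \ref{vanishing-dfn}), each assertion above must be phrased in terms of images of the modules $\BK_\p(-)\otimes I^{\bullet}$ rather than honest submodules, and one must check that all the maps involved — the embeddings of group algebras, the norm lifts ${\bf N}_T$ of \eqref{zeta-eq} and the main involution $x\mapsto x^*$, and the displayed identification — are compatible with these images. This is routine but worth spelling out; apart from it, the proof is purely formal.
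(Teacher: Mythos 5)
Your argument is correct and is essentially the paper's proof spelled out in full: the paper simply observes that $\mathcal L_{S,\p}$ is built from the elements $\zeta_{T,\p}$ and $\zeta_{T,\p}^*$ for $T\mid S$, each lying (by Corollary \ref{coro-zeta}) in the image of $\BK_\p(K_S)\otimes_{\cO_\p}I_{\Gamma_S}^{\rho_\p}$, with coefficients in $\cO_\p[\Gamma_S]$, so that multiplicativity of the augmentation filtration forces $\mathcal L_{S,\p}$ into the image of $\BK_\p(K_S)^{\otimes2}\otimes_{\cO_\p}I_{\Gamma_S}^{2\rho_\p}$. Your explicit handling of the passage $I_{G_T}^{\rho_\p}\hookrightarrow I_{\Gamma_S}^{\rho_\p}$, the ``image of $\BK\otimes I^{\bullet}$'' bookkeeping, and the factorization of the identification makes precise the steps the paper leaves implicit.
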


\begin{proof} Since ${\mathcal L}_{S,\p}$ is a linear combination with coefficients in $\cO_\p[\Gamma_S]$ of the elements $\zeta_{T,\p}$ and $\zeta^*_{T,\p}$ for $T\,|\,S$, the result is a consequence of Corollary \ref{coro-zeta} applied to these elements. \end{proof}

\begin{remark} \label{remark-general-choice}
More generally, the result of Corollary \ref{main-coro} is valid (with the same proof) for any linear combination with coefficients in $\cO_\p[\Gamma_S]$ of the elements $\zeta_{T,\p}$ and $\zeta^*_{T,\p}$ with $T\,|\,S$. See Remark \ref{rem4.1} for a detailed discussion of our specific choice of coefficients for $\mathcal L_{S,\p}$.
\end{remark}

\subsection{Results on the leading terms} \label{sec4.5}

We study, in some particular cases, the reductions modulo $p$ of the leading terms of $\zeta_{S,\p}$ and $\mathcal L_{S,\p}$. Here $S$ is a square-free product of primes in $\mathcal S_p$ and $\rho_\p< p$.

We first consider the leading term $\tilde\theta_{S,\p}$ of $\theta_{S,\p}$, which is defined to be the image of $\theta_{S,\p}$ in $\Lambda_\p(K_S)\otimes_{\cO_\p}\!\bigl(I_{G_S}^{\rho_\p}/I_{G_S}^{\rho_\p+1}\bigr)$. By Theorem \ref{main-thm}, there is a congruence
\begin{equation} \label{eq-theta-45}
\tilde\theta_{S,\p}\equiv\sum_\kappa{\bf D}_\kappa(y_{S,\p})\otimes(\sigma_1-1)^{k_1}\dots(\sigma_t-1)^{k_s}\pmod{p}
\end{equation} 
where the sum is over all the $\kappa$ with $\ord(\kappa)={\rho_\p}$. Denote by 
\[ {\bf D}_\kappa^{(p)}(y_{S,\p})\in\Lambda_\p(K_S)/p\Lambda_\p(K_S) \]
the reduction modulo $p$ of ${\bf D}_\kappa(y_{S,\p})$ for $\ord(\kappa)=\rho_\p$.

\begin{lemma} \label{lemma5.24a} 
${\bf D}_\kappa^{(p)}(y_{S,\p})\in\bigl(\Lambda_\p(K_S)/p\Lambda_\p(K_S)\bigr)^{\!G_S}$.
\end{lemma}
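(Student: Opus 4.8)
The plan is to verify that each of the standard generators $\sigma_{\ell_1},\dots,\sigma_{\ell_t}$ of $G_S=\prod_{i=1}^tG_{\ell_i}$ fixes the class ${\bf D}_\kappa^{(p)}(y_{S,\p})$; since these elements generate $G_S$, this will suffice. I would fix an index $i$, write $\kappa=(k_1,\dots,k_t)$, and split into two cases according to whether $k_i=0$ or $k_i>0$. In the trivial case $k_i=0$, the $i$-th factor of ${\bf D}_\kappa$ is ${\bf D}_{\ell_i}^0=\sum_{j=0}^{\ell_i}\sigma_{\ell_i}^j$, the norm element of $\Z[G_{\ell_i}]$, which is annihilated by $\sigma_{\ell_i}-1$ because $\sigma_{\ell_i}$ has order $\ell_i+1$; hence $(\sigma_{\ell_i}-1){\bf D}_\kappa(y_{S,\p})=0$ already in $\Lambda_\p(K_S)$.

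In the main case $k_i>0$, I would use the commutativity of $\cO_\p[G_S]$ to factor ${\bf D}_\kappa={\bf D}_{\ell_i}^{k_i}\cdot{\bf D}'$ with ${\bf D}':=\prod_{j\neq i}{\bf D}_{\ell_j}^{k_j}$, and then invoke congruence \eqref{sigma-ell-D-eq} (in the case $m=1$, which is legitimate because $\ell_i\in\mathcal S_p$ forces $p\,|\,\ell_i+1$, and $0<k_i\leq\ord({\bf D}_\kappa)=\rho_\p<p$ by the standing assumption of this subsection) to obtain
\[ (\sigma_{\ell_i}-1){\bf D}_\kappa(y_{S,\p})={\bf D}'\bigl((\sigma_{\ell_i}-1){\bf D}_{\ell_i}^{k_i}(y_{S,\p})\bigr)\equiv-\sigma_{\ell_i}\,{\bf D}_{\kappa'}(y_{S,\p})\pmod p, \]
where $\kappa'$ is obtained from $\kappa$ by lowering the $i$-th entry by $1$, so that ${\bf D}_{\kappa'}={\bf D}_{\ell_i}^{k_i-1}\cdot{\bf D}'$ has $\ord({\bf D}_{\kappa'})=\rho_\p-1$ and ${\rm supp}({\bf D}_{\kappa'})=S$.

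The final step is to kill ${\bf D}_{\kappa'}(y_{S,\p})$ modulo $p$ via Theorem \ref{div-thm} applied with $m=1$. For this I need $\ord({\bf D}_{\kappa'})=\rho_\p-1<\min\{r_{\p,1},p\}$: the bound $\rho_\p\leq r_{\p,1}=r_\p$ is Lemma \ref{lem4.2}, and $\rho_\p<p$ is the hypothesis in force throughout \S\ref{sec4.5}, so the required strict inequality holds; since $S$ is a product of primes in $\mathcal S_p$, Theorem \ref{div-thm} yields ${\bf D}_{\kappa'}(y_{S,\p})\equiv0\pmod p$. Combining this with the displayed congruence gives $(\sigma_{\ell_i}-1){\bf D}_\kappa^{(p)}(y_{S,\p})=0$ for every $i$, hence ${\bf D}_\kappa^{(p)}(y_{S,\p})$ is $G_S$-fixed.

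I do not anticipate any real obstacle: the argument is a direct combination of the elementary identity recalled in \S\ref{3.3.4} with the already-established divisibility result Theorem \ref{div-thm}. The only point deserving a moment of care is the bookkeeping that lowering the order by $1$ keeps us strictly below $\min\{r_{\p,1},p\}$ — this is precisely why the assumption $\rho_\p<p$ (rather than $\rho_\p\leq p$) is imposed in this subsection, and why Lemma \ref{lem4.2} is invoked.
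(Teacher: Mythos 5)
Your proof is correct and follows the paper's intended argument: the paper's one-line proof invokes Theorem \ref{main-thm} together with formula \eqref{sigma-ell-D-eq}, but Theorem \ref{main-thm} is itself deduced directly from Theorem \ref{div-thm}, so your direct appeal to Theorem \ref{div-thm} (with $m=1$, checking $\rho_\p-1<\min\{r_{\p,1},p\}$ via Proposition \ref{lem4.2} and the standing hypothesis $\rho_\p<p$) is the same mechanism made explicit. Your separate treatment of the case $k_i=0$, where \eqref{sigma-ell-D-eq} does not apply but $(\sigma_{\ell_i}-1){\bf D}_{\ell_i}^0=0$ identically since ${\bf D}_{\ell_i}^0$ is the norm element of $\Z[G_{\ell_i}]$, is a point the paper leaves implicit and is handled correctly.
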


\begin{proof} Combine Theorem \ref{main-thm} and formula \eqref{sigma-ell-D-eq}, for which the condition $\rho_\p<p$ is needed. \end{proof}

Recall that ${\bf N}_S\in\Z[\Gamma_S]$ is a lift of the norm operator in $\Z[\Gamma_1]$.
 
\begin{lemma} \label{lemma5.24} 
${\bf D}_\kappa^{(p)}\bigl({\bf N}_S(y_{S,\p})\bigr)\in\bigl(\Lambda_\p(K_S)/p\Lambda_\p(K_S)\bigr)^{\!\Gamma_S}$.
\end{lemma}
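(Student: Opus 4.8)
The plan is to reduce the $\Gamma_S$-invariance statement to the already-established $G_S$-invariance of $\mathbf{D}_\kappa^{(p)}(y_{S,\p})$ from Lemma \ref{lemma5.24a}, together with the $\Gamma_1$-invariance that is automatically achieved by applying the norm lift $\mathbf N_S$. Recall that $\Gamma_S = \Gal(K_S/K)$ sits in an exact sequence $1 \to G_S \to \Gamma_S \to \Gamma_1 \to 1$ where $G_S = \Gal(K_S/K_1)$ and $\Gamma_1 = \Gal(K_1/K)$; since $p \nmid h_K = |\Gamma_1|$, the group $\Gamma_S$ is, up to prime-to-$p$ index, generated by $G_S$ together with a lift of $\Gamma_1$. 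So it suffices to check that $\mathbf{D}_\kappa^{(p)}\bigl(\mathbf N_S(y_{S,\p})\bigr)$ is fixed by $G_S$ and by (any lift of) $\Gamma_1$.

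First I would address the $G_S$-part. Since $\mathbf N_S \in \Z[\Gamma_S]$ is a lift of $\mathbf N = \sum_{\sigma \in \Gamma_1}\sigma$ and $\Z[\Gamma_S]$ contains the commuting subalgebra $\Z[G_S]$, and since ${\bf D}_\kappa \in \Z[G_S]$, the operator $\mathbf{D}_\kappa \mathbf N_S$ acts on $y_{S,\p}$; one has to be slightly careful because $\mathbf N_S$ and the $\sigma_\ell$ generating $G_S$ need not commute in $\Z[\Gamma_S]$. The cleanest route is to argue modulo $p$ directly: for $\sigma = \sigma_{\ell_i}$ a generator of some $G_{\ell_i}$, formula \eqref{sigma-ell-D-eq} gives $(\sigma-1)\mathbf D_\kappa \equiv -\sigma \mathbf D_{\kappa'} \pmod{p^m}$ for a derivative $\mathbf D_{\kappa'}$ of order $\rho_\p - 1 < \rho_\p$, hence (by Theorem \ref{main-thm}, exactly as in the proof of Lemma \ref{lemma5.24a}) $(\sigma - 1)\mathbf D_\kappa(y_{S,\p}) \equiv 0 \pmod p$. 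Applying $\mathbf N_S$ — which is a fixed integral operator and so commutes with reduction mod $p$ — and using that $\mathbf N_S$ normalizes $G_S$ inside $\Gamma_S$ up to elements that again lower the derivative order, one deduces $(\sigma-1)\mathbf D_\kappa^{(p)}\bigl(\mathbf N_S(y_{S,\p})\bigr) = 0$. This gives $G_S$-invariance.

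For the $\Gamma_1$-part: let $\tilde\tau \in \Gamma_S$ be any lift of $\tau \in \Gamma_1$. Then $\tilde\tau \mathbf N_S$ and $\mathbf N_S$ are both lifts to $\Z[\Gamma_S]$ of $\tau \mathbf N = \mathbf N$ (since $\mathbf N$ is the full norm over $\Gamma_1$, it is $\Gamma_1$-invariant), so they differ by an element of the augmentation-like ideal coming from $G_S$; more precisely $\tilde\tau \mathbf N_S - \mathbf N_S$ lies in the $\Z$-span of differences $\sigma' - \sigma''$ with $\sigma', \sigma''$ restricting to the same element of $\Gamma_1$, i.e. in $I_{G_S}\cdot\Z[\Gamma_S]$ (using $p \nmid h_K$ is not even needed here, only that $\mathbf N$ is $\Gamma_1$-fixed). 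Applying $\mathbf D_\kappa$ and reducing mod $p$, the contribution of $\tilde\tau \mathbf N_S - \mathbf N_S$ is killed by the same argument as above — each $(\sigma'-\sigma'')$ acting after $\mathbf D_\kappa$, combined with \eqref{sigma-ell-D-eq} and Theorem \ref{main-thm}, gives something $\equiv 0 \pmod p$. Hence $\tilde\tau\bigl(\mathbf D_\kappa^{(p)}(\mathbf N_S(y_{S,\p}))\bigr) = \mathbf D_\kappa^{(p)}(\mathbf N_S(y_{S,\p}))$, and since $\tilde\tau$ was an arbitrary lift and $G_S$-invariance is already known, the class is fixed by all of $\Gamma_S$.

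The main obstacle I anticipate is the bookkeeping around non-commutativity in $\Z[\Gamma_S]$: one must track carefully that conjugating $\mathbf D_\kappa$ by elements of $\Gamma_S$ (or commuting it past $\mathbf N_S$) produces only derivatives of strictly smaller order, so that Theorem \ref{main-thm} applies and everything vanishes mod $p$. This is the analogue in \cite{Dar} of the corresponding lemma and should go through routinely, but it is the step where an incorrect commutation could break the argument. Everything else is formal manipulation of group-ring elements plus the congruence \eqref{sigma-ell-D-eq}, for which the hypothesis $\rho_\p < p$ is exactly what guarantees the binomial coefficients $\binom{\ell+1}{k}$ are divisible by $p^m$ in the relevant range.
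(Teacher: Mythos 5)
Your proof arrives at the right conclusion, but it rests on a misconception that makes it considerably more complicated than it needs to be. You repeatedly worry about non-commutativity in $\Z[\Gamma_S]$ --- you say ``$\mathbf N_S$ and the $\sigma_\ell$ generating $G_S$ need not commute in $\Z[\Gamma_S]$'' and flag ``bookkeeping around non-commutativity in $\Z[\Gamma_S]$'' as the main obstacle. But $\Gamma_S = \Gal(K_S/K)$ is \emph{abelian}: $K_S$ is a ring class field, hence an abelian extension of $K$. (You are likely conflating $\Gamma_S$ with $\Gal(K_S/\Q)$, which is indeed generalized dihedral and non-abelian.) The paper itself uses the commutativity of $\Z[\Gamma_T]$ explicitly elsewhere, e.g.\ in the proof of Proposition \ref{eigen-d-prop}.

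Once you grant that $\Z[\Gamma_S]$ is commutative, the proof is genuinely immediate from Lemma \ref{lemma5.24a}, which is what the paper asserts. Since $\mathbf D_\kappa, \mathbf N_S \in \Z[\Gamma_S]$ commute, one has $\mathbf D_\kappa^{(p)}(\mathbf N_S(y_{S,\p})) = \mathbf N_S\bigl(\mathbf D_\kappa^{(p)}(y_{S,\p})\bigr)$, and the class $x := \mathbf D_\kappa^{(p)}(y_{S,\p})$ lies in $M^{G_S}$ by Lemma \ref{lemma5.24a}, where $M = \Lambda_\p(K_S)/p\Lambda_\p(K_S)$. It is then purely formal group theory that $\mathbf N_S(x) \in M^{\Gamma_S}$: for $\tau \in \Gamma_S$, the set $\{\tau\sigma' : \sigma \in \Gamma_1\}$ is again a full set of lifts to $\Gamma_S$ of the elements of $\Gamma_1$, and since $x$ is $G_S$-invariant the value $\sigma'(x)$ depends only on the image of $\sigma'$ in $\Gamma_1$, so $\tau \mathbf N_S(x) = \mathbf N_S(x)$. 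No further invocation of \eqref{sigma-ell-D-eq} or Theorem \ref{main-thm} is needed beyond what already went into Lemma \ref{lemma5.24a}. Your argument is not wrong --- the extra care you take is vacuously satisfied --- but the non-commutativity concern is a red herring, and resolving it would have shortened the proof to one line, matching the paper.
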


\begin{proof} Immediate from Lemma \ref{lemma5.24a}. \end{proof}

By \eqref{eq13}, there is an injection $\Lambda_\p(K)/p\Lambda_\p(K)\hookrightarrow H^1_f(K,W_\p[p])$. Define the $\p$-part $\Sha_\p(f/K)$ of the Shafarevich--Tate group of $f$ over $K$ as the cokernel of this map, so that there is a short exact sequence 
\[ 0\longrightarrow \Lambda_\p(K)/p\Lambda_\p(K)\longrightarrow H^1_f(K,W_\p[p])\longrightarrow\Sha_\p(f/K)\longrightarrow0. \] 
See \cite[p. 118]{Nek} for a slightly different definition of $\Sha_\p(f/K)$. 

\begin{proposition} \label{prop5.24} 
Suppose that $|\rho_\p^+-\rho_\p^-|=1$ and let $\bf D_\kappa$ have order $\rho_\p$ and support $S$. If ${\bf D}_\kappa(y_{S,\p})\not\equiv0\pmod p$ then
\begin{enumerate}
\item $\Sha_\p(f/K)=0$;
\item with $A(S)$ defined as in \eqref{A(S')-eq}, the natural map
\[ H^1_f(K,W_\p[p])\longrightarrow A(S) \] 
is surjective. 
\end{enumerate}
\end{proposition}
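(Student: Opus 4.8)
\textbf{Proof plan for Proposition \ref{prop5.24}.}

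The strategy is a ``rank-drop'' argument in the spirit of Kolyvagin, parallel to \cite[Proposition 5.2]{Dar}. Suppose that ${\bf D}_\kappa(y_{S,\p})\not\equiv0\pmod p$. First I would reduce everything to statements about the eigenspaces $H^1_f(K,W_\p[p])^{\pm}$ under complex conjugation and about $r_{\p,1}$ of the relative Selmer groups $H^1_{f,S'}(K,W_\p[p])$. The contrapositive of Theorem \ref{div-thm}, or more precisely of Corollary \ref{coro5.22}, applied with $m=1$ to our non-vanishing derivative of order $\rho_\p$, forces the two inequalities $\rho_\p\geq r_{\p,1}\bigl(H^1_f(K,W_\p[p])^{\epsilon_\kappa}\bigr)$ and $\rho_\p+1\geq r_{\p,1}\bigl(H^1_f(K,W_\p[p])^{-\epsilon_\kappa}\bigr)$ (here $\ord({\bf D}_\kappa)=\rho_\p<p$, so the extra hypothesis in part (2) of Corollary \ref{coro5.22} is met). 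On the other hand, Proposition \ref{lem4.2} (with $m=1$) gives $\rho_\p\leq r_{\p,1}$, where $r_{\p,1}$ is built from the $\tilde r^{\pm}_{\p,1}$ exactly as $\rho_\p$ is built from the $\rho_\p^{\pm}$. Combining these with the hypothesis $|\rho_\p^+-\rho_\p^-|=1$ (so that $\rho_\p=\max\{\rho_\p^+,\rho_\p^-\}-1$ and, say after relabelling, $\rho_\p = \min\{\rho_\p^+,\rho_\p^-\}$ on one eigenspace) should pin down the $\mathbb F_\p$-ranks on the nose: one eigenspace of $H^1_f(K,W_\p[p])$ has rank exactly $\rho_\p$ and the other exactly $\rho_\p+1$, and moreover the injection $\Lambda_\p(K)/p\Lambda_\p(K)\hookrightarrow H^1_f(K,W_\p[p])$ of \eqref{eq13} must be surjective on each eigenspace by a dimension count against $\rho_\p^{\pm}=\dim_{F_\p}\BBK_\p(K)^{\pm}$ (using that $\BBK_\p(K)^{\pm}$ and $\Lambda_\p(K)^{\pm}/p\Lambda_\p(K)^{\pm}$ have matching ranks via the sandwich $\tilde\rho_\p\leq r_{\p,m}(\Lambda_\p(K)/p^m\Lambda_\p(K))\leq\tilde r_{\p,m}$ of \eqref{ineq-1-eq}, \eqref{eq-2}). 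Surjectivity of that map on both eigenspaces is precisely the vanishing of $\Sha_\p(f/K)$, giving part (1).

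For part (2), I would run the \v{C}ebotarev machinery of Proposition \ref{prop-Cheb} to propagate the non-triviality of ${\bf D}_\kappa(y_{S,\p})$ down to each local factor, and then use the rank equalities from part (1) together with Lemma \ref{aux-lemma} to show the localization map cannot lose rank. Concretely: Lemma \ref{aux-lemma} bounds $r_{\p,1}\bigl(H^1_{f,S'}(K,W_\p[p])^{\pm}\bigr)$ in terms of $r_{\p,1}$ of the full (or more-relative) Selmer group and the local ranks $r_\p(A(S'\ell)^{\pm})$; iterating over the prime factors of $S$ and using that, by the argument in Proposition \ref{T5.20}, every derivative of order $\le\rho_\p$ supported properly in $S$ kills $y$ modulo $p$, one gets that the non-vanishing of ${\bf D}_\kappa(y_{S,\p})$ can only happen if at each stage the local map $H^1_{f,S'}(K,W_\p[p])^{\epsilon_\kappa}\to H^1_f(K_\lambda,W_\p[p])^{\epsilon_\kappa}$ is surjective — this is exactly condition (4) in Proposition \ref{prop-Cheb}, and it is forced because otherwise $r_{\p,1}$ of the relative Selmer group would be too large, contradicting the sharp bound $r_{\p,1}\bigl(H^1_f(K,W_\p[p])^{\epsilon_\kappa}\bigr)=\rho_\p$ obtained above. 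Summing the surjectivities over all $\lambda\mid S$ gives surjectivity of $H^1_f(K,W_\p[p])\to A(S)$, which is part (2). One also needs the parallel surjectivity in the $-\epsilon_\kappa$-eigenspace, which comes from Corollary \ref{coro5.21} together with the rank count $r_{\p,1}\bigl(H^1_f(K,W_\p[p])^{-\epsilon_\kappa}\bigr)=\rho_\p+1$.

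The main obstacle, I expect, is bookkeeping the eigenspace ranks so that everything is an \emph{equality} rather than a one-sided inequality — in particular being careful about which of $\epsilon_\kappa,-\epsilon_\kappa$ carries rank $\rho_\p$ versus $\rho_\p+1$, and verifying that the hypothesis $|\rho_\p^+-\rho_\p^-|=1$ really does collapse the chain $\rho_\p\le r_{\p,1}\le\tilde r_{\p,1}$ of inequalities into exact values on each eigenspace. A secondary technical point is justifying that the surjectivity of $\Lambda_\p(K)/p\Lambda_\p(K)\to H^1_f(K,W_\p[p])$ on eigenspaces is equivalent to $\Sha_\p(f/K)=0$; since $\Sha_\p(f/K)$ is by definition the cokernel and $p$ is odd, this follows by splitting the defining short exact sequence into $\pm$-parts, but it should be stated explicitly. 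Once the ranks are nailed down, parts (1) and (2) both drop out formally from Proposition \ref{prop-Cheb}, Corollary \ref{coro5.21}, Corollary \ref{coro5.22}, and Lemma \ref{aux-lemma}.
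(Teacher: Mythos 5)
Your plan for part (1) is essentially the paper's: read Corollary \ref{coro5.22} contrapositively to get $\rho_\p\geq r_{\p,1}\bigl(H^1_f(K,W_\p[p])^{\epsilon_\kappa}\bigr)$ and $\rho_\p+1\geq r_{\p,1}\bigl(H^1_f(K,W_\p[p])^{-\epsilon_\kappa}\bigr)$, combine with the chain $r_{\p,1}\bigl(H^1_f(K,W_\p[p])^{\pm}\bigr)\geq r_\p\bigl((\Lambda_\p(K)/p\Lambda_\p(K))^{\pm}\bigr)\geq\rho_\p^{\pm}$ and the identity $\rho_\p^++\rho_\p^-=2\rho_\p+1$ forced by $|\rho_\p^+-\rho_\p^-|=1$, and observe that everything collapses to equalities. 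Since \eqref{eq13} is injective and the $\F_\p$-dimensions now agree, the map is surjective and $\Sha_\p(f/K)=0$. This is correct and matches the paper, except that the paper works directly with the sharper inequalities from Proposition \ref{T5.20} and Corollary \ref{coro5.21} (involving $H^1_{f,S}$ and $A(S)$), which is what one needs for part (2).

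For part (2), however, your proposal goes off the rails. You invoke the \v{C}ebotarev machinery of Proposition \ref{prop-Cheb}, Lemma \ref{aux-lemma}, and an ``iteration over the prime factors of $S$,'' concluding by ``summing the surjectivities over all $\lambda\mid S$.'' None of this is the right mechanism. Proposition \ref{prop-Cheb}, condition (4), and Lemma \ref{aux-lemma} all concern a \emph{new auxiliary prime} $\ell\in\tilde{\mathcal S}_{p^m}$ not dividing $S$, chosen by \v{C}ebotarev; they say nothing about surjectivity at the primes already dividing $S$. Moreover, ``summing surjectivities'' to the individual $H^1_f(K_\lambda,W_\p[p])$'s does not in general give surjectivity onto their direct sum $A(S)$, so even if those per-prime surjectivities were available the final step would not follow. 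The correct argument is purely a rank count: the contrapositive of the \emph{finer} Proposition \ref{T5.20} gives $\rho_\p\geq r_{\p,1}\bigl(H^1_{f,S}(K,W_\p[p])^{\epsilon_\kappa}\bigr)+r_\p\bigl(A(S)^{\epsilon_\kappa}\bigr)$, while the defining exact sequence $0\to H^1_{f,S}(K,W_\p[p])\to H^1_f(K,W_\p[p])\to A(S)$ restricted to the $\epsilon_\kappa$-eigenspace gives, for $m=1$, that the right-hand side dominates $r_{\p,1}\bigl(H^1_f(K,W_\p[p])^{\epsilon_\kappa}\bigr)$, which equals $\rho_\p$ by your part (1). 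So the chain is squeezed to equalities; equality in the exact-sequence bound (for $\F_\p$-vector spaces, rank-nullity) is precisely surjectivity of $H^1_f(K,W_\p[p])^{\epsilon_\kappa}\to A(S)^{\epsilon_\kappa}$, and the parallel argument with Corollary \ref{coro5.21} handles the $-\epsilon_\kappa$-eigenspace. You should replace the \v{C}ebotarev/iteration discussion with this direct exact-sequence argument.
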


\begin{proof} We first observe that, by definition, one has 
\begin{equation} \label{eq41}
2\rho_\p=\dim_{F_\p}\bigl(X_\p(K)\bigr)-1.
\end{equation} 
Proposition \ref{T5.20} shows that 
\begin{equation} \label{eq41-1}
\rho_\p\geq r_{\p}\bigl(H^1_{f,S}(K,W_\p[p^m])^{\epsilon_\kappa}\bigr)+r_\p\bigl(A(S)^{\epsilon_\kappa}\bigr)\geq r_\p\bigl(H^1_f(K,W_\p[p^m])^{\epsilon_\kappa}\bigr),
\end{equation} 
while Corollary \ref{coro5.21} implies that 
\begin{equation} \label{eq41-2}
\rho_\p\geq r_\p\bigl(H^1_{f,S}(K,W_\p[p^m])^{-\epsilon_\kappa}\bigr)+r_\p\bigl(A(S)^{-\epsilon_\kappa}\bigr)-1\geq r_\p\bigl(H^1_f(K,W_\p[p^m])^{-\epsilon_\kappa}\bigr)-1
\end{equation}
(for the last inequalities in the above chains, see the proof of Corollary \ref{coro5.22}). Since 
\begin{equation}\label{eq42}
r_\p\bigl(H^1_f(K,W_\p[p^m])^\pm\bigr)\geq r_\p\Big(\bigl(\Lambda_\p(K)/p\Lambda_\p(K)\bigr)^\pm\Big)\geq\rho_\p^\pm,
\end{equation}
we obtain the inequalities $\rho_\p\geq\rho_\p^{\epsilon_\kappa}$ and $\rho_\p\geq\rho_\p^{-\epsilon_\kappa}-1$. Therefore we have the inequality 
\begin{equation} \label{eq43}
2\rho_\p\geq\dim_{F_\p}\bigl(X_\p(K)\bigr)-1.
\end{equation} 
Comparing \eqref{eq41} and \eqref{eq43}, we conclude that all the above inequalities are, in fact, equalities; in particular, the first inequality in \eqref{eq42} is an equality, from which (1) follows immediately by definition of $\Sha_\p(f/K)$. Furthermore, the second inequalities in \eqref{eq41-1} and \eqref{eq41-2} are equalities, and then 
\[ r_\p\bigl(H^1_f(K,W_\p[p^m])\bigr)=r_\p\bigl(H^1_{f,S}(K,W_\p[p^m])\bigr)+r_\p\bigl(A(S)\bigr). \] 
Comparing this equality with the definition of $H^1_{f,S}(K,W_\p[p^m])$ in \eqref{S'-Selmer} proves (2). \end{proof}

\begin{proposition} \label{prop5.26}
If $|\rho_\p^+-\rho_\p^-|=1$ and $\ord(\kappa)={\rho_\p}$ then ${\bf D}_\kappa^{(p)}\bigl({\bf N}_S(y_{S,\p})\bigr)$ lies in the image of $\Lambda_\p(K)/p\Lambda_\p(K)$.
\end{proposition}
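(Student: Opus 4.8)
The plan is to compare the class ${\bf D}_\kappa^{(p)}\bigl({\bf N}_S(y_{S,\p})\bigr)$ with a class coming from $\Lambda_\p(K)$ by exploiting the three-step filtration of fields $K \subset K_1 \subset K_S$ together with the cohomological vanishing already established. By Lemma \ref{lemma5.24} we know that ${\bf D}_\kappa^{(p)}\bigl({\bf N}_S(y_{S,\p})\bigr)$ lies in $\bigl(\Lambda_\p(K_S)/p\Lambda_\p(K_S)\bigr)^{\Gamma_S}$, and via \eqref{eq13} we may regard it inside $H^1(K_S,W_\p[p])^{\Gamma_S}$. The first step is to descend along $K_S/K_1$: since $K_S/\Q$ is generalized dihedral, hence solvable, part (2) of Lemma \ref{lem-sol} gives $W_\p[p](K_S) = 0$, so inflation--restriction yields an isomorphism $\res_{K_S/K_1}\colon H^1(K_1,W_\p[p]) \overset{\simeq}{\to} H^1(K_S,W_\p[p])^{G_S}$, and since $G_S$ acts trivially on our class and $\Gamma_S/G_S \simeq \Gamma_1$, the $\Gamma_S$-invariance forces the preimage to land in $H^1(K_1,W_\p[p])^{\Gamma_1}$. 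The second step descends along $K_1/K$: because $p \nmid h_K = |\Gamma_1|$, inflation--restriction again gives $\res_{K_1/K}\colon H^1(K,W_\p[p]) \overset{\simeq}{\to} H^1(K_1,W_\p[p])^{\Gamma_1}$. Composing, we obtain a unique class in $H^1(K,W_\p[p])$ whose restriction to $K_S$ is ${\bf D}_\kappa^{(p)}\bigl({\bf N}_S(y_{S,\p})\bigr)$.

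The remaining point is to check that this descended class actually lies in the image of the \emph{geometric} subspace $\Lambda_\p(K)/p\Lambda_\p(K) \hookrightarrow H^1_f(K,W_\p[p])$ of \eqref{eq13}, not merely in $H^1(K,W_\p[p])$. Here I would argue as follows. First, ${\bf D}_\kappa^{(p)}\bigl({\bf N}_S(y_{S,\p})\bigr)$ lies in $\Lambda_\p(K_S)/p\Lambda_\p(K_S)$ by construction (it is a $\Z[\Gamma_S]$-combination of translates of the Heegner cycle $y_{S,\p}$, which lies in $\Lambda_\p(K_S)$, and the Abel--Jacobi map is Galois-equivariant), so it is automatically in $H^1_f(K_S,W_\p[p])$. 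The local conditions defining $H^1_f$ are preserved under restriction and, using the vanishing of $W_\p[p](K_S)$ and the fact that the relevant restriction maps on local cohomology have controlled (trivial, after the unramified/crystalline identifications) kernels, the descended class in $H^1(K,W_\p[p])$ again satisfies all local Selmer conditions, hence lies in $H^1_f(K,W_\p[p])$. Finally, to pin it down inside the geometric part, invoke Proposition \ref{prop5.24}: if ${\bf D}_\kappa(y_{S,\p}) \equiv 0 \pmod p$ there is nothing to prove (the class is zero), while if ${\bf D}_\kappa(y_{S,\p}) \not\equiv 0 \pmod p$ then part (1) of that proposition gives $\Sha_\p(f/K) = 0$, i.e.\ the inclusion $\Lambda_\p(K)/p\Lambda_\p(K) \hookrightarrow H^1_f(K,W_\p[p])$ is an \emph{isomorphism}, so every class of $H^1_f(K,W_\p[p])$ — in particular our descended class — lies in the image of $\Lambda_\p(K)/p\Lambda_\p(K)$.

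I expect the main obstacle to be the verification that the class descended from $K_S$ down to $K$ genuinely satisfies the Bloch--Kato local conditions at \emph{all} places of $K$, including those dividing $p$ and those dividing $S$; the subtlety is that the isomorphisms $\res_{K_S/K_1}$ and $\res_{K_1/K}$ are statements about full cohomology groups, and one must argue separately (using the good-reduction / crystalline behaviour at $p$ as in the proof of Proposition \ref{finite-1-prop}, and the unramified behaviour elsewhere, together with the solvability of $K_S/\Q$ and Lemma \ref{lem-sol}) that the Selmer conditions transfer correctly under restriction and its inverse. The splitting into the two cases according to whether ${\bf D}_\kappa(y_{S,\p})$ vanishes mod $p$ is what lets us sidestep a direct geometric construction of a cycle over $K$ realizing the descended class: in the nontrivial case the vanishing of $\Sha_\p(f/K)$ from Proposition \ref{prop5.24}(1) makes the geometric and Selmer subspaces coincide, and in the trivial case the statement is vacuous.
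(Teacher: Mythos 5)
The case split at the end is sound, and the descent from $K_S$ to $K$ via inflation--restriction (whether or not you stop off at $K_1$) is the same as the paper's. But there is a genuine gap in the middle, precisely at the point you yourself flag as ``the main obstacle'': showing that the descended class $d\in H^1(K,W_\p[p])$ actually satisfies the Bloch--Kato local conditions at the places $v\mid S$. You assert that the local conditions ``are preserved under restriction'' with ``controlled (trivial, after the unramified/crystalline identifications) kernels'', but this is false at $v\mid S$. For such a $v$, the extension $K_S/K$ is \emph{ramified} at $v$ (the $\ell_i$-component of $K_S/K$ is totally ramified at the prime above $\ell_i$), so the inertia group of $K_v$ does not inject into the inertia group of $K_{S,v'}$. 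Consequently a class in $H^1(K_v,W_\p[p])$ can have nontrivial image in $H^1_{sin}(K_v,W_\p[p])$ while its restriction to $K_{S,v'}$ becomes unramified; knowing that ${\bf D}_\kappa^{(p)}({\bf N}_S y_{S,\p})$ is finite over $K_S$ does \emph{not} yield that $d$ is finite at $v$. This is exactly where the naive descent of Selmer conditions breaks down.

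The paper handles this step not by a local restriction argument but by global Tate duality combined with part (2) of Proposition \ref{prop5.24}, which you did not use. Concretely, since $d$ is a global class and is finite at every $v\nmid S$ (by the same good-reduction/unramified arguments as in Propositions \ref{prop5.9} and \ref{finite-1-prop}), the global reciprocity relation forces the tuple $(d_v)_{v\mid S}\in\bigoplus_{v\mid S}H^1_{sin}(K_v,W_\p[p])$ to lie in the kernel of the natural pairing map \eqref{eq46} into $H^1_f(K,W_\p[p])^*$. Proposition \ref{prop5.24}(2) (surjectivity of $H^1_f(K,W_\p[p])\to A(S)$, available because you are in the nonvanishing case) dualizes to injectivity of that map, so in fact $(d_v)_{v\mid S}=0$ in the singular quotient, i.e.\ $d$ is finite also at $v\mid S$. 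Only after this is established can one invoke $\Sha_\p(f/K)=0$ from Proposition \ref{prop5.24}(1), as you do, to land in $\Lambda_\p(K)/p\Lambda_\p(K)$. So your outline is right in its scaffolding, but the crucial step requires the Tate-duality argument and the second assertion of Proposition \ref{prop5.24}, neither of which appears in your proposal.
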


\begin{proof} By \eqref{eq13}, there is an injective map
\begin{equation} \label{injection-Lambda-quotient-eq} 
\Lambda_\p(K_S)/p\Lambda_\p(K_S)\;\longmono\;H^1_f(K_S,W_\p[p])\subset H^1(K_S,W_\p[p]). 
\end{equation}
Recall that restriction gives an isomorphism $H^1(K,W_\p[p])\simeq H^1(K_S,W_\p[p])^{\Gamma_S}$ and that ${\bf D}_\kappa^{(p)}({\bf N}_Sy_{S,\p})$ belongs to $(\Lambda_\p(K_S)/p\Lambda_\p(K_S))^{\Gamma_S}$ by Lemma \ref{lemma5.24}. In light of these facts and the $\Gamma_S$-equivariance of the injection \eqref{injection-Lambda-quotient-eq}, write $d$ for the image of ${\bf D}_\kappa^{(p)}({\bf N}_Sy_{S,\p})$ in $H^1(K,W_\p[p])$. 

We first show that $d\in H^1_f(K,W_\p[p])$. By an argument similar to those in Propositions \ref{prop5.9} and \ref{finite-1-prop}, one can check that the restriction of $d$ at all places $v\nmid S$ is finite. There is a map  
\begin{equation} \label{eq46}
\bigoplus_{v|S}H^1_{sin}(K_v,W_\p[p])\longrightarrow {H^1_f(K,W_\p[p])}^*
\end{equation}
taking $x={(x_v)}_{v|S}$ to the linear function 
\[ s\longmapsto\sum_{v\in S}\langle x,\res_v(s)\rangle_v \]
on $H^1_f(K,W_\p[p])$ (recall that all the primes dividing $S$ are inert in $K$). Since $d$ is a global class, Tate duality ensures that the image of $d$ in $\oplus_{v|S}H^1_{sin}(K_v,W_\p[p])$ 
belongs to the kernel of \eqref{eq46}. With $A(S)$ as in \eqref{A(S')-eq}, part (2) of Proposition \ref{prop5.24} shows that the map 
\[ H^1_f(K,W_\p[p])\longrightarrow A(S) \] 
is surjective and hence, dually, that the map in \eqref{eq46} is injective (here we are implicitly using isomorphism \eqref{isom-pontryagin-eq}). It follows that $d$ is locally finite everywhere and belongs to $H^1_f(K,W_\p[p])$. 

Since $\Sha_\p(f/K)=0$ by part (1) of Proposition \ref{prop5.24}, we conclude that $d$ comes from a class in $\Lambda_\p(K)/\Lambda_\p(K)$. But there is a commutative diagram  
\[ \xymatrix{\Lambda_\p(K)/p\Lambda_\p(K)\ar[d]\ar[r]^-\simeq&H^1_f(K,W_\p[p])\ar[d]\ar@{^(->}[r]&H^1(K,W_\p[p])\ar[d]^-\simeq\\
\bigl(\Lambda_\p(K_S)/p\Lambda_\p(K_S)\bigr)^{\Gamma_S}\ar@{^(->}[r]&H^1_f(K_S,W_\p[p]{)}^{\Gamma_S}\ar@{^(->}[r]&H^1(K_S,W_\p[p]{)}^{\Gamma_S}} \]
in which all the horizontal arrows are injective, and the proposition follows. \end{proof}

The information collected above on ${\bf D}_\kappa^{(p)}\bigl({\bf N}_S(y_{S,\p})\bigr)$ when $\ord(\kappa)={\rho_\p}$ yields a result on the reduction modulo $p$ of the leading term $\tilde\zeta_{S,\p}$ of $\zeta_{S,\p}$. More precisely, define $\tilde\zeta_{S,\p}$ as the image of $\zeta_{S,\p}$ in $\Lambda_\p(K_S)\otimes_{\cO_\p}\!\bigl(I_{\Gamma_S}^{\rho_\p}/I_{\Gamma_S}^{\rho_\p+1}\bigr)$ and consider its mod $p$ reduction
\[ \tilde\zeta_{S,\p}^{(p)}\in\bigl(\Lambda_\p(K_S)/p\Lambda_\p(K_S)\bigr)\otimes_{\cO_\p}\!\bigl(I_{\Gamma_S}^{\rho_\p}/I_{\Gamma_S}^{\rho_\p+1}\bigr). \]
Finally, 
let $J(S)$ denote the cokernel of the map $H^1_f(K,W_\p[p])\rightarrow A(S)$; see \eqref{A(S')-eq} with $S'=S$ and $m=1$ for the definition of $A(S)$. 

\begin{theorem} \label{theorem5.27} 
Fix a square-free product $S$ of primes in $\mathcal S_p$.
\begin{enumerate}
\item $\tilde{\zeta}_{S,\p}^{(p)}\in\bigl(\Lambda_\p(K_S)/p\Lambda_\p(K_S)\bigr)^{\Gamma_S}\otimes_{\cO_\p}\!\bigl(I^{\rho_\p}_{G_S}/I^{\rho_\p+1}_{S,\p}\bigr)$.
\item If $|\rho_\p^+-\rho_\p^-|=1$ then $\tilde\zeta_{S,\p}^{(p)}$ belongs to the image of the map 
\[ \bigl(\Lambda_\p(K)/p\Lambda_\p(K)\bigr)\otimes_{\mathcal O_\p}\!\bigl(I_{G_S}^{\rho_\p}/I_{G_S}^{{\rho_\p}+1}\bigr)\longrightarrow\bigl(\Lambda_\p(K_S)/p\Lambda_\p(K_S)\bigr)^{\Gamma_S}\otimes_{\cO_\p}\!\bigl(I_{G_S}^{\rho_\p}/I_{G_S}^{{\rho_\p}+1}\bigr). \]
\item Assume that $\Sha_\p(f/K)$ is finite. If $|\rho_\p^+-\rho_\p^-|=1$ and $p$ divides $|\Sha_\p(f/K)|\cdot|J(S)|$ then $\tilde\zeta_{S,\p}^{(p)}=0$.  
\end{enumerate}
\end{theorem}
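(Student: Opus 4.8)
The strategy is to reduce all three assertions to an explicit formula for the leading term $\tilde\zeta_{S,\p}^{(p)}$ coming from Taylor's formula of \S\ref{3.3.1}, in exact parallel with \eqref{eq-theta-45}. By \eqref{zeta-eq} the element $\zeta_{S,\p}$ is the resolvent element attached to ${\bf N}_S(y_{S,\p})$ inside the $\cO_\p[G_S]$-module $\Lambda_\p(K_S)$, so Taylor's formula yields
\[ \zeta_{S,\p}=\sum_\kappa{\bf D}_\kappa\bigl({\bf N}_S(y_{S,\p})\bigr)\otimes(\sigma_1-1)^{k_1}\cdots(\sigma_t-1)^{k_t}; \]
moreover, since the ring $\cO_\p[\Gamma_S]$ is commutative, ${\bf N}_S$ commutes with each ${\bf D}_\kappa$, so ${\bf D}_\kappa({\bf N}_S(y_{S,\p}))={\bf N}_S({\bf D}_\kappa(y_{S,\p}))$. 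By Corollary \ref{coro-zeta} the element $\zeta_{S,\p}$ vanishes to order at least $\rho_\p$, hence its leading term $\tilde\zeta_{S,\p}$ in $\Lambda_\p(K_S)\otimes_{\cO_\p}\!\bigl(I_{G_S}^{\rho_\p}/I_{G_S}^{\rho_\p+1}\bigr)$ is well defined; the summands with $\ord(\kappa)>\rho_\p$ lie in $I_{G_S}^{\rho_\p+1}$, while those with $\ord(\kappa)<\rho_\p$ have coefficient ${\bf N}_S({\bf D}_\kappa(y_{S,\p}))$, which is $\equiv0\pmod p$ because ${\bf D}_\kappa(y_{S,\p})\equiv0\pmod p$ by Theorem \ref{div-thm} (applied with $m=1$, using $\rho_\p\le r_\p$ from Lemma \ref{lem4.2} together with $\rho_\p<p$). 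Therefore
\[ \tilde\zeta_{S,\p}^{(p)}=\sum_{\ord(\kappa)=\rho_\p}{\bf D}_\kappa^{(p)}\bigl({\bf N}_S(y_{S,\p})\bigr)\otimes(\sigma_1-1)^{k_1}\cdots(\sigma_t-1)^{k_t}, \]
the analogue of \eqref{eq-theta-45} for $\zeta_{S,\p}$. I would establish this identity first.

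Granting the formula, part (1) is immediate: by Lemma \ref{lemma5.24} each coefficient ${\bf D}_\kappa^{(p)}({\bf N}_S(y_{S,\p}))$ with $\ord(\kappa)=\rho_\p$ lies in $\bigl(\Lambda_\p(K_S)/p\Lambda_\p(K_S)\bigr)^{\Gamma_S}$, and hence so does the whole sum. For part (2), assume $|\rho_\p^+-\rho_\p^-|=1$; then Proposition \ref{prop5.26} shows that each such coefficient lies in the image of $\Lambda_\p(K)/p\Lambda_\p(K)$. Choosing preimages $e_\kappa\in\Lambda_\p(K)/p\Lambda_\p(K)$ and forming $\sum_\kappa e_\kappa\otimes(\sigma_1-1)^{k_1}\cdots(\sigma_t-1)^{k_t}$ exhibits $\tilde\zeta_{S,\p}^{(p)}$ in the image of the natural map from $\bigl(\Lambda_\p(K)/p\Lambda_\p(K)\bigr)\otimes_{\cO_\p}\!\bigl(I_{G_S}^{\rho_\p}/I_{G_S}^{\rho_\p+1}\bigr)$, as claimed.

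For part (3), assume $|\rho_\p^+-\rho_\p^-|=1$ and $p\mid|\Sha_\p(f/K)|\cdot|J(S)|$. Since $\Sha_\p(f/K)$ and $J(S)$ are finite $\cO_\p/p\cO_\p$-modules, this means that $\Sha_\p(f/K)\neq0$ or that the localization map $H^1_f(K,W_\p[p])\to A(S)$ is not surjective (here $A(S)$ is as in \eqref{A(S')-eq} with $S'=S$ and $m=1$). I claim that ${\bf D}_\kappa(y_{S,\p})\equiv0\pmod p$ for every $\kappa$ of order $\rho_\p$ and support $S$. Indeed, if ${\bf D}_\kappa(y_{S,\p})\not\equiv0\pmod p$ for one such $\kappa$, then Proposition \ref{prop5.24} would give simultaneously $\Sha_\p(f/K)=0$ and the surjectivity of $H^1_f(K,W_\p[p])\to A(S)$, that is, $J(S)=0$, contradicting $p\mid|\Sha_\p(f/K)|\cdot|J(S)|$. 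Hence ${\bf D}_\kappa^{(p)}(y_{S,\p})=0$ for all $\kappa$ of order $\rho_\p$, so ${\bf D}_\kappa^{(p)}({\bf N}_S(y_{S,\p}))={\bf N}_S\bigl({\bf D}_\kappa^{(p)}(y_{S,\p})\bigr)=0$, and the displayed formula for $\tilde\zeta_{S,\p}^{(p)}$ forces $\tilde\zeta_{S,\p}^{(p)}=0$.

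The main obstacle is really only the first step, the clean derivation of the formula for $\tilde\zeta_{S,\p}^{(p)}$: one must match the resolvent-element description of $\zeta_{S,\p}$ in \eqref{zeta-eq} with Taylor's formula, invoke Theorem \ref{div-thm} to discard the derivatives of order below $\rho_\p$ modulo $p$, and keep careful track of the (in general non-free) $\cO_\p$-module structure of the graded pieces $I_{G_S}^{\rho_\p}/I_{G_S}^{\rho_\p+1}$ when passing to the reduction modulo $p$. Once this is in place, parts (1), (2) and (3) are term-by-term consequences of Lemma \ref{lemma5.24}, Proposition \ref{prop5.26} and Proposition \ref{prop5.24}, respectively.
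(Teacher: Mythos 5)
Your proof is correct and follows the same route as the paper: establish the Taylor-expansion formula for $\tilde\zeta_{S,\p}^{(p)}$ (the $\zeta$-analogue of \eqref{eq-theta-45}, which the paper cites directly since $\zeta_{S,\p}={\bf N}_S(\theta_{S,\p})$ and ${\bf N}_S$ commutes with the derivatives), then deduce (1) from Lemma \ref{lemma5.24}, (2) from Proposition \ref{prop5.26}, and (3) from Proposition \ref{prop5.24} via a contrapositive, exactly as the paper does. The only cosmetic difference is that you spell out the derivation of the leading-term formula explicitly rather than referring to \eqref{eq-theta-45} and pushing it through ${\bf N}_S$.
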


\begin{proof} Part (1) follows from \eqref{eq-theta-45} and Lemma \ref{lemma5.24}, while part (2) follows from \eqref{eq-theta-45} and Proposition \ref{prop5.26}. As for part (3), if $\tilde\zeta_{S,\p}^{(p)}\not=0$ then \emph{a fortiori} ${\bf D}_\kappa^{(p)}({\bf N}_Sy_{S,\p})\not\equiv 0$ for all $\kappa$ with $\ord(\kappa)={\rho_\p}$, and so Proposition \ref{prop5.24} gives the triviality of both $\Sha_\p(f/K)$ and $J(S)$. \end{proof}
 
\begin{corollary} \label{coro4.14} 
Fix a square-free product $S$ of primes in $\mathcal S_p$.  
\begin{enumerate}
\item The image $\tilde{\mathcal L}_{S,\p}^{(p)}$ of $\mathcal L_{S,\p}$ in 
\[ \bigl(\Lambda_\p(K_S)^{\otimes 2}/p\Lambda_\p(K_S)^{\otimes 2}\bigr)\otimes_{\cO_\p}\!\bigl(I_{\Gamma_S}^{2\rho_\p}/I_{\Gamma_S}^{2\rho_\p+1}\bigr) \]
belongs to the image of 
\[ \bigl(\Lambda_\p(K_S)^{\otimes2}/p\Lambda_\p(K_S)^{\otimes2}\bigr)^{\Gamma_S}\otimes_{\cO_\p}\!\bigl(I_{\Gamma_S}^{2\rho_\p}/I_{\Gamma_S}^{2\rho_\p+1}\bigr). \]
\item If $|\rho_\p^+-\rho_\p^-|=1$ then $\tilde{\mathcal L}_{S,\p}^{(p)}$ belongs to the image of
\[ \bigl(\Lambda_\p(K)^{\otimes2}/p\Lambda_\p(K)^{\otimes2}\bigr)\otimes_{\cO_\p}\!\bigl(I_{\Gamma_S}^{2{\rho_\p}}/I_{\Gamma_S}^{2{\rho_\p}+1}\bigr). \]
\item Assume that $\Sha_\p(f/K)$ is finite. If $|\rho_\p^+-\rho_\p^-|=1$ and $p$ divides $|\Sha_\p(f/K)|\cdot|J(S)|$ then $\tilde{\mathcal L}_{S,\p}^{(p)}=0$.  
\end{enumerate}
\end{corollary}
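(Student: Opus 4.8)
The plan is to deduce Corollary \ref{coro4.14} directly from Theorem \ref{theorem5.27}, exploiting the fact that $\mathcal L_{S,\p}$ is built out of the elements $\zeta_{T,\p}$ and $\zeta_{T,\p}^*$ for $T\mid S$ via the tensor product construction in \eqref{def-L}. First I would recall the compatibility relation \eqref{compatibility}: under the projection $\mu_{S,T}$ induced by $\Gamma_S\twoheadrightarrow\Gamma_T$, the element $\mathcal L_{S,\p}$ maps to $\mathcal L_{T,\p}$ times a product of Euler-like factors $(1+\ell\mp a_\ell/\ell^{k/2-1})$ over $\ell\mid(S/T)$. Since every prime $\ell\mid S$ lies in $\mathcal S_p$, we have $p\mid\ell+1$, and hence each such factor lies in the augmentation ideal $I_{\Gamma_S}$; this shows that among all the summands $a_T\zeta_{T,\p}\otimes a_{T'}^*\zeta_{T',\p}^*$ contributing to the leading term, only the term $T=T'=S$ survives modulo higher powers of $I_{\Gamma_S}$ and modulo $p$. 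Concretely, the leading term $\tilde{\mathcal L}_{S,\p}^{(p)}$ equals (up to a unit coming from $a_S=\mu(S)\cdot 1$ and $a_S^*=\chi_K(S)\cdot 1$) the image of $\tilde\zeta_{S,\p}^{(p)}\otimes\tilde\zeta_{S,\p}^{(p),*}$ in the bigraded piece $\bigl(\Lambda_\p(K_S)^{\otimes2}/p\bigr)\otimes_{\cO_\p}\!\bigl(I_{\Gamma_S}^{2\rho_\p}/I_{\Gamma_S}^{2\rho_\p+1}\bigr)$, where the degree $2\rho_\p$ arises because each factor contributes degree $\rho_\p$ and the graded pieces multiply: $I^{\rho_\p}/I^{\rho_\p+1}\otimes I^{\rho_\p}/I^{\rho_\p+1}$ maps to $I^{2\rho_\p}/I^{2\rho_\p+1}$.

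With this reduction in hand, the three parts follow by applying the corresponding parts of Theorem \ref{theorem5.27} to each tensor factor. For part (1), since $\tilde\zeta_{S,\p}^{(p)}$ lies in $\bigl(\Lambda_\p(K_S)/p\Lambda_\p(K_S)\bigr)^{\Gamma_S}\otimes(\cdots)$ by Theorem \ref{theorem5.27}(1), and the main involution $x\mapsto x^*$ is $\Gamma_S$-equivariant (it commutes with the action of $\Gamma_S$ up to the automorphism $\sigma\mapsto\sigma^{-1}$, which preserves the $\Gamma_S$-invariants), the tensor product $\tilde\zeta_{S,\p}^{(p)}\otimes\tilde\zeta_{S,\p}^{(p),*}$ lies in $\bigl(\Lambda_\p(K_S)^{\otimes2}/p\Lambda_\p(K_S)^{\otimes2}\bigr)^{\Gamma_S}\otimes(\cdots)$, which is exactly the claimed image. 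For part (2), the hypothesis $|\rho_\p^+-\rho_\p^-|=1$ lets us invoke Theorem \ref{theorem5.27}(2): $\tilde\zeta_{S,\p}^{(p)}$ (and likewise $\tilde\zeta_{S,\p}^{(p),*}$, since the involution carries the image of $\Lambda_\p(K)/p\Lambda_\p(K)$ to itself) comes from $\bigl(\Lambda_\p(K)/p\Lambda_\p(K)\bigr)\otimes(\cdots)$, so the tensor product descends to the image of $\bigl(\Lambda_\p(K)^{\otimes2}/p\Lambda_\p(K)^{\otimes2}\bigr)\otimes_{\cO_\p}\!\bigl(I_{\Gamma_S}^{2\rho_\p}/I_{\Gamma_S}^{2\rho_\p+1}\bigr)$. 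For part (3), under the additional hypothesis that $\Sha_\p(f/K)$ is finite and $p\mid|\Sha_\p(f/K)|\cdot|J(S)|$, Theorem \ref{theorem5.27}(3) gives $\tilde\zeta_{S,\p}^{(p)}=0$, whence $\tilde{\mathcal L}_{S,\p}^{(p)}=0$ since it factors through this vanishing factor.

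The main obstacle I anticipate is the bookkeeping in the first reduction step: one must verify carefully that the cross terms with $T\neq S$ or $T'\neq S$ genuinely lie in $I_{\Gamma_S}^{2\rho_\p+1}$ and not merely in $I_{\Gamma_S}^{2\rho_\p}$. The point is that by Corollary \ref{coro-zeta} each $\zeta_{T,\p}$ already vanishes to order $\rho_\p$ in $I_{G_S}$ (hence in $I_{\Gamma_S}$), while for $T\subsetneq S$ the coefficient $a_T=\mu(T)\sum_{\sigma\in\Gal(K_S/K_T)}\sigma$ is itself a norm element which, when $T\neq S$, can be rewritten so as to contribute extra augmentation degree after pairing with the Euler factors of \eqref{compatibility}; more directly, the compatibility \eqref{compatibility} shows that the contribution of $\mathcal L_{T,\p}$ to $\mathcal L_{S,\p}$ picks up a factor in $I_{\Gamma_S}$ for every prime in $S/T$, so any $T\subsetneq S$ pushes the order strictly above $2\rho_\p$. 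One should also double-check the $\Gamma_S$-equivariance and compatibility of the involution $*$ with the filtration by powers of the augmentation ideal — this is routine since $*$ sends $I_{\Gamma_S}$ isomorphically to itself — and confirm that the identification $\Lambda_\p(K_S)^{\otimes2}\otimes_{\cO_\p}\cO_\p[\Gamma_S]=\bigl(\Lambda_\p(K_S)\otimes\cO_\p[\Gamma_S]\bigr)\otimes_{\cO_\p[\Gamma_S]}\bigl(\Lambda_\p(K_S)\otimes\cO_\p[\Gamma_S]\bigr)$ used in \eqref{def-L} is compatible with passing to the graded pieces, so that the degree-$2\rho_\p$ part of the tensor product is computed from the degree-$\rho_\p$ parts of the factors. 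Once these points are settled, the three assertions are immediate transcriptions of Theorem \ref{theorem5.27}.
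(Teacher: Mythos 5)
Your approach matches the paper's — deduce the corollary from Theorem \ref{theorem5.27} applied to the constituents $\zeta_{T,\p}$ and $\zeta^*_{T,\p}$ of $\mathcal L_{S,\p}$. The paper's proof is a one-liner; your attempt to spell out the reduction to the single term $T=T'=S$ is a genuinely useful addition, and is in fact \emph{needed} for part (3): Theorem \ref{theorem5.27}(3) applied to a proper divisor $T$ of $S$ would require $p$ to divide $|\Sha_\p(f/K)|\cdot|J(T)|$, which does not follow from the hypothesis $p\,\big|\,|\Sha_\p(f/K)|\cdot|J(S)|$, so one really must show the cross terms die modulo $p$.

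However, the justification you give for this reduction is wrong on two counts. First, the Euler factors $1+\ell\mp a_\ell/\ell^{k/2-1}$ are scalars in $\cO_\p$, so they cannot ``lie in the augmentation ideal $I_{\Gamma_S}$''; moreover they need not even be divisible by $p$, since for $\ell\in\mathcal S_p$ one only knows $p\,|\,\ell+1$, not $p\,|\,a_\ell$, and $1+\ell\mp a_\ell/\ell^{k/2-1}\equiv\mp\,a_\ell/\ell^{k/2-1}\pmod p$ can be a unit. Second, the appeal to \eqref{compatibility} is misplaced: that identity computes $\mu_{S,T}(\mathcal L_{S,\p})$, i.e.\ the image of $\mathcal L_{S,\p}$ under the projection $\Gamma_S\twoheadrightarrow\Gamma_T$; it is not a decomposition of $\mathcal L_{S,\p}$ in terms of the $\mathcal L_{T,\p}$. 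The correct and elementary reason the cross terms vanish is that modulo $I_{\Gamma_S}$ the coefficient $a_T=\mu(T)\sum_{\sigma\in\Gal(K_S/K_T)}\sigma$ reduces to its augmentation $\mu(T)\prod_{\ell\mid(S/T)}(\ell+1)$, which is divisible by $p$ for every proper divisor $T\subsetneq S$ because each $\ell\,|\,S$ lies in $\mathcal S_p$; the same holds for $a^*_{T'}$. Since $\zeta_{T,\p}\otimes\zeta^*_{T',\p}\in\Lambda_\p(K_S)^{\otimes 2}\otimes I_{\Gamma_S}^{2\rho_\p}$ by Corollary \ref{coro-zeta}, only the augmentation of $a_Ta^*_{T'}$ contributes to the class in $I_{\Gamma_S}^{2\rho_\p}/I_{\Gamma_S}^{2\rho_\p+1}$, and that contribution vanishes modulo $p$ unless $T=T'=S$, where $a_Sa^*_S=\chi_K(S)$ is a unit. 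With this correction in place of your two flawed justifications, the rest of your argument — including the handling of the involution $*$ and the compatibility with graded pieces — is sound and completes the proof.
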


\begin{proof}  The term $\mathcal L_{S,\p}$ is an $\cO_\p[\Gamma_S]$-linear combination of the elements $\zeta_{T,\p}$ and $\zeta^*_{T,\p}$ for $T\,|\,S$, and the result is obtained by applying Theorem \ref{theorem5.27} to each of them. \end{proof}

\begin{remark} \label{remark-general-choice-2} 
In parallel with Remark \ref{remark-general-choice}, we observe that the results of Corollary \ref{coro4.14} hold more generally for any $\cO_\p[\Gamma_S]$-linear combination of the elements $\zeta_{T,\p}$ and $\zeta^*_{T,\p}$ for $T\,|\,S$. 
\end{remark}

\subsection{Galois module structure of Heegner cycles} 

Fix a prime number $\ell\in\mathcal S_p$. Define $\mathcal H(K_\ell)$ to be the $\mathcal O_\p[G_\ell]$-module generated by $y_{\ell,\p}$ inside $\BK_\p(K_\ell)$ and denote by $\mathcal H_p(K_\ell)$ the $\mathbb F_\p$-subspace $\mathcal H(K_\ell)/p\mathcal H(K_\ell)$ of $\BK_\p(K_\ell)/p\BK_\p(K_\ell)$. Finally, recall from \S \ref{main-subsec} that $r_\p=r_{\p,1}$. 

\begin{theorem} \label{Heegner-theorem}
$\dim_{\mathbb F_\p}\!\bigl(\mathcal H_p(K_\ell)\bigr)\leq\ell+1-r_\p$. 
\end{theorem}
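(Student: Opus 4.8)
The plan is to exploit the Darmon--Kolyvagin derivative formalism applied to the single prime $\ell$, together with Theorem \ref{div-thm}, to force most of the derivatives ${\bf D}_\ell^k(y_{\ell,\p})$ to vanish modulo $p$. First recall from \S\ref{3.2.5} that the derivatives ${\bf D}_\ell^0, {\bf D}_\ell^1,\dots,{\bf D}_\ell^\ell$ form a $\Z$-basis of $\Z[G_\ell]$, so the module $\mathcal H_p(K_\ell)$ is spanned over $\mathbb F_\p$ by the reductions of the $\ell+1$ elements ${\bf D}_\ell^k(y_{\ell,\p})$ for $k=0,\dots,\ell$. The first step is thus to show that all but at most $\ell+1-r_\p$ of these reductions are zero.

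The key observation is that Theorem \ref{div-thm}, applied with $m=1$ (so $p^m=p$) and with $S=\ell$ in the role of the square-free product, tells us that ${\bf D}_\kappa(y_{\ell,\p})\equiv 0\pmod{p}$ whenever $\ord({\bf D}_\kappa)<\min\{r_{\p,1},p\}=\min\{r_\p,p\}$. Taking $\kappa=(k)$ a one-component multi-index, this gives ${\bf D}_\ell^k(y_{\ell,\p})\equiv 0\pmod p$ for all $k$ with $0\le k<\min\{r_\p,p\}$. So in the worst case $r_\p\le p$, we already kill the derivatives of order $0,1,\dots,r_\p-1$, i.e.\ $r_\p$ of the $\ell+1$ basis elements, leaving at most $\ell+1-r_\p$ nonzero ones; hence $\dim_{\mathbb F_\p}(\mathcal H_p(K_\ell))\le \ell+1-r_\p$. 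The second step is to deal with the possibility $r_\p>p$: here one needs that $\ell\in\mathcal S_p$ forces $p\mid \ell+1$, so $\ell+1\ge p$, and in fact one can iterate the argument with higher $m$ (using $\ell\in\mathcal S_{p^m}$ for the appropriate $m$, if available) or else argue directly that the derivatives of order $\ge p$ contribute at most $\ell+1-p\le \ell+1-r_\p$ dimensions only when $r_\p\le p$; more robustly, one should invoke $\ord({\bf D}_\ell^k)=k$ together with the structure of $\Z[G_\ell]$ to see that ${\bf D}_\ell^k$ for $k\ge p$ can be rewritten modulo $p$ in terms of lower-order derivatives via the relations in \S\ref{3.3.4}, so that only the derivatives of order $<p$ are genuinely needed as generators, and among those the ones of order $<r_\p$ vanish.

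The main obstacle I expect is the bookkeeping when $r_\p\ge p$: Theorem \ref{div-thm} only gives vanishing below $\min\{r_\p,p\}$, and the relation $(\sigma_\ell-1){\bf D}_\ell^k\equiv -\sigma_\ell{\bf D}_\ell^{k-1}\pmod p$ from \eqref{sigma-ell-D-eq} only holds for $0<k<p$, so one cannot naively reduce high-order derivatives. The careful point is to check that $\mathcal H_p(K_\ell)$ is already generated by ${\bf D}_\ell^0(y_{\ell,\p}),\dots,{\bf D}_\ell^{p-1}(y_{\ell,\p})$ together with, say, the norm element, and then to bound the contribution of the surviving generators by $\ell+1-r_\p$ uniformly; this likely requires combining the Taylor expansion of \S\ref{3.3.1} (which expresses $\theta_{y_{\ell,\p}}$ in terms of the ${\bf D}_\ell^k(y_{\ell,\p})$ and powers of $\sigma_\ell-1$) with the filtration by powers of $I_{G_\ell}$, exactly as in the proof of Theorem \ref{main-thm}, to see that $\theta_{y_{\ell,\p}}$ lies deep enough in $\Lambda_\p(K_\ell)\otimes I_{G_\ell}^{r_\p}$ that at most $\ell+1-r_\p$ of the coefficients survive mod $p$.
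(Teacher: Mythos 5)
Your first paragraph is precisely the paper's proof: use \S\ref{3.2.5} to see that $\bigl\{{\bf D}_\ell^k(y_{\ell,\p})\bigr\}_{k=0}^{\ell}$ is an $\cO_\p$-generating set of $\mathcal H(K_\ell)$, apply Theorem \ref{div-thm} with $m=1$, $S=\ell$ to kill the derivatives of order $k<r_\p$, and conclude that at most $\ell+1-r_\p$ generators survive mod $p$. So the core argument is the same as the paper's.

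Your second paragraph correctly puts a finger on a genuine subtlety: Theorem \ref{div-thm} only guarantees vanishing for $\ord({\bf D}_\kappa)<\min\{r_{\p,m},p\}$, so the argument literally as written needs $r_\p\le p$. However, the remedies you propose do not in fact work. Replacing $m=1$ by higher $m$ still only gives the bound $\min\{r_{\p,m},p\}$, so the cap at $p$ does not go away. The identity \eqref{sigma-ell-D-eq} relates $(\sigma_\ell-1){\bf D}_\ell^k$ to ${\bf D}_\ell^{k-1}$; it does not let you rewrite ${\bf D}_\ell^k$ itself as an $\F_\p$-combination of lower-order derivatives, so $\mathcal H_p(K_\ell)$ is not generated by $\{{\bf D}_\ell^0(y_{\ell,\p}),\dots,{\bf D}_\ell^{p-1}(y_{\ell,\p})\}$. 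And appealing to the Taylor expansion of \S\ref{3.3.1} gives information about $\theta_{y_{\ell,\p}}$, not directly about the span of the individual ${\bf D}_\ell^k(y_{\ell,\p})$. In fact the paper's two-line proof quietly assumes $r_\p\le p$ as well (it writes ``$k<r_\p$'' where Theorem \ref{div-thm} only delivers ``$k<\min\{r_\p,p\}$''), consistent with the standing hypothesis $\rho_\p\le p$ used elsewhere (e.g.\ Theorem \ref{main-thm}). So you should either record the hypothesis $r_\p\le p$ explicitly and drop the second paragraph, or simply follow the paper's statement with that implicit restriction; the speculative fixes in the second paragraph should not be presented as a proof.
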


\begin{proof} By \S\ref{3.2.5}, an $\cO_\p$-basis of $\mathcal H(K_\ell)$ is given by $\bigl\{{\bf D}_\ell^i(y_{\ell,\p})\mid i=0,\dots,\ell\bigr\}$. Theorem \ref{div-thm} shows then that ${\bf D}_{\ell}^k(y_{\ell,\p})\equiv0\pmod{p}$ if $k<r_\p$, and hence at most $\ell+1-r_\p$ elements of the $\mathcal O_\p$-basis of $\mathcal H(K_\ell)$ under consideration are non-zero. \end{proof}

\section{Regulators} \label{sec-regulators}

In this final section we propose an axiomatic construction of regulators in our context, generalizing those introduced by Mazur and Tate in \cite{MT0} and \cite{MT} and used in Darmon's work \cite{Dar}. We believe that this picture can be made unconditional by adopting the point of view of Nekov\'a\v{r} (\cite{nek4}). We also observe that a cohomological approach to Mazur--Tate regulators, along the lines of the theory developed by Bertolini and Darmon in \cite{BD1} and \cite{BD2}, is certainly possible, at least in some special situations: we plan to come back to these issues in a future project. 

Let $K$ be an imaginary quadratic field of discriminant coprime to $Np$ and let $S>1$ be a square-free product of primes that are inert in $K$, then define 
\begin{equation} \label{eq52}
\Lambda_{\p,S}(K):=\ker\Bigg(\!\Lambda_\p(K)\longrightarrow\bigoplus_{\lambda\mid S}H^1_f(K_\lambda,A_\p)\!\Bigg)
\end{equation} 
where the map is induced by \eqref{lambda-dfn-eq} via localizations. Finally, recall the maps $\mu_{S,T}$ introduced in \S\ref{sec-theta}, which are defined for integers $T\,|\,S$. Our starting point is the following

\begin{assumption} \label{ass1}
There exists a bilinear pairing  
\[ {\langle\cdot\,,\cdot\rangle}_S:\BK_\p(K)\times\BK_{\p,S}(K)\longrightarrow I_{\Gamma_S}/I_{\Gamma_S}^2\]
satisfying the compatibility condition 
\[ \mu_{S,T}\circ{\langle\cdot\,,\cdot\rangle}_S={\langle\cdot\,,\cdot\rangle}_T \] 
for all $T\,|\,S$.
\end{assumption}

As in \cite{Dar}, we use this pairing to construct a regulator map. Let $\ell$ be a prime number dividing $S$ and, as before, let $\lambda$ be the unique prime of $K$ above $\ell$, then write $F_\lambda$ for the arithmetic Frobenius in $\Gal(\Q_\ell^{\rm nr}/K_\lambda)$. Recall from \S \ref{selmer-subsec} that $V_\p=A_\p\otimes_{\cO_\p}F_\p$. 
We have 
\[ \det(F_\ell\pm1\,|\,V_\p)=\ell+1\mp\frac{a_\ell}{\ell^{\frac{k}{2}-1}}, \] 
which are non-zero thanks to the Weil bounds, hence 
\[ \begin{split}
   \det(F_\lambda-1\,|\,V_\p)&=\det(F_\ell+1\,|\,V_\p)\cdot\det(F_\ell-1\,|\,V_\p)\\&=(\ell+1)^2-\frac{a_\ell^2}{\ell^{k-2}}\not=0. 
    \end{split} \]
Then \cite[Theorem 4.1, (i)]{BK} implies that $H^1_f(K_\lambda,A_\p)$ is finite, so the codomain of the map in \eqref{eq52} is finite and the ranks of $\Lambda_{\p,S}(K)$ and $\Lambda_\p(K)$ over $\cO_\p$ are equal. As in \eqref{tilde-rho-eq}, this common rank will be denoted by $\tilde \rho_\p$. Fix finite index subgroups $A\subset\Lambda_\p(K)$ and $B\subset\Lambda_{\p,S}(K)$ that are $\mathcal O_\p$-free and choose $\mathcal O_\p$-bases $\{P_1,\dots,P_{\tilde \rho_\p}\}$ and $\{Q_1,\dots,Q_{\tilde \rho_\p}\}$ of $A$ and $B$, respectively. Form the matrix 
\[ R(A,B):=\bigl({\langle P_i,Q_j\rangle}_S\bigr)_{i,j=1,\dots,\tilde\rho_\p} \] 
with entries in $I_{\Gamma_S}/I_{\Gamma_S}^2$ and let $R_{i,j}(A,B)$ be the $(i,j)$-minor of $R(A,B)$. Consider the element 
\[ \Reg(A,B):=\sum_{i,j=1}^{\tilde \rho_\p}(-1)^{i+j}(P_i\otimes Q_j)\otimes \det\bigl(R_{i,j}(A,B)\bigr)\in\Lambda_\p(K)^{\otimes 2}\otimes\bigl(I_{\Gamma_S}^{{\tilde\rho_\p}-1}/I_{\Gamma_S}^{\tilde\rho_\p}\bigr) \] 
and define the \emph{regulator term} $\Reg(S)$ as 
\[{\rm Reg}(S):={\Reg}(A,B)/([\BK_\p(K):A]\cdot[\BK_{\p,S}(K):B]).\]
This is independent of the choice of $A$ and $B$.

Let $B(S)$ denote the cokernel of the map in \eqref{eq52}, so that there is an exact sequence
\[ 0\longrightarrow\Lambda_{\p,S}(K)\longrightarrow\Lambda_\p(K)\longrightarrow\bigoplus_{\lambda\mid S}H^1_f(K_\lambda,A_\p)\longrightarrow B(S)\longrightarrow0. \] 
The analogue of Darmon's conjecture \cite[Conjecture 2.3]{Dar} in the present context amounts to the following 

\begin{question} \label{ques}
Suppose that $\Sha_\p(f/K)$ is finite. Is there a pairing as in Assumption \ref{ass1} such that the following conditions are satisfied? 
\begin{enumerate}
\item $\mathcal L_{S,\p}\in \BK(K_S)^{\otimes 2}\otimes I_{\Gamma_S}^{\tilde \rho_\p-1}$.
\item The leading coefficient $\tilde{\mathcal L}_{S,\p}$ of $\mathcal L_{S,\p}$ in $\BK(K_S)^{\otimes2}\otimes\bigl(I_{\Gamma_S}^{\tilde\rho_\p-1}/I_{\Gamma_S}^{\tilde \rho_\p}\bigr)$ belongs to the image of the canonical map 
\begin{equation} \label{canonical-augmentation-eq} 
\BK(K)^{\otimes2}\otimes\bigl(I_{\Gamma_S}^{\tilde \rho_\p-1}/I_{\Gamma_S}^{\tilde \rho_\p}\bigr)\longrightarrow\BK(K_S)^{\otimes2}\otimes\bigl(I_{\Gamma_S}^{\tilde \rho_\p-1}/I_{\Gamma_S}^{\tilde \rho_\p}\bigr). 
\end{equation}
\item There exists $c(f)\in\cO_\p$ depending only on $f$ such that 
\[ \tilde{\mathcal L}_{S,\p}=c(f)\cdot |\Sha_\p(f/K)|\cdot |B(S)|\cdot {\rm Reg}(S). \] 
Here ${\rm Reg}(S)$ denotes also the image of ${\rm Reg}(S)$ via the map in \eqref{canonical-augmentation-eq}, by an abuse of notation. 
\end{enumerate}
\end{question}

A few remarks are in order here. First of all, parts (1) and (2) of Question \ref{ques} do not depend 
on ${\rm Reg}(S)$, so they can be formulated independently of Assumption \ref{ass1}. Moreover, 
thanks to the compatibility condition in Assumption \ref{ass1}, one can show that
\[ \mu_{S,T}\bigl(|B(T)|\cdot \Reg(T)\bigr)=|B(S)|\cdot \Reg(S)\cdot\prod_{\ell\mid(S/T)}(1+\ell-a_\ell/\ell^{k/2-1})\cdot(1+\ell+a_\ell/\ell^{k/2-1}) \] 
whenever $T\,|\,S$. Comparing with \eqref{compatibility}, one sees that (1), (2) and (3) above are all compatible with the map $\mu_{S,T}$ when $T\,|\,S$. Actually, as in \cite{Dar}, it is this compatibility relation that suggests the definition of $\mathcal L_{S,\p}$ given above. However, 
different regulators might be attached to different choices of the coefficients $b_T$ and $b_T'$, 
as discussed in Remark \ref{rem4.1}. The choice of \emph{correct} regulators and $\mathcal L$-elements is an open problem, although we believe that, in light of \eqref{compatibility} and the fact that the relation in Assumption \ref{ass1} is very natural, our definition of $\mathcal L_{S,\p}$ is 
in some sense the ``standard'' one. Furthermore, since $2\rho_p\geq\tilde\rho_\p-1$ with equality holding if and only if $|\rho_\p^+-\rho_\p^-|=1$, we can state the following partial results. 
\begin{enumerate}
\item Condition (1) in Question \ref{ques} is implied by Corollary \ref{main-coro}, and is equivalent to it if $|\rho_\p^+-\rho_\p^-|=1$. 
\item If $|\rho_\p^+-\rho_\p^-|=1$ and all the prime factors of $S$ belong to $\mathcal S_p$ then a mod $p$ version of (2) in Question \ref{ques} is given in part (2) of Corollary \ref{coro4.14}. 
\item Suppose that $|\rho_\p^+-\rho_\p^-|=1$ and all the prime factors of $S$ belong to $\mathcal S_p$. In this case, if $p$ divides $|\Sha_\p(f/K)|\cdot |B(S)|$ then $p$ divides $\tilde{\mathcal L}_{S,\p}$ as well. This is a simple consequence of part (3) of Corollary \ref{coro4.14}.
\end{enumerate} 
Let us finally observe that if $|\rho_\p^+-\rho_\p^-|>1$ then (1) predicts more than what is proved in Corollary \ref{main-coro}, and the question about the leading term of $\mathcal L_{S,\p}$ might be addressed, at least in some special cases, by means of a suitable theory of generalized regulators as in \cite{BD1} and \cite{BD2}.


\begin{thebibliography}{100}

\bibitem{Beil} A. A. Beilinson, Higher regulators and values of $L$-functions, \emph{J. Math. Sci. (N. Y.)} {\bf 30} (1985), no. 2, 2036--2070.

\bibitem{BD1} M. Bertolini, H. Darmon, Derived heights and generalized Mazur--Tate regulators, \emph{Duke Math. J.} {\bf 76} (1994), no. 1, 75--111. 

\bibitem{BD2} M. Bertolini, H. Darmon, Derived $p$-adic heights, \emph{Amer. J. Math.} {\bf 117} (1995), no. 6, 1517--1554.

\bibitem{BD96} M. Bertolini, H. Darmon, Heegner points on Mumford--Tate curves, \emph{Invent. Math.} {\bf 126} (1996), no. 3, 413--456.

\bibitem{BD98} M. Bertolini, H. Darmon, Heegner points, $p$-adic $L$-functions, and the Cerednik--Drinfeld uniformization, \emph{Invent. Math.} {\bf 131} (1998), no. 3, 453--491.

\bibitem{BD99} M. Bertolini, H. Darmon, $p$-adic periods, $p$-adic $L$-functions and the $p$-adic uniformization of Shimura curves, \emph{Duke Math. J.} {\bf 98} (1999), no. 2, 305--334.

\bibitem{BD01} M. Bertolini, H. Darmon, The $p$-adic $L$-functions of modular elliptic curves, in \emph{Mathematics unlimited -- 2001 and beyond}, B. Engquist and W. Schmid (eds.), Springer-Verlag, Berlin, 2001, 109--170.

\bibitem{BD-Iwasawa} M. Bertolini, H. Darmon, Iwasawa's Main Conjecture for elliptic curves over anticyclotomic $\Z_p$-extensions, \emph{Ann. of Math. (2)} {\bf 162} (2005), no. 1, 1--64.

\bibitem{BDP} M. Bertolini, H. Darmon, K. Prasanna, Generalised Heegner cycles and $p$-adic Rankin $L$-series, \emph{Duke Math. J.}, to appear.


\bibitem{Bes} A. Besser, On the finiteness of $\Shaa$ for motives associated to modular forms, \emph{Doc. Math.} {\bf 2} (1997), 31--46.

\bibitem{Bloch} S. Bloch, Algebraic cycles and values of $L$-functions, \emph{J. Reine Angew. Math.} {\bf 350} (1984), 94--108.

\bibitem{BK} S. Bloch, K. Kato, $L$-functions and Tamagawa numbers of motives, in \emph{The Grothendieck Festschrift, vol. I}, P. Cartier, L. Illusie, N. M. Katz, G. Laumon, Y. Manin and K. A. Ribet (eds.), Progress in Mathematics {\bf 86}, Birkh\"auser, Boston, MA, 1990, 333--400.

\bibitem{BFH} D. Bump, S. Friedberg, J. Hoffstein, Nonvanishing theorems for $L$-functions of modular forms and their derivatives, \emph{Invent. Math.} {\bf 102} (1990), no. 3, 543--618.

\bibitem{Bu}  D. Burns, Congruences between derivatives of abelian $L$-functions at $s=0$, \emph{Invent. Math.} {\bf 169} (2007), no. 3, 451--499.

\bibitem{BF} D. Burns, M. Flach, Tamagawa numbers for motives with (non-commutative) coefficients, \emph{Doc. Math.} {\bf 6} (2001), 501--570.

\bibitem{Cas} F. Castella, Heegner cycles and higher weight specializations of big Heegner points, \emph{Math. Ann.}, to appear.

\bibitem{CH} M. Chida, M.-L. Hsieh, Anticyclotomic Iwasawa main conjecture for modular forms, in progress. 

\bibitem{Dar} H. Darmon, A refined conjecture of Mazur--Tate type for Heegner points, \emph{Invent. Math.} {\bf 110} (1992), no. 1, 123--146. 


\bibitem{Del} P. Deligne, Formes modulaires et repr\'esentations $\ell$-adiques, Lecture Notes in Mathematics {\bf 179}, Springer-Verlag, Berlin, 1971, 139--172.


\bibitem{DFG} F. Diamond, M. Flach, L. Guo, The Tamagawa number conjecture of adjoint motives of modular forms, \emph{Ann. Sci. \'Ec. Norm. Sup. (4)} {\bf 37} (2004), no. 5, 663--727.

\bibitem{Fou} O. Fouquet, Dihedral Iwasawa theory of nearly ordinary quaternionic automorphic forms, \emph{Compos. Math.}, to appear.

\bibitem{Gr} B. H. Gross, On the values of abelian $L$-functions at $s=0$, \emph{J. Fac. Sci. Univ. Tokyo Sect. IA Math.} {\bf 35} (1988), no. 1, 177--197.


\bibitem{Ho} B. Howard, Variation of Heegner points in Hida families, \emph{Invent. Math.} {\bf 167} (2007), no. 1, 91--128.



\bibitem{Jan} U. Jannsen, \emph{Mixed motives and algebraic $K$-theory}, Lecture Notes in Mathematics {\bf 1400}, Springer-Verlag, Berlin, 1990.

\bibitem{Lo1} M. Longo, On the Birch and Swinnerton-Dyer conjecture for modular elliptic curves over totally real fields, \emph{Ann. Inst. Fourier (Grenoble)} {\bf 56} (2006), no. 3, 689--733.

\bibitem{Lo2} M. Longo, Euler systems obtained from congruences between Hilbert modular forms, \emph{Rend. Semin. Mat. Univ. Padova} {\bf 118} (2007), 1--34.

\bibitem{Lo3} M. Longo, Anticyclotomic Iwasawa's main conjecture for Hilbert modular forms, \emph{Comment. Math. Helv.} {\bf 87} (2012), no. 2, 303--353.

\bibitem{LRV} M. Longo, V. Rotger, S. Vigni, Special values of $L$-functions and the arithmetic of Darmon points, \emph{J. Reine Angew. Math.}, to appear. 

\bibitem{LV-JNT} M. Longo, S. Vigni, On the vanishing of Selmer groups for elliptic curves over ring class fields, \emph{J. Number Theory} {\bf 150} (2010), no. 1, 128--163.

\bibitem{LV-Man} M. Longo, S. Vigni,  Quaternion algebras, Heegner points and the arithmetic of Hida families, \emph{Manuscripta Math.} {\bf 135} (2011), no. 3--4, 273--328.

\bibitem{LV} M. Longo, S. Vigni, Vanishing of special values and central derivatives in Hida families, \emph{Ann. Sc. Norm. Super. Pisa Cl. Sci.}, to appear.

\bibitem{MT0} B. Mazur, J. Tate, Canonical height pairings via biextensions, in \emph{Arithmetic and Geometry, vol. I}, M. Artin and J. Tate (eds.), Progress in Mathematics {\bf 35}, Birkh\"auser, Boston, MA, 1983, 195--237.

\bibitem{MT} B. Mazur, J. Tate, Refined conjectures of the ``Birch and Swinnerton-Dyer type'', \emph{Duke Math. J.} {\bf 54} (1987), no. 2, 711--750.

\bibitem{Milne} J. S. Milne, \emph{Arithmetic duality theorems}, Perspectives in Mathematics {\bf 1}, Academic Press, Boston, MA, 1986. 

\bibitem{Nek} J. Nekov\'a\v{r}, Kolyvagin's method for Chow groups of Kuga--Sato varieties, \emph{Invent. Math.} {\bf 107} (1992), no. 1, 99--125.

\bibitem{nek4} J. Nekov\'a\v{r}, On $p$-adic height pairings, in \emph{S\'eminaire de Th\'eorie des Nombres, Paris, 1990--91}, S. David (ed.), Progress in Mathematics {\bf 108}, Birkh\"auser, Boston, MA, 1993, 127--202.  

\bibitem{Nek2} J. Nekov\'a\v{r}, On the $p$-adic height of Heegner cycles, \emph{Math. Ann.} {\bf 302} (1995), no. 1, 609--686. 

\bibitem{Nek3} J. Nekov\'a\v{r}, $p$-adic Abel--Jacobi maps and $p$-adic heights, in \emph{The arithmetic and geometry of algebraic cycles (Banff, AB, 1998)}, B. B. Gordon, J. D. Lewis, S. M\"uller-Stach, S. Saito and N. Yui (eds.), CRM Proceedings \& Lecture Notes {\bf 24}, American Mathematical Society, Providence, RI, 2000, 367--379. 

\bibitem{Nek-Dur} J. Nekov\'a\v{r}, The Euler system method for CM points on Shimura curves, in \emph{$L$-functions and Galois representations}, D. Burns, K. Buzzard and J. Nekov\'a\v{r} (eds.), London Mathematical Society Lecture Note Series {\bf 320}, Cambridge University Press, Cambridge, 2007, 471--547. 

\bibitem{Nek-Level} J. Nekov\'a\v{r}, Level raising and Selmer groups for Hilbert modular forms of weight two, \emph{Canad. J. Math.} {\bf 64} (2012), no. 3, 588--668. 

\bibitem{Niz} W. Nizio\l, On the image of $p$-adic regulators, \emph{Invent. Math.} {\bf 127} (1997), no. 2, 375--400.

\bibitem{Rib} K. A. Ribet, On $l$-adic representations attached to modular forms II, \emph{Glasgow Math. J.} {\bf 27} (1985), 185--194.

\bibitem{Ru} K. Rubin, A Stark conjecture ``over $\Z$'' for abelian $L$-functions with multiple zeros, \emph{Ann. Inst. Fourier (Grenoble)} {\bf 46} (1996), no. 1, 33--62.

\bibitem{Sai} T. Saito, Modular forms and $p$-adic Hodge theory, \emph{Invent. Math.} {\bf 129} (1997), no. 3, 607--620.

\bibitem{Sai2} T. Saito, Weight-monodromy conjecture for $l$-adic representations associated to modular forms, in \emph{The arithmetic and geometry of algebraic cycles (Banff, AB, 1998)}, B. B. Gordon, J. D. Lewis, S. M\"uller-Stach, S. Saito and N. Yui (eds.), NATO Science Series C: Mathematical and Physical Sciences {\bf 548}, Kluwer Academic Publishers, Dordrecht, 2000, 427--431.

\bibitem{Schneider} P. Schneider, Introduction to the Beilinson conjectures, in \emph{Beilinson's conjectures on special values of $L$-functions}, M. Rapoport, N. Schappacher and P. Schneider (eds.), Perspectives in Mathematics {\bf 4}, Academic Press, Boston, MA, 1988, 1--35.

\bibitem{Sch} A. J. Scholl, Motives for modular forms, \emph{Invent. Math.} {\bf 100} (1990), no. 2, 419--430. 

\bibitem{Se} J.-P. Serre, \emph{Local fields}, Graduate Texts in Mathematics {\bf 67}, Springer-Verlag, New York, 1979.

\bibitem{Zh} S. Zhang, Heights of Heegner cycles and derivatives of $L$-series, \emph{Invent. Math.} {\bf 130} (1997), no. 1, 99--152.

\end{thebibliography}
\end{document}